\title{\ttitle} 
\patchcmd{\theindex}{\thispagestyle{plain}}{}{}{}
\tikzset{node distance=1.5cm, auto}
\tikzset{no body/.style={/tikz/dash pattern=on 0 off 1mm}}
\newcommand{\ZZ}{\ensuremath{\mathbb{Z}}}
\newcommand{\C}{C_2}
\newcommand{\Rdelta}{{\mathbb{R}^\delta}}
\newcommand{\R}{\mathbb{R}}
\newcommand{\RC}{\mathbb{R}[\C]}
\newcommand{\CL}{C_2\mathcal{L}}
\newcommand{\Jmor}[3]{C_2\mathcal{J}_{#1}(#2,#3)}
\newcommand{\Gmor}[3]{C_2\mathcal{\gamma}_{#1}(#2,#3)}
\newcommand{\CTop}{\C\Top}
\newcommand{\COTop}{(O(p,q)\rtimes\C)\Top}
\newcommand{\Jpq}{C_2\mathcal{J}_{p,q}}
\newcommand{\Jzero}{C_2\mathcal{J}_{0,0}}
\newcommand{\Jzerobar}{\overline{C_2\mathcal{J}_{0,0}}}
\newcommand{\Epq}{C_2\mathcal{E}_{p,q}}
\newcommand{\Ezero}{C_2\mathcal{E}_{0,0}}
\newcommand{\LL}{ \mathcal{L} }
\newcommand{\OEpq}{O(p,q)C_2\mathcal{E}_{p,q}}
\newcommand{\OEpql}{O(p,q)C_2\mathcal{E}_{p,q}^{l}}
\newcommand{\CJ}[2]{C_2\mathcal{J}_{#1,#2}}
\newcommand{\CE}[2]{C_2\mathcal{E}_{#1,#2}}
\newcommand{\OCE}[2]{O(#1,#2)C_2\mathcal{E}_{#1,#2}}
\newcommand{\taupq}{\tau_{p,q}}
\newcommand{\Tpq}{T_{p,q}}
\newcommand{\pqs}{(p,q)\mathbb{S}}
\newcommand{\Ilevel}{I_{level}}
\newcommand{\Jlevel}{J_{level}}
\newcommand{\Jstable}{J_{stable}}
\DeclareMathOperator{\Top}{Top}
\DeclareMathOperator{\ind}{ind}
\DeclareMathOperator{\Nat}{Nat}
\DeclareMathOperator{\res}{res}
\DeclareMathOperator{\colim}{colim}
\DeclareMathOperator{\hocolim}{hocolim}
\DeclareMathOperator{\Ev}{Ev}
\DeclareMathOperator{\hofibre}{hofibre}
\DeclareMathOperator{\holim}{holim}
\DeclareMathOperator{\Dim}{dim}
\DeclareMathOperator{\Min}{min}
\DeclareMathOperator{\conn}{conn}
\DeclareMathOperator{\Ho}{Ho}
\DeclareMathOperator{\CI}{CI}
\DeclareMathOperator{\proj}{proj}
\DeclareMathOperator{\poly}{-poly-}
\DeclareMathOperator{\homog}{-homog-}
\DeclareMathOperator{\id}{id}
\DeclareMathOperator{\Id}{Id}
\DeclareMathOperator{\inc}{inc}
\DeclareMathOperator{\Fun}{Fun}
\begin{document}

\frontmatter 

\setstretch{1.3} 

\fancyhead{} 
\rhead{\thepage} 
\lhead{} 

\pagestyle{fancy} 

\newcommand{\HRule}{\rule{\linewidth}{0.5mm}} 

\hypersetup{pdftitle={\ttitle}}
\hypersetup{pdfsubject=\subjectname}
\hypersetup{pdfauthor=\authornames}
\hypersetup{pdfkeywords=\keywordnames}


\begin{titlepage}

\pagenumbering{Roman}

\begin{center}

\textsc{\LARGE \univname}\\[1.5cm] 
\textsc{\Large Doctoral Thesis}\\[0.5cm] 

\HRule \\[0.4cm] 
{\huge \bfseries \ttitle}\\[0.4cm] 
\HRule \\[1.5cm] 

\begin{minipage}{0.4\textwidth}
\begin{flushleft} \large
\emph{Author:}\\
{\authornames} 
\end{flushleft}
\end{minipage}
\begin{minipage}{0.4\textwidth}
\begin{flushright} \large
\emph{Supervisor:} \\
{\supname} 
\end{flushright}
\end{minipage}\\[3cm]

\large \textit{A thesis submitted for the degree of}\\[0.1cm] \textrm{\degreename}\\[0.3cm] 
\textit{in the}\\[0.4cm]
\groupname\\\deptname\\[1cm] 

{\large \today}\\[1cm] 
\includegraphics[width=2cm]{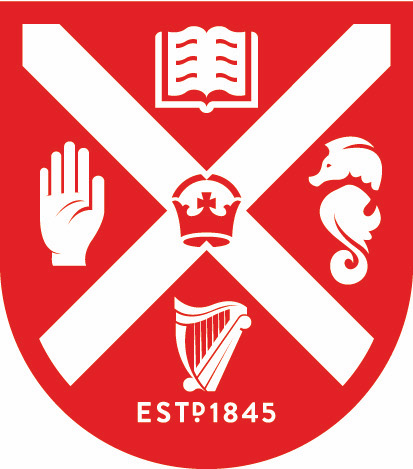} 

\vfill
\end{center}

\end{titlepage}

\pagenumbering{roman}


\addtotoc{Abstract} 

\abstract{\addtocontents{toc}{\vspace{1em}} 

In this thesis, we construct a new version of orthogonal calculus for functors $F$ from $\C$-representations to $\C$-spaces, where $\C$ is the cyclic group of order 2. For example, the functor $BO(-):V\mapsto BO(V)$, where $BO(V)$ is the classifying space of the orthogonal group $O(V)$, which has a $\C$-action induced by the action on the $\C$-representation $V$. We obtain a bigraded sequence of approximations to $F$, called the strongly $(p,q)$-polynomial approximations $T_{p,q}F$. The bigrading arises from the bigrading on $\C$-representations. The homotopy fibre $D_{p,q}F$ of the map $T_{p+1,q}T_{p,q+1}F\rightarrow \Tpq F$ is such that the approximation $T_{p+1,q}T_{p,q+1}D_{p,q}F$ is equivalent to the functor $D_{p,q}F$ itself and the approximation $T_{p,q}D_{p,q}F$ is trivial. A functor with these properties is called $(p,q)$-homogeneous. Via a zig-zag of Quillen equivalences, we prove that $(p,q)$-homogeneous functors are fully determined by orthogonal spectra with a genuine action of $\C$ and a naive action of the orthogonal group $O(p,q):=O(\mathbb{R}^{p+q\delta})$. The notation $\delta$ is used to represent the sign $\C$-representation, and $\C$ acts on $O(p,q)$ by conjugation. Hence, the fibres $D_{p,q}F$ are $(O(p,q)\rtimes \C)$-spectra over a non-trivial incomplete universe.

}

\clearpage 


\setstretch{1.5} 

\acknowledgements{\addtocontents{toc}{\vspace{1em}} 

I would like to express my gratitude to my supervisor, David Barnes, whose support, guidance and expertise have been instrumental in the completion of this thesis. My thanks also go to my second supervisor Lisa McFetridge, for her support and encouragement throughout my postgraduate studies. A special thanks also goes to Niall Taggart, whose knowledge, advice and time have been invaluable. 




}

\clearpage 


\pagestyle{fancy} 
\setstretch{1.5}
\fancyhead[C]{Contents} 
\tableofcontents 






{
}












\setstretch{1.5} 
\mainmatter 

\pagestyle{fancy} 


\chapter{Introduction} 
\label{ch:Introduction}

\fancyhf{}
\fancyhead[C]{\rightmark}
\fancyhead[R]{\thepage}

\section{Context}
Taylor's Theorem for real functions is well known. Given a real function $f:\R\rightarrow\R$, Taylor's theorem describes how $f$ can be approximated by a sequence of polynomial functions, which are built using the derivatives of $f$. As a result, in order to study complex functions, it suffices to study polynomial functions, which are well understood. 

This concept of `breaking' a function into more manageable pieces is one that can be seen throughout other areas of mathematics. A well known example of this in algebraic topology is the Postnikov tower of a CW space.  Given a CW-complex X, one can construct a tower (inverse system) of spaces $\{P_nX\}_{n\geq 0}$,
\[\begin{tikzcd}
	&& \vdots \\
	&& {P_2X} && {F_2X} \\
	&& {P_1X} && {F_1X} \\
	X && {*}
	\arrow[from=4-1, to=4-3]
	\arrow[from=3-5, to=3-3]
	\arrow[from=2-3, to=3-3]
	\arrow[from=3-3, to=4-3]
	\arrow[curve={height=-6pt}, from=4-1, to=3-3]
	\arrow[from=1-3, to=2-3]
	\arrow[curve={height=-12pt}, from=4-1, to=2-3]
	\arrow[from=2-5, to=2-3]
\end{tikzcd}\]
where each map $P_{n+1}X\rightarrow P_n X$ is a fibration and each fibre $F_nX$ is an Eilenberg-Maclane space $K(\pi_n(X),n)$. Since Eilenberg-MacLane spaces are well understood, one can study the space $X$ by studying the layers of it's Postnikov tower. 

Many objects studied in algebraic topology can be realised as functors. Functor calculus is a method by which one can approximate a given functor by a sequence of functors with `nice' properties, which we call polynomial functors. The resulting sequence of functors is a similar concept to that of a Postnikov tower. Polynomial functors have properties that mimic those of the polynomial functions used in differential calculus. For example, an $n$-polynomial functor is also $(n+1)$-polynomial and the $(n+1)$-st derivative of an $n$-polynomial functor is trivial. 

There are many different branches of functor calculus designed to study different categories of functors. Goodwillie calculus, originally constructed by Goodwillie \cite{Goo90, Goo92, Goo03}, is used to study endofunctors on the category of topological spaces. The fibres of the tower produced by Goodwillie calculus are classified by spectra with an action of the symmetric group $\Sigma_n$. The main focus of this thesis is the orthogonal homotopy calculus first constructed by Weiss in \cite{Wei95}. It is the branch of functor calculus involving the study of functors from the category of finite dimensional real vector spaces to the category of pointed topological spaces. The tower for a functor $F$ produced by orthogonal calculus looks as follows,   
\[\begin{tikzcd}
	&& \vdots \\
	&& {T_2F(V)} & {\Omega^\infty \left[\left(S^{2V} \wedge \Theta_F^2\right)_{hO(2)}\right]} \\
	&& {T_1F(V)} & {\Omega^\infty \left[\left(S^{1V} \wedge \Theta_F^1\right)_{hO(1)}\right]} \\
	{F(V)} && {F(\R^\infty)}
	\arrow[from=1-3, to=2-3]
	\arrow[from=2-3, to=3-3]
	\arrow[from=3-3, to=4-3]
	\arrow[from=3-4, to=3-3]
	\arrow[from=4-1, to=4-3]
	\arrow[from=2-4, to=2-3]
	\arrow[curve={height=-6pt}, from=4-1, to=3-3]
	\arrow[curve={height=-12pt}, from=4-1, to=2-3]
\end{tikzcd}\]
where each functor $T_nF$ is $n$-polynomial and the $n^\text{th}$ fibre of the tower is fully determined by an orthogonal spectrum $\Theta_F^n$ with an action of the orthogonal group $O(n)$.

Classic examples of functors studied using orthogonal calculus include:
\begin{itemize}
    \item $BO(-):V\mapsto BO(V)$
    \item $B\Top(-):V\mapsto B\Top(V)$
    \item $B\text{Diff}^b(-):V\mapsto B\text{Diff}^b(M\times V)$
\end{itemize}
where $BO(V)$ is the classifying space of the space of linear isometries on V, $B\Top(V)$ is the classifying space of the space of homeomorphisms on V and, for a smooth compact manifold $M$, $B\text{Diff}^b(M\times V)$ is the classifying space of the space of bounded diffeomorphisms on $M\times V$. 

There exist functors, similar to those above, that have group actions. For example, the functor $BO(-):V\rightarrow BO(V)$ that sends a $G$-representation to its classifying space. As such, there is a natural motivation to construct functor calculi that study functors with a group action. An equivariant orthogonal calculus of this type could have applications in many different areas, such as the study of equivariant diffeomorphisms of $G$-manifolds. Extensive research focused on equivariance in the Goodwillie calculus setting has been carried out by Dotto \cite{Dot16a,Dot16b,Dot17} and Dotto and Moi \cite{DM16}. 

It is very difficult to produce variations of orthogonal calculus due to the nature of its construction, see Section \ref{general G future work}. Two successful variations are the unitary calculus and calculus with reality constructed by Taggart in \cite{Tag22unit, Tag22real}. In these calculi, real vector spaces are replaced by complex vector spaces, and in the calculus with reality one takes into consideration the $\C$-action on complex vector spaces given by complex conjugation. The fibres of the towers produced are classified by spectra with an action of the unitary group $U(n)$ for unitary calculus and spectra with an action of $\C\ltimes U(n)$ for calculus with reality. Taggart's calculus with reality provides a great insight of what can be expected from a genuine $C_2$-equivariant orthogonal calculus, and a number of proofs in this thesis were inspired by the extensions of Taggart.

\section{$\C$-equivariant orthogonal calculus}\label{sec: intro c2 calc}

The $\C$-equivariant orthogonal calculus gives a method for studying functors from $\C$-representations to the category of $\C$-spaces. For example, the functor $$BO(-):V\mapsto BO(V),$$where $BO(V)$ is the classifying space of the orthogonal group $O(V)$, which has a $\C$-action induced by the action on the $\C$-representation $V$. Details of this functor, along with it's derivatives, are discussed in Section \ref{sec: BO}.

The main result of the thesis, Theorem \ref{thm A}, is the classification of $(p,q)$-homogeneous functors (defined in Section \ref{sec: homog functors}), which are the $\C$-equivariant analogue of functors that are $n$-homogeneous in orthogonal calculus (functors with polynomial approximations concentrated in degree $n$). We show that $(p,q)$-homogeneous functors are fully determined by genuine orthogonal $\C$-spectra with an action of the orthogonal group $O(p,q):=O(\R^{p+q\delta})$, which has a specified $\C$-action given in Definition \ref{def:O(p,q) and matrix A}. That is, orthogonal spectra with a genuine action of $\C$ and a naive action of $O(p,q)$, denoted $\C Sp^{\mathcal{O}}[O(p,q)]$. In this way, we get a richer equivariant structure compared to that of calculus with reality \cite{Tag22real}, in which the classification is in terms of spectra with a naive action of $\C\ltimes U(n)$.  

\begin{xxthm}[{Theorem \ref{weissclassification}}]\label{thm A}
Let $p,q\geq 1$. If $F$ is a $(p,q)$-homogeneous functor, then $F$ is objectwise weakly equivalent to 
\begin{equation*}
    V\mapsto \Omega^\infty[(S^{(p,q)V}\wedge\Theta_F^{p,q})_{hO(p,q)}],
\end{equation*}
where $\Theta_F^{p,q}\in \C Sp^{\mathcal{O}}[O(p,q)]$ and $(-)_{hO(p,q)}$ denotes homotopy orbits. 

Conversely, every functor of the form 
\begin{equation*}
    V\mapsto \Omega^\infty[(S^{(p,q)V}\wedge\Theta)_{hO(p,q)}],
\end{equation*}
where $\Theta\in \C Sp^{\mathcal{O}}[O(p,q)]$, is $(p,q)$-homogeneous. 
\end{xxthm}

The classification can alternatively be stated in terms of the following zig-zag of Quillen equivalences between the calculus input category $\Ezero$ (of functors from the category of finite dimensional $\C$-representations with inner product and linear isometries to $\C\Top_*$) and the category of genuine orthogonal $\C$-spectra with an action of $O(p,q)$. 

\begin{xxthm}[{Theorem \ref{boclassification} and Theorem \ref{QEstabletospectra}}]\label{thm B}
For all $p,q\geq 1$, there exist Quillen equivalences
\[\begin{tikzcd}
	{(p,q)\homog C_2\mathcal{E}_{0,0}} && {O(p,q)C_2\mathcal{E}_{p,q}^s} && {C_2Sp^O[O(p,q)]}
	\arrow["{\ind_{0,0}^{p,q}\varepsilon^*}"', shift right=2, from=1-1, to=1-3]
	\arrow["{\res_{0,0}^{p,q}/O(p,q)}"', shift right=2, from=1-3, to=1-1]
	\arrow["{(\alpha_{p,q})_!}", shift left=2, from=1-3, to=1-5]
	\arrow["{\alpha_{p,q}^*}", shift left=2, from=1-5, to=1-3]
\end{tikzcd}\]
\end{xxthm}
Here $(p,q)\homog C_2\mathcal{E}_{0,0}$ denotes the $(p,q)$-homogeneous model structure on the input category. This model structure captures the structure of $(p,q)$-homogeneous functors, in that the cofibrant-fibrant objects are exactly the projectively cofibrant $(p,q)$-homogeneous functors. This model structure is detailed in Section \ref{sec:homog ms}. The zig-zag of equivalences is made up of two Quillen equivalences. Differentiation (also called induction) forms a Quillen functor from the $(p,q)$-homogeneous model structure to an intermediate category of functors $O(p,q)\C\mathcal{E}_{p,q}^s$, which is in turn Quillen equivalent to the category of genuine orthogonal $\C$-spectra with an action of $O(p,q)$.

In comparison to the underlying calculus which is indexed over $\mathbb{N}$, $\C$-equivariant orthogonal calculus is bi-indexed over $\mathbb{N}\times\mathbb{N}$. As a result, we can define differentiation in two directions (the $p$-direction and the $q$-direction). These different derivatives act like partial derivatives in differential calculus; in particular, they commute. In Conjecture \ref{ex: BO}, we predict that the two first derivatives of $BO(-)$ are the orthogonal sphere spectrum $\mathbb{S}:V\mapsto S^V$ (in the $p$-direction) and the shifted orthogonal sphere spectrum $\mathbb{S}^{\otimes\delta}:V\mapsto S^{V\otimes \Rdelta}$ (in the $q$-direction), where $\Rdelta$ is the sign representation of $\C$ (See Example \ref{Ex: trivial and sign representation}). 

A key difference between the underlying and $\C$-equivariant orthogonal calculi is an indexing shift, caused by this bi-indexing. In particular, $\tau_n$ in the underlying calculus is defined using the poset of non-zero subspaces $\{0\neq U\subseteq \R^{n+1}\}$ and $\taupq$ in the $\C$-calculus is defined using the poset of non-zero subspaces $\{0\neq U\subseteq \R^{p,q}\}$. To keep notation consistent, the author introduced the new term strongly $(p,q)$-polynomial, see Definition \ref{def: polynomial}. A functor in the input category for $\C$-equivariant orthogonal calculus is then called $(p,q)$-polynomial if and only if it is both strongly $(p+1,q)$-polynomial and strongly $(p,q+1)$-polynomial. In particular, we define the strongly $(p,q)$-polynomial approximation functor $\Tpq$, and the $(p,q)$-polynomial approximation functor is the composition $T_{p+1,q}T_{p,q+1}$. A functor $X$ is $(p,q)$-homogeneous if it is $(p,q)$-polynomial and the strongly $(p,q)$-polynomial approximation $T_{p,q}X$ is trivial. 

\begin{xxthm}[{Theorem \ref{thm: DYpq is homog}}]
The homotopy fibres of the maps 
\begin{equation*}
    T_{p+1,q}T_{p,q+1}F\rightarrow \Tpq F
\end{equation*}
are $(p,q)$-homogeneous, and can therefore be described in terms of genuine orthogonal $\C$-spectra with an action of $O(p,q)$, by the classification given in Theorem \ref{thm A}. 
\end{xxthm}

A key result of the calculus, which makes the classification work, is the existence of the following $\C$-homotopy cofibre sequences. These cofibre sequences tell us that derivatives in $\C$-equivariant orthogonal calculus are well behaved. The notation $\C\mathcal{J}_{p,q}:=\C\mathcal{J}_{\mathbb{R}^{p,q}}$ denotes the $(p,q)$-th jet category whose objects are $\C$-representations and morphisms are given by $\Jmor{p,q}{U}{V}$, which is a $\C$-space (see Definition \ref{def: p,q-jet cat}).

\begin{xxprop}[{Proposition \ref{cofibseq}}]\label{prop D}
For all $U,V,W$ in $\Jzero$, the homotopy cofibre of the map
\begin{equation*}
\Jmor{W}{U\oplus X}{V}\wedge S^{W\otimes X} \rightarrow \Jmor{W}{U}{V}
\end{equation*}
is $\C$-homeomorphic to $\Jmor{W\oplus X}{U}{V}$, where $X=\R$ or $X=\Rdelta$ (see Example \ref{Ex: trivial and sign representation}), and $S^{W\otimes X}$ denotes the one point compactification of $W\otimes X$.
\end{xxprop}  
To replace $X$ with something of higher dimension would mean taking some kind of iteration of cofibre sequences. This indicates that a potentially more involved approach may be needed if one wants to construct this kind of result in a $G$-equivariant orthogonal calculus, for an arbitrary group $G$ which may have irreducible representations of dimension greater than one. As a result, it is not obvious how derivatives should behave for the arbitrary $G$ setting, see Section \ref{general G future work}.

\section{Future work}

In this chapter we provide a brief overview of some future work that naturally follows from the content of this thesis. This acknowledges the importance of having a good working model of $\C$-equivariant orthogonal calculus as a key step towards understanding a more general equivariant orthogonal calculus. 

\subsection{Comparison to orthogonal calculus}\label{sec: forget c2}
As mentioned throughout this thesis, the $\C$-equivariant orthogonal calculus has been constructed in such a way that orthogonal calculus might be recovered via a forgetful functor. Comparisons made between existing functor calculi have depended on the inclusion of sub-automorphism groups. For example, comparisons made by Taggart \cite{Tag21,Tag22comp ,Tag23}   between orthogonal calculus, unitary calculus and calculus with reality, relied on the subgroup inclusions 
\begin{equation*}
    O(n)\hookrightarrow U(n)\hookrightarrow U(n)\rtimes \C.
\end{equation*}
Similarly, one could expect that the comparison between $\C$-equivariant orthogonal calculus and orthogonal calculus may depend on the inclusion 
\begin{equation*}
    O(p,q)\hookrightarrow O(p,q)\rtimes \C.
\end{equation*}

Orthogonal calculus is indexed on the universe $\bigoplus\limits_{i=1}^{\infty} \R$, meaning that functors in orthogonal calculus take finite dimensional real inner product spaces as their input. The regular representation, $\RC$, of $\C$ is $\C$-isomorphic to the direct sum of the trivial and sign representations of $\C$. That is, $\RC\cong \R\oplus\Rdelta$. We index $\C$-equivariant orthogonal calculus on the universe $\bigoplus\limits_{i=1}^{\infty} \RC$, which means that functors in the $\C$-equivariant calculus take finite dimensional real subrepresentations of $\bigoplus\limits_{i=1}^{\infty} \RC$ with an inner product as their input. These subrepresentations have the form $\R^{p}\oplus \R^{q\delta}$ (see Section \ref{sect: input functors}). We define $\Ezero$ to be the category of $\CTop_*$-enriched functors from the category of such representations with linear isometries to $\CTop_*$, and $\C$-equivariant natural transformations (see Definition \ref{jzero and ezero def}). We call $\Ezero$ the input category for $\C$-equivariant orthogonal calculus. 

Much like the calculus with reality case \cite{Tag23}, given a functor in the equivariant input category $\Ezero$, forgetting the $\C$-action on the target space and changing the universe from $\bigoplus\limits_{i=1}^{\infty} \RC$ to $\bigoplus\limits_{i=1}^{\infty} \R$ gives a functor in the input category for orthogonal calculus $\mathcal{E}_0$ (of functors from the category of finite dimensional real inner product spaces with linear isometries to pointed topological spaces). That is, there is a functor from the equivariant input category $\Ezero$ to the underlying input category $\mathcal{E}_0$. It is not clear how this functor behaves, since we choose the fine model structure on $\C\Top_*$, rather than the coarse model structure used in \cite{Tag22real} (see Proposition \ref{finemodelstructure} and Proposition \ref{prop: coarse ms}). 

In future work, these functors should be explored in more detail. In particular, the most interesting comparisons will be made between the functors $\tau_{p,q}$ and $T_{p,q}$ and their non-equivariant equivalents $\tau_n$ and $T_n$. We expect, from comparing the indexing, that the equivariant functors $\tau_{p,q}$ and $T_{p,q}$ should correspond to the non-equivariant functors $\tau_{p+q-1}$ and $T_{p+q-1}$. This is interesting, since this would mean that all functors $\Tpq$ with $p+q=n$ correspond non-equivariantly to the same functor $T_{n-1}$.  

\subsection{$G$-equivariant orthogonal calculus}\label{general G future work}

The main difficulty in generalising the $\C$-equivariant orthogonal calculus to $G$-equivariant orthogonal calculus, for an arbitrary group $G$, is the cofibre sequence of Proposition \ref{prop D}. The cofibre sequence holds for $X=\R$ and $X=\Rdelta$, since $\R$ and $\Rdelta$ are one-dimensional irreducible $\C$-representations. Replacing $X$ with a representation of dimension greater than one would require an iterated cofibre sequence. In particular, this indicates that replicating this type of cofibre sequence for a general group $G$ could be difficult, since $G$ might have irreducible representations with dimension greater than one. 

This difficulty can be avoided if one restricts to abelian groups and the complex setting, since every irreducible representation of a finite abelian group over $\mathbb{C}$ is one-dimensional. Therefore, it should be possible to construct a $G$-equivariant unitary calculus, for $G$ a finite abelian group. It might be possible to then recover $G$-equivariant orthogonal calculus via a complexification functor, similar to that used by Taggart \cite{Tag21}, however one should be careful to check that this preserves the $G$-equivariance. 

Naturally, one might then want to consider a change of group. Given a group homomorphism $G\rightarrow G'$, one could attempt to construct comparison functors between $G$-equivariant unitary calculus and $G'$-equivariant unitary calculus, and then also in the orthogonal setting.

\subsection{Global equivariant orthogonal calculus}\label{sec:global}
It is well known that equivariant stable homotopy theory can be recovered from global stable homotopy theory, see \cite{Sch18}. This raises the following idea: If one could construct a `global orthogonal calculus', then it might be possible to recover $G$-equivariant orthogonal calculus, for an arbitrary group $G$. 

To construct global stable homotopy theory in \cite[Chapter 4]{Sch18}, one starts by defining what global equivalences and global fibrations of orthogonal spectra are. In particular, this allows one to define a global model structure on the category of orthogonal spectra, in which the weak equivalences are the global equivalences and the fibrations are the global fibrations. 

One could imagine replicating these definitions in the category of orthogonal functors $\mathcal{J}_0\rightarrow\Top_*$. That is, we could define a global model structure on the input category for orthogonal calculus, which could be used in place of the projective model structure. Repeating the same constructions of orthogonal calculus with this new model structure should in theory produce a `global orthogonal calculus'. In future work, these definitions should be made precise, and all of the constructions that follow should be checked carefully.

\subsection{Comparing the equivariant orthogonal and Goodwillie calculi}\label{sec: barnes eldred}

Comparisons between the non-equivariant orthogonal and Goodwillie calculi have been made by Barnes and Eldred \cite{BE16}. Extensive research focused on equivariance in the Goodwillie calculus setting has been carried out by Dotto \cite{Dot16a,Dot16b, Dot17} and Dotto and Moi \cite{DM16}. Differences in the implementation of equivariance make the equivariant calculi difficult to compare. In particular, there is no obvious way to compare the notions of $(p,q)$-polynomial in the $\C$-equivariant orthogonal calculus sense and $J$-excisive in the equivariant Goodwillie sense, for a $\C$-set $J$. Never the less, we make the following conjecture, which is analogous to \cite[Theorem 3.5]{BE16}. 

Let $F:G\Top_*\rightarrow G\Top_*$ and $\mathbb{R}[G]$ be the regular representation of $G$. Define the restriction of $F$ by 
\begin{equation*}
    \res F: V\mapsto F(S^V)
\end{equation*}
for $V$ a finite dimensional sub-representation of $\oplus^{\infty} \mathbb{R}[G]$ with an inner product. 

\begin{conjecture}
    The $G$-equivariant Goodwillie calculus on a functor $F$ is equivalent to $G$-equivariant orthogonal calculus on the functor $\res F$. 
\end{conjecture}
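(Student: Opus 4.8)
We sketch a possible approach, modelled on the non-equivariant comparison of Barnes and Eldred \cite{BE16}. The central object is the restriction functor $\res$, sending $F\colon \C\Top_*\to\C\Top_*$ to the functor $V\mapsto F(S^V)$, which by hypothesis lands (after a suitable enrichment) in the input category $\Ezero$; the first task is to check that $\res$ is homotopically well behaved, that is, that it preserves objectwise $\C$-weak equivalences of homotopy functors and admits a well-defined derived functor compatible with the $(p,q)$-polynomial and $(p,q)$-homogeneous model structures. Before any of this can proceed, however, one must fix the correct equivariant Goodwillie framework --- that of Dotto and Moi \cite{Dot16a,Dot16b,Dot17,DM16}, in which excisiveness is indexed not by an integer but by a finite $G$-set $J$ --- and pin down the dictionary between the bigrading $(p,q)$ and a $\C$-set $J_{p,q}$. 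Guided by the cofibre sequences of Proposition \ref{prop D}, whose building blocks are ``adjoin a trivial line $X=\R$'' and ``adjoin a sign line $X=\Rdelta$'', the natural guess is that $J_{p,q}$ is the $\C$-set with $p$ trivial points and $q$ free orbits, so that enlarging $J$ by a fixed point or by a free orbit matches the two partial derivatives of the $\C$-equivariant orthogonal calculus.

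With the dictionary in hand, the comparison would proceed in the same four steps as the non-equivariant argument. First I would show that $\res$ carries $J_{p,q}$-excisive functors to $(p,q)$-polynomial functors: this rests on a $\C$-equivariant cofinality statement identifying the homotopy limit over the poset of all nonzero $\C$-subrepresentations of $\R^{p,q}$ (which defines $\taupq$) with the homotopy limit over the sub-poset of coordinate subrepresentations, which after applying $V\mapsto S^V$ is precisely the total homotopy fibre of the $J_{p,q}$-cube computing the relevant cross-effect of $F$. Second, I would compare the towers themselves, proving $\Tpq(\res F)\simeq \res(P_{J_{p,q}}F)$ by induction up the tower, reducing at each stage to a comparison of homogeneous layers. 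Third, on layers one invokes the classification of Theorem \ref{thm A}: the $(p,q)$-homogeneous part of $\res F$ is determined by a genuine orthogonal $\C$-spectrum with an action of $O(p,q)$, while the $J_{p,q}$-homogeneous part of $F$ is determined by a genuine $\C$-spectrum with an action of the $\C$-equivariant symmetric group $\Sigma_{J_{p,q}}$; the bridge is the computation, again via Proposition \ref{prop D}, of the $(p,q)$-th derivative of $\res F$ as the $J_{p,q}$-th equivariant Goodwillie derivative of $F$, with the group action restricted along an inclusion $\Sigma_{J_{p,q}}\hookrightarrow O(p,q)\rtimes\C$ permuting the trivial and the sign coordinates compatibly. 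Assembling these identifications over all $(p,q)$ would give the claimed equivalence of calculi; note that for general $G$ this already presupposes the $G$-equivariant orthogonal calculus and the analogue of Proposition \ref{prop D}, which are themselves open (see Section \ref{general G future work}).

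The hard part will be the dictionary itself, exactly as the introduction anticipates. The $\C$-equivariant orthogonal calculus is bi-indexed over $\NN\times\NN$, whereas equivariant Goodwillie calculus is indexed over the much larger poset of isomorphism classes of finite $\C$-sets; one must therefore not merely define $J_{p,q}$ but show that the restriction functor is ``blind'' to the Goodwillie layers not of the form $J_{p,q}$, so that the sub-tower $\{P_{J_{p,q}}F\}$ already carries all the information that the orthogonal tower of $\res F$ can detect. There is also a subtlety on the target side: this thesis works with the fine model structure on $\C\Top_*$ rather than the coarse one used in \cite{Tag22real}, so the identification of the homogeneous layers --- in particular the appearance of \emph{genuine} rather than naive $\C$-spectra on both sides --- has to be matched against Dotto's conventions, which may require a change-of-model-structure argument. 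A secondary obstacle is the indexing shift noted in Section \ref{sec: forget c2}: the dimension count $\dim\R^{p,q}=p+q$ does not match the cardinality $|J_{p,q}|=p+2q$, so the precise bookkeeping relating ``excision degree'' to ``polynomial degree'' must be worked out carefully, presumably by tracking how a free orbit contributes through the regular representation $\RC\cong\R\oplus\Rdelta$.
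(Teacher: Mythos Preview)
The paper does not prove this statement at all: it is explicitly recorded as a \emph{conjecture} in the future-work section, immediately after the authors note that ``there is no obvious way to compare the notions of $(p,q)$-polynomial in the $\C$-equivariant orthogonal calculus sense and $J$-excisive in the equivariant Goodwillie sense, for a $\C$-set $J$.'' There is therefore no proof in the paper to compare your proposal against.

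That said, your sketch is a reasonable outline of a program, and you have correctly identified the principal obstruction the paper itself flags: pinning down the dictionary between the bigrading $(p,q)$ and a finite $\C$-set $J_{p,q}$. Two cautions are in order. First, several of the steps you list as routine (the cofinality statement identifying the $\tau_{p,q}$ homotopy limit with a cubical cross-effect, the claim that $\res$ is ``blind'' to Goodwillie layers not of the form $J_{p,q}$, and the matching of genuine $\C$-spectra on both sides under the fine model structure) are themselves open problems at the level of this thesis, not lemmas one can cite. Second, your proposal is phrased for $G=\C$, whereas the conjecture is stated for general $G$; as you yourself note at the end, and as the paper emphasises in Section~\ref{general G future work}, the $G$-equivariant orthogonal calculus does not yet exist for general $G$, so the conjecture as stated cannot currently be attacked beyond the $\C$ case. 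Your write-up should be framed as a strategy for the conjecture rather than a proof.
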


\chapter{Preliminaries}
\label{ch:Preliminaries}

\fancyhf{}
\fancyhead[C]{\rightmark} 
\fancyhead[R]{\thepage}

In this chapter we gather any necessary preliminary material that will aid in reading the main text. We only provide a very concise summary of the material, and provide references of where greater detail can be found in literature. 

\section{Group actions}

The primary goal of the main text is to construct a $\C$-equivariant orthogonal calculus. The functors of interest will take $\C$-representations as their input and output $\C$-spaces. As such, we will rely heavily on standard notation and results from equivariant homotopy theory. In this section, we recall various equivariant constructions and results that will be used throughout the main text. The main resources for this section are Mandell and May \cite{MM02} and May \cite{May96}. 

\subsection{Topological $G$-spaces}\label{sec:Gspace}

In this section, we cover basic definitions and results of $G$-spaces for a compact topological group $G$. This material has been detailed by Mandell and May in \cite[Section 3.1]{MM02}. 

We will use $\Top$ to denote the category of compactly generated weak Hausdorff spaces, and $\Top_*$ to denote the category of pointed compact generated weak Hausdorff spaces with the Quillen model structure below. 

\begin{proposition}\label{prop: Quillen ms}\index{$\Top_*$}
$\Top_*$ is a cofibrantly generated model category with weak equivalences and fibrations given by weak homotopy equivalences and Serre fibrations. The generating cofibrations $I_{\Top_*}$ and acyclic cofibrations $J_{\Top_*}$ are below, where the map $S^{n-1}\rightarrow D^n$ is inclusion as the boundary.
\begin{align*}\index{$I_{\Top_*}$}\index{$J_{\Top_*}$}
    &I_{\Top_*} =\{S^{n-1}_+\rightarrow D^n_+ :n\in \mathbb{N}\}\\
    &J_{\Top_*} =\{D^n_+ \rightarrow (D^n\times I)_+ :n\in \mathbb{N}\}
\end{align*}
\end{proposition}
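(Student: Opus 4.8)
This is the classical Quillen model structure on pointed spaces, so the plan is to verify the hypotheses of the standard recognition theorem for cofibrantly generated model categories (as in \cite{MM02}), applied to the class $\mathcal{W}$ of weak homotopy equivalences together with the proposed generating sets $I_{\Top_*}$ and $J_{\Top_*}$. Several inputs are immediate: $\Top_*$ is complete and cocomplete (a standard fact for pointed compactly generated weak Hausdorff spaces); $\mathcal{W}$ satisfies the two-out-of-three property and is closed under retracts; and the domains $S^{n-1}_+$ and $D^n_+$ of the maps in $I_{\Top_*}$ and $J_{\Top_*}$ are compact, hence small relative to the closed inclusions, so the small object argument is available and produces the required functorial factorizations.

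The heart of the matter is the identification of the maps characterised by the right lifting properties. First I would show that a map has the right lifting property with respect to $J_{\Top_*}$ if and only if its underlying map of spaces is a Serre fibration: after discarding basepoints, this uses the homeomorphism of pairs $(D^n\times I,\,D^n\times\{0\})\cong(\Delta^{n+1},\,\Lambda^{n+1}_k)$ together with closure under retracts to see that $J_{\Top_*}$ and the horn inclusions generate the same saturated class. Along the way one notes that each map in $J_{\Top_*}$ is a relative CW inclusion and the inclusion of a strong deformation retract, so every relative $J_{\Top_*}$-cell complex is a closed inclusion lying in $\mathcal{W}$; this gives $J_{\Top_*}\text{-cell}\subseteq\mathcal{W}\cap I_{\Top_*}\text{-cof}$ and, dually, $I_{\Top_*}\text{-inj}\subseteq J_{\Top_*}\text{-inj}$. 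Second, I would show that a map has the right lifting property with respect to $I_{\Top_*}$ if and only if it is a Serre fibration that is also a weak homotopy equivalence: the Serre-fibration part follows from the previous step, the weak-equivalence part from a direct homotopy-group argument, and the converse is the obstruction-theoretic lemma discussed below.

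It then remains only to check that every $I_{\Top_*}$-cofibration lying in $\mathcal{W}$ also lies in the saturated class generated by $J_{\Top_*}$, equivalently has the left lifting property against all Serre fibrations. Given such a map $i\colon A\to B$, the plan is to factor it via the small object argument as $A\xrightarrow{\,j\,}C\xrightarrow{\,p\,}B$ with $j$ a relative $J_{\Top_*}$-cell complex and $p$ a Serre fibration; then $p\in\mathcal{W}$ by two-out-of-three, so $p$ is an acyclic Serre fibration and hence has the right lifting property with respect to $I_{\Top_*}$, in particular against $i$; the resulting lift exhibits $i$ as a retract of $j$. Feeding all of this into the recognition theorem yields the model structure, with generating (acyclic) cofibrations $I_{\Top_*}$ and $J_{\Top_*}$ as stated.

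The main obstacle is the obstruction-theoretic lemma invoked twice above: that an acyclic Serre fibration has the right lifting property against the boundary inclusions $S^{n-1}_+\to D^n_+$. The plan there is the usual one: reduce the lifting problem along a relative CW approximation of the pair $(D^n,S^{n-1})$ and solve it cell by cell, using the long exact sequence of the fibration (whose fibres are weakly contractible, since the total map is a weak equivalence) to annihilate the successive obstructions. An alternative that avoids redoing this by hand is to transport the known unpointed Quillen model structure on $\Top$ along the free--forgetful adjunction $\Top\rightleftarrows\Top_*$, under which the unpointed generating sets are carried precisely to $I_{\Top_*}$ and $J_{\Top_*}$; I would at least remark on this as the most economical route, with a reference to \cite{MM02}.
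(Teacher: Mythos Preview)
Your proposal is a correct and thorough sketch of the standard verification via Hovey's recognition theorem, but you should know that the paper does not actually prove this proposition: it is stated in the preliminaries as a background result with no proof, as the Quillen model structure on $\Top_*$ is assumed known. So there is nothing to compare against; your write-up goes well beyond what the paper does, and in a thesis context a one-line reference (e.g.\ to Hovey or \cite{MM02}) would suffice in place of the full argument you outline.
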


\begin{definition}
A \emph{$G$-space} is a topological space $Y$ with a continuous left group action of $G$
\begin{equation*}
    G\times Y\rightarrow Y, \quad (g,y)\mapsto gy
\end{equation*}
such that $ey=y$ and $g_1(g_2y)=(g_1g_2)y$.

A \emph{pointed $G$-space} is a pointed topological space $X$ with a continuous left group action of $G$, such that the basepoint $x_0$ of $X$ is fixed (i.e. $g x_0=x_0$ for all $g\in G$). 

An \emph{equivariant map} of (pointed) $G$-spaces is a continuous (pointed) map $f:X\rightarrow Y$ such that $g\circ f=f\circ g$ for all $g\in G$.

Denote the category of pointed $G$-spaces and $G$-equivariant maps by $G\Top_*$. 
\end{definition}

\begin{examples}The following are examples of pointed $G$-spaces.
\begin{enumerate}
    \item The trivial $G$-action on a (pointed) space $X$ is given by $gx=x$ for all $g\in G,x\in X$. 
    \item Let $X,Y$ be pointed $G$-spaces. There is a $G$-action on the space of pointed continuous maps $\Top_*(X,Y)$ called conjugation, which is given by 
\begin{equation*}
    (g*f)(x)=(g\circ f\circ g^{-1})(x)
\end{equation*}
    for all $x\in X$, $f\in \Top_*(X,Y)$ and $g\in G$.
\end{enumerate}
\end{examples}

One can construct a topological space from a $G$-space by taking fixed points. Taking fixed points is the most important construction on $G$-spaces, and can be described as a functor. 

\begin{definition}\label{def: fixed point of G-space}\index{$(-)^G$}
    Let $H$ be a closed subgroup of $G$. Define \emph{the $H$-fixed point functor} $(-)^H:G\Top_*\rightarrow \Top_*$ by $X\mapsto X^H$, where 
    \begin{equation*}
        X^H=\{x\in X: hx=x ,\space\forall h\in H\}.
    \end{equation*}
\end{definition}

The fixed point functor $(-)^G$ has a left adjoint, which is equipping a topological space with the trivial $G$-action, see \cite[Result 3.1.4]{MM02}. The quotient map $$\varepsilon: G\rightarrow G/N,$$ for a normal subgroup $N\unlhd G$, induces the inflation functor $$\varepsilon^*:G/N \Top_* \rightarrow G\Top_*,$$ which sends a $G/N$-space $X$ to the underlying space $X$ with $G$-action given by $$g x= (\varepsilon (g)) x.$$ The left adjoint to $(-)^G$ is exactly the inflation functor for $N=G$. Therefore, for a $G$-space $X$ and a space $K$
\begin{equation*}
G\Top_* (\varepsilon^* K,X)\cong \Top_*(K,X^G). 
\end{equation*}
With the conjugation group action on $\Top_*(X,Y)$, $G\Top_*(X,Y)$ is exactly the subspace $(\Top_*(X,Y))^G$ of $\Top_*(X,Y)$. Hence, we topologise the set of $G$-equivariant maps $G\Top_* (X,Y)$ as a subspace of the space of continuous maps $\Top_*(X,Y)$ with the compact-open topology, via fixed points. 

The category $G\Top_*$ is a closed symmetric monoidal category. The monoidal product is given by the smash product $X\wedge Y$, equipped with the diagonal $G$-action defined by 
\begin{equation*}\index{$\wedge$}
    g(x\wedge y):=gx\wedge gy.
\end{equation*}

The internal hom is given by the space of continuous maps $\Top_*(X,Y)$ with the conjugation action, and there exists a $G$-equivariant homeomorphism 
\begin{equation*}
    \Top_*(X, \Top_*(Y,Z))\overset{\cong}{\rightarrow} \Top_* (X\wedge Y,Z).
\end{equation*}

Let $G_+$\index{$G_+$} be the group $G$ with a disjoint basepoint, and let $Y$ be an $H$-space. One can define an equivalence relation $\sim$ on the space $G_+\wedge Y$ such that $gh\wedge y\sim g\wedge hy$ for all $g\in G$, $h\in H$ and $y\in Y$. We denote the quotient space $G_+\wedge Y/\sim$ by $G_+\wedge_H Y$\index{$\wedge_G$}, which has a $G$-action given by $g'(g\wedge y)=g'g\wedge y$ for all $g, g'\in G$ and $y\in Y$. Moreover, if $Y$ is a $G$-space, then $G_+\wedge_H Y \cong (G/H)_+\wedge Y$. The subgroup inclusion map $i: H\rightarrow G$ induces the restriction functor $i^*:G \Top_* \rightarrow H\Top_*$, which sends a $G$-space $X$ to the underlying space $X$ with $H$-action given by $h x= (i(h)) x$.

\begin{proposition}[{\cite[Results 3.1.2 and 3.1.3]{MM02}}]\label{prop: mm3.1.2}
Let $H$ be a closed subgroup of $G$. For a $G$-space $X$ and an $H$-space $Y$
\begin{equation*}
    G\Top_* (G_+\wedge_H Y,X)\cong H\Top_*(Y,i^*X).
\end{equation*}
\end{proposition}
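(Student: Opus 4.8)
The plan is to write down an explicit natural bijection between the two mapping sets and then upgrade it to a homeomorphism of mapping spaces. First I would define the two assignments. Given a $G$-equivariant pointed map $f\colon G_+\wedge_H Y\to X$, precompose with $\eta_Y\colon Y\to i^*(G_+\wedge_H Y)$, $y\mapsto e\wedge y$, to obtain $\bar f:=f\circ\eta_Y\colon Y\to i^*X$; here $\eta_Y$ is $H$-equivariant because $h\wedge y=eh\wedge y\sim e\wedge hy$ in $G_+\wedge_H Y$, so $\eta_Y(hy)=e\wedge hy=h\cdot(e\wedge y)=h\cdot\eta_Y(y)$. Conversely, given an $H$-equivariant pointed map $g\colon Y\to i^*X$, define $\hat g\colon G_+\wedge_H Y\to X$ on representatives by $\hat g(a\wedge y)=a\cdot g(y)$ (basepoint to basepoint). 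This is well defined: $\hat g(ah\wedge y)=(ah)\cdot g(y)=a\cdot\big(h\cdot g(y)\big)=a\cdot g(hy)=\hat g(a\wedge hy)$, the third equality being $H$-equivariance of $g$; and $\hat g$ is $G$-equivariant directly from $a'(a\wedge y)=a'a\wedge y$.

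Next I would check continuity in both directions. The map $\eta_Y$ is continuous, being the composite of $y\mapsto e\wedge y$ with the quotient map $q\colon G_+\wedge Y\to G_+\wedge_H Y$, so $\bar f=f\circ\eta_Y$ is continuous. For $\hat g$, note that the map $G_+\wedge Y\to X$, $a\wedge y\mapsto a\cdot g(y)$, is the composite of $\id_{G_+}\wedge g$ with the action map $G_+\wedge X\to X$ (continuity here using that the $G$-action $G\times X\to X$ is continuous and that smash and product agree in compactly generated spaces); since $G_+\wedge_H Y$ carries the quotient topology, the universal property of $q$ makes $\hat g$ continuous.

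Then I would verify the assignments are mutually inverse: starting from $f$, $\widehat{\bar f}(a\wedge y)=a\cdot\bar f(y)=a\cdot f(e\wedge y)=f\big(a\cdot(e\wedge y)\big)=f(a\wedge y)$ using $G$-equivariance of $f$; starting from $g$, $\overline{\hat g}(y)=\hat g(e\wedge y)=e\cdot g(y)=g(y)$. Naturality in $X$ and $Y$ is then immediate from the explicit formulas. To promote the set bijection to a homeomorphism of mapping spaces — each topologised, as in the discussion preceding the statement, as the fixed points of a compact-open mapping space — I would observe that $f\mapsto\bar f$ is the restriction of the continuous precomposition map $\Top_*(G_+\wedge_H Y,X)\to\Top_*(Y,X)$ along $\eta_Y$, and that $g\mapsto\hat g$ is the restriction of the continuous map $\Top_*(Y,X)\to\Top_*(G_+\wedge_H Y,X)$ built from $g\mapsto\id_{G_+}\wedge g$, postcomposition with the action map, and the exponential law, composed with the inverse of the embedding $q^*\colon\Top_*(G_+\wedge_H Y,X)\hookrightarrow\Top_*(G_+\wedge Y,X)$ onto the subspace of $q$-constant maps. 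A continuous bijection with continuous inverse is a homeomorphism, and it restricts to one on the $G$-fixed resp.\ $H$-fixed subspaces, which are exactly $G\Top_*(G_+\wedge_H Y,X)$ and $H\Top_*(Y,i^*X)$.

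The step I expect to be the main obstacle is this last piece of topological bookkeeping: confirming that precomposition and the exponential-law adjunction are continuous for the compact-open topology on these (compactly generated) mapping spaces, and that $q^*$ is genuinely an embedding onto the $q$-constant maps — for which one uses that $q\colon G_+\wedge Y\to G_+\wedge_H Y$ is a quotient map (indeed a closed map, as $H$ is a closed subgroup of $G$). The purely algebraic content — well-definedness, equivariance, mutual inverseness, naturality — is entirely formal once the maps are written down; alternatively one may simply invoke \cite[Results 3.1.2 and 3.1.3]{MM02}, as the statement does.
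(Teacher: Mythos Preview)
Your proof is correct and is the standard explicit verification of this adjunction. Note, however, that the paper does not supply its own proof: the proposition is stated with a citation to \cite[Results 3.1.2 and 3.1.3]{MM02} and no argument is given in the text. So there is no ``paper's proof'' to compare against; you have filled in precisely the details that the paper defers to Mandell--May. Your own final remark already anticipates this.

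One minor comment on your topological bookkeeping: the cleanest way to justify that $q^*$ is an embedding onto the $q$-constant maps is to observe that $G_+\wedge_H Y$ is the coequalizer of the two maps $G_+\wedge H_+\wedge Y\rightrightarrows G_+\wedge Y$, and that in compactly generated spaces the internal hom $\Top_*(-,X)$ sends colimits to limits. This gives $\Top_*(G_+\wedge_H Y,X)$ as the equalizer subspace of $\Top_*(G_+\wedge Y,X)$, which is exactly what you need, without having to argue separately that $q^*$ is closed.
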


In particular, together with the fixed point adjunction discussed previously, this gives \cite[III.1.5]{MM02}. That is, for $G$-spaces $X$ and $Y$
\begin{equation*}
    G\Top_*((G/H)_+\wedge Y,X)\cong \Top_*(Y,X^H).
\end{equation*}

We now give two model structures on the category $G\Top_*$. They are called the coarse and fine model structures respectively, and are related by a Quillen adjunction, where the underlying functor is the identity.

\begin{proposition}\label{prop: coarse ms}\index{$\Top_*[G]$} (Coarse model structure) $G\Top_*$ is a compactly generated, proper model category with weak equivalences and fibrations defined as follows. A map $f:X\rightarrow Y$ is a coarse weak equivalence or coarse fibration of pointed $G$-spaces if after  forgetting the $G$-action the underlying map $f:X\rightarrow Y$ is a weak homotopy equivalence or Serre fibration of pointed spaces. Denote this model structure by $\Top_*[G]$.
The generating cofibrations and acyclic cofibrations are respectively
\begin{align*}
    &\{G_+\wedge i:i\in I_{\Top_*}\}\\
    &\{G_+\wedge j:j\in J_{\Top_*}\}.
\end{align*}
\end{proposition}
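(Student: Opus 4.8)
The plan is to obtain this model structure by \emph{transferring} the cofibrantly generated model structure on $\Top_*$ of Proposition~\ref{prop: Quillen ms} along the free--forgetful adjunction \[ G_+\wedge(-)\colon \Top_* \rightleftarrows G\Top_* \colon U, \] where $U$ forgets the $G$-action. The key formal inputs are that $U$ is simultaneously a left adjoint (to the cofree functor $X\mapsto \Top_*(G_+,X)$ with conjugation action) and a right adjoint (to $G_+\wedge(-)$), so $U$ preserves all small limits and colimits; and that, by the very definitions of \emph{coarse weak equivalence} and \emph{coarse fibration}, $U$ creates both of these classes. Thus the candidate weak equivalences and fibrations are exactly those detected by $U$, which is the situation in which the lifting theorem applies.

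First I would invoke the standard transfer theorem for cofibrantly generated model categories (Crans; see also Hirschhorn), whose hypotheses here are: (i) the domains of $\{G_+\wedge i : i\in I_{\Top_*}\}$ and $\{G_+\wedge j : j\in J_{\Top_*}\}$ are small; and (ii) every relative $\{G_+\wedge j\}$-cell complex is a coarse weak equivalence. For (i), the domains of $I_{\Top_*}$ and $J_{\Top_*}$ are compact spaces and $G_+\wedge(-)$ preserves compactness, so the generating sets consist of inclusions of compact objects, giving compact generation in the sense of Mandell--May. For (ii), since $U$ preserves pushouts and transfinite composites it suffices to check that $U(G_+\wedge j)$ is an acyclic cofibration of pointed spaces for each $j\in J_{\Top_*}$, together with the fact that acyclic cofibrations in the left proper cofibrantly generated category $\Top_*$ are closed under pushout and transfinite composition. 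When $G$ is finite (in particular $G=\C$), $U(G_+\wedge A)\cong \bigvee_G A$ for $A$ with trivial action, so $U(G_+\wedge j)$ is a wedge of copies of $j$ and hence an acyclic cofibration; for general compact $G$ one checks directly on the generators $D^n_+\hookrightarrow (D^n\times I)_+$ that $U(G_+\wedge j)$ remains an acyclic $\Top_*$-cofibration. The transfer theorem then yields the model structure with weak equivalences and fibrations created by $U$ and with the stated generating (acyclic) cofibrations.

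It remains to transfer properness. Left properness follows because $U$ preserves pushouts and sends cofibrations of $G\Top_*$ to cofibrations of $\Top_*$ (a retract of a relative $\{G_+\wedge i\}$-cell complex forgets, using the same argument as for $j$, to a retract of a relative cell complex on $\Top_*$-cofibrations): a pushout in $G\Top_*$ of a coarse weak equivalence along a cofibration becomes, under $U$, a pushout of a weak equivalence along a cofibration in the left proper category $\Top_*$, hence a weak equivalence, and $U$ detects weak equivalences. Right properness is dual, using that $U$ preserves pullbacks and $\Top_*$ is right proper. The main obstacle is the acyclicity condition~(ii): everything else is formal adjunction bookkeeping, whereas (ii) is precisely the point at which the tractability of $G$ (compactness, or here finiteness of $\C$) is used, since one needs $G_+\wedge(-)$ applied to a generating acyclic cofibration to stay an acyclic cofibration after forgetting the action.
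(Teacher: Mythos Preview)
The paper states this proposition without proof, as a preliminary/background result in Chapter~\ref{ch:Preliminaries}; it is presented alongside the fine model structure (which is cited from \cite{MM02}) but no argument or reference is given for the coarse one. Your proof via transfer along the free--forgetful adjunction is the standard and correct way to establish it, and your verification of the acyclicity condition and properness is sound. One small remark: for general compact $G$ you can avoid the ``checks directly'' step by noting that $U(G_+\wedge j)$ is $(G\times D^n)_+\hookrightarrow (G\times D^n\times I)_+$, an inclusion of a deformation retract and an $h$-cofibration, hence an acyclic Serre cofibration in $\Top_*$; this works uniformly without splitting into finite versus general $G$.
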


\begin{proposition}[{\cite[Theorem 3.1.8]{MM02}}]\label{finemodelstructure}\index{$G\Top_*$}  (Fine model structure) $G\Top_*$ is a compactly generated, proper model category with weak equivalences and fibrations defined as follows. A map $f:X\rightarrow Y$ is a fine weak equivalence or fine fibration of pointed $G$-spaces if $f^H:X^H\rightarrow Y^H$ is a weak homotopy equivalence or Serre fibration of pointed spaces for each closed subgroup $H\leq G$. Denote this model structure by $G\Top_*$.
The generating cofibrations and acyclic cofibrations are respectively 
\begin{align*}\index{$I_{G}$}\index{$J_{G}$}
    &I_{G}=\{G/H_+\wedge i:i\in I_{\Top_*}, H\leq G \textnormal{ a closed subgroup}\}\\
    &J_{G}=\{G/H_+\wedge j:j\in J_{\Top_*}, H\leq G \textnormal{ a closed subgroup}\}.
\end{align*}
\end{proposition}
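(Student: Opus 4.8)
The statement is \cite[Theorem 3.1.8]{MM02}, so the plan is to recall the shape of that argument. The strategy is to apply the recognition principle for cofibrantly generated model categories to the sets $I_G$, $J_G$ and the class $W$ of fine weak equivalences: one checks that $G\Top_*$ is complete and cocomplete, that $W$ is closed under retracts and has the two-out-of-three property, that the domains of the maps in $I_G$ and $J_G$ are small, and finally the ``acyclicity/compatibility'' conditions relating $I_G$-cell complexes, $J_G$-cell complexes, $W$, and the two lifting classes. Completeness and cocompleteness are immediate, since limits and colimits in $G\Top_*$ are computed on underlying spaces with the induced action. The fixed-point functors $(-)^H$, for $H\le G$ closed, detect $W$ and preserve all limits, filtered colimits and pushouts along closed inclusions, so the two-out-of-three property and closure under retracts for $W$ descend from the corresponding facts for weak homotopy equivalences in $\Top_*$ (Proposition \ref{prop: Quillen ms}). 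For smallness, every domain in sight, namely $G/H_+ \wedge S^{n-1}_+$ and $G/H_+ \wedge D^n_+$, is compact, hence small relative to the closed inclusions, so the small object argument applies to both $I_G$ and $J_G$.

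The key input is the family of adjunctions $G/H_+ \wedge -\;\dashv\;(-)^H$, obtained from Proposition \ref{prop: mm3.1.2} together with the fixed-point adjunction \cite[Result 3.1.4]{MM02}. From these, a map $f\colon X\to Y$ has the right lifting property against $I_G$ (respectively $J_G$) if and only if $f^H\colon X^H\to Y^H$ has the right lifting property against $I_{\Top_*}$ (respectively $J_{\Top_*}$) for every closed $H\le G$, which by the $\Top_*$ model structure says exactly that each $f^H$ is an acyclic Serre fibration (respectively a Serre fibration), i.e. that $f$ is a fine acyclic fibration (respectively a fine fibration). Granting this, the conditions ``$I_G$-injectives are fine weak equivalences and $J_G$-injectives'' and ``$J_G$-injectives that are fine weak equivalences are $I_G$-injectives'' become the fixed-points translations of the analogous true statements in $\Top_*$, using once more that $W$ is detected on fixed points.

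The step I expect to be the real work is proving that every relative $J_G$-cell complex is both a fine weak equivalence and an $I_G$-cofibration. The cofibration half is formal: each generating map $G/H_+\wedge j$ with $j\in J_{\Top_*}$ lifts against every fine acyclic fibration by the adjunction above, so $J_G\subseteq\mathrm{cof}(I_G)$, and $\mathrm{cof}(I_G)$ is closed under pushout and transfinite composition. For the weak-equivalence half, the plan is to apply $(-)^K$ for an arbitrary closed $K\le G$ and use that it commutes with the pushouts along closed inclusions and the transfinite compositions used to build cell complexes, together with the identification $(G/H_+\wedge j)^K\cong (G/H)^K_+\wedge j$. When $G=\C$ (our case) the set $(G/H)^K$ is finite, so this is a finite wedge of copies of a generating acyclic cofibration of $\Top_*$; hence $f^K$ is itself a relative $J_{\Top_*}$-cell complex, therefore an acyclic cofibration, and in particular a weak homotopy equivalence, so $f\in W$. (For a general compact Lie group the same argument runs with $(G/H)^K$ a closed manifold and $(G/H)^K_+\wedge -$ a left Quillen functor.) This completes the verification of the recognition principle and produces the model structure.

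Finally, for properness: right properness holds because every object of $G\Top_*$ is fine-fibrant, since $X^H\to *$ is a Serre fibration for every $X$ and every $H$; and left properness reduces to left properness of $\Top_*$, because applying $(-)^H$ to a generating cofibration $G/K_+\wedge i$ yields a wedge of copies of $i\in I_{\Top_*}$, so every cofibration of $G\Top_*$ restricts to a cofibration on each fixed-point space, and pushouts commute with $(-)^H$, so the gluing lemma on fixed points gives the claim.
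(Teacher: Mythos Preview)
The paper does not give its own proof of this proposition: it is stated with the citation \cite[Theorem 3.1.8]{MM02} and no proof environment follows. Your proposal is a correct reconstruction of the standard Mandell--May argument via Hovey's recognition theorem, so there is nothing to compare against in the paper itself.
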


Moreover, the coarse model structure is monoidal, and so is the fine model structure provided that $G$ is a compact Lie group. 

\begin{remark}\index{$\Top_*[O(n)]$}\index{$\Top_*[O(p,q)]$}\index{$\C\Top_*$}
For the remainder of this document we will use the coarse model structure on $O(n)$-spaces and $O(p,q)$-spaces, which we will denote by $\Top_*[O(n)]$ and $\Top_*[O(p,q)]$ respectively. We will use the fine model structure on $\C$-spaces, which we denote by $\C\Top_*$. The word fine will often be dropped from notation, and a fine weak equivalence/fibration of pointed $\C$-spaces will simply be called a weak equivalence/fibration of $\C$-spaces. We let $[-,-]_{\C}$ denote maps in the homotopy category of $\C\Top_*$. 
\end{remark}

\subsection{$G$-representations}

Orthogonal calculus is indexed on the universe $\mathbb{R}^\infty$. That is, the functors considered in orthogonal calculus take finite dimensional inner product spaces as their input. A natural replacement for these inner product spaces in the $\C$-equivariant setting will be elements from a universe of $\C$-inner product spaces. As such, in this section we recall basic definitions from representation theory for groups, see for example the work of Howe \cite{How22}. Throughout the main text we will always be working over $\mathbb{R}$, so that all vector spaces are real. 

\begin{definition}
    A representation of a group $G$ on an inner product space $V$ is a map $\Phi:G\times V\rightarrow V$ such that for all $g,g_1,g_2\in G$, $u,v\in V$ and $e$ the identity element of $G$
    \begin{itemize}
        \item $\Phi(g):V\rightarrow V$, $v\mapsto \Phi(g,v)$ is linear
        \item $\Phi(e,v)=v$
        \item $\Phi (g_1,\Phi (g_2,v))=\Phi (g_1g_2,v)$
    \end{itemize}

and the inner product on $V$ is $G$-invariant ($\langle \Phi(g,u),\Phi(g,v\rangle = \langle u,v\rangle$). The inner product space $V$ is called a \emph{$G$-representation}. This can also be defined in terms of a map from $G$ to $GL(V)$. 
\end{definition}

\begin{remark}\label{rem: all reps have inner prod}
In general, a $G$-representation need not be an inner product space, however we will assume that all representations have an inner product throughout this thesis. 
\end{remark}

\begin{example}\label{Ex: trivial and sign representation}\index{$\R$}\index{$\Rdelta$}
Let $G=\C=\{e,\sigma\}$. The trivial representation $\R$ and the sign representation $\Rdelta$ of $\C$ have the $\C$-actions defined below. 
\begin{align*}
    \sigma(x)&=x\quad (x\in\R)\\
    \sigma(y)&=-y\quad (y\in\Rdelta)
\end{align*}
These are the indecomposable $\C$-representations, in that they cannot be decomposed as direct sums of subrepresentations, see Definition \ref{def: subrep}.  
\end{example}

\begin{example}\label{ex: regular c2 rep}\index{$\RC$}
The regular representation of $\C=\{e,\sigma\}$ is defined as the following vector space\begin{equation*}
    \RC=\{\lambda_1 \underline{e}+\lambda_2 \underline{\sigma}:\lambda_1,\lambda_2\in\R\}
\end{equation*}
with basis elements $\underline{e}, \underline{\sigma}$. One can decompose 
\begin{equation*}\RC= \R\langle\underline{e}+\underline{\sigma}\rangle \oplus \R\langle\underline{e}-\underline{\sigma}\rangle.
\end{equation*}
There is a $\C$-equivariant isomorphism $\R\rightarrow \R\langle\underline{e}+\underline{\sigma}\rangle$ defined by $x\mapsto x(\underline{e}+\underline{\sigma})$, and a $\C$-equivariant isomorphism $\Rdelta\rightarrow \R\langle\underline{e}-\underline{\sigma}\rangle$ defined by $y\mapsto y(\underline{e}-\underline{\sigma})$. Therefore, $\R[\C]$ is $\C$-isomorphic to $\R\oplus\Rdelta$, which is given the diagonal $\C$-action, since it is a direct sum of $\C$-representations (see below). 
\end{example}

\begin{example}
    The direct sum of two $G$-representations is a $G$-representation with the diagonal $G$-action. Similarly, the tensor product of two $G$-representations is a $G$-representation, whose underlying vector space is the usual tensor product of vector spaces and $G$-action is the diagonal $G$-action, e.g. $\R\otimes\Rdelta=\Rdelta$ and $\Rdelta\otimes\Rdelta=\R$. 

\end{example}

Much like with $G$-spaces, we can also take fixed points of $G$-representations (see Definition \ref{def: fixed point of G-space}). 

\begin{definition}\index{$V^{\C}$}
    Let $H$ be a closed subgroup of $G$, and $V$ be a $G$-representation. Define the $H$-fixed points of $V$ by 
    \begin{equation*}
        V^H=\{ v\in V: hv=v, \forall h\in H\}.
    \end{equation*}
\end{definition}
\begin{remark}\label{rem: ^h^perp notation}\index{$(V^{\C})^\perp$}
We will use the notation $(V^H)^\perp$ to denote the orthogonal complement of $V^H$ in $V$. For example, if $V$ is the regular representation of $\C$ (see Example \ref{ex: regular c2 rep}), then $V^{\C}=\R$ and $(V^{\C})^{\perp}=\Rdelta$.
\end{remark}

One can talk about subspaces of a vector space. This notion extends when the vector space is given a group action. 
\begin{definition}\label{def: subrep}
    Let $(V,\Phi)$ be a $G$-representation. A \emph{subrepresentation} of $(V,\Phi)$ is a linear subspace $W\subseteq V$ that is preserved by the $G$-action. That is, $\Phi|_{G\times W}$ defines a $G$-action on W. 
\end{definition}

Lastly, we recall the notion of a $G$-universe. Choosing a $G$-universe allows one to specify which $G$-representations are to be considered. 

\begin{definition}
    A \emph{$G$-universe} $U$ is a countable direct sum of $G$-representations such that $U$ contains:
    \begin{itemize}
        \item the trivial $G$-representation,
        \item countably many copies of each of its subrepresentations.
    \end{itemize}
    A $G$-universe is called complete if is contains every irreducible representation of $G$.
\end{definition}

\begin{example}
    For $G=\C$. The countable direct sum of the regular representation $\bigoplus\limits_{i=1}^{\infty} \RC$ forms a complete $\C$-universe. 
\end{example}

\subsection{$G$-vector bundles}

Vector bundles are used to define the $n$-th jet categories $\mathcal{J}_n$ in orthogonal calculus, see Definition \ref{def: gamma n bundle}. The analogous $(p,q)$-th jet categories $\Jpq$ in the $\C$-calculus are defined using $\C$-vector bundles, see Definition \ref{def: gamma pq bundle}. 

We now briefly introduce for later convenience the definition of a $G$-vector bundle. This is an equivariant vector bundle with respect to an action of a compact Lie group $G$. These bundles are discussed in more detail by May in \cite[Section 14.1]{May96}. Again, we will be working over $\R$. 

\begin{definition}
 Let $G$ be a compact Lie group and $X$ be a $G$-space. A \emph{$G$-vector bundle} over $X$ is a vector bundle map $\rho:E\rightarrow X$ and a $G$-action on the total space $E$ such that: 
 \begin{itemize}
    \item $\rho$ is a $G$-equivariant map,
    \item if $g\in G$, then $\rho^{-1}(x)\rightarrow \rho^{-1}(gx)$ is a linear map for all $x\in X$.
\end{itemize}
\end{definition}

That is, $G$ acts linearly on the fibres.

\begin{example} \cite[Section 14.2]{May96}
A $G$-vector bundle over a point is precisely the information of a $G$-representation. 
\end{example}

An important example of a $G$-vector bundle is the bundle $E(V,V' )$ used by Mandell and May in \cite{MM02} to describe $G$-spectra. These are defined analogously to the bundles $\gamma_1(U,V)$ used in orthogonal calculus, see Definition \ref{def: gamma n bundle}. 

\begin{example} \cite[Definition 2.4.1]{MM02}
Let $V,V'$ be elements of some $G$-universe of $G$-representations. Let $\mathcal{J} (V,V')$ be the $G$-space of linear isometries from $V$ to $V'$ with $G$-action given by conjugation. Let $E(V,V')$ be the subbundle of the product $G$-bundle $\mathcal{J} (V,V')\times V'$ over $\mathcal{J} (V,V')$ defined by 
\begin{equation*}
        E(V,V')= \{ (f,x): f\in\mathcal{J} (V,V'), x\in f(V)^\perp \}
\end{equation*}
where $f(V)^\perp$ denotes the orthogonal complement of the image of $f$. 
The group $G$ acts on $E(V,V')$ via the diagonal action. 
\end{example}

We now recall the definition for the Thom space of a vector bundle. 

\begin{definition} \cite[Section 23.5]{May99}\index{$T(-)$}
    Let $f:E\rightarrow B$ be a real vector bundle. Let $D(E)$ and $S(E)$ denote the unit disk and sphere fibre bundles of $f$ respectively, with respect to any choice of metric. The \emph{Thom space} of the vector bundle $f$ is the pointed topological quotient space $T(E)=D(E)/S(E)$. The basepoint of $T(E)$ is the image of $S(E)$ under the quotient.  
\end{definition}

If the base space $B$ is compact, $T(E)$ is homeomorphic to the one point compactification of $E$. The point at infinity is exactly the basepoint given by $S(E)$ under the quotient.  

\begin{remark}
In particular, if $f:E\rightarrow B$ is a $G$-vector bundle, then $T(E)$ inherits a $G$-action from the $G$-action on $E$, as $G$ acts through isometries, and $G$ acts as the identity on the point at infinity.       
\end{remark}

\subsection{The equivariant Freudenthal suspension theorem}\label{sec: EFST}

Many of the key results of orthogonal calculus are proven using connectivity arguments. As such, the Freudenthal suspension theorem is heavily relied upon. Analogously, the $\C$-calculus depends on the equivariant Freudenthal suspension theorem. Since the equivariant Freudenthal suspension theorem does not follow from the non-equivariant theorem in an obvious way, we state the theorem here. The content of this section can be found in \cite[Chapters 9 and 11]{May96}. 

Given a $G$-representation $V$, one can define disks and spheres (since $V$ has an inner product, see Remark \ref{rem: all reps have inner prod}). Let $S(V)$ \index{$S(V)$}denote the unit sphere of $V$ and $D(V)$ \index{$D(V)$}denote the unit disk of $V$. Let $S^V$ \index{$S^V$}be the one point compactification of $V$, with basepoint given by the point at infinity. The space $S^V$ inherits a $G$-action from the action on $V$, and the point at infinity has trivial $G$-action. Alternatively, $S^V$ is homeomorphic to the quotient $D(V)/S(V)$, where the image of $S(V)$ under the quotient corresponds to the point at infinity. In particular, when $V$ is the trivial $n$-dimensional $G$-representation this gives the usual $n$-sphere $S^n$. 

We now define two functors which give a notion of equivariant suspension and loops. They are analogous to the standard $\Sigma^n X=S^n\wedge X$\index{$\Sigma^n$} and $\Omega^n X=\Top_*(S^n,X)$\index{$\Omega^n$} functors for pointed spaces. 

\begin{definition}\index{$\Sigma^V$}\index{$\Omega^V$}
Let $X$ be a $G$-space and $V$ be a $G$-representation, then the \emph{$V$-th suspension functor} and \emph{$V$-th loop space functor} are defined by 
\begin{align*}
    \Sigma^V X&=S^V \wedge X,\\
    \Omega^V X&=\Top_*(S^V,X).
\end{align*}
As expected from Section \ref{sec:Gspace}, $G$ acts on $\Sigma^V X$ diagonally and on $\Omega^V X$ by conjugation.    
\end{definition}
 
As with the standard suspension and loops functors, these functors form an adjunction on $G$-spaces
\begin{equation*}
    \Sigma^V:G\Top_*\rightleftarrows G\Top_*:\Omega^V.
\end{equation*}

Now we turn to discussing the connectivity of a $G$-space. 

Given a $G$-space $X$, one can take homotopy groups of the space $X^H$ for each closed subgroup $H$ of $G$. Therefore, the connectivity of $X$ can be described as a dimension function, see \cite[Definition 11.2.1]{May96}. 

\begin{definition}\label{def: dim funtion}
    A \emph{dimension function} $v$ is a function from the set of conjugacy classes of closed subgroups of $G$ to the integers. 
    A $G$-space $X$ is \emph{$v$-connected}, if each $X^H$ is a $v(H)$-connected space. 
\end{definition}

\begin{examples}\label{ex: dim function examples} \cite[Definition 11.2.1]{May96}
The following are examples of dimension functions, where $G$ is a group and $H$ is a closed subgroup of $G$:
\begin{enumerate}
        \item For $n\in\ZZ$, $n^*:H\mapsto n$.
        \item For $V$ a $G$-representation, $|V^*| :H\mapsto \Dim(V^H)$.
        \item For $X$ a $G$-space, $c^*(X):H\mapsto c(X^H)$, where $c(-)$ is the connectivity of the space. If $X^H$ is not path connected and non-empty, then set $c^H(X)$ equal to $-1$.
\end{enumerate}
\end{examples}

One can also use dimension functions to describe the connectivity of a map between $G$-spaces. 

\begin{definition}
    Let $X$ and $Y$ be $G$-spaces. A map $f:X\rightarrow Y$ is \emph{$v$-connected}, if each $f^H:X^H\rightarrow Y^H$ is $v(H)$-connected. 
\end{definition}

That is, for each closed subgroup $H$, $f^H$ is injective on homotopy groups of degree less than $v(H)$ and surjective on homotopy groups of degree less than or equal to $v(H)$. 

We can now state the equivariant Freudenthal suspension theorem, a proof of which by May can be found in \cite[Section 11.5]{May96}.

\begin{theorem}[{\cite[Theorem 9.1.4]{May96}}] (\textit{Equivariant Freudenthal suspension theorem})
Let $Y$ be a pointed $G$-space, and $V$ be a $G$-representation. Then the unit map $\eta: Y\mapsto \Omega^V\Sigma^V Y$, of the adjunction $\Sigma^V:G\Top_*\rightleftarrows G\Top_*:\Omega^V$, is $v$-connected for any $v$ satisfying
\begin{enumerate}
    \item $v(H)\leq 2c^H (Y) +1, \quad \forall$ subgroups $H$ such that $V^H\neq 0$.
    \item $v(H)\leq c^K(Y), \quad \forall$ subgroup pairs $K< H$ such that $V^K\neq V^H$.
\end{enumerate}
Therefore, the suspension map
\begin{equation*}
    \Sigma^V:[X,Y]_G\rightarrow [\Sigma^V X,\Sigma^V Y]_G
\end{equation*}
is surjective if $\Dim(X^H)\leq v(H)$, and bijective if $\Dim(X^H)\leq v(H)-1$ for all closed subgroups $H$ of $G$, where $[-,-]_G$ denotes the set of homotopy classes of based $G$-maps. 
\end{theorem}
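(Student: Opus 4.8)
The statement has two parts: the $v$-connectivity of the unit map $\eta_Y$, and the resulting range in which $\Sigma^V\colon[X,Y]_G\to[\Sigma^V X,\Sigma^V Y]_G$ is surjective or bijective. I would deduce the second part formally from the first. Via the $(\Sigma^V,\Omega^V)$-adjunction on $G\Top_*$, the map $\Sigma^V$ on $[X,Y]_G$ is identified with $(\eta_Y)_*\colon[X,Y]_G\to[X,\Omega^V\Sigma^V Y]_G$, so it is enough to invoke the standard equivariant obstruction-theoretic fact: if $f\colon A\to B$ is $v$-connected and $X$ is a $G$-CW complex with $\dim(X^H)\le v(H)$ for every closed subgroup $H$, then $f_*\colon[X,A]_G\to[X,B]_G$ is surjective, and it is bijective when moreover $\dim(X^H)\le v(H)-1$. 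This is the relative equivariant Whitehead argument, proved by inducting over the cells $G/H_+\wedge D^n$ of $X$ and using that a $v$-connected map is an isomorphism (resp.\ surjection) on $\pi_n$ of the relevant fixed-point spaces in the appropriate range, so that the obstructions to lifting and to extending homotopies vanish for dimensional reasons.

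For the connectivity of $\eta_Y$, which by definition means $\eta_Y^H\colon Y^H\to(\Omega^V\Sigma^V Y)^H$ is $v(H)$-connected for all closed $H\le G$, the plan is to argue for all $H$ simultaneously by induction on $\dim V$. The case $\dim V=0$ is trivial. A trivial summand $\mathbb{R}\subseteq V$ is cheap: writing $V=\mathbb{R}\oplus V'$, the functors $\Sigma^{\mathbb{R}}$ and $\Omega^{\mathbb{R}}$ commute with every fixed-point functor, $\eta_Y$ factors through the lower-dimensional unit $\eta^{V'}$ and an ordinary-sphere unit, and the trivial coordinate is controlled by the non-equivariant Freudenthal theorem applied to the spaces $(\Sigma^{V'}Y)^H$ together with the estimate $c^H(\Sigma^{V'}Y)\ge c^H(Y)+\dim (V')^H$. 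The genuinely equivariant content is the analysis of the non-trivial directions. For these I would analyse $(\Omega^V Z)^H=H\Top_*(S^V,Z)$ through the $G$-CW structure of $S^V$ (built from that of the unit sphere $S(V)$): a cell of orbit type $G/K$ contributes, after passing to $H$-fixed points and smashing with $Y$, a layer of the form $\Omega^n\Sigma^{\dim V^K}(Y^K)$ with $K\le H$, and running the non-equivariant Freudenthal theorem on each $Y^K$ and assembling the resulting long exact sequences bounds the connectivity of $\eta_Y^H$. In the case relevant to this thesis, $G=\C$, this reduces to one concrete computation, since the only non-trivial irreducible is $\Rdelta$: one checks $(\Omega^{\Rdelta}Z)^{\C}\simeq\hofibre(Z^{\C}\to Z)$, and for $Z=\Sigma^{\Rdelta}Y$ the inclusion $Y^{\C}\to S^{\Rdelta}\wedge Y$ of the fixed points is null as a map of underlying spaces, so the homotopy fibre splits and $\eta_Y^{\C}$ comes out $c^{e}(Y)$-connected, while $\eta_Y^{e}$ is $(2c^{e}(Y)+1)$-connected by ordinary Freudenthal.

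The two hypotheses on $v$ then match the two kinds of cell exactly. A ``diagonal'' cell of orbit type $G/H$ contributes the ordinary Freudenthal bound $2c^H(Y)+1$ --- condition (1), active precisely when there is such a cell above the bottom one, i.e.\ when $V^H\ne 0$ --- whereas a cell of orbit type $G/K$ with $K\lneq H$ is present exactly when $V^K\ne V^H$ and, through the homotopy-fibre phenomenon above, contributes only the weaker bound $c^K(Y)$ --- condition (2). The main obstacle is precisely this bookkeeping: propagating connectivities through the cofibre sequences coming from the cell filtration of $S^V$ so as to obtain the stated sharp constants rather than weaker ones, and treating carefully the low-connectivity edge cases (non-path-connected fixed-point spaces, the convention $c^H=-1$). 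The non-equivariant ingredients --- ordinary Freudenthal and the equivariant Whitehead/obstruction argument --- are routine; all the difficulty lies in the interaction between the orbit types of $S^V$ and the dimension function $v$. This is carried out in detail by May in \cite[Section~11.5]{May96}.
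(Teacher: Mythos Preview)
The paper does not give its own proof of this statement: it is quoted as a preliminary result from the literature, with the proof attributed to May \cite[Section~11.5]{May96}. Your proposal is a reasonable sketch of that argument and explicitly defers to the same source, so there is nothing to compare --- both you and the paper point to May for the details.
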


Note that when $G=e$, this is exactly the standard non-equivariant Freudenthal suspension theorem, since condition 2 is empty. 

For $G=\C$, the theorem becomes much easier to interpret, since there are only a maximum of three conditions on the dimension function $v$. 

\begin{theorem}($\C$-equivariant Freudenthal suspension theorem)
Let $Y$ be a pointed $\C$-space and $V$ be a $\C$-representation. The map $\eta: Y\mapsto \Omega^V\Sigma^V Y$ is $v$-connected for any $v$ satisfying
\begin{enumerate}
    \item $v(e)\leq 2c^e (Y) +1$, if $V\neq 0$.
    \item $v(\C) \leq 2c^{\C} (Y) +1$, if $V^{\C}\neq 0$.
    \item $v(\C)\leq c^{e}(Y)$, if $V\neq V^{\C}$.
\end{enumerate}
\end{theorem}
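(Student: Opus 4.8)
The plan is to obtain this as an immediate specialisation of the general equivariant Freudenthal suspension theorem (Theorem 9.1.4 of May, stated just above) to the group $G=\C$, by enumerating the closed subgroups and proper subgroup pairs of $\C$. Since $\C$ is finite and abelian, its only closed subgroups are the trivial subgroup $e$ and $\C$ itself, and since conjugation is trivial each is its own conjugacy class; a dimension function on $\C$ is therefore nothing more than a choice of the two integers $v(e)$ and $v(\C)$.

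First I would unpack condition (1) of the general theorem, namely $v(H)\leq 2c^H(Y)+1$ for every subgroup $H$ with $V^H\neq 0$. Taking $H=e$ we have $V^e=V$, so this instance occurs precisely when $V\neq 0$ and reads $v(e)\leq 2c^e(Y)+1$, which is condition (1) here. Taking $H=\C$, the instance occurs precisely when $V^{\C}\neq 0$ and reads $v(\C)\leq 2c^{\C}(Y)+1$, which is condition (2) here. When $V^H=0$ the corresponding constraint is vacuous, which is exactly why the statement carries the provisos ``$V\neq 0$'' and ``$V^{\C}\neq 0$''.

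Next I would unpack condition (2) of the general theorem, namely $v(H)\leq c^K(Y)$ for every pair of subgroups $K<H$ with $V^K\neq V^H$. In $\C$ the only strict pair is $e<\C$, for which $V^K=V^e=V$ and $V^H=V^{\C}$; hence this constraint is present precisely when $V\neq V^{\C}$ and reads $v(\C)\leq c^e(Y)$, which is condition (3) here. Since conditions (1)--(2) of the general theorem contribute nothing further when $G=\C$, a dimension function $v$ satisfies the hypotheses of the general theorem for $G=\C$ if and only if it satisfies (1)--(3) above, and so the conclusion that $\eta\colon Y\to\Omega^V\Sigma^V Y$ is $v$-connected follows directly. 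The surjectivity/bijectivity statement for the suspension map $[X,Y]_{\C}\to[\Sigma^V X,\Sigma^V Y]_{\C}$ transcribes in precisely the same way, should one wish to record it.

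There is essentially no obstacle in this argument: the only care needed is the bookkeeping — confirming that $\{e\}$ and $\C$ are the only conjugacy classes of closed subgroups, that $e<\C$ is the only strict subgroup pair, and that the ``$V^H\neq 0$'' and ``$V^K\neq V^H$'' hypotheses in the general theorem are exactly what force the three side conditions. All of the genuine work lives inside the proof of the general theorem (May, Section 11.5), which I would invoke as a black box.
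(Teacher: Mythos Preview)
Your proposal is correct and matches the paper's approach exactly: the paper presents this theorem immediately after the general equivariant Freudenthal suspension theorem with only the remark that for $G=\C$ ``the theorem becomes much easier to interpret, since there are only a maximum of three conditions on the dimension function $v$,'' treating it as a direct specialisation without further proof. Your careful enumeration of the subgroups and the single strict pair $e<\C$ simply makes explicit the bookkeeping the paper leaves to the reader.
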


Given the non-equivariant Freudenthal suspension theorem, see \cite[Theorem 1.1.10]{BR20}, one would expect the $\C$-equivariant suspension theorem to look like: 

The map $\eta: Y\mapsto \Omega^V\Sigma^V Y$ is $v$-connected for any $v$ satisfying
\begin{enumerate}
    \item $v(e)\leq 2c^e (Y) +1$.
    \item $v(\C) \leq 2c^{\C} (Y) +1$.
\end{enumerate}

However, there are counter examples that demonstrate why this is false. These are discussed by May in \cite[Section 11.2]{May96}, and we just state one of them here. 

If $V$ is the sign representation $\Rdelta$ of $\C$ and $n\geq 3$, then the above `expected theorem' says that the map 
\begin{equation*}
    \Sigma^V:[S^n, S^n]_G\rightarrow [\Sigma^V S^n, \Sigma^V S^n]_G
\end{equation*}
is an isomorphism. However, $[S^n, S^n]_G=\mathbb{Z}$ and $[\Sigma^V S^n, \Sigma^V S^n]_G=\mathbb{Z}^2$. Hence, the map $\Sigma^V$ is not surjective. 

In this way, the third condition of the $\C$-equivariant Freudenthal suspension theorem can be thought of as the extra condition needed to make the `expected theorem' work.

\section{Orthogonal calculus}\label{sec: orth calc}

The aim of the main text is to extend the theory of orthogonal calculus to the $\C$-equivariant context. To do this, we will work through the same constructions of the underlying calculus, while adjusting the categories and results to account for the newly introduced group actions. As such, in this section we provide an overview of these constructions for comparison. The main details of these constructions were originally by Weiss \cite{Wei95}, however towards the classification theorems we choose to follow the model categorical approach of Barnes and Oman \cite{BO13}. Since the proofs of many results in the $\C$-equivariant setting mimic those in the underlying calculus, we chose to omit them here. 

Many objects studied in algebraic topology can be realised as functors. Using functor calculus, one can approximate a given functor by a sequence of functors with `nice' properties. In doing so, we make analysis of the original functor much more simple, by 'breaking' it into smaller, more manageable parts. The resulting sequence of functors is a similar concept to that of a Postnikov tower, in which we approximate a CW-complex $X$ by a tower of fibrations, with fibres Eilenberg-MacLane spaces. Language from differential calculus is adopted, since this is comparable to the Taylor series for a function, in which the function is approximated by polynomial functions. Hence, we can justify labelling this a form of calculus. 

Orthogonal homotopy calculus is the branch of functor calculus involving the study of functors from the category of finite dimensional real vector spaces to the category of pointed topological spaces. Given an input functor $F$, the end result is a Taylor tower of approximations, built from polynomial functors $T_n F$ and spectra $\Theta_F^n$ with an action of $O(n)$, for $n\geq 1$. 
\[\begin{tikzcd}
	&& \vdots \\
	&& {T_2F(V)} & {\Omega^\infty \left[\left(S^{2V} \wedge \Theta_F^2\right)_{hO(2)}\right]} \\
	&& {T_1F(V)} & {\Omega^\infty \left[\left(S^{1V} \wedge \Theta_F^1\right)_{hO(1)}\right]} \\
	{F(V)} && {F(\R^\infty)}
	\arrow[from=1-3, to=2-3]
	\arrow[from=2-3, to=3-3]
	\arrow[from=3-3, to=4-3]
	\arrow[from=3-4, to=3-3]
	\arrow[from=4-1, to=4-3]
	\arrow[from=2-4, to=2-3]
	\arrow[curve={height=-6pt}, from=4-1, to=3-3]
	\arrow[curve={height=-12pt}, from=4-1, to=2-3]
\end{tikzcd}\]
The fibres of the tower are homogeneous functors. The main result of the calculus is known as the classification theorem. It allows us to characterise $n$-homogeneous functors as functors that are completely determined by orthogonal spectra with an action of $O(n)$. Therefore, giving a means to obtain unstable information from stable data. It can be stated as a zig-zag of equivalences between the stable model structure on orthogonal spectra with an action of $O(n)$ and the $n$-homogeneous model structure. 
\[\begin{tikzcd}
	{n\homog\mathcal{E}_0} && {O(n)\mathcal{E}_n} && {Sp^O[O(n)]}
	\arrow["{\ind_0^n\varepsilon^*}"', shift right=2, from=1-1, to=1-3]
	\arrow["{\res_0^n/O(n)}"', shift right=2, from=1-3, to=1-1]
	\arrow["{(\alpha_n)_!}", shift left=2, from=1-3, to=1-5]
	\arrow["{\alpha_n^*}", shift left=2, from=1-5, to=1-3]
\end{tikzcd}\]

\subsection{The functor categories}

Let $\mathcal{J}$ \index{$\mathcal{J}$}denote the category of finite dimensional real subspaces of $\R^\infty$ with an inner product and linear isometries. We now define a sequence of $\Top_*$-enriched categories $\mathcal{J}_n$ for $n\geqslant 0$, which we will use to define intermediate categories $O(n)\mathcal{E}_n$ of enriched functors. We begin by defining the following vector bundle $\gamma_n(U,V)$, that will be used to build $\mathcal{J}_n$. 

\begin{definition}\label{def: gamma n bundle}\index{$\gamma_n(U,V)$}
For $U,V\in \mathcal{J}$, define the \emph{$n$-th complement bundle} $\gamma_n(U,V)$ to be the vector bundle on $\mathcal{J}(U,V)$, whose total space is given by
\begin{equation*}
    \gamma_n(U,V)=\{(f,x):f\in \mathcal{J}(U,V), x\in \mathbb{R}^n\otimes f(U)^\perp\}
\end{equation*}
where $f(U)^\perp$ denotes the orthogonal complement of the image of $f$.
\end{definition}

This is a sub-bundle of the product bundle whose total space is $\mathcal{J}(U,V)\times (\mathbb{R}^n \otimes V)$.

Denote the Thom space of the bundle $\gamma_n(U,V)$ by $\mathcal{J}_n(U,V)$\index{$\mathcal{J}_n(U,V)$}. This Thom space is the one point compactification of $\gamma_n(U,V)$, since $\mathcal{J}(U,V)$ is compact. Each $\mathcal{J}_n(U,V)$ is a pointed space. 

\begin{remark}
In particular, $\mathcal{J}_0(U,V)=T(\gamma_0(U,V))$ is equal to $\mathcal{J}(U,V)_+$.
\end{remark}

There is a composition rule induced by the vector bundle map 
\begin{align*}
\gamma_n(V,W)\times \gamma_n(U,V)&\rightarrow \gamma_n(U,W)\\
((f,x),(g,y))&\mapsto (fg,x+(\id\otimes f)(y)).
\end{align*}Passing to Thom spaces then yields the composition law 
\begin{equation*}
    \mathcal{J}_n(V,W)\wedge \mathcal{J}_n(U,V)\rightarrow \mathcal{J}_n(U,W).
 \end{equation*}

One can check that this composition is a continuous map. Moreover, the composition maps are unital and associative. 

We can now define the categories $\mathcal{J}_n$.
\begin{definition}\label{def: nth jet cat}\index{$\mathcal{J}_n$}
For each $n\geqslant 0$, let the \emph{$n$-th jet category} $\mathcal{J}_n$ be the $\Top_*[O(n)]$-enriched category whose objects are finite dimensional real inner product spaces, and whose morphism spaces are given by $\mathcal{J}_n(U,V)=T(\gamma_n(U,V))$. Composition in $\mathcal{J}_n$ is defined as above.  
\end{definition}

The action of $O(n)$ on the space of morphisms $\mathcal{J}_n(U,V)$ is induced by the $O(n)$-action on $\R^n$.

The following theorem by Weiss demonstrates that it is possible to construct $\mathcal{J}_{n+1}$ from $\mathcal{J}_n$. Note that there is a functor $\mathcal{J}_n\rightarrow\mathcal{J}_{n+1}$ induced by the standard inclusion $\R^n\rightarrow \R^{n+1}$ as the first $n$ coordinates (see Section \ref{sec: orth calc derivatives}). 

\begin{proposition}[{\cite[Theorem 1.2]{Wei95}}]\label{prop: wei 1.2}
For all $U,V$ in $\mathcal{J}_0$ and for all $n\geqslant 0$,
\begin{equation*}
    \mathcal{J}_n(\mathbb{R}\oplus U,V)\wedge S^n\rightarrow \mathcal{J}_n(U,V)\rightarrow \mathcal{J}_{n+1}(U,V).
\end{equation*}
is a homotopy cofibre sequence. 
\end{proposition}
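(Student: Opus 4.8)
This is Weiss's theorem, and I would follow his argument, organising everything around a single cofibration of Thom spaces over the compact base $\mathcal{J}(U,V)$. Write $\gamma_1(U,V)$ for the complement bundle of Definition \ref{def: gamma n bundle} in the case $n=1$, i.e.\ the bundle over $\mathcal{J}(U,V)$ with fibre $f(U)^\perp$ over an isometry $f$, and let $D=D(\gamma_1(U,V))$ and $S=S(\gamma_1(U,V))$ denote its disk and sphere bundles. Two identifications are set up first. (i) There is a homeomorphism $\mathcal{J}(\R\oplus U,V)\cong S$ under which an isometry $g\colon\R\oplus U\to V$ corresponds to the pair consisting of $f=g|_U\in\mathcal{J}(U,V)$ and the unit vector $g(e_1)\in f(U)^\perp$. (ii) The decomposition $\R^{n+1}=\R^n\oplus\R$ induces a Whitney-sum splitting $\gamma_{n+1}(U,V)\cong\gamma_n(U,V)\oplus\gamma_1(U,V)$, under which the canonical bundle map $\gamma_n(U,V)\hookrightarrow\gamma_{n+1}(U,V)$ coming from $\R^n\hookrightarrow\R^{n+1}$ is the inclusion of the first summand; on Thom spaces this is precisely the structure map $\mathcal{J}_n(U,V)\to\mathcal{J}_{n+1}(U,V)$.

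Next I would pull $\gamma_n(U,V)$ back along the projections $q\colon D\to\mathcal{J}(U,V)$ and $p\colon S\to\mathcal{J}(U,V)$. Since $S\hookrightarrow D$ is a cofibration, the Thom-space construction yields a cofibration $T(p^*\gamma_n(U,V))\hookrightarrow T(q^*\gamma_n(U,V))$, so its homotopy cofibre is its strict cofibre, and it remains to identify the three terms. As $D$ deformation retracts onto its zero section, $T(q^*\gamma_n(U,V))\simeq T(\gamma_n(U,V))=\mathcal{J}_n(U,V)$. Over $S$, the orthogonal splitting $f(U)^\perp=g(\R\oplus U)^\perp\oplus\R\,g(e_1)$ gives $p^*\gamma_n(U,V)\cong\gamma_n(\R\oplus U,V)\oplus\varepsilon^n$, where $\varepsilon^n$ is the trivial rank-$n$ bundle, trivialised fibrewise by $x\mapsto x\otimes g(e_1)$; hence $T(p^*\gamma_n(U,V))\cong\Sigma^n\mathcal{J}_n(\R\oplus U,V)=\mathcal{J}_n(\R\oplus U,V)\wedge S^n$. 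Finally, since $\mathcal{J}(U,V)$ is compact every Thom space above is a one-point compactification, so collapsing $T(p^*\gamma_n(U,V))$ inside $T(q^*\gamma_n(U,V))$ yields the one-point compactification of $\gamma_n(U,V)$ pulled back to the open disk bundle of $\gamma_1(U,V)$; as that open disk bundle is fibrewise homeomorphic to $\gamma_1(U,V)$ itself, the cofibre is $T(\gamma_n(U,V)\oplus\gamma_1(U,V))\cong T(\gamma_{n+1}(U,V))=\mathcal{J}_{n+1}(U,V)$ by (ii). This produces the claimed cofibre sequence.

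It then remains to check that the two maps so obtained are homotopic to the named ones. The map $T(q^*\gamma_n(U,V))\to T(\gamma_n(U,V)\oplus\gamma_1(U,V))$ is the zero-section inclusion in the $\gamma_1$-direction, which by (ii) is the structure map $\mathcal{J}_n(U,V)\to\mathcal{J}_{n+1}(U,V)$. For the other map, unwinding the composite $T(p^*\gamma_n(U,V))\to T(q^*\gamma_n(U,V))\simeq\mathcal{J}_n(U,V)$ shows that on the fibre over $g\in\mathcal{J}(\R\oplus U,V)$ it is the one-point compactification of the splitting isomorphism $\bigl(\R^n\otimes g(\R\oplus U)^\perp\bigr)\oplus\bigl(\R^n\otimes\R\,g(e_1)\bigr)\to\R^n\otimes f(U)^\perp$, and this agrees fibrewise with the orthogonal-calculus composition $\mathcal{J}_n(\R\oplus U,V)\wedge\mathcal{J}_n(U,\R\oplus U)\to\mathcal{J}_n(U,V)$ precomposed with the point $S^n\to\mathcal{J}_n(U,\R\oplus U)$ selecting the fibre over the inclusion $U\hookrightarrow\R\oplus U$, i.e.\ with the first map of the statement. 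I expect the main obstacle to be not any individual step but the bookkeeping: one must keep the identifications in (i), in (ii), and the splitting $f(U)^\perp=g(\R\oplus U)^\perp\oplus\R\,g(e_1)$ mutually compatible so that the abstract Thom-space cofibre sequence reproduces exactly the composition and inclusion maps of orthogonal calculus, and the point-set handling of the compact-base cofibre identification (thickening $S$ to a collar, or working throughout with open disk bundles) also needs to be done with care.
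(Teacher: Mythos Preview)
Your argument is correct and is essentially Weiss's original Thom-space proof. Note, however, that the paper does not itself prove this proposition in the preliminaries---it is simply cited from \cite{Wei95}. The paper's own contribution is the $C_2$-equivariant generalisation in Proposition~\ref{cofibseq}, and there the argument is organised rather differently from yours: instead of working conceptually with pullback bundles over the disk and sphere bundles of $\gamma_1(U,V)$, the paper writes the homotopy cofibre directly as the mapping cone, i.e.\ as a quotient of $[0,\infty]\times\mathcal{J}_n(\mathbb{R}\oplus U,V)\times\mathbb{R}^n$, and exhibits an explicit point-set homeomorphism to $\mathcal{J}_{n+1}(U,V)$, sending $(t,f,x,w)$ to $\bigl(f|_U,\; x+(\mathrm{id}\otimes f|_{\mathbb{R}})(w)+t\,\alpha(f|_{\mathbb{R}}(1))\bigr)$, where $\alpha$ embeds $V$ as the last coordinate of $\mathbb{R}^{n+1}\otimes V$. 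Your bundle-theoretic route is more structural and arguably cleaner for the non-equivariant statement, and it makes the ``why'' of the cofibre sequence transparent (it is just the Thom-space cofibration for a rank-one subbundle); the paper's explicit formula, on the other hand, is tailored to the equivariant extension, where one must check $C_2$-equivariance of every map and a concrete formula makes that verification direct.
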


We are now ready to define the intermediate functor categories $O(n)\mathcal{E}_n$. 

\begin{definition}\index{$\mathcal{E}_n$}\index{$O(n)\mathcal{E}_n$}
Define $\mathcal{E}_n$ to be the category of $\Top_*$-enriched functors $\mathcal{J}_n\rightarrow \Top_*$ and natural transformations. 
Define the \emph{$n$-th intermediate category} $O(n)\mathcal{E}_n$ to be the category of $\Top_*[O(n)]$-enriched functors $\mathcal{J}_n\rightarrow \Top_*[O(n)]$ and $O(n)$-equivariant natural transformations. 
\end{definition}

\begin{remark}\label{rem: ontop has coarse ms and E1 is spectra}
Note that $\Top_*[O(n)]$ is equipped with the coarse model structure, see Proposition \ref{prop: coarse ms}. The category $\mathcal{E}_1$ is equivalent to the category of orthogonal spectra $Sp^O$ by definition. 
\end{remark}

The input functors for orthogonal calculus are those objects from the category $\mathcal{E}_0$. Some examples of such functors are listed below. 

\begin{examples}
$\quad$
\begin{itemize}
    \item $\mathcal{J}_n(U,-):V\mapsto \mathcal{J}_n(U,V)_+$
    \item $O(-):V\mapsto O(V)_+ $ 
    \item $\Top(-):V\mapsto \Top(V)_+$
\end{itemize}
where $O(V)$ is the space of linear isometries on $V$, and $\Top(V)$ is the space of homeomorphisms on $V$. Other examples can be produced by replacing $O(V)$ and $\Top(V)$ by their classifying spaces $BO(V)_+$ and $B\Top(V)_+$.
\end{examples}

\begin{remark}
We denote the set of natural transformations between $E,F\in \mathcal{E}_n$ by $\Nat_n(E,F)$\index{$\Nat_n(E,F)$}. There is a natural topology that one can define on $\Nat_n(E,F)$, by expressing $\Nat_n(E,F)$ as the enriched end below, see \cite[Section 2.2]{Kel05}. 

\begin{equation*}
    \Nat_n(E,F)= \int_{V\in\mathcal{J}_n} \Top_*(E(V),F(V))\subseteq \prod_{V\in\mathcal{J}_n}\Top_*(E(V),F(V)) 
\end{equation*}
Hence, we can equip $\Nat_n(E,F)$ with the appropriate subspace topology of the product space. 

In a similar way, we can describe a functor $E\in \mathcal{E}_n$ in terms of an enriched coend, by the enriched Yoneda lemma (see for example \cite[Section 3.10]{Kel05}). 
\begin{equation*}
    \int^{W\in\mathcal{J}_n} E(W) \wedge \mathcal{J}_n(W,-)\cong E.
\end{equation*}
\end{remark}

There exists a projective model structure on $\mathcal{E}_0$, by \cite[Theorem 6.5]{MMSS01}, in which weak equivalences and fibrations are defined objectwise. 

\begin{proposition}[{\cite[Lemma 6.1]{BO13}}]\label{prop: E0 proj model structure}\index{$\mathcal{E}_0$}
There exists a proper, cellular model structure on $\mathcal{E}_0$, where $f:E\rightarrow F$ is a weak equivalence (resp. fibration), if $f(V):E(V)\rightarrow F(V)$ is a weak homotopy equivalence (resp. Serre fibration) for all $V\in\mathcal{J}_0$. We call this model structure the projective model structure on $\mathcal{E}_0$ and denote it by $\mathcal{E}_0$. It is cofibrantly generated by the following sets of generating cofibrations and generating acyclic cofibrations respectively
\begin{align*}
    &\{\mathcal{J}_0 (V,-)\wedge i : i\in I_{\Top_*}\}\\
    &\{\mathcal{J}_0 (V,-)\wedge j : j\in J_{\Top_*}\},
\end{align*}
where $V\in \mathcal{J}_0$, and $I_{\Top_*}$, $J_{\Top_*}$ are the generating cofibrations and acyclic cofibrations of the standard Quillen model structure on $\Top_*$ respectively (see Proposition \ref{prop: Quillen ms}). 
\end{proposition}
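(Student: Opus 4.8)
The plan is to invoke the standard machinery for transferring (or directly constructing) model structures on categories of enriched functors with values in a cofibrantly generated model category. Here the enriched functor category is $\mathcal{E}_0 = \Fun(\mathcal{J}_0, \Top_*)$, and the target $\Top_*$ carries the Quillen model structure of Proposition \ref{prop: Quillen ms}, with generating (acyclic) cofibrations $I_{\Top_*}$ and $J_{\Top_*}$. First I would recall the general result (the cited \cite[Theorem 6.5]{MMSS01}, applied as in \cite[Lemma 6.1]{BO13}) that for a small $\Top_*$-enriched category $\mathcal{C}$, the diagram category $\Fun(\mathcal{C}, \Top_*)$ admits a cofibrantly generated \emph{projective} model structure in which weak equivalences and fibrations are detected objectwise, and whose generating (acyclic) cofibrations are obtained by applying the represented-functor/tensor construction: namely $\{\mathcal{C}(V,-) \wedge i : i \in I_{\Top_*},\ V \in \mathcal{C}\}$ and $\{\mathcal{C}(V,-) \wedge j : j \in J_{\Top_*},\ V \in \mathcal{C}\}$. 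The key input making this work is the enriched Yoneda lemma, which gives the adjunction $\Nat(\mathcal{C}(V,-) \wedge K, F) \cong \Top_*(K, F(V))$, so that lifting problems against the proposed generating sets translate objectwise into lifting problems in $\Top_*$; the small object argument then applies because $\Top_*$ (hence each mapping space and each transfinite composite computed objectwise) is suitably small/cofibrantly generated.

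Next I would specialise to $\mathcal{C} = \mathcal{J}_0$. The only things to check beyond the general statement are: (1) $\mathcal{J}_0$ is a genuinely small $\Top_*$-enriched category — it has a small skeleton since every object is isomorphic to some $\mathbb{R}^n$, and each morphism space $\mathcal{J}_0(U,V) = \mathcal{J}(U,V)_+$ is a compact (hence small) pointed space; (2) the resulting model structure is \emph{proper} — right properness is automatic because fibrations and weak equivalences are objectwise and $\Top_*$ is right proper, while left properness follows because cofibrations are in particular objectwise cofibrations (by the adjunction argument above, a projective cofibration evaluated at any $V$ is a retract of a relative $\{S^{n-1}_+ \to D^n_+\}$-cell complex, hence an objectwise cofibration) and $\Top_*$ is left proper; (3) \emph{cellularity} — this requires the effective-monomorphism and smallness conditions of Hirschhorn, which hold because $\mathcal{J}_0(V,-) \wedge i$ is an objectwise relative CW-inclusion and all objects are small with respect to these.

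Finally I would record that the category $\mathcal{E}_0$ so obtained is exactly the ``underlying'' (non-equivariant) input category whose equivariant analogue will be constructed later, and note in a remark that the generating sets are indexed by a \emph{set} of objects $V$ (using the skeleton) so that the generating sets are genuinely small, as required by the small object argument.

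I expect the main obstacle to be purely bookkeeping rather than conceptual: verifying that the transferred/diagram model structure genuinely satisfies the cellularity hypotheses of Hirschhorn (effective monomorphisms, compactness of the domains and codomains of the generating cofibrations relative to the cofibrations). Everything else — existence of the projective model structure, the explicit generating sets, and properness — is either a direct citation of \cite[Theorem 6.5]{MMSS01} or an immediate consequence of the objectwise definitions together with the enriched Yoneda adjunction. Since all of this is standard and already carried out in \cite[Lemma 6.1]{BO13}, the ``proof'' is essentially a pointer to that reference together with the verification that $\mathcal{J}_0$ meets its hypotheses.
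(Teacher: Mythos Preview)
Your proposal is correct and matches the paper's approach exactly: the paper does not give an independent proof of this proposition but simply cites \cite[Lemma 6.1]{BO13} (which in turn rests on \cite[Theorem 6.5]{MMSS01}), and your sketch is precisely an unpacking of what that citation contains. The only minor remark is that the paper later proves the $C_2$-equivariant analogue (Lemma \ref{objectwise model structure proof}) directly via Hovey's recognition theorem rather than by transfer, so if you wanted to mirror the paper's internal style you could alternatively specialise that argument to the trivial group; but as written your proposal is entirely adequate.
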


\subsection{The intermediate categories as spectra}\label{sec: En as spectra}

The intermediate category $O(n)\mathcal{E}_n$ is Quillen equivalent to the category of orthogonal spectra with an action of $O(n)$, when it is equipped with the $n$-stable model structure. This Quillen equivalence forms half of the zig-zag of equivalences that gives the classification of $n$-homogeneous functors, see \cite[Theorem 10.1]{BO13}. In this section, we summarise the construction this Quillen equivalence. 

First we give the $n$-stable model structure on the intermediate category $O(n)\mathcal{E}_n$. It is a modification of the stable model structure on orthogonal spectra, see Barnes and Roitzheim \cite[Section 2.3]{BR20}. We begin by first defining homotopy groups on objects of $O(n)\mathcal{E}_n$. These homotopy groups detect the weak equivalences of the $n$-stable model structure.  

\begin{definition}\index{$n\pi_k (X)$}
Define the \emph{$n$-homotopy groups} of $X\in O(n)\mathcal{E}_n$ by 
\begin{equation*}
n\pi_k (X) =\colim_l \pi_{nl+k} X(\R^l),
\end{equation*} 
where $k\in\mathbb{Z}$ and the colimit runs over the diagram
\[\begin{tikzcd}
	\dots & {\pi_{nl+k}X(\R^l)} & {\pi_{nl+k+n}(X(\R^l)\wedge S^n)} & {\pi_{nl+k+n}X(\R^{l+1})} & \dots
	\arrow["{\sigma_X}", from=1-1, to=1-2]
	\arrow["{(-)\wedge S^l}", from=1-2, to=1-3]
	\arrow["{\sigma_X}", from=1-3, to=1-4]
	\arrow["{(-)\wedge S^l}", from=1-4, to=1-5]
\end{tikzcd}\]
in which $\sigma_X$ is the structure map of $X$ (see Definition \ref{def: structure maps in orth}).

Define a map $f:X\rightarrow Y$ in $O(n)\mathcal{E}_n$ to be an $n\pi_*$-equivalence if the induced map on $n$-homotopy groups $n\pi_k f: n\pi_k X\rightarrow n\pi_k Y$ is an isomorphism for all $k$. 
\end{definition}

Now we identify the fibrant objects of the $n$-stable model structure. These are the $n\Omega$-spectra, see \cite[Definition 7.9]{BO13}.

\begin{definition}\label{def: structure maps in orth}
An object $X$ of $O(n)\mathcal{E}_n$ has structure maps $$\sigma_X:S^{nV}\wedge X(W)\rightarrow X(W\oplus V).$$The object $X$ is called an \emph{$n\Omega$-spectrum} if its adjoint structure maps $$\tilde{\sigma}_X:X(W)\rightarrow \Omega^{nV} X(W\oplus V)$$ are weak homotopy equivalences, for all $V,W\in \mathcal{J}_0$. 
\end{definition}

Recall from Remark \ref{rem: ontop has coarse ms and E1 is spectra} that $\Top_*[O(n)]$ is equipped with the coarse model structure.

\begin{proposition}[{\cite[Proposition 7.14]{BO13}}]\label{prop: n stable ms}\index{$O(n)\mathcal{E}_n^s$}
There is a cofibrantly generated, proper, cellular model structure on $O(n)\mathcal{E}_n$ called the $n$-stable model structure. The weak equivalences are the $n\pi_*$-equivalences. The fibrations are the maps $f:X\rightarrow Y$ such that $f(V)$ is a Serre fibration for each $V\in\mathcal{J}_0$ and such that the diagram 
\[\begin{tikzcd}
	{X(V)} & {\Omega^{nW}X(V\oplus W)} \\
	{Y(V)} & {\Omega^{nW}Y(V\oplus W)}
	\arrow[from=1-1, to=1-2]
	\arrow[from=1-1, to=2-1]
	\arrow[from=2-1, to=2-2]
	\arrow[from=1-2, to=2-2]
\end{tikzcd}\]
is a homotopy pullback for all $V,W\in\mathcal{J}_0$. The fibrant objects are the $n\Omega$-spectra. Denote this model category by $O(n)\mathcal{E}_n^s$.
\end{proposition}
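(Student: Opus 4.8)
The plan is to realise the $n$-stable model structure as a left Bousfield localization of the objectwise (level) model structure on $O(n)\mathcal{E}_n$, exactly paralleling the construction of the stable model structure on orthogonal spectra in \cite[Section 2.3]{BR20}. First I would record the level model structure: arguing as in Proposition \ref{prop: E0 proj model structure}, or equivalently by \cite[Theorem 6.5]{MMSS01} applied to $\Top_*$-enriched diagrams over $\mathcal{J}_n$, the category $O(n)\mathcal{E}_n$ carries a cofibrantly generated, left and right proper, cellular model structure in which $f\colon X\to Y$ is a weak equivalence (respectively fibration) precisely when each $f(V)$ is an underlying weak homotopy equivalence (respectively Serre fibration), with generating cofibrations $\{\mathcal{J}_n(V,-)\wedge i : i\in I_{\Top_*}\}$ and generating acyclic cofibrations $\{\mathcal{J}_n(V,-)\wedge j : j\in J_{\Top_*}\}$.

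Next, for each pair $V,W\in\mathcal{J}_0$ I would introduce the stabilisation map
\[
\lambda_{V,W}\colon \mathcal{J}_n(V\oplus W,-)\wedge S^{nW}\longrightarrow \mathcal{J}_n(V,-),
\]
the natural transformation which, for any $X$, induces the adjoint structure map $\tilde\sigma_X\colon X(V)\to\Omega^{nW}X(V\oplus W)$ via the enriched Yoneda lemma. Cofibrantly replacing these and using that $\mathcal{J}_0$ is essentially small gives a set $S$ of maps between cofibrant objects; since the level structure is left proper and cellular, the left Bousfield localization $L_S O(n)\mathcal{E}_n$ exists and remains cofibrantly generated, left proper and cellular. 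This is the $n$-stable model structure $O(n)\mathcal{E}_n^s$, and its weak equivalences by construction contain the $\lambda_{V,W}$. By the enriched Yoneda lemma the homotopy function complex out of $\mathcal{J}_n(V,-)$ is $X(V)$ and that out of $\mathcal{J}_n(V\oplus W,-)\wedge S^{nW}$ is $\Omega^{nW}X(V\oplus W)$, so a level-fibrant $X$ is $S$-local exactly when every adjoint structure map $\tilde\sigma_X$ is a weak equivalence, i.e. exactly when $X$ is an $n\Omega$-spectrum in the sense of Definition \ref{def: structure maps in orth}.

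The substantive step is to identify the $S$-local equivalences with the $n\pi_*$-equivalences. For this I would build the stabilisation functor $\Theta^\infty X=\colim_k\Omega^{n\mathbb{R}^k}(X(\mathbb{R}^k\oplus-))$, formed along the iterated adjoint structure maps, and compose it with a level-fibrant replacement. Three facts are needed: (i) $\Theta^\infty X$ is an $n\Omega$-spectrum, which is a cofinality argument in a double colimit together with the fact that sequential colimits of level equivalences along cofibrations are level equivalences; (ii) the natural map $X\to\Theta^\infty X$ is always an $n\pi_*$-equivalence, since $n\pi_*$ is by definition computed by exactly this colimit and homotopy groups commute with it; and (iii) an $n\pi_*$-equivalence between $n\Omega$-spectra is a level equivalence, because for an $n\Omega$-spectrum the defining colimit already stabilises, giving $n\pi_{nl+k}X\cong\pi_{nl+k}X(\mathbb{R}^l)$, and an isomorphism on all of these forces a weak equivalence on each space $X(\mathbb{R}^l)$. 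Combining (i)--(iii), $f$ is an $S$-local equivalence iff $\Theta^\infty f$ is a level equivalence iff $f$ is an $n\pi_*$-equivalence. The fibration characterisation in the statement then follows from the general description of fibrations in a left Bousfield localization, exactly as for orthogonal spectra in \cite[Section 2.3]{BR20}: a level fibration $f$ is an $n$-stable fibration precisely when the square in the statement is a homotopy pullback for all $V,W$. Finally, right properness is not automatic from localization and must be checked directly, using this fibration characterisation together with the long exact sequences attached to the $n$-homotopy groups and right properness of $\Top_*$ to see that a pullback of an $n\pi_*$-equivalence along an $n$-stable fibration is again an $n\pi_*$-equivalence; left properness and cellularity are inherited. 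I expect the main obstacle to be items (i)--(iii) — in particular verifying that $\Theta^\infty$ lands in $n\Omega$-spectra — with the hand-verification of right properness a close second.
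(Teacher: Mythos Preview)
Your proposal is correct and takes essentially the same approach as the paper: the paper records only that the $n$-stable structure is the left Bousfield localisation of the levelwise model structure $O(n)\mathcal{E}_n^l$ of \cite[Theorem 6.5]{MMSS01}, and in the parallel equivariant construction (Proposition \ref{prop: stable ms}) it localises at exactly the maps $\lambda_{V,W}$ you introduce, identifying the local objects with $\Omega$-spectra via Hirschhorn. Your write-up is considerably more detailed than the paper's treatment (which is a citation of \cite[Proposition 7.14]{BO13}), particularly in spelling out the identification of $S$-local equivalences with $n\pi_*$-equivalences via $\Theta^\infty$ and in flagging that right properness must be checked separately, but the underlying strategy is the same.
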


This model structure is constructed as a left Bousfield localisation of the levelwise model structure $O(n)\mathcal{E}_n^l$ (\cite[Theorem 6.5]{MMSS01}), where fibrations and weak equivalences are defined levelwise. 

To relate the intermediate categories to orthogonal spectra with an action of $O(n)$, $Sp^O[O(n)]$\index{$Sp^O[O(n)]$}, we define explicitly a functor $\alpha_n^*$. The construction of this functor is detailed by Barnes and Oman in \cite[Section 8]{BO13}, and it forms a Quillen equivalence between these categories. Recall that the category of orthogonal spectra $Sp^O$ is equivalent to the category $\mathcal{E}_1$ by definition (see Remark \ref{rem: ontop has coarse ms and E1 is spectra}). Therefore, the categories $Sp^O[O(n)]$ and $\mathcal{E}_1[O(n)]$ are also equivalent. The category $Sp^O[O(n)]$ has a stable model structure, where weak equivalences and fibrations are defined by the underlying stable model structure on $Sp^O$.

\begin{definition}\index{$\alpha_n^*$}
Define a $\Top_*$-enriched functor $\alpha_{n}:\mathcal{J}_n\rightarrow\mathcal{J}_1$ by 
\begin{equation*}
    U\mapsto \mathbb{R}^{n}\otimes U := nU
\end{equation*}
on objects, and 
\begin{align*}
\mathcal{J}_n(U,V)&\rightarrow \mathcal{J}_1(nU,nV)\\
    (f,x)&\mapsto(\mathbb{R}^{n}\otimes f, x)
\end{align*}
on morphism spaces. 
\end{definition}

This induces a well defined functor 
\[\begin{tikzcd}
	{Sp^O[O(n)]} & {O(n)\mathcal{E}_n} \\
	{\text{Fun}_{\Top_*}(\mathcal{J}_1,\Top_*[O(n)])} & {\text{Fun}_{\Top_*[O(n)]}(\mathcal{J}_n,\Top_*[O(n)])}
	\arrow["{\alpha_n^*}", from=1-1, to=1-2]
	\arrow[Rightarrow, no head, from=1-1, to=2-1]
	\arrow[Rightarrow, no head, from=1-2, to=2-2]
\end{tikzcd}\]
which is precomposition with $\alpha_{n}$ on objects, where $\text{Fun}_C(-,-)$ denotes the category of $C$-enriched functors.

Given $\Theta \in Sp^O[O(n)]$, there are two $O(n)$-actions on the image $\alpha_n^*\Theta(V)=\Theta(nV)$. The first action is the internal action on $\Theta(nV)$ induced by the action on $nV$, which is denoted by $ \Theta(g \otimes V)$ for $g\in O(n)$. The second is the external action from $\Theta(nV)$ being an $O(n)$-space, which is denoted by $g_{\Theta(nV)}$ for $g\in O(n)$. These two actions commute by construction. Since we want the functor $\alpha^*\Theta$ to be $\Top_*[O(n)]$-enriched, we define the action of $O(n)$ on $\Theta(nV)$ as the composition
\begin{equation*}
 \Theta(g\otimes V)\circ g_{\Theta(nV)}.
\end{equation*}

The left Kan extension along $\alpha_{n}$ forms a left adjoint to $\alpha_{n}^*$. Using the notation $(\alpha_{n})_!$ to denote taking the left Kan extension along $\alpha_{n}$, this can be described by the $\Top_*[O(n)]$-enriched coend 
\begin{equation*}
    ((\alpha_{n})_! (\Theta))(V)=\int\limits^{U\in \mathcal{J}_n} \mathcal{J}_1 (nU,V)\wedge \Theta(U).
\end{equation*}

\begin{theorem}[{\cite[Proposition 8.3]{BO13}}]
The adjoint pair 
\begin{equation*}
    (\alpha_{n})_{!}: O(n)\mathcal{E}_n^s \rightleftarrows Sp^O[O(n)] : \alpha_{n}^*
\end{equation*}
is a Quillen equivalence, where $Sp^O[O(n)]$ is equipped with the stable model structure. 
\end{theorem}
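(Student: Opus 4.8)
The plan is to establish the two ingredients of a Quillen equivalence separately: that $((\alpha_n)_!,\alpha_n^*)$ is a Quillen pair for the stable structures, and then that the derived unit is a weak equivalence on cofibrant objects while $\alpha_n^*$ reflects weak equivalences.

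For the Quillen pair, first observe that $(\alpha_n^*\Theta)(V)=\Theta(nV)$ is evaluation at $nV$, so $\alpha_n^*$ preserves levelwise fibrations and levelwise acyclic fibrations; hence $((\alpha_n)_!,\alpha_n^*)$ is a Quillen pair for the levelwise model structures on $O(n)\mathcal{E}_n$ and $Sp^O[O(n)]$. The $n$-stable model structure $O(n)\mathcal{E}_n^s$ of Proposition~\ref{prop: n stable ms} and the stable structure on $Sp^O[O(n)]$ are left Bousfield localisations of these, so by the localisation criterion it suffices to check that $(\alpha_n)_!$ carries the localising maps of $O(n)\mathcal{E}_n^s$ to stable equivalences. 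Those maps have the form $\mathcal{J}_n(V\oplus W,-)\wedge S^{nW}\to\mathcal{J}_n(V,-)$ (the one classified by the fibre inclusion $S^{nW}\hookrightarrow\mathcal{J}_n(V,V\oplus W)$), and the enriched co-Yoneda lemma gives $(\alpha_n)_!\mathcal{J}_n(V,-)\cong\mathcal{J}_1(nV,-)$. Thus the image is $\mathcal{J}_1(nV\oplus nW,-)\wedge S^{nW}\to\mathcal{J}_1(nV,-)$, which is one of the standard stable equivalences of $Sp^O$, so this step is done.

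Next, $\alpha_n^*$ reflects, indeed preserves and reflects, stable equivalences: since $(\alpha_n^*\Theta)(\R^l)=\Theta(\R^{nl})$ and the structure maps of $\alpha_n^*\Theta$ are $n$-fold composites of those of $\Theta$, the colimit computing $n\pi_k(\alpha_n^*\Theta)$ runs over the cofinal subsequence $\{nl\}_{l}$ of the colimit computing $\pi_k(\Theta)$, giving a natural isomorphism $n\pi_k(\alpha_n^*\Theta)\cong\pi_k(\Theta)$. By the standard criterion (a Quillen pair whose right adjoint reflects weak equivalences is a Quillen equivalence precisely when the derived unit is a weak equivalence on cofibrant objects), it now remains to check that for every cofibrant $X\in O(n)\mathcal{E}_n^s$ the unit $X\to\alpha_n^*(\alpha_n)_!X$ is an $n\pi_*$-isomorphism; the passage to the derived unit only introduces $\alpha_n^*$ of a stable equivalence, which is again a weak equivalence by the cofinality isomorphism. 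Both $(\alpha_n)_!$ and $\alpha_n^*$ preserve colimits and cofibre sequences, and $n\pi_*$, $\pi_*$ send cofibre sequences to long exact sequences, so a cell induction reduces the claim to $X=\mathcal{J}_n(V,-)$; there the unit is $W\mapsto\bigl(\mathcal{J}_n(V,W)\xrightarrow{\alpha_n}\mathcal{J}_1(nV,nW)\bigr)$. One verifies this is an $n\pi_*$-isomorphism by a connectivity argument: $\mathcal{J}_n(V,\R^l)$ is the Thom space of a rank $n(l-\Dim V)$ bundle over the $(l-\Dim V-1)$-connected Stiefel manifold $\mathcal{J}(V,\R^l)$, while $\mathcal{J}_1(nV,\R^{nl})$ is the Thom space of a rank $n(l-\Dim V)$ bundle over the $(n(l-\Dim V)-1)$-connected Stiefel manifold $\mathcal{J}(nV,\R^{nl})$, and the comparison map restricts to the identity on the fibre sphere $S^{n(l-\Dim V)}$; hence for $l$ large relative to $k$ it induces isomorphisms $\pi_{nl+k}\mathcal{J}_n(V,\R^l)\cong\pi^{s}_{k+n\Dim V}\cong\pi_{nl+k}\mathcal{J}_1(nV,\R^{nl})$, and passing to the colimit over $l$ gives the claim.

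The main obstacle is this last connectivity estimate: one must pin down the connectivity of the Thom spaces $\mathcal{J}_n(V,\R^l)$ and $\mathcal{J}_1(nV,\R^{nl})$ and of the comparison map between them, uniformly enough that they agree after applying $\colim_l\pi_{nl+k}$. This is where the Freudenthal suspension theorem (and its equivariant analogue in the $\C$-equivariant version of this argument) together with the cofibre sequence of Proposition~\ref{prop: wei 1.2} do the real work; everything else is formal model-category bookkeeping.
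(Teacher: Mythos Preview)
Your proposal is correct and follows essentially the same skeleton as the paper's argument (visible in its $C_2$-equivariant version, Theorem~\ref{QEstabletospectra}): establish the Quillen pair, use cofinality to see that $\alpha_n^*$ preserves and reflects $n\pi_*$-equivalences, and then invoke Hovey's criterion to reduce to checking the derived unit on generators.

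Two minor differences in execution are worth noting. First, for the Quillen pair, the paper argues on the right adjoint directly: $\alpha_n^*$ preserves levelwise fibrations and homotopy pullback squares, hence preserves stable fibrations by the description in Proposition~\ref{prop: n stable ms}. Your route via the left Bousfield localisation criterion (checking that $(\alpha_n)_!$ sends the localising maps $\lambda_{V,W}^n$ to stable equivalences of spectra) is equally valid and arguably cleaner. Second, for the derived unit, the paper exploits stability to reduce to a \emph{single} generator, the sphere-type object $O(n)_+\wedge n\mathbb{S}$ (the representable at $V=0$), where the unit is essentially an isomorphism on the nose; you instead treat all representables $\mathcal{J}_n(V,-)$ and close the argument with a Stiefel/Thom connectivity estimate. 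Both work: the paper's reduction is slicker because the unit at $V=0$ is an identification $S^{nW}\to S^{nW}$, but your approach is more explicit and makes the stable range visible. The connectivity bookkeeping you flag in your last paragraph is exactly the content that the paper's reduction to $V=0$ sidesteps.
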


\subsection{Derivatives}\label{sec: orth calc derivatives}

As suggested by the name orthogonal calculus, one can expect to define a notion of differentiation for functors. The derivatives of input functors play a key role in the structure of the tower of approximations produced by orthogonal calculus. We construct these derivatives by defining an adjunction between the intermediate categories. In particular, by the previous section, this says that the derivatives of an input functor are given by spectra with an action of $O(n)$. 

Let $n\geq m$. Consider the map 
\begin{align*}
    (i^n_m)_{V,W}:\gamma_m(V,W)&\rightarrow\gamma_n(V,W)\\
    (f,x)&\mapsto (f,(i^n_m\otimes \id) (x))
\end{align*}
where $i^n_m:\mathbb{R}^m\rightarrow \mathbb{R}^n$\index{$i_m^n$} is the standard inclusion as the first $m$ entries. 
Passing to Thom spaces yields a sequence of enriched functors 
$$\mathcal{J}_0\xrightarrow{i_0^1}\mathcal{J}_1\xrightarrow{i_1^2}\mathcal{J}_2\xrightarrow{i_2^3}\mathcal{J}_3\rightarrow\dots$$

\begin{definition}\label{def: res and ind for weiss calc}\index{$\res_m^n$}\index{$\ind_m^n$}
Let $m\leqslant n$. Define the \emph{restriction functor} $\res_m^n:\mathcal{E}_n\rightarrow \mathcal{E}_m$ as precomposition with $i_m^n:\mathcal{J}_m\rightarrow\mathcal{J}_n$.

Define the \emph{induction functor} $\ind_m^n:\mathcal{E}_m\rightarrow \mathcal{E}_n$ by 
\begin{equation*}
\left(\ind_m^nF\right)(U)=\Nat_m(\mathcal{J}_n(U,-),F).
\end{equation*}\end{definition}

\begin{remark}
Note that the restriction functor $\res_m^n$ is often omitted from notation. For a functor $X\in \mathcal{E}_m$, we call $\ind_m^n X$ the $(n-m)$-derivative of $X$, denoted by $X^{(n-m)}$. In particular, $X^{(n)}=\ind_0^n X$ is the $n$-th derivative of an input functor $X\in \mathcal{E}_0$. 
\end{remark}

With the addition of inflation-orbit change-of-group functors for spaces, see \cite[Section 4]{BO13}, this defines an adjunction
\begin{equation*}
    \res_0^n /O(n) : O(n)\mathcal{E}_n \rightleftarrows \mathcal{E}_0 : \ind_0^n \varepsilon^* 
\end{equation*}\index{$\res_0^n /O(n) : O(n)$}
which is upgraded to a Quillen equivalence when the categories are equipped with the stable and homogeneous model structures, see \cite[Theorem 10.1]{BO13}. 

The following proposition defines induction iteratively as a homotopy fibre, and acts as a tool for calculating the derivatives. 

\begin{proposition}[{\cite[Theorem 2.2]{Wei95}}]
For all $U\in\mathcal{J}_0$ and for all $F\in\mathcal{E}_n$, there exists a homotopy fibre sequence
\begin{equation*}
\ind_{n}^{n+1} F(U)\rightarrow F(U)\rightarrow \Omega^{n}F(U\oplus \R)
\end{equation*}
where $\Omega^{n}Y$ is the space of pointed maps $S^{n}\rightarrow Y$, for a pointed topological space $Y$.
\end{proposition}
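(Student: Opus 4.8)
The plan is to follow Weiss's original strategy \cite[Theorem 2.2]{Wei95}, producing the homotopy fibre sequence by first identifying $\ind_n^{n+1}F(U)$ with a mapping space and then exhibiting it as the homotopy fibre of the restriction-to-a-subspace map. Concretely, unwinding the definition, $\ind_n^{n+1}F(U) = \Nat_n(\mathcal{J}_{n+1}(U,-),F)$. The first step is to rewrite $\mathcal{J}_{n+1}(U,-)$ using Proposition \ref{prop: wei 1.2}: for each $V$ there is a homotopy cofibre sequence
\begin{equation*}
\mathcal{J}_n(\mathbb{R}\oplus U,V)\wedge S^n \rightarrow \mathcal{J}_n(U,V) \rightarrow \mathcal{J}_{n+1}(U,V),
\end{equation*}
natural in $V$. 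Applying the (contravariant, homotopy-invariant) functor $\Nat_n(-,F)$ to this cofibre sequence of representable-type objects turns it into a homotopy fibre sequence of spaces. By the enriched Yoneda lemma $\Nat_n(\mathcal{J}_n(U,-),F)\cong F(U)$, and similarly $\Nat_n(\mathcal{J}_n(\mathbb{R}\oplus U,-)\wedge S^n, F)\cong \Omega^n F(\mathbb{R}\oplus U) \cong \Omega^n F(U\oplus\mathbb{R})$, using the smash-loops adjunction on the $S^n$ factor. This yields exactly
\begin{equation*}
\ind_n^{n+1}F(U)\rightarrow F(U)\rightarrow \Omega^n F(U\oplus\mathbb{R})
\end{equation*}
as a homotopy fibre sequence.

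The steps, in order: (1) record that $\Nat_n(-,F)$ sends a (levelwise) homotopy cofibre sequence of suitably cofibrant functors to a homotopy fibre sequence of spaces — this is where one must be slightly careful, since $\Nat_n(-,F)$ is only homotopically well-behaved on cofibrant inputs, so one first replaces the cofibre sequence of Proposition \ref{prop: wei 1.2} by a cofibration sequence of projectively cofibrant objects (the representables $\mathcal{J}_n(U,-)$ and $\mathcal{J}_n(\mathbb{R}\oplus U,-)\wedge S^n$ are built from generating cofibrations, so this is automatic up to the usual cofibrant-replacement bookkeeping); (2) apply Yoneda to identify the middle term with $F(U)$; (3) apply Yoneda together with the $(\Sigma^n,\Omega^n)$-adjunction and the isomorphism $\mathbb{R}\oplus U\cong U\oplus\mathbb{R}$ to identify the third term with $\Omega^n F(U\oplus\mathbb{R})$; (4) identify the resulting fibre term with $\ind_n^{n+1}F(U)=\Nat_n(\mathcal{J}_{n+1}(U,-),F)$, again by Yoneda applied to the cofibre $\mathcal{J}_{n+1}(U,-)$.

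The main obstacle is step (1): turning the cofibre sequence of $\mathcal{J}_n$-functors into a fibre sequence after applying $\Nat_n(-,F)$. This requires knowing that the maps involved are (objectwise) cofibrations of spaces — or can be arranged to be — and that $\Nat_n(-,F)$ preserves the relevant homotopy (co)limits; equivalently, one needs the homotopy cofibre sequence of Proposition \ref{prop: wei 1.2} to be realised by an honest sequence of $h$-cofibrations so that mapping out of it yields an honest fibration sequence. Once this formal input is in place, the rest is a purely diagrammatic application of the enriched Yoneda lemma and the standard suspension–loop adjunction, with no substantive computation.
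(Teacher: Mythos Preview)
Your proof is correct and follows exactly the paper's approach: apply $\Nat_n(-,F)$ to the homotopy cofibre sequence of Proposition~\ref{prop: wei 1.2}, then identify the three terms via the enriched Yoneda lemma and the definition of $\ind_n^{n+1}$. The paper's argument is identical (and in fact less explicit than yours about the cofibrancy bookkeeping in step~(1)).
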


\begin{proof}
From Proposition \ref{prop: wei 1.2} there exists a homotopy cofibre sequence  
\begin{equation*}
    \mathcal{J}_n(\R\oplus U,-)\wedge S^{n} \rightarrow \mathcal{J}_n(U,-)\rightarrow \mathcal{J}_{n+1}(U,-).
\end{equation*}
Pick $F\in\mathcal{E}_n$ and apply the contravariant functor $\Nat_{n}(-,F)$ to the cofibre sequence above. This yields a homotopy fibre sequence
\begin{align*}
\Nat_{n}\left(\mathcal{J}_n(\R\oplus U,-)\wedge S^{n},F\right)\leftarrow \Nat_{n}&\left(\mathcal{J}_n(U,-),F\right)
\leftarrow \Nat_{n}\left(\mathcal{J}_{n+1}(U,-),F\right).
\end{align*}
Application of the Yoneda Lemma and the definition of  $\ind_{n}^{n+1}$ gives the desired fibre sequence.
\end{proof}

\begin{example}\cite[Example 2.7]{Wei95}
The first derivative of $BO(-)$ is the orthogonal sphere spectrum $\mathbb{S}:V\mapsto S^V$. The second derivative of $BO(-)$ is a shifted sphere spectrum, and the third derivative of $BO(-)$ is a shifted $\mathbb{Z}/3$-Moore spectrum. 
\end{example}

\subsection{Polynomial functors}

In differential calculus, a real function is approximated by polynomial functions, which are the partial sums of the associated Taylor series. Analogously, polynomial functors are a crucial ingredient in orthogonal calculus. In particular, the $n$-polynomial approximation functors form the layers of the tower of approximations. As suggested by the name, these functors have similar properties to polynomial functions. They are discussed in detail by Weiss \cite[Section 5]{Wei95} and Barnes and Oman \cite[Sections 5 and 6]{BO13}. 

\begin{definition}\index{$\tau_n$}
Let $X\in \mathcal{E}_0$. Define the functor $\tau_n X\in\mathcal{E}_0$ by 
\begin{equation*}
    \tau_n X(V)=\Nat_0(S\gamma_{n+1}(V,-)_+,X).
\end{equation*}
\end{definition}

The functor $\tau_n$ can be defined in a different way using \cite[Proposition 4.2]{Wei95}. 
\begin{equation*}
    \tau_n X(V)=\underset{0\neq U \subseteq \mathbb{R}^{n+1}}{\holim}X(U\oplus V).
\end{equation*}
This homotopy limit is taken over the poset of non-zero subspaces of $\R^{n+1}$ and is constructed to take into account that this poset is internal to $\Top_*$. This is discussed in more detail by Weiss in \cite{Wei98}. 

Now we define what it means for a functor to be $n$-polynomial. This is exactly Weiss' definition of polynomial of degree less than or equal to $n$ \cite[Definition 5.1]{Wei95}.

\begin{definition}
A functor $X\in\mathcal{E}_0$ is defined to be \emph{$n$-polynomial} if and only if the map $$(\rho_n)_X:X(V)\rightarrow\tau_n X(V)$$ is a weak homotopy equivalence, for all $V\in\mathcal{J}_0$. 
\end{definition}

In differential calculus, an $n$-polynomial function is also $(n+1)$-polynomial. The same property holds for polynomial functors. This is \cite[Proposition 5.4]{Wei95}, which is also found in \cite[Proposition 6.7]{BO13}. 
\begin{proposition}[{\cite[Proposition 5.4]{Wei95}}]
Let $X\in\mathcal{E}_0$. If $X$ is $n$-polynomial, then it is also $(n+1)$-polynomial. 
\end{proposition}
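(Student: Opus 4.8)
The plan is to use the description of $\tau_n$ as a homotopy limit over the poset of non-zero subspaces of $\mathbb{R}^{n+1}$, together with the fact that being $n$-polynomial can be detected by a comparison with this homotopy limit. The essential point is that the poset of non-zero subspaces of $\mathbb{R}^{n+2}$ can be analysed via the subspaces containing a fixed line, and this reduces the $(n+1)$-polynomial condition to applying the $n$-polynomial condition fibrewise.

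First I would recall the formula $\tau_n X(V)=\holim_{0\neq U\subseteq \mathbb{R}^{n+1}} X(U\oplus V)$ and the characterisation that $X$ is $n$-polynomial precisely when $(\rho_n)_X:X\to\tau_n X$ is an objectwise weak equivalence. Next, I would set up the comparison between $\tau_{n+1}X$ and $\tau_n X$: since $\tau_{n+1}X(V)$ is a homotopy limit over non-zero subspaces of $\mathbb{R}^{n+2}$ and $\tau_n(\tau_{n+1}X)(V)$ involves iterating, the standard move (following Weiss \cite[Section 5]{Wei95}) is to show that if $X$ is $n$-polynomial then the canonical map $\tau_{n+1}X\to\tau_{n+1}(\tau_n X)$ — or more precisely the relevant comparison of iterated homotopy limits — is an equivalence, and then use $n$-polynomiality of $X$ to identify $\tau_n X$ with $X$ objectwise. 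Concretely, one argues that $\tau_n(\tau_{n+1}X)\simeq \tau_{n+1}X$: stratifying the poset of non-zero subspaces of $\mathbb{R}^{n+2}$ by the line they contain (or by intersection with a chosen hyperplane) expresses $\tau_{n+1}X(V)$ as a homotopy limit whose values, after using that $X$ is $n$-polynomial, already satisfy the $(n+1)$-polynomial fixed-point condition.

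Then the argument concludes as follows: given $X$ is $n$-polynomial, the map $(\rho_{n+1})_X:X\to\tau_{n+1}X$ fits into a commuting square with $(\rho_n)_X:X\to\tau_n X$ and the induced maps between the $\tau$'s; since $(\rho_n)_X$ is an equivalence, one transports this to show $(\rho_{n+1})_X$ is an equivalence. The cleanest route is Weiss's: show that $\tau_{n+1}$ preserves the class of $n$-polynomial functors and that for $n$-polynomial $X$ the natural map $X\to\tau_{n+1}X$ agrees up to equivalence with applying $\tau_{n+1}$ to the equivalence $X\xrightarrow{\sim}\tau_n X$, combined with the fact that $\tau_n X$ is automatically $(n+1)$-polynomial because its defining homotopy limit already "sees" one extra dimension.

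The main obstacle, and the step I would spend the most care on, is the cofinality/stratification argument identifying the iterated homotopy limit: one must show that the poset of non-zero subspaces of $\mathbb{R}^{n+2}$ can be covered by the sub-posets of subspaces containing a fixed nonzero vector (ranging over the projective space of lines), that these intersections are contractible or at least have the right homotopy limits, and that the resulting homotopy limit of homotopy limits collapses correctly — all while keeping track of the internal-to-$\Top_*$ structure of these posets as emphasised after the definition of $\tau_n$ and in \cite{Wei98}. This is where the genuine content lies; the rest is formal manipulation of the equivalences $(\rho_n)_X$ and $(\rho_{n+1})_X$ in a commuting diagram.
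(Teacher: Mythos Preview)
The paper does not prove this non-equivariant statement directly; it is stated in the preliminaries with a citation to Weiss. The paper does, however, prove the $C_2$-equivariant analogue (Proposition~\ref{pq poly implies more poly}), and its method is the one to compare against. That proof, following Barnes--Oman, is model-categorical: one shows that $S_{n+1}$-local equivalences are $S_n$-local equivalences, which reduces to showing the sphere-bundle projection $S\gamma_{n+1}(V,-)_+\to\mathcal{J}_0(V,-)$ is an $S_n$-equivalence. The geometric core is the Whitney sum splitting $\gamma_{n+1}\cong\gamma_n\oplus\gamma_1$, so that $S\gamma_{n+1}$ is the fibrewise join of $S\gamma_n$ and $S\gamma_1$, together with the pullback identification $\res^*\gamma_n(V,-)\cong\epsilon^n\oplus\gamma_n(\mathbb{R}\oplus V,-)$ (the non-equivariant case of Proposition~\ref{prop: weiss 4.3}). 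A sequence of homotopy pushout squares then reduces everything to maps already in $S_n$.

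Your proposal is in the spirit of Weiss's original topological argument rather than the model-categorical one, but as written it has a genuine gap. The sentence ``$\tau_n X$ is automatically $(n+1)$-polynomial because its defining homotopy limit already sees one extra dimension'' is either circular (for $n$-polynomial $X$ one has $\tau_n X\simeq X$, so this is exactly the claim to be proved) or simply unsubstantiated. Likewise, ``stratify the poset of nonzero subspaces of $\mathbb{R}^{n+2}$ by the line they contain'' is the right picture, but a subspace need not contain a unique line, and the indexing set of lines is itself a space, so no discrete cofinality argument will do the job. What actually makes this work is precisely the bundle/join decomposition above: translated into your language, it expresses $S\gamma_{n+1}(V,-)$ as a homotopy pushout built from $S\gamma_n(V,-)$ and $S\gamma_1(V,-)\cong\mathcal{J}_0(\mathbb{R}\oplus V,-)$, and the pullback identification lets you recognise the fibrewise product as $S\gamma_n(\mathbb{R}\oplus V,-)$ up to a trivial summand. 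Without that concrete input your outline does not close; with it, you are essentially reproducing Weiss's proof, which is equivalent to but packaged differently from the paper's localisation argument.
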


The fibre of the map $X\rightarrow \tau_n X$ determines how far a functor $X$ is from being $n$-polynomial, and this fibre is exactly the $(n+1)$-derivative of $X$. The following Proposition \cite[Proposition 5.3]{Wei95} describes how this fibre can be calculated. 

\begin{proposition}[{\cite[Proposition 5.3]{Wei95}}]\label{3.4}
For all $X\in\mathcal{E}_0$ and for all $V\in\mathcal{J}_0$, there exists a natural fibration sequence 
\begin{equation*}
    X^{(n+1)}(V)\rightarrow X(V)\xrightarrow{(\rho_n)_X} \tau_n X(V).
\end{equation*}
\end{proposition}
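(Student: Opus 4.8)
The plan is to produce the fibration sequence by dualising a Thom-space cofibre sequence of representable functors and then reading off the three terms via the Yoneda lemma --- precisely the strategy used to prove the preceding proposition, but starting from a different cofibre sequence of $\mathcal{J}$-functors.

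First I would recall the Thom-space cofibre sequence of functors in $\mathcal{E}_0$ (in the second variable)
\begin{equation*}
S\gamma_{n+1}(V,-)_+ \to \mathcal{J}_0(V,-) \to \mathcal{J}_{n+1}(V,-),
\end{equation*}
where $S\gamma_{n+1}(V,W)$ is the unit sphere bundle of the vector bundle $\gamma_{n+1}(V,W)$. This comes from the usual cofibre sequence $S E_+ \to D E_+ \to T(E)$ attached to a vector bundle $E$: the disk bundle $D\gamma_{n+1}(V,-)$ deformation retracts onto the base $\mathcal{J}(V,-)$, so $\mathcal{J}_0(V,-)=\mathcal{J}(V,-)_+ \simeq D\gamma_{n+1}(V,-)_+$, while $\mathcal{J}_{n+1}(V,-)=T(\gamma_{n+1}(V,-))$ is the cofibre of the fibrewise inclusion $S\gamma_{n+1}(V,-)\hookrightarrow D\gamma_{n+1}(V,-)$. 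These functors are cofibrant in the projective model structure on $\mathcal{E}_0$ (being built from the representables $\mathcal{J}_0(W,-)$), and every object of $\mathcal{E}_0$ is fibrant, so applying the contravariant enriched mapping functor $\Nat_0(-,X)$ turns this cofibre sequence into a homotopy fibre sequence of pointed spaces
\begin{equation*}
\Nat_0(\mathcal{J}_{n+1}(V,-),X) \to \Nat_0(\mathcal{J}_0(V,-),X) \to \Nat_0(S\gamma_{n+1}(V,-)_+,X).
\end{equation*}

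It then remains to identify the terms and the maps. By the enriched Yoneda lemma $\Nat_0(\mathcal{J}_0(V,-),X)\cong X(V)$; by the definition of the induction functor (Definition \ref{def: res and ind for weiss calc}), $\Nat_0(\mathcal{J}_{n+1}(V,-),X)=\ind_0^{n+1}X(V)=X^{(n+1)}(V)$; and by the definition of $\tau_n$, $\Nat_0(S\gamma_{n+1}(V,-)_+,X)=\tau_n X(V)$. The middle map is induced by the bundle projection $S\gamma_{n+1}(V,-)_+\to\mathcal{J}_0(V,-)$ (equivalently $S\gamma_{n+1}(V,-)_+\hookrightarrow D\gamma_{n+1}(V,-)_+ \xrightarrow{\simeq}\mathcal{J}_0(V,-)$), and under the Yoneda identification this is exactly the natural transformation $(\rho_n)_X\colon X(V)\to\tau_n X(V)$. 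Naturality in $V$ is then automatic, since the Thom-space cofibre sequence is contravariantly natural in $V$ and all the identifications above are natural.

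The main obstacle is the homotopical control in the dualisation step: $\Nat_0(-,X)$ is not exact on the nose, so one must verify that the relevant functors are cofibrant and that the maps involved are cofibrations (in particular it is cleaner to work with the disk-bundle functor $D\gamma_{n+1}(V,-)_+$, which is cofibrant and weakly equivalent to $\mathcal{J}_0(V,-)$, than with $\mathcal{J}_0(V,-)$ directly), so that the dualised sequence genuinely is a homotopy fibre sequence with the stated fibre. Everything else --- the existence and functoriality of the Thom-space cofibre sequence and the Yoneda identifications --- is a formal consequence of the definitions recalled above.
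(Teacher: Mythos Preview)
Your proposal is correct and follows exactly the approach the paper uses: although the paper omits the proof of this non-equivariant statement, its proof of the $C_2$-equivariant analogue (Lemma~\ref{hofiblemma}) proceeds precisely as you describe---apply $\Nat_0(-,X)$ to the Thom cofibre sequence $S\gamma_{n+1}(V,-)_+\to\mathcal{J}_0(V,-)\to\mathcal{J}_{n+1}(V,-)$ (established in Proposition~\ref{sphere cofibre seq} in the equivariant setting) and then identify the three terms via Yoneda and the definitions of $\ind_0^{n+1}$ and $\tau_n$. Your additional remarks on cofibrancy and the disk-bundle model are more careful than the paper's treatment but not in tension with it.
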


In particular, Proposition \ref{3.4} describes the relation between differentiation and polynomial functors. From this fibration sequence, one can see that if a functor $X$ is $n$-polynomial, then $X^{(n+1)}$ is contractible. This is analogous to an $n$-polynomial function having zero $(n+1)$-st derivative.

\begin{definition}\index{$T_n$}
Let $X\in \mathcal{E}_0$. Define the \emph{$n$-polynomial approximation functor} of $X$ by
\begin{equation*}
    T_nX=\hocolim \left(X\xrightarrow{\rho_X}\tau_n X\xrightarrow{\tau_n\rho_X} \tau_n^2X\xrightarrow{\tau_n^2\rho_X} \dots\right).
\end{equation*}
There is a natural transformation $\eta_n :X\rightarrow T_nX$, which is the map of homotopy colimits 
\[\begin{tikzcd}
	{\hocolim(X\rightarrow X\rightarrow X\rightarrow \dots)} \\
	{\hocolim\left(X\rightarrow\tau_{p,q} X\rightarrow\tau_{p,q}^2 X\rightarrow\dots\right)}
	\arrow["\eta", from=1-1, to=2-1]
\end{tikzcd}\]
There are also maps $T_nX\rightarrow T_{n-1}X$, induced by the inclusions $\mathbb{R}^{n-1}\rightarrow  \mathbb{R}^{n} $.
\end{definition}

As the name would suggest, $T_n X\in\mathcal{E}_0$ is an $n$-polynomial functor. This can be proven using relations between homotopy limits and sequential homotopy colimits, see \cite[Theorem 6.3]{Wei95} and \cite{Wei98}. It is also true that if the functor $X$ is already $n$-polynomial, then $X\simeq T_nX$, see \cite[Theorem 6.3]{Wei95}. In particular, this implies that $T_mT_nX\simeq T_nX$ for any $m\geq n$. 

\begin{example}
If $X$ is $0$-polynomial, then $X(V)\simeq X(V\oplus \R)$. That is, $X$ is homotopically constant. In particular, the $0$-polynomial approximation of a functor $X$ is the constant functor which takes value 
\begin{equation*}
    T_0X(V)=X(\R^\infty):=\hocolim_n X(\R^n)
\end{equation*}
for each $V\in \mathcal{J}_0$.
\end{example}

To better understand polynomial functors, we construct a model structure on the input category $\mathcal{E}_0$, whose fibrant objects are the $n$-polynomial functors. We call this the $n$-polynomial model structure. 

\begin{proposition}[{\cite[Proposition 6.5]{BO13}}]\index{$n \poly \mathcal{E}_0$}
There is a proper model structure on $\mathcal{E}_0$ such that a morphism $f$ is a weak equivalence if and only if $T_n f$ is a levelwise weak equivalence. The fibrant objects are the $n$-polynomial functors. A morphism $f$ is a fibration if and only if it is an objectwise fibration and the diagram 
\[\begin{tikzcd}
	X & Y \\
	{T_nX} & {T_nY}
	\arrow["f", from=1-1, to=1-2]
	\arrow["\eta", from=1-2, to=2-2]
	\arrow["\eta"', from=1-1, to=2-1]
	\arrow["{T_nf}"', from=2-1, to=2-2]
\end{tikzcd}\]
is a homotopy pullback square in $\mathcal{E}_0$. We call this the $n$-polynomial model structure, and denote it by $n \poly \mathcal{E}_0$. 
\end{proposition}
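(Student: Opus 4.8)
The plan is to realise the $n$-polynomial model structure as the left Bousfield localisation of the projective model structure on $\mathcal{E}_0$ (Proposition~\ref{prop: E0 proj model structure}) at the set of maps that detect $n$-polynomiality, and then to translate the abstract description of the localised structure into the three concrete assertions (weak equivalences via $T_n$, fibrant objects, fibrations). First I would fix the localising set. For each $V\in\mathcal{J}_0$ the bundle projection $S\gamma_{n+1}(V,W)\to\mathcal{J}(V,W)$, with disjoint basepoints adjoined, assembles into a natural transformation $p_{n,V}\colon S\gamma_{n+1}(V,-)_+\to\mathcal{J}_0(V,-)$; applying $\Nat_0(-,X)$ and the enriched Yoneda lemma recovers precisely $(\rho_n)_X\colon X(V)\to\tau_n X(V)$. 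Since $\mathcal{J}_0$ has a set of isomorphism classes, let $S$ be the set of maps $Qp_{n,V}$ with $V$ ranging over a skeleton of $\mathcal{J}_0$, where $Q$ is projective cofibrant replacement (the representable $\mathcal{J}_0(V,-)$ is already projectively cofibrant, being the codomain of a generating cofibration with cofibrant domain).

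Next, existence and fibrant objects. The projective model structure on $\mathcal{E}_0$ is left proper and cellular, so the general existence theorem for left Bousfield localisations applies and produces a cofibrantly generated, left proper, cellular model category $L_S\mathcal{E}_0$ with the same cofibrations, whose weak equivalences are the $S$-local equivalences and whose fibrant objects are the $S$-local objects. An objectwise-fibrant $X$ is $S$-local exactly when each $p_{n,V}$ induces a weak equivalence of derived mapping spaces; as the (replaced) sources are cofibrant and $X$ is fibrant, these mapping spaces are the genuine $\Nat_0$-spaces and the induced map is $(\rho_n)_X$. Hence the $S$-local objects are precisely the $n$-polynomial functors — all values are fibrant in $\Top_*$ — which is the claimed description of fibrant objects. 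In particular $T_nX$ is fibrant in $L_S\mathcal{E}_0$, since it is $n$-polynomial (as recorded before this proposition) and objectwise fibrant.

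For the weak equivalences and fibrations, the key is to verify that $\eta_n\colon X\to T_nX$ is an $S$-local equivalence, so that $T_n$ serves as a functorial fibrant-replacement functor for $L_S\mathcal{E}_0$. This reduces, via the description of $T_n$ as a sequential homotopy colimit of the $\rho$-tower, to showing that $\rho_X\colon X\to\tau_nX$ is an $S$-local equivalence and that $\tau_n$ preserves $S$-local equivalences; both rely on Weiss's analysis of $\tau_n$ and its interaction with homotopy limits and sequential homotopy colimits. Granting this, a map $f$ is an $S$-local equivalence iff $T_nf$ is one, and since $T_nf$ is a map of $S$-local (hence fibrant) objects this holds iff $T_nf$ is an objectwise weak equivalence — the stated description. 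Likewise, for a left Bousfield localisation equipped with a fibrant-replacement functor $R$, a map $f$ is an $L_S$-fibration iff it is a fibration in the underlying model structure and the naturality square of $X\to RX$ on $f$ is a homotopy pullback; here $R=T_n$ and a projective fibration is exactly an objectwise fibration, which is the displayed characterisation.

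Finally, properness. Left properness is part of the localisation theorem. Right properness does not follow formally and must be checked by hand: the point is that $\tau_n$ — built from a homotopy limit over the finite poset $\{0\neq U\subseteq\R^{n+1}\}$ and a sequential homotopy colimit — preserves objectwise fibrations and homotopy-pullback squares of spaces, so $T_n$ carries a pullback of an $S$-local equivalence along an $L_S$-fibration to an objectwise weak equivalence; combined with right properness of the objectwise model structure on $\mathcal{E}_0$, this yields right properness of $L_S\mathcal{E}_0$. The main obstacle is the technical heart identified above: controlling how $\tau_n$ and sequential homotopy colimits interact with $S$-local equivalences, so that $X\to T_nX$ is genuinely a fibrant replacement and so that right properness holds. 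This is exactly where Weiss's homotopy-limit lemmas, the connectivity estimates, and the idempotency $T_mT_nX\simeq T_nX$ for $m\ge n$ do the real work.
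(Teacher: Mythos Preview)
Your approach is correct in outline but takes a different route from the paper, and in doing so you implicitly re-derive the machinery that the paper invokes directly. The paper's primary tool is the Bousfield--Friedlander localisation of the projective model structure at the endofunctor $T_n$: once one checks the axioms (A1)--(A3) of \cite[Theorem~9.3]{Bou01}, that theorem hands you the model structure together with the \emph{exact} description of weak equivalences (as $T_n$-equivalences), of fibrations (via the homotopy pullback square against $\eta$), and properness, all at once. The paper then notes, as a secondary observation, that this model structure coincides with the left Bousfield localisation at $S_n=\{S\gamma_{n+1}(V,-)_+\to\mathcal{J}_0(V,-)\}$, because the two localisations share cofibrations and fibrant objects; this second description buys cellularity, which is needed later for the right Bousfield localisation producing the homogeneous model structure.

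You run this in reverse: you start from $L_S\mathcal{E}_0$ and then work to identify its weak equivalences and fibrations with the $T_n$-descriptions. The identification of fibrant objects is clean, and your argument that the weak equivalences are the $T_n$-equivalences is correct once you know $\eta_n$ is an $S$-local equivalence. The step to watch is the characterisation of fibrations: the statement ``in a left Bousfield localisation with fibrant-replacement functor $R$, a map is a fibration iff it is an underlying fibration and the $\eta$-naturality square is a homotopy pullback'' is not a general fact about left Bousfield localisations --- it is essentially the \emph{conclusion} of the Bousfield--Friedlander theorem, and needs the same hypotheses (in particular axiom (A3)). So your route is valid but circuitous: you end up verifying the BF axioms in disguise. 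The paper's route is shorter because BF localisation is designed precisely for this situation where one has a homotopy-idempotent endofunctor with a natural map from the identity.
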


\begin{proof}
The model structure can be constructed as the Bousfield-Friedlander localisation of the projective model structure at the functor $T_n$, see Proposition \ref{prop: E0 proj model structure}. This proves the existence of the model structure, and guarantees that it is proper. Alternatively, one can construct the $n$-polynomial model structure as the left Bousfield localisation of the projective model structure with respect to the set of maps 
\begin{equation*}
    S_n=\{S\gamma_{n+1}(V,-)_+\rightarrow \mathcal{J}_0(V,-):V\in\mathcal{J}_0\}.
\end{equation*}
This construction guarantees that the model structure is also cellular. Since Bousfield-Friedlander and left Bousfield localisations do not change cofibrations, and the fibrant objects of both model structures are the same, these two constructions do indeed agree. 
\end{proof}

\subsection{Homogeneous functors}

The fibre of the map $T_n X\rightarrow T_{n-1} X$ is $n$-polynomial and has trivial $(n-1)$-polynomial approximation. Functors of this type are called $n$-homogeneous. The main result of the calculus is that $n$-homogeneous functors are completely determined by orthogonal spectra with an action of $O(n)$. Therefore, these fibres are much more computable. 

\begin{definition}
Let $X\in\mathcal{E}_0$. $X$ is defined to be \emph{$n$-homogeneous} if it is $n$-polynomial and $T_{n-1}X(V)$ is contractible for all $V\in\mathcal{J}_0$. 
\end{definition}

The following example is \cite[Example 5.7]{Wei95}. It forms one half of the classification theorem. 
\begin{example}\label{ex: 5.7 orth calc}
Let $\Theta\in Sp^O[O(n)]$. The functor in $\mathcal{E}_0$ defined by 
\begin{equation*}
    V\mapsto \Omega^{\infty}[(S^{nV}\wedge \Theta)_{hO(n)}]
\end{equation*}
is $n$-homogeneous. 
\end{example}

\begin{proposition}\index{$D_n$}
The homotopy fibre $D_nX=\hofibre[T_nX\rightarrow T_{n-1}X]$ is an $n$-homogeneous functor.
\end{proposition}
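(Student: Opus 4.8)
The plan is to show that $D_nX$ satisfies the two conditions in the definition of $n$-homogeneous: it is $n$-polynomial, and its $(n-1)$-polynomial approximation $T_{n-1}D_nX$ is objectwise contractible. The key tool is that homotopy fibres of maps between $n$-polynomial functors are again $n$-polynomial, which follows from the fact that $\tau_n$ (being built from finite homotopy limits over the poset of nonzero subspaces of $\R^{n+1}$) and $T_n$ (a sequential homotopy colimit of such) commute with homotopy fibre sequences up to equivalence — or, equivalently, from the fact that in the $n$-polynomial model structure on $\mathcal{E}_0$ the fibrant objects are precisely the $n$-polynomial functors and fibrant objects are closed under homotopy pullback.

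First I would observe that $T_nX$ and $T_{n-1}X$ are both $n$-polynomial: $T_nX$ is $n$-polynomial by construction, and $T_{n-1}X$ is $(n-1)$-polynomial, hence $n$-polynomial by Proposition \cite[Proposition 5.4]{Wei95}. Since $\tau_n$ preserves objectwise fibrations and finite homotopy limits, and $T_n$ is a filtered homotopy colimit, the functor $T_n$ preserves homotopy fibre sequences; applying $T_n$ to the fibre sequence $D_nX\to T_nX\to T_{n-1}X$ and using that $T_n$ is (up to natural equivalence) the identity on $n$-polynomial functors, we get that $T_nD_nX\to T_nX\to T_{n-1}X$ is again a homotopy fibre sequence with the same base and total space, so the natural map $D_nX\to T_nD_nX$ is an objectwise weak equivalence. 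This shows $D_nX$ is $n$-polynomial.

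Next I would compute $T_{n-1}D_nX$. Applying the functor $T_{n-1}$ to the same fibre sequence $D_nX\to T_nX\to T_{n-1}X$, and again using that $T_{n-1}$ preserves homotopy fibre sequences and is the identity up to equivalence on $(n-1)$-polynomial functors, the base term $T_{n-1}T_{n-1}X\simeq T_{n-1}X$ and the total-space term $T_{n-1}T_nX\simeq T_{n-1}X$ (since $T_{n-1}T_nX\simeq T_{n-1}X$ for $n\geq n-1$, as noted after Theorem \cite[Theorem 6.3]{Wei95}, the map being induced by $\R^{n-1}\hookrightarrow\R^n$). The map $T_{n-1}T_nX\to T_{n-1}T_{n-1}X$ is then a weak equivalence, so its homotopy fibre $T_{n-1}D_nX$ is objectwise contractible. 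Combining the two paragraphs gives that $D_nX$ is $n$-polynomial with trivial $(n-1)$-polynomial approximation, i.e.\ $n$-homogeneous.

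The main obstacle is the claim that $T_n$ preserves homotopy fibre sequences; this is the technical heart and requires care. It rests on the interaction between the sequential homotopy colimit defining $T_n$ and homotopy limits: one needs that homotopy fibres commute with the relevant filtered homotopy colimits (true in $\Top_*$), and that $\tau_n$, being a finite homotopy limit over the poset of nonzero subspaces of $\R^{n+1}$ (suitably interpreted as internal to $\Top_*$, cf.\ \cite{Wei98}), preserves homotopy fibre sequences. Equivalently, one invokes that in the $n$-polynomial model structure the fibrant objects are closed under homotopy pullback and that $T_{n-1}X\to *$ with $T_nX\to T_{n-1}X$ a fibration exhibits $D_nX$ as a homotopy pullback of $n$-polynomial (resp.\ $(n-1)$-polynomial) objects. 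Once this preservation property is in hand, the rest is formal bookkeeping with the idempotence properties $T_nT_n\simeq T_n$ and $T_{n-1}T_n\simeq T_{n-1}$.
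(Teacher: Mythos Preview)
Your proposal is correct and follows essentially the same approach as the paper. The paper phrases the $n$-polynomial step as ``the homotopy fibre of a map between two $n$-polynomial functors is $n$-polynomial'' (justified by a Five Lemma argument on $\rho_n$), whereas you argue by applying $T_n$ directly to the fibre sequence; both rest on exactly the commutation of $\tau_n$ and $T_n$ with homotopy fibres that you isolate as the technical core, and the second step ($T_{n-1}D_nX\simeq *$) is argued identically via $T_{n-1}T_nX\simeq T_{n-1}X$ and $T_{n-1}$ commuting with fibres.
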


\begin{proof}
One can prove, using an application of the Five Lemma, that the homotopy fibre of a map between two $n$-polynomial functors is also $n$-polynomial. We know already that $T_nX $ is $n$-polynomial. $T_{n-1}X$ is also $n$-polynomial, given that it is $(n-1)$-polynomial and using \cite[Proposition 5.4]{Wei95}. Hence, $D_nX$ is $n$-polynomial. 

Using that $T_n$ and $T_{n-1}$ commute with homotopy fibres, and in particular that $$T_{n-1}T_nX\simeq T_nT_{n-1}X\simeq T_{n-1}X,$$we get
\begin{align*}
T_{n-1}D_nX&=T_{n-1}\hofibre[T_nX\rightarrow T_{n-1}X]\\
&\simeq \hofibre[T_{n-1}T_nX\rightarrow T_{n-1}^2 X]\\
&\simeq \hofibre[T_nT_{n-1}X\rightarrow T_{n-1}X]\\
&\simeq \hofibre[T_{n-1}X\rightarrow T_{n-1}X]\\
&\simeq \ast
\end{align*}
Thus, $D_nX$ is $n$-homogeneous.
\end{proof}

As with $n$-polynomial functors, there is a model structure on the input category $\mathcal{E}_0$ that captures the structure of the $n$-homogeneous functors. 

\begin{proposition}[{\cite[Proposition 6.9]{BO13}}]\index{$n\homog\mathcal{E}_0$}
There exists a model structure on $\mathcal{E}_0$ whose cofibrant-fibrant objects are the $n$-homogeneous functors that are cofibrant in the projective model structure on $\mathcal{E}_0$. Fibrations are the same as for the $n$-polynomial model structure and weak equivalences are morphisms $f$ such that $\res_0^{n}\ind_{0}^{n}T_n f$ is an objectwise weak equivalence. 
We call this the $n$-homogeneous model structure on $\mathcal{E}_0$ and denote it by $n\homog\mathcal{E}_0$. 

\noindent There is a Quillen adjunction 
\begin{equation*}
    \Id: n\homog\mathcal{E}_0\rightleftarrows n\poly\mathcal{E}_0:\Id.
\end{equation*}
\end{proposition}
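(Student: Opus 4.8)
The plan is to realise $n\homog\mathcal{E}_0$ as a right Bousfield localisation (cellularisation) of the $n$-polynomial model structure $n\poly\mathcal{E}_0$ at a suitable set $K$ of $n$-homogeneous functors. Since $n\poly\mathcal{E}_0$ is cellular and right proper, the existence theorem for right Bousfield localisations at a set of objects produces a model structure whose fibrations are exactly those of $n\poly\mathcal{E}_0$, whose cofibrant objects are the $K$-colocal objects, and whose weak equivalences are the $K$-colocal equivalences. This accounts directly for the claim that the fibrations coincide with those of the $n$-polynomial model structure. The Quillen adjunction $\Id\colon n\homog\mathcal{E}_0\rightleftarrows n\poly\mathcal{E}_0\colon\Id$ is then formal: a right Bousfield localisation shrinks the class of cofibrations and leaves the classes of fibrations and of acyclic cofibrations unchanged, so the identity functor $n\homog\mathcal{E}_0\to n\poly\mathcal{E}_0$ preserves cofibrations and acyclic cofibrations and is left Quillen.

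Concretely, I would proceed as follows. First, take $K$ to be a set of projectively cofibrant replacements of the functors $\res_0^n\ind_0^n T_n\,\mathcal{J}_0(V,-)$ for $V$ in a skeleton of $\mathcal{J}_0$, and verify that each such functor is $n$-homogeneous; this goes exactly as the proof that $D_nX$ is $n$-homogeneous, via the fibration sequence $X^{(n+1)}(V)\to X(V)\to\tau_nX(V)$ of Proposition \ref{3.4} together with the commutation of $T_n$ and $T_{n-1}$ with homotopy fibres. Second, identify the $K$-colocal equivalences with the maps $f$ for which $\res_0^n\ind_0^n T_nf$ is an objectwise weak equivalence; the point is that a cofibrant-replacement functor for this localisation agrees, up to natural weak equivalence, with $X\mapsto\res_0^n\ind_0^nT_nX$, which one sees by checking that $\res_0^n\ind_0^nT_n$ is homotopy idempotent, that its essential image consists of $n$-homogeneous functors, and that every such functor is a homotopy colimit of objects of $K$. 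Third, identify the cofibrant-fibrant objects: the $n\poly\mathcal{E}_0$-fibrant objects are the $n$-polynomial functors, the $K$-colocal objects are in particular projectively cofibrant, and an $n$-polynomial functor $X$ is $K$-colocal precisely when it is weakly equivalent to a homotopy colimit of objects of $K$ — equivalently, since the objects of $K$ are $n$-homogeneous and $n$-homogeneous functors are closed under the relevant homotopy colimits, precisely when $X$ is $n$-homogeneous. Combining these gives the stated description of $n\homog\mathcal{E}_0$ and of its cofibrant-fibrant objects.

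The main obstacle is the second and third steps: proving that the colocalisation functor agrees with $\res_0^n\ind_0^nT_n$ up to homotopy, equivalently that the class of homotopy colimits of objects of $K$ coincides up to weak equivalence with the class of $n$-homogeneous functors, and hence that the $K$-colocal $n$-polynomial functors are exactly the $n$-homogeneous ones. This is where the homotopy theory of derivatives developed earlier is essential: the cofibre sequence of Proposition \ref{prop: wei 1.2}, the fibration sequence of Proposition \ref{3.4}, the commutation of $T_n$ and $T_{n-1}$ with homotopy fibres, and the interaction of $\ind_0^n$ with homotopy colimits up to $n$-polynomial equivalence. An alternative that avoids choosing $K$ explicitly is to verify the Bousfield--Friedlander axioms for the coaugmented idempotent functor $\res_0^n\ind_0^nT_n$ and localise $n\poly\mathcal{E}_0$ at it directly; the homotopy-idempotency computation remains the crux, with the additional task of checking that the resulting fibrations are those of $n\poly\mathcal{E}_0$.
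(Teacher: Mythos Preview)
Your overall strategy — construct $n\homog\mathcal{E}_0$ as a right Bousfield localisation of $n\poly\mathcal{E}_0$ — is exactly what the paper does, and your deduction of the Quillen adjunction from the general properties of right localisation is correct. The difference is in the choice of the set $K$ at which to colocalise, and your choice makes the argument substantially harder than it needs to be.

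The paper localises at
\[
K_n=\{\mathcal{J}_n(V,-):V\in\mathcal{J}_0\},
\]
not at cofibrant replacements of $\res_0^n\ind_0^n T_n\mathcal{J}_0(V,-)$. With this choice, the identification of the weak equivalences is immediate: by definition $\ind_0^n F(V)=\Nat_0(\mathcal{J}_n(V,-),F)$, so a $K_n$-colocal equivalence in $n\poly\mathcal{E}_0$ is precisely a map $f$ for which $\Nat_0(\mathcal{J}_n(V,-),T_nX)\to\Nat_0(\mathcal{J}_n(V,-),T_nY)$ is a weak equivalence for all $V$, i.e.\ $\res_0^n\ind_0^n T_n f$ is an objectwise weak equivalence. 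No separate verification that a colocalisation functor agrees with $\res_0^n\ind_0^n T_n$ is required; the objects $\mathcal{J}_n(V,-)$ corepresent the derivative by construction, and they are already projectively cofibrant (cf.\ the analogue of Lemma~\ref{sphereandmorpharecofibrant}).

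The identification of the cofibrant--fibrant objects as the projectively cofibrant $n$-homogeneous functors still takes work (the detailed $C_2$-equivariant argument later in the paper shows the shape of it: one uses the $K_n$-colocal equivalence $*\to T_{n-1}A$ to force $T_{n-1}A\simeq *$, and conversely one shows any $n$-homogeneous functor is $K_n$-colocal by comparing it to its cofibrant replacement via a connectivity argument). Your sketch of steps 2 and 3 gestures at this, but the machinery you invoke — homotopy idempotence of $\res_0^n\ind_0^n T_n$, closure of $n$-homogeneous functors under homotopy colimits, realising every $n$-homogeneous functor as a homotopy colimit of objects in $K$ — is both harder to verify and unnecessary once one uses the representables $\mathcal{J}_n(V,-)$ directly. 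Your Bousfield--Friedlander alternative is also more delicate than the paper's route, since $\res_0^n\ind_0^n T_n$ is a \emph{co}augmented functor and one would need the dual axioms.
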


\begin{proof}
Right Bousfield localisation of the $n$-polynomial model structure with respect to the set 
\begin{equation*}
    K_n=\{\mathcal{J}_n(V,-):V\in \mathcal{J}_0\}
\end{equation*}
yields the desired model structure and adjunction. 
\end{proof}

\subsection{The classification of $n$-homogeneous functors}\label{sec: classification n-homog}

To recap, so far we have constructed a tower of $n$-polynomial approximation functors $T_nX$, and the fibres of the maps between them $D_nX$ are $n$-homogeneous functors. 

\[\begin{tikzcd}
	&&&&& {X(V)} \\
	\\
	\dots & {T_nX(V)} & {T_{n-1}X(V)} & \dots & {T_1X(V)} & {X(\R^\infty)} \\
	& {D_{n}X(V)} & {D_{n-1}X(V)} && {D_1X(V)}
	\arrow[from=3-1, to=3-2]
	\arrow[from=3-2, to=3-3]
	\arrow[from=3-3, to=3-4]
	\arrow[from=3-4, to=3-5]
	\arrow[from=3-5, to=3-6]
	\arrow[from=4-5, to=3-5]
	\arrow[from=4-3, to=3-3]
	\arrow[from=4-2, to=3-2]
	\arrow[from=1-6, to=3-6]
	\arrow[curve={height=6pt}, from=1-6, to=3-5]
	\arrow[curve={height=12pt}, from=1-6, to=3-3]
	\arrow[curve={height=18pt}, from=1-6, to=3-2]
\end{tikzcd}\]

We now wish to characterise the fibres $D_nX$ as a functors built from spectra. This process is outlined by Barnes and Oman in \cite[Sections 9 and 10]{BO13}. The model categories and their relations discussed in the previous sections are summarised by the following diagram. 

\[\begin{tikzcd}
	{O(n)\mathcal{E}_n^l} && {\mathcal{E}_0} && {n\poly\mathcal{E}_0} \\
	{O(n)\mathcal{E}_n^s} &&&& {n\homog\mathcal{E}_0} \\
	{Sp^O[O(n)]}
	\arrow["{\res_0^n/O(n)}", shift left=2, from=1-1, to=1-3]
	\arrow["{\ind_0^n\varepsilon^*}", shift left=2, from=1-3, to=1-1]
	\arrow["{\Id}", shift left=2, from=1-5, to=1-3]
	\arrow["{\Id}", shift left=2, from=1-3, to=1-5]
	\arrow["{\Id}", shift left=2, from=1-5, to=2-5]
	\arrow["{\Id}", shift left=2, from=2-5, to=1-5]
	\arrow["{\res_0^n/O(n)}", shift left=2, from=2-1, to=2-5]
	\arrow["{\ind_0^n\varepsilon^*}", shift left=2, from=2-5, to=2-1]
	\arrow["{\Id}"', shift right=2, from=1-1, to=2-1]
	\arrow["{\Id}"', shift right=2, from=2-1, to=1-1]
	\arrow["{(\alpha_n)_!}"', shift right=2, from=2-1, to=3-1]
	\arrow["{\alpha_n^*}"', shift right=2, from=3-1, to=2-1]
\end{tikzcd}\]

In \cite[Theorem 10.1]{BO13}, the bottom horizontal Quillen adjunction is proven to be a Quillen equivalence. 
\begin{theorem}[{\cite[Theorem 10.1]{BO13}}]
There exists a Quillen equivalence 
\begin{equation*}
    \res_0^n/O(n): O(n)\mathcal{E}_n^s \rightleftarrows n\homog\mathcal{E}_0: \ind_0^n\varepsilon^*.
\end{equation*}
\end{theorem}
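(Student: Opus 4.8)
\emph{Overview.} The plan is to prove this in two stages: first verify that the adjoint pair is a Quillen adjunction, and then promote it to a Quillen equivalence by pinning down the derived functors on both sides. Since $n\homog\mathcal{E}_0$ is a right Bousfield localisation of $n\poly\mathcal{E}_0$, which is itself a left Bousfield localisation of the projective model structure on $\mathcal{E}_0$, and $O(n)\mathcal{E}_n^s$ is a left Bousfield localisation of the levelwise model structure $O(n)\mathcal{E}_n^l$, all the input one needs is control of a set of generators and of the localising maps.

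\emph{Step 1: the Quillen adjunction.} First I would check that the left adjoint $\res_0^n/O(n)$ sends the generating cofibrations and generating acyclic cofibrations of $O(n)\mathcal{E}_n^l$ to cofibrations, respectively weak equivalences, of $n\homog\mathcal{E}_0$. The generators are the maps $\mathcal{J}_n(V,-)\wedge i$ for $i$ a generating (acyclic) cofibration of $\Top_*[O(n)]$; applying $\res_0^n$ and passing to $O(n)$-orbits uses that the objectwise orbit of $\mathcal{J}_n(V,-)$ is projectively cofibrant in $\mathcal{E}_0$, the free $O(n)$-cells being absorbed by the orbit construction. One then checks that the localising data match up under the adjunction: the stabilisation maps defining $O(n)\mathcal{E}_n^s$ correspond, via Proposition \ref{prop: wei 1.2}, to the maps $S\gamma_{n+1}(V,-)_+\to\mathcal{J}_0(V,-)$ used to pass to $n\poly\mathcal{E}_0$ together with the representables $\mathcal{J}_n(V,-)$ used in the right localisation to $n\homog\mathcal{E}_0$. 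This gives the Quillen adjunction.

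\emph{Step 2: upgrading to an equivalence.} I would use the criterion that $\ind_0^n\varepsilon^*$ reflects weak equivalences between fibrant objects and that the derived unit is a weak equivalence on all cofibrant $\Theta\in O(n)\mathcal{E}_n^s$. The essential computational input is an identification of the derived image of such a $\Theta$: replacing $\Theta$ by a weakly equivalent $n\Omega$-spectrum and applying $\res_0^n/O(n)$ should produce, up to objectwise weak equivalence, the functor $V\mapsto\Omega^\infty[(S^{nV}\wedge\Theta)_{hO(n)}]$ of Example \ref{ex: 5.7 orth calc}, which is $n$-homogeneous. Conversely, one computes that the $n$-th derivative $\ind_0^n$ of this functor recovers $\Theta$ as an object of $O(n)\mathcal{E}_n$ with its $O(n)$-action; this uses the iterated-homotopy-fibre description of induction (as in the fibre sequence $X^{(n+1)}(V)\to X(V)\to\tau_n X(V)$ of Proposition \ref{3.4}) together with connectivity estimates from the Freudenthal suspension theorem to control the colimit defining the $n$-homotopy groups. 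Granting these two identifications, the derived unit and derived counit are both weak equivalences, which is the desired Quillen equivalence.

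\emph{Main obstacle.} The hard part is the second identification in Step 2 — showing that $\ind_0^n$ of Weiss's homogeneous functor $V\mapsto\Omega^\infty[(S^{nV}\wedge\Theta)_{hO(n)}]$ is $\Theta$. This requires carefully tracking the $O(n)$-equivariance through the homotopy-orbit construction, through $\Omega^\infty$, and through the sequential colimit computing $n\pi_*$, and invoking delooping and connectivity estimates at each stage. Keeping straight how the $n$-fold tensor $\mathbb{R}^n\otimes(-)$, the complement bundles $\gamma_n$, and the stabilisation maps interact is where essentially all of the real work lies; the rest is formal Bousfield-localisation bookkeeping.
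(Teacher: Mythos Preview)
Your overall strategy matches the paper's (and Barnes--Oman's) approach: establish the Quillen adjunction by pushing through the chain of Bousfield localisations, then verify Quillen equivalence via Hovey's criterion (the right adjoint reflects weak equivalences of fibrant objects, and the derived unit is an equivalence on cofibrants). The paper's proof of the $C_2$-analogue (Theorem \ref{boclassification}) follows exactly this template, and explicitly says it uses ``the same method as \cite[Theorem 10.1]{BO13}''.

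There is, however, a genuine gap in your Step 2. You write that applying $\res_0^n/O(n)$ to a cofibrant $n\Omega$-spectrum produces, \emph{up to objectwise weak equivalence}, the functor $V\mapsto\Omega^\infty[(S^{nV}\wedge\Theta)_{hO(n)}]$. This is false. The left derived functor of $\res_0^n/O(n)$ is (by the analogue of Lemma \ref{BO9.3}) objectwise equivalent to $EO(n)_+\wedge_{O(n)}\res_0^n(-)$, so on an $n\Omega$-spectrum one obtains
\[
V\;\longmapsto\;[\Omega^\infty(S^{nV}\wedge\Theta)]_{hO(n)},
\]
with the homotopy orbits \emph{outside} the infinite loop space. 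Swapping the order of $\Omega^\infty$ and $(-)_{hO(n)}$ is not an objectwise equivalence; it is only a $T_n$-equivalence. This is precisely Weiss's \cite[Example 6.4]{Wei95} (the $C_2$-version is Example \ref{6.4} here), and it is a separate connectivity argument, not a formality. The paper's proof of the derived-unit step runs through a commutative square in which one vertical edge is handled by Example \ref{5.7} (the functor is $n$-homogeneous, and its $n$-th derivative is computed via the lattice of intermediate derivatives) and the other by Example \ref{6.4} (the $\Omega^\infty$/homotopy-orbits interchange). You have correctly flagged the first of these as the hard part, but you have silently assumed the second, and without it the argument does not close.
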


Combined with the bottom left vertical Quillen equivalence, see Section \ref{sec: En as spectra}, this results in a zig-zag of equivalences between the stable model structure on spectra with an action of $O(n)$ and the $n$-homogeneous model structure on $\mathcal{E}_0$. 

Under this zig-zag of equivalences, we denote the spectrum which is the image of an input functor $X$ by $\Theta_X^n$. That is, $\Theta_X^n=\mathbb{L}(\alpha_n)_!\mathbb{R}\ind_0^n\varepsilon^* X$, where $\mathbb{L}$ and $\mathbb{R}$ denote taking the left and right derived functors respectively. This is weakly equivalent to the spectrum $\Theta X^{(n)}$ constructed in \cite[Section 2]{Wei95}. The classification theorem states that an $n$-homogeneous functor $X$ is levelwise weakly equivalent to a functor built from the spectrum $\Theta_X^n$. 

\begin{theorem}[{\cite[Theorem 7.3]{Wei95}}]\label{thm: classification weiss calc}
Let $X$ be an $n$-homogeneous functor in $\mathcal{E}_0$, for $n>0$. Then $X$ is levelwise weakly equivalent to the functor defined by 
\begin{equation*}
    V\mapsto \Omega^\infty [(S^{nV}\wedge \Theta_X^n)_{hO(n)}].
\end{equation*}
Conversely, any functor of the form 
\begin{equation*}
    V\mapsto \Omega^\infty [(S^{nV}\wedge \Theta)_{hO(n)}]
\end{equation*}
for $\Theta\in Sp^O[O(n)]$, is $n$-homogeneous. 
\end{theorem}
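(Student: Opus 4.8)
The plan is to prove the two directions separately, the converse being essentially already in hand. The converse — that $E_\Theta\colon V\mapsto \Omega^\infty[(S^{nV}\wedge\Theta)_{hO(n)}]$ is $n$-homogeneous for every $\Theta\in Sp^O[O(n)]$ — is recorded as Example~\ref{ex: 5.7 orth calc}. The one additional ingredient I would isolate as a preliminary lemma is that the image of $E_\Theta$ under the zig-zag of Quillen equivalences recovers $\Theta$, i.e. $\mathbb{L}(\alpha_n)_!\mathbb{R}\ind_0^n\varepsilon^* E_\Theta\simeq\Theta$ in $Sp^O[O(n)]$. This is checked by unwinding the definition of $\ind_0^n$ and of the $n$-stable structure maps, and feeding in the cofibre sequence of Proposition~\ref{prop: wei 1.2}; it is used in both directions.

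For the forward direction I would feed $X$ through the zig-zag
\[
n\homog\mathcal{E}_0 \ \rightleftarrows\ O(n)\mathcal{E}_n^s \ \rightleftarrows\ Sp^O[O(n)]
\]
supplied by \cite[Theorem 10.1]{BO13} and \cite[Proposition 8.3]{BO13}. First replace $X$ by a projectively cofibrant model; since $X$ is $n$-homogeneous it is then cofibrant--fibrant in $n\homog\mathcal{E}_0$, and we set $\Theta_X^n:=\mathbb{L}(\alpha_n)_!\mathbb{R}\ind_0^n\varepsilon^* X$. A composite of Quillen equivalences is a Quillen equivalence, so the induced equivalence $\Ho(n\homog\mathcal{E}_0)\simeq\Ho(Sp^O[O(n)])$ sends $X$ to $\Theta_X^n$ and, by the lemma, sends $E_{\Theta_X^n}$ to an object isomorphic to $\Theta_X^n$. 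As the equivalence is fully faithful, $X$ and $E_{\Theta_X^n}$ become isomorphic in $\Ho(n\homog\mathcal{E}_0)$.

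It then remains to upgrade this link to an objectwise weak equivalence. Replacing both by projectively cofibrant models makes $X$ and $E_{\Theta_X^n}$ cofibrant--fibrant in $n\homog\mathcal{E}_0$ without changing their objectwise homotopy type, and between such bifibrant objects an isomorphism in $\Ho(n\homog\mathcal{E}_0)$ is represented by a genuine weak equivalence $f\colon X\to E_{\Theta_X^n}$ of $n\homog\mathcal{E}_0$. Since $\Id\colon n\homog\mathcal{E}_0\to n\poly\mathcal{E}_0$ is left Quillen (the trivial cofibrations are unchanged under the right Bousfield localization and the cofibrations only shrink), Ken Brown's lemma makes $f$ a weak equivalence of $n\poly\mathcal{E}_0$; and between $n$-polynomial functors — in particular between $n$-homogeneous ones — a weak equivalence of $n\poly\mathcal{E}_0$ is objectwise, because $T_nY\simeq Y$ for $n$-polynomial $Y$. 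Hence $X(V)\simeq\Omega^\infty[(S^{nV}\wedge\Theta_X^n)_{hO(n)}]$ for all $V\in\mathcal{J}_0$.

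The main obstacle is the lemma of the first paragraph: tracking $E_\Theta$ through cofibrant and fibrant replacements in $\mathcal{E}_0$, $O(n)\mathcal{E}_n^s$ and $Sp^O[O(n)]$, matching the two commuting $O(n)$-actions on $\Theta(nV)$ — the internal one from the action on $nV=\mathbb{R}^n\otimes V$ and the external one on $\Theta$ — with the diagonal action implicit in the homotopy orbits, and identifying the $n\Omega$-spectrum fibrant replacement in $O(n)\mathcal{E}_n^s$ with $\Omega^\infty$ of the associated genuine spectrum. This bookkeeping is what forces the exact formula rather than merely something weakly equivalent to it, and it is the technical core of \cite[Sections 9 and 10]{BO13} and \cite[Section 7]{Wei95}.
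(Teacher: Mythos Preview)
The paper does not supply its own proof of this statement; it is quoted in the preliminaries as a result of Weiss with proof omitted. The natural comparison is with the paper's proof of the $C_2$-equivariant analogue, Theorem~\ref{weissclassification}, which follows the same template.

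Your argument is correct, and your upgrade from a homogeneous weak equivalence to an objectwise one matches the paper's exactly: your Ken Brown step is Hirschhorn's Theorem~3.2.13(2) for the right Bousfield localisation, and your last step is 3.2.13(1) for the left. The converse is Example~\ref{ex: 5.7 orth calc} in both accounts.

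The routes differ in the middle. The paper (for the $C_2$ case) introduces an explicit intermediate functor $E(V)=(\ind_{0,0}^{p,q}\varepsilon^*F(V))_{hO(p,q)}$, invokes Example~\ref{6.4}---the connectivity estimate allowing $\Omega^\infty$ and $(-)_{hO(n)}$ to be exchanged up to $T$-equivalence---to show $E$ is polynomially equivalent to the target $G$, and then uses the derivative lattice of Example~\ref{5.7} to identify $\ind G$ with $\ind F$. You bypass the intermediate $E$ and Example~\ref{6.4} altogether by appealing directly to full faithfulness of the equivalence $\Ho(n\homog\mathcal{E}_0)\simeq\Ho(Sp^O[O(n)])$, with your preliminary lemma doing the work. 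This is a cleaner packaging, but be aware that the lemma is not just ``unwinding definitions and Proposition~\ref{prop: wei 1.2}'': its content is the derivative computation of \cite[Example~5.7]{Wei95}, i.e.\ the inductive identification $E_\Theta^{(n)}(V)\simeq\Omega^\infty(S^{nV}\wedge\Theta)$ via the lattice of partial derivatives. You rightly flag this as the technical core, but the correct reference for it is Example~5.7, not the cofibre sequence.
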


Note that the converse statement is exactly Example \ref{ex: 5.7 orth calc}. 

In particular, applied to the fibres of the maps $T_nX\rightarrow T_{n-1}X$, this gives a description of the fibres of the orthogonal tower in terms of spectra with an action of $O(n)$. 

\begin{theorem}[{\cite[Theorem 9.1]{Wei95}}]
For each $X\in \mathcal{E}_0$, $n>0$, $V\in\mathcal{J}_0$, there exists a homotopy fibre sequence
\begin{equation*}
    \Omega^\infty[(S^{nV}\wedge \Theta_X^n)_{hO(n)}]\rightarrow T_nX(V)\rightarrow T_{n-1}X(V).
\end{equation*}
\end{theorem}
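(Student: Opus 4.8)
The plan is to recognise the homotopy fibre of $T_nX(V)\to T_{n-1}X(V)$ as an $n$-homogeneous functor, feed it into the classification theorem, and then identify its classifying spectrum with $\Theta_X^n$.

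First I would set $D_nX=\hofibre[T_nX\to T_{n-1}X]$, so that by construction $D_nX(V)\to T_nX(V)\to T_{n-1}X(V)$ is a homotopy fibre sequence for every $V\in\mathcal{J}_0$. As established above, $D_nX$ is $n$-homogeneous, so Theorem \ref{thm: classification weiss calc} provides a levelwise weak equivalence
\[ D_nX(V)\simeq \Omega^\infty[(S^{nV}\wedge\Theta_{D_nX}^n)_{hO(n)}], \]
with $\Theta_{D_nX}^n=\mathbb{L}(\alpha_n)_!\,\mathbb{R}\ind_0^n\varepsilon^*D_nX$. It then remains only to produce a stable equivalence $\Theta_{D_nX}^n\simeq\Theta_X^n$.

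For this, I would first observe that the composite $\Theta_{(-)}^n=\mathbb{L}(\alpha_n)_!\,\mathbb{R}\ind_0^n\varepsilon^*$ sends homotopy fibre sequences of functors to homotopy fibre sequences of orthogonal $O(n)$-spectra: inflation $\varepsilon^*$ is exact, $\mathbb{R}\ind_0^n$ preserves homotopy fibre sequences by the fibre-sequence description of the derivative derived from Proposition \ref{prop: wei 1.2}, and $\mathbb{L}(\alpha_n)_!$ is left Quillen into spectra, where homotopy cofibre and homotopy fibre sequences coincide. Applying $\Theta_{(-)}^n$ to $D_nX\to T_nX\to T_{n-1}X$ yields a homotopy fibre sequence $\Theta_{D_nX}^n\to\Theta_{T_nX}^n\to\Theta_{T_{n-1}X}^n$. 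Two facts then conclude the proof. Since $T_{n-1}X$ is $(n-1)$-polynomial, Proposition \ref{3.4} (with $n$ replaced by $n-1$) identifies its $n$-th derivative with the homotopy fibre of the weak equivalence $T_{n-1}X\to\tau_{n-1}(T_{n-1}X)$, which is objectwise contractible; hence $\Theta_{T_{n-1}X}^n\simeq\ast$ and so $\Theta_{D_nX}^n\simeq\Theta_{T_nX}^n$. Secondly, the unit $\eta\colon X\to T_nX$ becomes an isomorphism in $\Ho(n\homog\mathcal{E}_0)$: the weak equivalences of the $n$-homogeneous model structure are detected after applying $\res_0^n\ind_0^n T_n$, and $T_n\eta$ is already a weak equivalence. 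Transporting this isomorphism through the Quillen equivalences $O(n)\mathcal{E}_n^s\rightleftarrows n\homog\mathcal{E}_0$ of \cite[Theorem 10.1]{BO13} and $O(n)\mathcal{E}_n^s\rightleftarrows Sp^O[O(n)]$ of \cite[Proposition 8.3]{BO13} gives $\Theta_X^n\simeq\Theta_{T_nX}^n$. Combining the two facts yields $\Theta_{D_nX}^n\simeq\Theta_X^n$, and substituting into the classification equivalence above gives the claimed homotopy fibre sequence.

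The main obstacle is the last paragraph: one must take care that the homotopy fibre sequence $D_nX\to T_nX\to T_{n-1}X$ really is carried to a homotopy fibre sequence of spectra once cofibrant and fibrant replacements are inserted to compute the derived functors, and the vanishing $\Theta_{T_{n-1}X}^n\simeq\ast$ hinges on the precise interaction between differentiation and the polynomial approximation functors. In the $\C$-equivariant setting this is exactly the point at which the equivariant connectivity estimates and the cofibre sequence analogous to Proposition \ref{prop: wei 1.2} must be set up with the genuine $\C$-action present; over a point it is the content of \cite{Wei95} and \cite{BO13}.
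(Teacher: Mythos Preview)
Your argument is correct. The paper itself does not give a proof of this statement: it appears in the preliminary overview of orthogonal calculus and is simply cited from \cite{Wei95} and \cite{BO13}. The only comparable proof in the paper is for the $\C$-equivariant analogue at the end of Section~\ref{sec: classification}, and there the author takes the shorter route you start with and stops: one notes that $D_{p,q}X$ is $(p,q)$-homogeneous and invokes the classification theorem, obtaining the fibre as $\Omega^\infty[(S^{(p,q)V}\wedge\Theta_{D_{p,q}X}^{p,q})_{hO(p,q)}]$ without ever identifying $\Theta_{D_{p,q}X}^{p,q}$ with $\Theta_X^{p,q}$.

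Your additional step, showing $\Theta_{D_nX}^n\simeq\Theta_X^n$, therefore goes beyond what the paper proves for its own version of the result. The argument you give for it is sound: $\mathbb{R}\ind_0^n\varepsilon^*$ is a right Quillen functor and so preserves homotopy fibre sequences, the target categories are stable so $\mathbb{L}(\alpha_n)_!$ carries these to fibre sequences of spectra, $(T_{n-1}X)^{(n)}$ vanishes by Proposition~\ref{3.4} with the index shifted, and $\eta\colon X\to T_nX$ is an equivalence in $n\homog\mathcal{E}_0$ and hence induces an equivalence on $\Theta_{(-)}^n$ via the zig-zag of Quillen equivalences. This is the standard way the identification is made in \cite{Wei95} and \cite{BO13}, so you have recovered the stronger statement that the paper quotes but does not itself argue.
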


\chapter{Equivariant functor categories}
\label{ch:EquivFunct}

\fancyhf{}
\fancyhead[C]{\rightmark}
\fancyhead[R]{\thepage}

The primary objects of study in calculus of real functions are derivatives. In orthogonal calculus, one constructs derivatives of input functors via combinations of restriction functors and the inflation-orbit change-of-group functors for spaces (see Definition \ref{def: res and ind for weiss calc}). These derivatives play a key role in the classification of $n$-homogeneous functors, as they form part of the zig-zag of Quillen Equivalences used to derive the classification Theorem (Theorem \ref{thm: classification weiss calc}). 

We will extend this notion to the $\C$-equivariant setting by defining new functor categories and adjunctions analogous to those used in the underlying calculus. We begin by choosing a new indexing category that will induce the $\C$-actions used throughout the calculus. With the $\C$-setting fixed, we can construct the jet categories $\Jpq$ as well as the intermediate categories $\OEpq$. The relationships between these categories (which include derivatives), and in particular their various model structures, are heavily relied upon later in the homotopical part of the calculus; the classification of $(p,q)$-homogeneous functors as a category of orthogonal $\C$-spectra with an action of $O(p,q)$. In particular, objects in this category of orthogonal spectra have a genuine action of $\C$ and a naive action of $O(p,q)$.

\section{The input functors}\label{sect: input functors}

The input functors to $\C$-equivariant orthogonal calculus are continuous functors from a category of finite dimensional $\C$-representations (with inner product, see Remark \ref{rem: all reps have inner prod}) to the category of pointed $\C$-spaces. For example, the functor $$BO(-):V\mapsto BO(V)_+,$$see Examples \ref{Examples}. We call the category of such functors $\Ezero$. In this section we give a more detailed description of this category, by defining a new indexing category $\CL$. 

First, recall that $\C$\index{$\C$} is the cyclic group of order two, which we write as $\C=\{e,\sigma\}$. The category of $\C$-spaces is assumed to be equipped with its fine model structure throughout this main text, see Proposition \ref{finemodelstructure}. In particular, a map $f:X\rightarrow Y$ in $\CTop_*$ is a weak equivalence if and only if $f^{\C}:X^{\C}\rightarrow Y^{\C}$ and $f^e:X^e\rightarrow Y^e$ are weak homotopy equivalences. 

Orthogonal calculus is indexed on the universe $\mathbb{R}^\infty$, which makes the input category of functors enriched over topological spaces. To guarantee that the category of input functors for $\C$-equivariant orthogonal calculus is enriched over pointed $\C$-spaces, we must specify a new universe which is closed under $\C$-action. 

 Recall from Example \ref{ex: regular c2 rep} that the regular representation of $\C=\{e,\sigma\}$ is defined as the following vector space\begin{equation*}
    \RC=\{\lambda_1 \underline{e}+\lambda_2 \underline{\sigma}:\lambda_1,\lambda_2\in\R\}
\end{equation*}
with basis elements $\underline{e}, \underline{\sigma}$. To better understand the $\C$-action on the vector space $\RC$, we can decompose \begin{equation*}\RC=\R\langle\underline{e}+\underline{\sigma}\rangle\oplus\R\langle\underline{e}-\underline{\sigma}\rangle.\end{equation*}This direct sum is $\C$-isomorphic to $\R\oplus \Rdelta$, where $\R$ and $\Rdelta$ are the trivial and sign $\C$-representations respectively with $\C$-actions defined below
\begin{align*}
    \sigma(x)&=x\quad (x\in\R),\\
    \sigma(y)&=-y\quad (y\in\Rdelta).
\end{align*}

Choosing the universe $\bigoplus\limits_{i=1}^{\infty} \RC$, we define a new indexing category as follows. 
\begin{definition}\index{$\CL$}\index{$\LL(U,V)$}
The \emph{equivariant indexing category} $\CL$ is a $\CTop_*$-enriched category whose objects are the finite dimensional subrepresentations of $\bigoplus\limits_{i=1}^{\infty} \RC$ with inner product. Let $\LL(U,V)$ denote the space of (not necessarily $\C$-equivariant) linear isometries with the $\C$-action that is conjugation. That is, for $f\in \LL(U,V)$, $e*f=f$ and $\sigma * f = \sigma f \sigma : U\rightarrow V$. The hom-object of morphisms $U\rightarrow V$ is the pointed $\C$-space $\LL(U,V)_+$. 
\end{definition}

From the discussion above on the decomposition of $\RC$, we can see that an object in the category $\CL$ is isomorphic to an object of the form $\R^{p+q\delta}=\R^p\oplus \R^{q\delta}$, for some $p,q\in \mathbb{N}$. That is, $p$ copies of the trivial representation and $q$ copies of the sign representation.  

\begin{remark}\index{$\R^{p,q}$}\index{$(p,q)V$}
We will use the notation $\R^{p,q}$ to mean $\R^{p+q\delta}$. We will also use the notation $(p,q)V$ to mean $\R^{p,q}\otimes V$ equipped with the diagonal action of $\C$, where $V\in\CL$.
\end{remark}

\begin{example}
By definition, $\LL (\R^{k\delta},\R^n)_+$ is the space of linear isometries from $\R^{k\delta}$ to $\R^n$, which is known to be isomorphic to $O(n)/O(n-k)_+$ (via the map $O(n)\rightarrow \LL (\R^{k\delta},\R^n)$ that sends a matrix $A$ to the matrix given by the first $k$ columns of $A$).  

Let $f\in \LL (\R^{k\delta},\R^n)_+$, then $\sigma f \sigma$ is as follows, where the last equality is known from linearity, rather than an assumption in equivariance. 
\begin{equation*}
x\overset{\sigma}\mapsto -x \overset{f}\mapsto f(-x) \overset{\sigma}\mapsto f(-x)=-f(x)
\end{equation*}
That is, $\C$ acts on $\LL (\R^{k\delta},\R^n)_+$ as multiplication by -1. 
\end{example}

\begin{remark}
Note that $\CTop_*$ is also enriched over itself, by equipping the space of continuous maps $\Top_*(X,Y)$ with the conjugation action (see Section \ref{sec:Gspace}). 
\end{remark}

We can now define the input category for $\C$-orthogonal calculus, which we denote by $\Ezero$. 
\begin{definition}\label{jzero and ezero def}\index{$\Jzero$}
Define $\Jzero$ to be the $\CTop_*$-enriched category with the same objects as $\CL$ and morphisms defined by $\Jzero(U,V)=\LL(U,V)_+$. 
Define the \emph{input category} $\Ezero$ to be the category of $\CTop_*$-enriched functors from $\Jzero$ to $\CTop_*$ and $\C$-equivariant natural transformations (see Remark \ref{remNat} for details). 
\end{definition}

\begin{examples}\label{Examples}
$\quad$
\begin{itemize}
    \item $BO(-):V\mapsto BO(V)_+ $ 
    \item $S^{(-)}:V\mapsto S^V$
    \item $\Jmor{p,q}{U}{-}:V\mapsto \Jmor{p,q}{U}{V}$ 
\end{itemize}
where $BO(V)$ is the classifying space of the space of linear isometries on $V$ and $S^V$ is the one point compactification of $V$. The $\C$-action on $O(V)$ is conjugation, that is $\sigma * h = \sigma h \sigma^{-1}$ for $h\in O(V)$. The $\C$-action on $BO(V)$ is induced by the action on $O(V)$, see Section \ref{sec: BO}. The $\C$-action on $S^V$ is induced by the $\C$-action on $V$, and the $\C$-action on $\Jpq(U,V)$ is induced by the $\C$-action on $\C\gamma_{p,q}(U,V)$ (see Definition \ref{def: gamma pq bundle}). 
\end{examples}

We now define a model structure on the input category $\Ezero$. This model structure is the projective model structure, similar to that of \cite[Theorem 6.5]{MMSS01}, and will be used to build the polynomial and homogeneous model structures. The projective model structure on $\Ezero$ is a special case of the projective model structure on $\OEpq$ for $p=q=0$, as such we defer the proof to Lemma \ref{objectwise model structure proof}.
\begin{proposition}\label{proj model structure}\index{$\Ezero$}
There is a proper, cellular, $\C$-topological model structure on $\Ezero$ where the fibrations and weak equivalences are defined objectwise from the fine model structure on $\CTop_*$. We call them objectwise fibrations and objectwise weak equivalences. We call this the projective model structure on $\Ezero$ and denote it by $\Ezero$. It is cofibrantly generated by the following sets of generating cofibrations and generating acyclic cofibrations respectively 
\begin{align*}
    &I_{\proj}=\{\Jzero (V,-)\wedge i : i\in I_{\C}\}\\
    &J_{\proj}=\{\Jzero (V,-)\wedge j : j\in J_{\C}\},
\end{align*}
where $V\in \Jzero$ and $I_{\C},J_{\C}$ are the generating cofibrations and acyclic cofibrations of the fine model structure on $\CTop_*$ (see Proposition \ref{finemodelstructure}). 
\end{proposition}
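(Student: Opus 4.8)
The plan is to obtain this model structure by the standard transfer of a cofibrantly generated model structure along the adjunction given by evaluation, in the spirit of \cite[Theorem 6.5]{MMSS01} and \cite[Lemma 6.1]{BO13}. For each $V\in\Jzero$ the evaluation functor $\Ev_V\colon\Ezero\to\CTop_*$ has left adjoint $K\mapsto\Jzero(V,-)\wedge K$ by the enriched Yoneda lemma, and since $\CTop_*$ is tensored and cotensored over itself it also has a right adjoint; in particular each $\Ev_V$ preserves all small limits and colimits. Assembling these over the objects of $\Jzero$ produces an adjunction between $\prod_V\CTop_*$ and $\Ezero$ whose right adjoint is the tuple of evaluations, and which creates exactly the objectwise weak equivalences and objectwise fibrations. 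First I would invoke the transfer (lifting) theorem for cofibrantly generated model categories: it yields the desired model structure, with $I_{\proj}$ and $J_{\proj}$ as generating cofibrations and generating acyclic cofibrations, provided (i) the domains of $I_{\proj}$ and $J_{\proj}$ permit the small object argument, and (ii) every relative $J_{\proj}$-cell complex is an objectwise weak equivalence.

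Condition (i) is routine: the domains of $I_{\C}$ and $J_{\C}$ are compact, being built from $S^{n-1}_+$ and $D^n_+$ smashed with the orbits $\C/H_+$, and smashing with a representable $\Jzero(V,-)$ preserves smallness since each $\Ev_W$ commutes with the relevant filtered colimits. The crux is condition (ii), and the key observation is that each generating map $\Jzero(V,-)\wedge j$ with $j\in J_{\C}$ is \emph{already} an objectwise acyclic cofibration. Indeed, evaluating at $W$ gives $\Jzero(V,W)\wedge j=\LL(V,W)_+\wedge j$; since $\LL(V,W)$ is a compact smooth $\C$-manifold (a Stiefel manifold with conjugation action), it admits a $\C$-CW structure, so $\LL(V,W)_+$ is a cofibrant pointed $\C$-space, and the pushout-product axiom for the monoidal fine model structure on $\CTop_*$ then shows $\LL(V,W)_+\wedge j$ is an acyclic cofibration. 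As $\Ev_W$ preserves pushouts, transfinite compositions and retracts, every relative $J_{\proj}$-cell complex evaluates at each $W$ to a transfinite composition of pushouts of acyclic cofibrations in $\CTop_*$, hence to an acyclic cofibration; it is therefore an objectwise acyclic cofibration, and in particular an objectwise weak equivalence. This establishes the model structure together with the stated generating sets.

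It then remains only to record the adjectives. Left and right properness pass from $\CTop_*$ to $\Ezero$ because pushouts, pullbacks, weak equivalences, fibrations and cofibrations are all detected objectwise (cofibrations being retracts of relative $I_{\proj}$-cell complexes, which are objectwise cofibrations). For cellularity one checks the three standard conditions: the domains and codomains of $I_{\proj}$ are compact, the domains of $J_{\proj}$ are small relative to the cofibrations, and the cofibrations are effective monomorphisms; each reduces to the corresponding fact in $\CTop_*$, which is compactly generated and proper by Proposition \ref{finemodelstructure}, again using that everything is tested objectwise. Finally, the $\C$-topological structure — that $\Ezero$ is a $\CTop_*$-enriched model category for the objectwise smash tensoring — is the pushout-product axiom, which follows levelwise from the pushout-product axiom in $\CTop_*$ together with the unit axiom.

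I expect the main obstacle to be condition (ii), and within it the input that the morphism $\C$-spaces $\LL(V,W)_+$ are cofibrant, equivalently that spaces of linear isometries between $\C$-representations carry $\C$-CW structures, since this is precisely where the equivariance genuinely enters and is what makes smashing with a representable preserve acyclic cofibrations. Everything else is a formal consequence of $\CTop_*$ being a cofibrantly generated, proper, cellular, monoidal model category and of the model structure on $\Ezero$ being defined and tested objectwise; the argument is the $p=q=0$ instance of the more general statement for $\OEpq$ established in Lemma \ref{objectwise model structure proof}.
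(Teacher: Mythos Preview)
Your proposal is correct and follows essentially the same strategy as the paper: the paper defers the proof to Lemma~\ref{objectwise model structure proof}, which establishes the projective model structure on the general $\OEpq$ via Hovey's recognition theorem, exactly the transfer argument you outline, and you rightly identify this proposition as the $p=q=0$ instance.

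The one point where your argument diverges slightly is in verifying that relative $J_{\proj}$-cell complexes are objectwise weak equivalences. You argue directly that $\LL(V,W)_+$ is cofibrant in $\CTop_*$ (via an equivariant triangulation of the Stiefel manifold) and then invoke the pushout-product axiom to conclude $\LL(V,W)_+\wedge j$ is an acyclic cofibration. The paper instead argues abstractly: since $\Jlevel\subseteq\Jlevel\text{-cof}$ and $\Jlevel$-inj consists of the objectwise fibrations (by the adjunction characterisation in Proposition~\ref{inj}), each map in $\Jlevel$ has the left lifting property against objectwise fibrations, and one then passes through the adjunctions to deduce that its evaluations are acyclic cofibrations in $\CTop_*$. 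Your route is more concrete and makes the geometric input (Illman-type $\C$-CW structures on Stiefel manifolds) explicit, which is arguably clearer; the paper's route is more uniform across all $(p,q)$ and avoids appealing to equivariant triangulation results. Both are valid, and your treatment of properness, cellularity, and the $\C$-topological enrichment matches the paper's.
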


\section{The intermediate categories}

In orthogonal calculus, one constructs intermediate categories $O(n)\mathcal{E}_n$ which are intermediate between the input category and the category of orthogonal spectra with an action of $O(n)$. We replicate this process in the $\C$-equivariant setting to construct new intermediate categories $\OEpq$. These categories will give greater insight into the structure of the input category $\Ezero$ defined in Section \ref{sect: input functors}. The construction follows the orthogonal calculus version of Weiss \cite[Sections 1 and 2]{Wei95} and Barnes and Oman \cite[Sections 3 and 8]{BO13}.

We begin by defining the following $\C$-equivariant vector bundle.
\begin{definition}\label{def: gamma pq bundle}\index{$\Gmor{p,q}{U}{V}$}
Let $U,V\in \CL$. Define the \emph{$(p,q)$-th complement bundle} $\Gmor{p,q}{U}{V}$ to be the $\C$-equivariant vector bundle on $\LL(U,V)$, whose total space is given by
\begin{equation*}
    \Gmor{p,q}{U}{V}=\{(f,x):f\in \LL(U,V), x\in \R^{p,q} \otimes f(U)^\perp\},
\end{equation*}
where $f(U)^\perp$ denotes the orthogonal complement of the image of $f$.

\noindent Let $(f,x)\in \Gmor{p,q}{U}{V} $. Where $\sigma * f = \sigma f \sigma $, define a $\C$-action on $\Gmor{p,q}{U}{V} $ by \begin{equation*}
\sigma(f,x)=(\sigma *f, \sigma x).
\end{equation*}
\end{definition}

Note that $\Gmor{p,q}{U}{V}$ is a subbundle of the product bundle over $\LL(U,V)$ whose total space is $\LL(U,V)\times (\R^{p,q} \otimes V)$.

One can verify that the $\C$-action on $\Gmor{p,q}{U}{V}$ is well defined by considering an element $$x=\sum\limits_i w_i\otimes v_i\in \R^{p,q} \otimes f(U)^\perp.$$ Under the $\C$-action this is mapped to $\sigma x=\sum\limits_i \sigma w_i\otimes \sigma v_i$. Clearly $\sigma w_i\in \R^{p,q} $ for all $i$, since $\R^{p,q} $ is closed under the $\C$-action. Now take an arbitrary element of $(\sigma *f) (U)$, which is of the form $(\sigma *f)(y)=(\sigma f \sigma )(Y)$, for $y\in U$. Then, as there is an inner product on $V$,
\begin{equation*}
    \langle (\sigma f \sigma) (y), \sigma v_i\rangle = \langle f(\sigma y), v_i \rangle= 0,
\end{equation*}
since $v_i \in f(U)^\perp$, $\sigma y\in U$ and the inner product on $V$ is $G$-invariant. Therefore, $\sigma v_i$ is an element of $\sigma f\sigma (U)^\perp$, and the action is well defined. 

\begin{example}
The total space
\begin{equation*}
    \Gmor{0,m}{\R^{k\delta}}{\R^n}=\{(f,x):f\in \LL(\R^{k\delta},\R^n), x\in \R^{m\delta}\otimes f(\R^{k\delta})^\perp\}
\end{equation*}
has $\C$-action 
\begin{equation*}
    \sigma (f,x)=(-f, -x).
\end{equation*}
\end{example}

The following result outlines the effect of the fixed point functor $(-)^{\C}:\CTop_*\rightarrow\Top_*$ on the $C_2$-spaces $\LL(\mathbb{R}^{a,b},\mathbb{R}^{c,d})$ and $C_2 \gamma_{p,q}(\mathbb{R}^{a,b},\mathbb{R}^{c,d})$. 

\begin{theorem}[The Equivariant Splitting Theorems]\label{splittingtheorems}There are homeomorphisms
\begin{equation*}
\LL(\mathbb{R}^{a,b},\mathbb{R}^{c,d})^{C_2}\cong \LL(\mathbb{R}^{a},\mathbb{R}^{c})\times \LL(\mathbb{R}^{b\delta},\mathbb{R}^{d\delta})
\end{equation*}
\begin{equation*}
C_2\gamma_{p,q}(\mathbb{R}^{a,b},\mathbb{R}^{c,d})^{C_2}\cong \C\gamma_{p,0}(\mathbb{R}^{a},\mathbb{R}^{c})\times \C\gamma_{0,q}(\mathbb{R}^{b\delta},\mathbb{R}^{d\delta}).
\end{equation*}
\end{theorem}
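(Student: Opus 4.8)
The plan is to establish both homeomorphisms by analysing what it means for an element to be fixed by $\C$, and then showing the resulting conditions decompose the space as a product over the trivial and sign isotypical pieces. I would begin with the linear isometries statement, since the vector bundle statement will follow by adding a fibre analysis on top of it. Fix $a,b,c,d$ and write $\R^{a,b}=\R^a\oplus\R^{b\delta}$, $\R^{c,d}=\R^c\oplus\R^{d\delta}$ for the isotypical decompositions. An isometry $f:\R^{a,b}\to\R^{c,d}$ is $\C$-fixed precisely when $\sigma f\sigma = f$, i.e. $f$ is $\C$-equivariant. The key observation is that a $\C$-equivariant linear map must preserve isotypical components: it sends the $\C$-fixed subspace $\R^a$ into the $\C$-fixed subspace $\R^c$, and sends the sign subspace $\R^{b\delta}$ (on which $\sigma$ acts by $-1$) into the sign subspace $\R^{d\delta}$, because an equivariant map intertwines the two eigenvalue-$(-1)$ eigenspaces and cannot mix eigenspaces with different eigenvalues. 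Hence $f$ splits as $f_0\oplus f_1$ with $f_0\in\LL(\R^a,\R^c)$ and $f_1\in\LL(\R^{b\delta},\R^{d\delta})$, and conversely any such pair assembles to a $\C$-equivariant isometry. One then checks this assignment $f\mapsto(f_0,f_1)$ is continuous with continuous inverse — both directions are restriction/assembly maps which are manifestly continuous in the compact-open (equivalently, subspace of the product) topology — giving the first homeomorphism.

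For the second statement, recall from Definition \ref{def: gamma pq bundle} that the total space of $C_2\gamma_{p,q}(\R^{a,b},\R^{c,d})$ consists of pairs $(f,x)$ with $f\in\LL(\R^{a,b},\R^{c,d})$ and $x\in\R^{p,q}\otimes f(\R^{a,b})^\perp$, with $\C$-action $\sigma(f,x)=(\sigma*f,\sigma x)$. Such a pair is fixed iff $f$ is $\C$-equivariant (so $f=f_0\oplus f_1$ as above) and $\sigma x = x$, i.e. $x$ lies in the $\C$-fixed subspace of $\R^{p,q}\otimes f(\R^{a,b})^\perp$. Here I would use that for an equivariant $f$, the orthogonal complement $f(\R^{a,b})^\perp$ is itself a subrepresentation which decomposes as $f_0(\R^a)^\perp\oplus f_1(\R^{b\delta})^\perp$ (complements taken inside $\R^c$ and $\R^{d\delta}$ respectively) — this uses that $f$ respects the isotypical splitting of both source and target. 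Then $\R^{p,q}\otimes f(\R^{a,b})^\perp$ decomposes into four tensor summands according to the bigrading of $\R^{p,q}=\R^p\oplus\R^{q\delta}$ and the two pieces of the complement; using $\R\otimes\R=\R$, $\Rdelta\otimes\Rdelta=\R$, $\R\otimes\Rdelta=\Rdelta$, the $\C$-fixed part is exactly $(\R^p\otimes f_0(\R^a)^\perp)\oplus(\R^{q\delta}\otimes f_1(\R^{b\delta})^\perp)$, which is precisely the fibre data of $\C\gamma_{p,0}$ over $f_0$ direct-sum the fibre data of $\C\gamma_{0,q}$ over $f_1$. Packaging this over all $(f_0,f_1)$ and invoking the first homeomorphism for the base gives the claimed product decomposition, and continuity of the bijection and its inverse is checked as before.

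The main obstacle I anticipate is the bookkeeping in the second part: making rigorous that the orthogonal complement of the image of an equivariant isometry splits compatibly with the isotypical decomposition, and that the $\C$-fixed points of the tensor product $\R^{p,q}\otimes(-)$ commute with this splitting in the way claimed (i.e. that taking $\C$-fixed points of a tensor of representations picks out exactly the "matching sign" summands). This is where one must be careful that no cross terms survive — the summand $\R^p\otimes f_1(\R^{b\delta})^\perp$ carries the sign action and so contributes nothing to the fixed points, and similarly for $\R^{q\delta}\otimes f_0(\R^a)^\perp$. Everything else — continuity, bijectivity, naturality of the assembly maps — is routine once the linear-algebraic decomposition is set up correctly, so I would state the isotypical-splitting lemma for equivariant isometries cleanly first and then deduce both homeomorphisms from it.
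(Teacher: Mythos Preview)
Your proposal is correct and follows essentially the same approach as the paper: both argue that a $\C$-fixed isometry is exactly a $\C$-equivariant one, which must preserve the isotypical decomposition, and then decompose $\R^{p,q}\otimes f(\R^{a,b})^\perp$ into its four tensor summands and observe that only the two ``matching-sign'' pieces $\R^p\otimes f_0(\R^a)^\perp$ and $\R^{q\delta}\otimes f_1(\R^{b\delta})^\perp$ are $\C$-fixed. The paper's version is slightly terser (it embeds $x$ into $\R^{p,q}\otimes\R^{c,d}$ and reads off the action as $\id\oplus(-1)\oplus(-1)\oplus\id$ there), but the content is identical.
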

\begin{proof}
A map $f:\mathbb{R}^{a,b}\rightarrow\mathbb{R}^{c,d}$ is $\C$-fixed if and only if it is $\C$-equivariant. As there are no non-zero $\C$-equivariant maps $\mathbb{R}^a\rightarrow\mathbb{R}^{d\delta}$ or $\mathbb{R}^{b\delta}\rightarrow\mathbb{R}^c$ the first splitting theorem follows. 

Let $(f,x)\in\Gmor{p,q}{\mathbb{R}^{a,b}}{\mathbb{R}^{c,d}}$. Then $(f,x)$ is $\C$-fixed if and only if
\begin{equation*}
    (\sigma *f,\sigma x)=(f,x).
\end{equation*}
By the first splitting theorem $\sigma *f=f$ if and only if $f\in\LL(\mathbb{R}^{a},\mathbb{R}^{c})\times \LL(\mathbb{R}^{b\delta},\mathbb{R}^{d\delta})$. That is, $f$ is of the form $f_1\times f_2$, where $f_1\in\LL(\mathbb{R}^{a},\mathbb{R}^{c})$ and $f_2\in \LL(\mathbb{R}^{b\delta},\mathbb{R}^{d\delta})$. 

Now, since $x\in \R^{p,q}\otimes \text{Im}(f)^\perp\subset \R^{p,q}\otimes \R^{c,d}$, $x$ is in 
\begin{equation*}
(\R^p\otimes\R^c)\oplus(\R^p\otimes\R^{d\delta})\oplus(\R^{q\delta}\otimes\R^{c})\oplus(\R^{q\delta}\otimes\R^{d\delta})
\end{equation*}
where $\C$ acts as $\id\oplus -1\oplus -1\oplus\id$. Therefore $\sigma(x)=x$ if and only if $$x\in (\R^p\otimes\R^c)\oplus(\R^{q\delta}\otimes\R^{d\delta}).$$ That is, $x$ is of the form $x_1\oplus x_2$, where $x_1\in \R^p\otimes \text{Im}(f_1)^\perp$ and $x_2\in\R^{q\delta}\otimes\text{Im}(f_2)^\perp$. The second splitting theorem follows from the well defined homeomorphism 
\begin{equation*}
    (f_1\times f_2,x_1\oplus x_2)\mapsto((f_1,x_1),(f_2,x_2)).\qedhere
\end{equation*}\end{proof}

Now we define what will become the morphism spaces for the categories $\Jpq$. These categories are analogous to the $n$-th jet categories of orthogonal calculus, and will be used to build the intermediate categories. 

\begin{definition}\label{thom}\index{$\Jmor{p,q}{U}{V}$}
Let $U,V \in \CL$. Define $\Jmor{p,q}{U}{V}$ to be the Thom space of $\Gmor{p,q}{U}{V}$.
\end{definition}

This Thom space is the one point compactification of $\Gmor{p,q}{U}{V}$, since $\LL(U,V)$ is compact. Hence, each $\Jmor{p,q}{U}{V}$ is a pointed $\C$-space, with $\C$-action inherited from $\Gmor{p,q}{U}{V}$. 

We can now define the categories $\Jpq$. 
\begin{definition}\label{def: p,q-jet cat}\index{$\Jpq$}
Let the \emph{$(p,q)$-th jet category} $\Jpq$ be the $\CTop$-enriched category whose objects are the same as $\CL$, and whose morphism are given by $\Jmor{p,q}{U}{V}$. 

Composition in $\Jpq$ is defined as follows. There are maps of $\C$-spaces defined by
\begin{align*}
\Gmor{p,q}{V}{X}\times \Gmor{p,q}{U}{V}&\rightarrow \Gmor{p,q}{U}{X}\\
((f,x),(g,y))&\mapsto (f\circ g,x+(\id\otimes f)(y)).
\end{align*}Passing to Thom spaces then yields the desired composition maps
\begin{equation*}
    \Jmor{p,q}{V}{X}\wedge \Jmor{p,q}{U}{V}\rightarrow \Jmor{p,q}{U}{X}.
 \end{equation*}
One can check that this composition is a continuous map. Moreover, the composition maps are unital and associative. The following argument verifies that it is $\C$-equivariant. 
\begin{align*}
    \sigma(f\circ g,x+(\id\otimes f)(y))&=(\sigma * (f\circ g) , \sigma (x+(\id\otimes f)(y))\\
    &=((\sigma * f)\circ(\sigma *g), \sigma x + (\id\otimes \sigma f\sigma) (\sigma y)),
\end{align*}
which is equal to the image of $((\sigma *f, \sigma x), (\sigma *g, \sigma y))$ under the composition map. 
\end{definition}

One can see that the category $\Jzero$ has morphisms $\Jmor{0,0}{U}{V}=\LL(U,V)_+$, and therefore is exactly the category defined in Definition \ref{jzero and ezero def}.

The following is the a $\C$-equivariant generalisation of \cite[Theorem 1.2]{Wei95}. It demonstrates that it is possible to build the morphism spaces $\Jmor{p,q}{U}{V}$ inductively. That is, we can construct $C_2\mathcal{J}_{p+1,q}$ and $C_2\mathcal{J}_{p,q+1}$ from $\Jpq$.

\begin{proposition}\label{cofibseq}
For all $U,V,W$ in $\Jzero$, the homotopy cofibre (specific construction given in proof) of the restricted composition map
\begin{equation*}
\Jmor{W}{U\oplus X}{V}\wedge S^{W\otimes X} \rightarrow \Jmor{W}{U}{V}
\end{equation*}
is $\C$-homeomorphic to $\Jmor{W\oplus X}{U}{V}$, where $X=\R$ or $X=\Rdelta$.
\end{proposition}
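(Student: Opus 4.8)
# Proof Proposal

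The plan is to mimic the proof of Weiss' non-equivariant result \cite[Theorem 1.2]{Wei95}, upgrading everything to account for the $\C$-action, and to construct the homotopy cofibre explicitly as a mapping cone so that the $\C$-homeomorphism with $\Jmor{W\oplus X}{U}{V}$ can be written down directly. The key point is that for $X = \R$ or $X = \Rdelta$, the space $X$ is one-dimensional, so $W \otimes X$ has the same underlying dimension as $W$, and the relevant bundle-theoretic decompositions go through.

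First I would work at the level of the $\C$-equivariant complement bundles before passing to Thom spaces. Over $\LL(U \oplus X, V)$ we have $\Gmor{p,q}{U\oplus X}{V}$ (with $\R^{p,q} = W$ playing the role of the twisting space $W$), and there is a restriction-type map to $\LL(U,V)$ obtained by precomposing an isometry $U \to V$ with the inclusion $U \hookrightarrow U \oplus X$; this is the map underlying the composition map in the statement, smashed with $S^{W\otimes X}$. The idea is that $\LL(U \oplus X, V)$ (or rather the relevant subspace where the image of $X$ is recorded) fibres over $\LL(U,V)$ with fibre the sphere $S(W\otimes (\text{complement}))$ or disk thereof, exactly as in Weiss; the extra coordinate $x \in W\otimes X$ gets absorbed, after one-point-compactifying, into enlarging $\R^{p,q}$ by $W\otimes X$. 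Concretely, I would decompose $f(U\oplus X)^\perp \oplus W\otimes X \cong$ (image of the $X$-direction) $\oplus$ $f(U)^\perp$ compatibly with the $\C$-action, using that the $\C$-action on $X$ is $\pm 1$ and that the inner products are $\C$-invariant, and then identify the mapping cone of the bundle map with the total space of $\Gmor{p,q}{U}{V}$ twisted by one more copy of $X$ — which is precisely $\C\gamma_{p,q}$ with $W$ replaced by $W \oplus X$, i.e. the bundle defining $\Jmor{W\oplus X}{U}{V}$.

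The key steps, in order: (1) write down the explicit mapping cone $C$ of the composition map, as the pushout of $S^{W\otimes X} \wedge \Jmor{W}{U\oplus X}{V} \leftarrow S^{W\otimes X}\wedge \Jmor{W}{U\oplus X}{V} \wedge I_+ \to \Jmor{W}{U}{V}$ collapsed appropriately (equivalently the standard reduced mapping cone), all in $\CTop_*$; (2) identify the open dense part of $\Jmor{W\oplus X}{U}{V}$ — the Thom space of the bundle over those isometries $g : U \to V$ — with the part of $C$ away from the base, via the homeomorphism $f(U)^\perp \oplus (W\otimes X) \to (W\oplus X)\otimes f(U)^\perp$ on fibres combined with the sphere/disk coordinate; (3) check the complementary (closed) strata match: the "point at infinity" locus of $S^{W\otimes X}$ together with the isometries not hitting the $X$ direction corresponds to the subspace of $\Jmor{W\oplus X}{U}{V}$ that is the image of $\Jmor{W}{U}{V}$; (4) verify the resulting bijection is a $\C$-homeomorphism using the explicit formulas $\sigma(f,x) = (\sigma * f, \sigma x)$ for the $\C$-action on the bundles and the fact that the splitting of $f(U\oplus X)^\perp$ into its $X$-part and its $f(U)^\perp$-part is $\C$-equivariant. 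Finally I would note the one-dimensionality of $X$ (used so that $S(W\otimes X_{\text{fibre}})$ is a genuine sphere of the expected dimension and the cofibre sequence does not need iteration) — this is where the restriction $X = \R$ or $X = \Rdelta$ is essential.

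The main obstacle I expect is step (4): making the $\C$-equivariance of the homeomorphism completely precise. In the non-equivariant case this is a routine point-set verification, but here one must track the $\C$-action through the decomposition $f(U\oplus X)^\perp$, through the sphere coordinate $S^{W\otimes X}$ (on which $\C$ acts by the given action on $W$ twisted by $\pm1$ on $X$), and through the absorption of the extra $X$-direction into the twisting space, checking at each stage that the identification commutes with $\sigma$. The Equivariant Splitting Theorems (Theorem \ref{splittingtheorems}) and the well-definedness computation already carried out for the $\C$-action on $\Gmor{p,q}{U}{V}$ give the needed templates: the same style of argument — reduce to checking on fixed points and on the underlying space separately, or verify the formula $\sigma(f,x) = (\sigma*f,\sigma x)$ is respected — should close the gap. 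A secondary subtlety is ensuring the "homotopy cofibre" is modelled by a cofibration so that it computes the homotopy cofibre and not merely the strict one; since the composition map arises from a cofibration of $\C$-CW-ish bundles (inclusion of a subbundle), this should be automatic, but I would remark on it.
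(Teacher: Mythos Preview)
Your proposal is correct and follows essentially the same approach as the paper: mimic Weiss' non-equivariant argument, write down the mapping-cone model of the homotopy cofibre, produce an explicit homeomorphism to $\Jmor{W\oplus X}{U}{V}$, and then verify $\C$-equivariance. The paper's execution is slightly more concrete than your outline---it writes a single explicit formula $(t,f,x,w\otimes y)\mapsto (f|_U,\, x + (\id_W\otimes f|_X)(w\otimes y) + t\,\alpha(f|_X(1)))$ rather than decomposing into strata, and it checks equivariance by a direct calculation on this formula rather than invoking the Splitting Theorems or fixed-point considerations (indeed the auxiliary map $\alpha$ is \emph{not} $\C$-equivariant, so a fixed-point approach would be awkward; the signs cancel only in the full formula).
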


\begin{proof}
We give the proof for the $X=\Rdelta$ case, and leave the similar $X=\R$ case to the reader.

We can $\C$-equivariantly identify the one point compactification of $W\otimes \Rdelta$, denoted $S^{W\otimes \Rdelta}$,  with a subspace of $\Jmor{W}{U}{U\oplus \Rdelta}$. Consider the map \begin{align*}
    W\otimes \Rdelta &\rightarrow \Gmor{W}{U}{U\oplus \Rdelta}\\
    w\otimes y&\mapsto (i_U, w\otimes (0,y)),
\end{align*} where $i_U:U\rightarrow U\oplus \Rdelta$ is the map $u \mapsto  (u,0)$. Taking Thom spaces then gives the desired identification. 

Composing this identification map with the composition map
\begin{equation*}
    \Jmor{W}{U \oplus \Rdelta}{V}\wedge \Jmor{W}{U}{U\oplus \Rdelta}\rightarrow \Jmor{W}{U}{V}
\end{equation*}
yields a continuous map 
\begin{equation*}
\Jmor{W}{\Rdelta\oplus U}{V}\wedge S^{W\otimes \Rdelta} \rightarrow \Jmor{W}{U}{V}
\end{equation*}
which is $\C$-equivariant, since both the identification and composition are $\C$-equivariant maps. This map is defined as 
\begin{equation*}
(f,x)\wedge (w\otimes y)\mapsto (f|_U, x+(\id_W\otimes f|_\Rdelta)(w\otimes y)).    
\end{equation*}

Since the map is surjective, the homotopy cofibre is a quotient of 
\begin{equation*}
    [0,\infty]\times \Jmor{W}{\Rdelta\oplus U}{V}\times (W\otimes \Rdelta).
\end{equation*}
Using the element form of this map, we can construct a homeomorphism $\phi$ from this cofibre to $\Jmor{W\oplus \Rdelta}{U}{V}$ defined by 
\begin{equation*}
    (t,f,x,w\otimes y)\mapsto (f|_U, x+ (\id_W\otimes f|_\Rdelta)(w\otimes y)+t\alpha(f|_\Rdelta (1))),
\end{equation*}
where 
\begin{align*}
&t\in[0,\infty]\\
&f:\Rdelta\oplus U\rightarrow V\\
&x\in W\otimes f(\Rdelta\oplus U)^\perp \\
&w\otimes y\in W\otimes \Rdelta
\end{align*}and $\alpha:V\rightarrow (W\oplus \Rdelta)\otimes V$ is the map that sends $V$ to the orthogonal complement of $W\otimes V$ in $(W\oplus \Rdelta)\otimes V$. That is, $\alpha(v)=(0,1\otimes v)$ in $(W\otimes V)\oplus (\Rdelta\otimes V)\cong(W\oplus \Rdelta)\otimes V$. Note that $\alpha$ need not be $\C$-equivariant, and in this case it is not. 

All that remains is to check that the map $\phi$ is $\C$-equivariant. 
\begin{align*}
    &\phi(t,\sigma * f , \sigma x, \sigma w\otimes -y)\\
    =&((\sigma *f) |_U, \sigma x+ (W\otimes (\sigma *f)|_\Rdelta)(\sigma w\otimes -y)+t\alpha((\sigma *f)|_\Rdelta (1)))\\
    =&((\sigma *f) |_U, \sigma x+ \sigma(w\otimes f(y))+t ((\sigma *f)|_{\Rdelta}(1))),
\end{align*} where we have used that $\alpha(\sigma (v))=(0,1\otimes \sigma(v))$ and $\sigma(\alpha(v))=(0,-1\otimes \sigma(v))$. This is equal to the image of $(f|_U, x+ (W\otimes f|_\Rdelta)(w\otimes y)+t\alpha(f|_\Rdelta (1))$ under the $\C$-action.
\end{proof}

\begin{remark}\label{rem: cofib seq}   
These cofibre sequences are analogous to the following cofibre sequences constructed in the underlying calculus (see Proposition \ref{prop: wei 1.2}).  
\begin{equation*}
    \mathcal{J}_{n}(U\oplus \mathbb{R},V)\wedge S^{\mathbb{R}^n} \rightarrow \mathcal{J}_{n}(U,V)\rightarrow \mathcal{J}_{n+1} (U,V)
\end{equation*}
If one wanted to replace $\mathbb{R}$ with something higher dimensional this would involve `gluing' these cofibre sequences together in an iterative manner. We want the $C_2$-equivariant calculus constructed in this thesis to reduce down to the underlying calculus after forgetting the $C_2$-actions. This forces that the cofibre sequences in Proposition \ref{cofibseq} only hold for $X=\mathbb{R}$ and $X=\mathbb{R}^\delta$, since these cases both correspond $\mathbb{R}$ in the non-equivariant statement. To replace $X$ with something of higher dimension, for example $\mathbb{R}^{1+1\delta}$, would again mean taking some kind of iteration of cofibre sequences. This indicates that a potentially more involved approach may be needed if one wants to construct this kind of result in a $G$-equivariant orthogonal calculus, for an arbitrary group $G$. As a result, it is also not obvious how derivatives should behave for the arbitrary $G$ setting, since the fibre sequences that describe derivatives (see Proposition \ref{loops fibre sequence}) are a direct consequence of these cofibre sequences. 
\end{remark}

We now wish to define the functor categories $\Epq$. At the same time, we will also define functor categories $\OEpq$, which will later be used to classify the layers of the orthogonal tower. Before we can do this, we introduce the group $O(p,q)$ and discuss its actions. 
\begin{definition}\label{def:O(p,q) and matrix A}\index{$O(p,q)$}\index{$O(p,q)\rtimes\C$}
Define $O(p,q)$ to be the group of linear isometries from $\mathbb{R}^{p,q}$ to $\mathbb{R}^{p,q}$ with the $\C$-action defined as follows. Let $\C$ act on $O(p,q)$ by conjugation by the matrix
\begin{equation*}
    A=\begin{pmatrix}
        \Id_p & 0 \\ 0 &-\Id_q
    \end{pmatrix},
\end{equation*}
where $\Id_m$ denotes the $m$-dimensional identity matrix. That is, $\sigma g=AgA^{-1}$ for $g$ in  $O(p,q)$. In particular, $O(p,q)^{\C}=O(p)\times O(q)$. 
\end{definition}

The group action of $\C$ on $O(p,q)$ can be described by the group homomorphism 
\begin{equation*}
\upvarphi:\C\rightarrow \text{Aut}(O(p,q)),\quad  \alpha\mapsto \upvarphi_\alpha   
\end{equation*}
where $\upvarphi_\alpha  (g)=\alpha g\alpha^{-1}$ for $g\in O(p,q)$. If $\alpha =e$, then $\upvarphi_\alpha $ is the identity on $O(p,q)$. We can construct a new group called the semidirect product of $O(p,q)$ and $\C$ with respect to the map $\upvarphi$. The underlying set is $O(p,q)\times \C$ and the group operation is given by 
\begin{equation*}
(g_1,\alpha_1)\bullet (g_2,\alpha_2)=(g_1\upvarphi_{\alpha_1}(g_2), \alpha_1 \alpha_2).     
\end{equation*}
The actions of $\C $ and $O(p,q)$ do not commute, but they do commute up to the operation $\upvarphi$, that is $g\alpha=\alpha \upvarphi_\alpha (g)$. We denote this semidirect product by $O(p,q)\rtimes \C$.

The group homomorphisms 
\begin{align*}
\inc:O(p,q)&\rightarrow O(p,q)\rtimes \C\\
g&\mapsto (g,e)
\end{align*}
and 
\begin{align*}
\proj:O(p,q)&\rtimes \C\rightarrow \C\\
(g,\alpha)&\mapsto \alpha
\end{align*}
form a short exact sequence
\begin{equation*}
    1\rightarrow O(p,q)\overset{\inc}{\rightarrow} O(p,q)\rtimes \C \overset{\proj}{\rightarrow} \C \rightarrow 1,
\end{equation*}
where $1$ is the trivial group. The group homomorphism $\beta:\C\rightarrow O(p,q)\rtimes\C$, defined by $\alpha\mapsto (\Id_{p+q},\alpha)$ is such that $\proj \beta= \Id_{\C}$.

\begin{remark}\label{rem: what is the ms on semi direct prod}
Throughout this thesis, we equip the category of $\C$-spaces with the fine model structure and the category of $O(p,q)$-spaces with the coarse model structure (see Propositions \ref{prop: coarse ms} and \ref{finemodelstructure}). We then equip the category of $(O(p,q)\rtimes \C)$-spaces with a model structure which is fine with respect to $\C$ and coarse with respect to $O(p,q)$. This is discussed more in Remark \ref{remark: model structure on semi direct}. 
\end{remark}

There is an action of $O(p,q)\rtimes \C$ on $\mathbb{R}^{p,q}$ given by $(T,\sigma)(x):= T(\sigma (x))$. This can be extended to an action on $\mathbb{R}^{p,q} \otimes f(U)^\perp$ by $(T,\sigma)x:=((T,\sigma) \otimes \sigma) (x)$, where $f\in \LL(U,V)$. This induces an $O(p,q)\rtimes \C$-action on $\Gmor{p,q}{U}{V}$ by $$(T,\sigma)(f,x):=(\sigma *f,((T,\sigma) \otimes\sigma)(x)).$$ Hence, there is also an $O(p,q)\rtimes \C$-action on its Thom space $\Jmor{p,q}{U}{V}$, making $\Jpq$ an $\COTop_* $-enriched category.

\begin{proposition}\label{sphere as quotient of orthogonal groups prop}
For all $p>0$ and $q\geq 0$, there exists a $\C$-equivariant homeomorphism 
\begin{equation*}
    O(p,q)/O(p-1,q)\cong S(\mathbb{R}^{p+q\delta}).
\end{equation*}
\end{proposition}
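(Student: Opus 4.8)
The plan is to exhibit the homeomorphism as the orbit map of the $O(p,q)$-action on the unit sphere of $\mathbb{R}^{p+q\delta}$ evaluated at a $\C$-fixed unit vector, and then to check that this map intertwines the two relevant $\C$-actions.

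First I would set up notation: write $\mathbb{R}^{p,q}=\mathbb{R}^{p}\oplus\mathbb{R}^{q\delta}$ with its standard orthonormal basis $e_1,\dots,e_{p+q}$, and recall that for $p>0$ the subgroup $O(p-1,q)=O(\mathbb{R}^{(p-1)+q\delta})$ sits inside $O(p,q)$ as the pointwise stabiliser of the line $\mathbb{R}e_p$, acting on $e_p^{\perp}=\mathbb{R}^{(p-1)+q\delta}$. Since the $\C$-action on $\mathbb{R}^{p,q}$ is trivial on the summand $\mathbb{R}^{p}$ — equivalently it is the linear map $A$ of Definition \ref{def:O(p,q) and matrix A}, which fixes $e_p$ — the vector $e_p$ is a $\C$-fixed point of $S(\mathbb{R}^{p+q\delta})$.

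Next I would consider the orbit map $\Phi\colon O(p,q)\to S(\mathbb{R}^{p+q\delta})$ given by $g\mapsto g(e_p)$. This is continuous, and it is surjective because the orthogonal group of a real inner product space acts transitively on its unit sphere. One has $\Phi(g)=\Phi(h)$ if and only if $h^{-1}g$ fixes $e_p$, i.e.\ $h^{-1}g\in O(p-1,q)$, so $\Phi$ factors through a continuous bijection $\overline{\Phi}\colon O(p,q)/O(p-1,q)\to S(\mathbb{R}^{p+q\delta})$; since $O(p,q)$ is compact and $S(\mathbb{R}^{p+q\delta})$ is Hausdorff, $\overline{\Phi}$ is a homeomorphism.

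It then remains to upgrade this to a $\C$-equivariant statement. The subgroup $O(p-1,q)$ is $\C$-stable, being the stabiliser of the $\C$-fixed vector $e_p$ under an action by group automorphisms, so $O(p,q)/O(p-1,q)$ carries the quotient $\C$-action; and using that $\C$ acts on $O(p,q)$ by conjugation by $A$ and on $\mathbb{R}^{p,q}$ by the linear map $A$, with $A^{-1}=A$ and $A(e_p)=e_p$, one computes $\Phi(\sigma\cdot g)=(AgA^{-1})(e_p)=Ag(e_p)=\sigma\cdot\Phi(g)$, so $\Phi$, and hence $\overline{\Phi}$, is $\C$-equivariant. The only point that requires any care — and the sole ``obstacle'' — is this bookkeeping of the two superficially different $\C$-actions (conjugation by $A$ on the group versus the linear map $A$ on the sphere); once one notes that $A$ itself implements the sign action on $\mathbb{R}^{p+q\delta}$ and fixes $e_p$, everything lines up and the argument is otherwise routine.
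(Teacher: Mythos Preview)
Your proof is correct and follows essentially the same route as the paper: both use the orbit map $g\mapsto g\cdot v$ at a $\C$-fixed unit vector in the trivial summand, identify the stabiliser with $O(p-1,q)$, invoke orbit--stabiliser and compactness for the homeomorphism, and then check $\C$-equivariance using that $A$ fixes the chosen vector. The only cosmetic difference is the choice of base vector (you use $e_p$, the paper uses $e_1$), and you spell out the compact--Hausdorff step and the equivariance computation a bit more explicitly than the paper does.
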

\begin{proof}

Since $O(p,q)$ acts on $\mathbb{R}^{p+q\delta}$ transitively by linear isometries, there is a restricted transitive action of $O(p,q)$ on $S(\mathbb{R}^{p+q\delta})$. Fix the vector $e_1=(1,0,...,0)$ in $S(\mathbb{R}^{p+q\delta})$. There is a continuous $\C$-equivariant map $\phi: O(p,q)\rightarrow S(\mathbb{R}^{p+q\delta})$ given by $g\mapsto ge_1$. The stabiliser of $e_1$ is the subgroup of $O(p,q)$ given by 
\begin{equation*}
    \biggl\{ \begin{pmatrix} 1 & 0\\0&A\end{pmatrix}:A\in O(p-1,q) \biggr\}, 
\end{equation*}
which is $\C$-homeomorphic to $O(p-1,q)$. 
The quotient $O(p,q)/O(p-1,q)$ inherits a $\C$-action defined by $\sigma ([g]):= [\sigma (g)]$. It follows by the orbit-stabiliser theorem that there is a continuous homeomorphism $O(p,q)/O(p-1,q)\cong S(\mathbb{R}^{p+q\delta})$. This is shown in the following commutative diagram, where $i$ is the inclusion as the subgroup above and $\proj$ is the projection onto the quotient. 

\[\begin{tikzcd}
	{O(p-1,q)} & {O(p,q)} & {S(\mathbb{R}^{p+q\delta})} \\
	& {O(p,q)/O(p-1,q)}
	\arrow["i", from=1-1, to=1-2]
	\arrow["\phi", from=1-2, to=1-3]
	\arrow["{\proj}"', from=1-2, to=2-2]
	\arrow["\exists"', dashed, from=2-2, to=1-3]
\end{tikzcd}\]

\noindent By the commutativity of the diagram, the homeomorphism is given by $[g]\mapsto ge_1$, which is a $\C$-equivariant map, since the $\C$-action on $S(\R^{p+q\delta})$ fixes $e_1$.   
\end{proof}

We can now define the intermediate categories. 
\begin{definition}\index{$\Epq$}\index{$\OEpq$}
Define $\Epq$ to be the category of $\CTop_*$ enriched functors from the $(p,q)$-th jet category $\Jpq$ to $\CTop_*$ and $\C$-equivariant natural transformations, denoted by $\C \Nat_{p,q}(-,-)$.

Define the \emph{$(p,q)$-th intermediate category} $\OEpq$ to be the category of $\COTop_*$-enriched functors from the $(p,q)$-th jet category $\Jpq$ to $\COTop_*$, and $(O(p,q)\rtimes\C)$-equivariant natural transformations.
\end{definition}

For $p,q=0$ this definition is exactly the category $\Ezero$ in Definition \ref{jzero and ezero def}.

\begin{remark}\label{remNat}\index{$\Nat_{p,q}(E,F)$}\index{$\C\Nat_{p,q}(E,F)$}

The set of natural transformations between $E,F\in \Epq$ is denoted by $\Nat_{p,q}(E,F)$. There is a natural topology on $\Nat_{p,q}(E,F)$, which is the subspace topology of a product space as follows. 
\begin{align*}
    \Nat_{p,q} (E,F)&:= \int\limits_{V\in\Jpq} \Top_* (E(V),F(V))\\
    &\subseteq \prod\limits_{V\in\Jpq} \Top_* (E(V),F(V))
\end{align*}
There is a $\C$-action on the space of natural transformations $\Nat_{p,q}(E,F)$ induced by the conjugation action on $\Top_* (E(V),F(V))$. This defines an enrichment of $\Epq$ in $\CTop_*$. 

With respect to this conjugation action, we topologise the set of $\C$-equivariant natural transformations between $E,F\in\Epq$, denoted by $\C\Nat_{p,q}(E,F):=\Nat_{p,q}(E,F)^{\C}$ as follows. 
\begin{align*}
    \C\Nat_{p,q} (E,F)&:= \int\limits_{V\in\Jpq} \C\Top_* (E(V),F(V))\\
    &\subseteq \prod\limits_{V\in\Jpq} \C\Top_* (E(V),F(V))
\end{align*}

Similar descriptions exist for the morphisms in $O(p,q)\Epq$. 

We can describe a functor $E\in \Epq$ in terms of an enriched coend (and similarly for $O(p,q)\Epq$), by the Yoneda lemma (see for example \cite[Section 3.10]{Kel05}). 
\begin{equation*}
    \int^{W\in\Jpq} E(W) \wedge \Jpq(W,-)\cong E.
\end{equation*}
Alternatively, we can describe a functor $E\in \Epq$ in terms of natural transformations, by the enriched Yoneda lemma.
\begin{equation*}
    E(W)\cong\Nat_{p,q}(\Jpq(W,-),E)=\int\limits_{V\in\Jpq} \Top_* (\Jpq(W,V),E(V)).
\end{equation*}

Another useful result, that we use throughout the thesis, is that $\Nat_{p,q}(-,F)$ sends homotopy cofibre sequences to homotopy fibre sequences. This follows from the fact that the functor $\Top_*(-,A):\C\Top_*\rightarrow \C\Top_*$ sends homotopy cofibre sequences to homotopy fibre sequences (it is contravariant, sends colimits to limits, and $\CTop_*$ is closed symmetric monoidal) and using the definition of $\Nat_{p,q}(-,F)$ as the end above. 

\end{remark}

\section{Derivatives}

Derivatives play a key role in calculus of real functions. They describe the difference between successive polynomial approximations in the Taylor series. As the name calculus suggests, one can define a notion of derivatives of functors in orthogonal calculus, as done by Weiss in \cite[Section 2]{Wei95} and Barnes and Oman in \cite[Section 4]{BO13}. In this section, we will extend this theory to the $\C$-equivariant setting. The derivatives of these functors play a key role in the classification of $(p,q)$-homogeneous functors, as the derivative adjunctions form one half of the zig-zag of equivalences between the $(p,q)$-homogeneous model structure and the category of orthogonal $\C$-spectra with an action of $O(p,q)$, see Theorem \ref{zigzagclassification}. 

Let $i_{p,q}^{l,m}:\R^{p,q}\rightarrow\R^{l,m}$\index{$i_{p,q}^{l,m}$} be the $\C$-equivariant inclusion map $(x,y)\mapsto (x,0,y,0)$, where $p\leq l$ and $q\leq m$. Such a map induces a group homomorphism $O(p,q)\rightarrow O(l,m)$, which is $O(p,q)$-equivariant by letting $O(p,q)$ act on the first $p$ and $q$ coordinates of $O(l,m)$. That is, both $\R^{p,q}$ and $\R^{l,m}$ are $(O(p,q)\rtimes \C)$-spaces. 

This map induces a map of $(O(p,q)\rtimes\C)$-equivariant spaces 
\begin{align*}
(i_{p,q}^{l,m} )_{U,V}:\Gmor{p,q}{U}{V}&\rightarrow\Gmor{l,m}{U}{V}\\
(f,x)&\mapsto(f,(i_{p,q}^{l,m}\otimes id)(x))   
\end{align*}
which in turn induces a map on the associated Thom spaces, and hence also on the $(O(p,q)\rtimes\C)\Top_*$-enriched categories $\CJ{p}{q}\rightarrow\CJ{l}{m}$. These maps form commutative diagrams of categories as follows.
\[\begin{tikzcd}
	\Jpq && {\C\mathcal{J}_{p+1,q}} \\
	\\
	{\C\mathcal{J}_{p,q+1}} && {\C\mathcal{J}_{p+1,q+1}}
	\arrow["{i_{p,q}^{p+1,q}}", from=1-1, to=1-3]
	\arrow["{i_{p+1,q}^{p+1,q+1}}", from=1-3, to=3-3]
	\arrow["{i_{p,q}^{p,q+1}}"', from=1-1, to=3-1]
	\arrow["{i_{p,q+1}^{p+1,q+1}}"', from=3-1, to=3-3]
	\arrow["{i_{p,q}^{p+1,q+1}}"', from=1-1, to=3-3]
\end{tikzcd}\]
We can use these maps to define functors between the categories $\Epq$, and with the addition of an orbit functor we can do the same for the categories $\OEpq$.

\begin{definition}\index{$\res_{p,q}^{l,m}$}\index{$\res_{p,q}^{l,m}/O(l-p,m-q)$}
Let $p\leq l$ and $q\leq m$.

\noindent Define the \emph{restriction functor} $\res_{p,q}^{l,m}:\CE{l}{m} \rightarrow \CE{p}{q}$ as precomposition with $i_{p,q}^{l,m}$.

\noindent Define the \emph{restriction-orbit functor} by
\begin{align*}
\res_{p,q}^{l,m}/O(l-p,m-q):\OCE{l}{m}&\rightarrow\OCE{p}{q}\\
F&\mapsto (F\circ i_{p,q}^{l,m})/O(l-p,m-q).
\end{align*}

\end{definition}

\begin{remark}
For an $O(l,m)$-space $X$, the $O(p,q)$ action on $X/O(l-p,m-q)$ is given by $g[x]:=[gx]$, where $g\in O(p,q)$ and $x\in X$. This is a well defined action, since for $h\in  O(l-p,m-q)$ 
$$g[x]=[gx]\sim [hgx]=[ghx]=g[hx].$$Note that the restriction functor is often omitted from notation. 
\end{remark}

The restriction functors also form commutative diagrams, induced by the diagram above. 
\[\begin{tikzcd}
	{\C\mathcal{E}_{p+1,q+1}} && {\C\mathcal{E}_{p+1,q}} \\
	\\
	{\C\mathcal{E}_{p,q+1}} && {\C\mathcal{E}_{p,q}}
	\arrow["{\res_{p+1,q}^{p+1,q+1}}", from=1-1, to=1-3]
	\arrow["{\res_{p,q}^{p+1,q}}", from=1-3, to=3-3]
	\arrow["{\res_{p,q+1}^{p+1,q+1}}"', from=1-1, to=3-1]
	\arrow["{\res_{p,q}^{p,q+1}}"', from=3-1, to=3-3]
	\arrow["{\res_{p,q}^{p+1,q+1}}"', from=1-1, to=3-3]
\end{tikzcd}\]

The restriction and restriction-orbit functors have right adjoints. Before we can define them, we must define an adjoint to the orbit functor, see \cite[Lemma 4.2]{BO13}. 

\begin{lemma}
Let $p\leq l$ and $q\leq m$. There is an adjoint pair 
\begin{equation*}
    (-)/O(l-p,m-q):(O(l,m)\rtimes\C)\Top_* \rightleftarrows \COTop_* : \CI^{l,m}_{p,q}.
\end{equation*}
The right adjoint $\CI^{l,m}_{p,q}$ is defined as follows. An $(O(p,q)\rtimes\C)$-space $A$ can be considered as an $((O(p,q)\times O(l-p,m-q))\rtimes\C)$-space, by letting $O(l-p,m-q)$ act trivially. Call this space $\varepsilon^*A$. Define $\CI^{l,m}_{p,q}A$ to be the space of $(O(p,q)\times O(l-p,m-q))$-equivariant maps from $O(l,m)\rtimes \C$ to $\varepsilon^*A$, which has the $(O(l,m)\rtimes\C)$-action induced by the conjugation $\C$-action and the action of $O(l,m)$ on itself.

\end{lemma}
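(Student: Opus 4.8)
The plan is to realise $(-)/O(l-p,m-q)$ as a composite of two left adjoints, and then obtain $\CI^{l,m}_{p,q}$ as the composite of the corresponding right adjoints in the opposite order; this follows the template of \cite[Lemma 4.2]{BO13}, the extra work being to carry the $\C$-action along. Write $G:=O(l,m)\rtimes\C$, let $N\leq G$ be the block-diagonal copy of $O(l-p,m-q)$ acting on the last $l-p$ trivial and $m-q$ sign coordinates, and set $H:=(O(p,q)\times O(l-p,m-q))\rtimes\C$. First I would record the elementary group theory: $O(l,m)$ is (abstractly $O(l+m)$, hence) a compact Lie group, so $G$ is compact Lie and $N$, $H$ are closed subgroups; the block-diagonal subgroups $O(p,q)$ and $O(l-p,m-q)$ of $O(l,m)$ commute and are each preserved by conjugation by the matrix $A$ of Definition \ref{def:O(p,q) and matrix A} (a direct check from the block structure), so $H$ is genuinely a subgroup of $G$, $N$ is normal in $H$, and the evident map $O(p,q)\rtimes\C\hookrightarrow H$ induces an isomorphism $H/N\cong O(p,q)\rtimes\C$. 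It follows that for a $G$-space $X$ the orbit space $X/N$ carries the restricted $H$-action, which descends along $H\to H/N$ since $N$ acts trivially on its own orbits; hence
\begin{equation*}
(-)/O(l-p,m-q)\ =\ \big((O(l,m)\rtimes\C)\Top_* \xrightarrow{\ \res_H^G\ } H\Top_* \xrightarrow{\ (-)/N\ } (O(p,q)\rtimes\C)\Top_*\big).
\end{equation*}

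Next I would identify the two right adjoints. The orbit functor $(-)/N\colon H\Top_*\to (H/N)\Top_*$ is left adjoint to the inflation functor $\varepsilon^*$ along $H\to H/N$, since any based $H$-map out of $X$ into a space on which $N$ acts trivially factors uniquely through $X/N$ (the orbit version of the inflation adjunction; cf.\ Section \ref{sec:Gspace}). The restriction functor $\res_H^G$ of the compact Lie group $G$ along the closed subgroup $H$ has a right adjoint, the coinduction (cofree) functor $Y\mapsto \operatorname{Map}^H(G_+,Y)$ of based $H$-equivariant maps out of $G_+$, with $G$ acting through right translation on $G_+$; this is the standard companion of Proposition \ref{prop: mm3.1.2}. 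Composing, the right adjoint of $(-)/O(l-p,m-q)$ is $A\mapsto \operatorname{Map}^H\!\big((O(l,m)\rtimes\C)_+,\varepsilon^*A\big)$, and I would then unwind this: restricting such an $H$-map to the identity coset $O(l,m)\subseteq O(l,m)\rtimes\C$ and using equivariance under $\C\subseteq H$ to recover its values on the remaining coset identifies it with the $(O(p,q)\times O(l-p,m-q))$-equivariant maps out of $O(l,m)\rtimes\C$ of the statement, where $O(l,m)$ acts by right translation and $\C$ by the conjugation action on $O(l,m)$ combined with the $\C$-action on $A$, which is exactly $\CI^{l,m}_{p,q}A$.

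Finally I would assemble the hom-set bijection
\begin{equation*}
(O(p,q)\rtimes\C)\Top_*\big(X/O(l-p,m-q),A\big)\ \cong\ (O(l,m)\rtimes\C)\Top_*\big(X,\CI^{l,m}_{p,q}A\big)
\end{equation*}
as the composite of the two adjunction bijections, check naturality in $X$ and in $A$, and upgrade it to a homeomorphism of mapping spaces (each factor already being such), so that the adjunction is topological. I expect the only real obstacle to be the bookkeeping of the three interacting actions: because $G=O(l,m)\rtimes\C$ is a semidirect product, the $\C$- and $O(l,m)$-actions on $\CI^{l,m}_{p,q}A$ commute only up to the automorphism $\upvarphi$, so one must check carefully that $\varepsilon^*A$ really is an $H$-space (this is where Definition \ref{def:O(p,q) and matrix A}, that conjugation by $A$ respects the block decomposition, is used), that the $H$-equivariance constraint on maps $G_+\to\varepsilon^*A$ is compatible with right $G$-translation, and that the resulting functor lands in $(O(l,m)\rtimes\C)\Top_*$ with the fine-in-$\C$, coarse-in-$O(l,m)$ structure of Remark \ref{rem: what is the ms on semi direct prod}. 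None of this is deep, but it is precisely where the argument departs from the non-equivariant template of \cite[Lemma 4.2]{BO13}.
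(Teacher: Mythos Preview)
The paper does not actually prove this lemma: it states the adjunction, describes the right adjoint $\CI^{l,m}_{p,q}$, and moves on (implicitly deferring to \cite[Lemma 4.2]{BO13}). Your approach---factoring the orbit functor as $\res_H^G$ followed by $(-)/N$, then composing the right adjoints coinduction and inflation---is the standard one and is correct; this is precisely the template the paper is pointing to.

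One imprecision worth tightening: your final unwinding claims that $\operatorname{Map}^H(G_+,\varepsilon^*A)$ agrees with the space of $K$-equivariant maps out of $G=O(l,m)\rtimes\C$. What your restriction argument actually establishes is the identification
\[
\operatorname{Map}^H(G_+,\varepsilon^*A)\ \cong\ \operatorname{Map}^K\big(O(l,m)_+,\varepsilon^*A\big),
\]
since $G=H\cdot O(l,m)$ with $H\cap O(l,m)=K$, so an $H$-map out of $G$ is determined by (and equivalent to) a $K$-map out of $O(l,m)$. The space $\operatorname{Map}^K(G_+,\varepsilon^*A)$ is genuinely larger---its $\C$-fixed points (for the residual left $\C$-translation) recover $\operatorname{Map}^H(G_+,\varepsilon^*A)$---so the two should not be conflated. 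This does not affect the adjunction itself, only the matching with the lemma's wording; either the lemma's description is slightly informal or there is a small typo in the domain. In any case your composite right adjoint is the correct one, and the rest of your bookkeeping (checking that conjugation by $A$ preserves the block subgroups, so that $H$ really is a subgroup of $G$ and $\varepsilon^*A$ is an $H$-space) is exactly what is needed.
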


\newpage
\begin{definition}\label{induction definition}\index{$\ind_{p,q}^{l,m}$}\index{$\ind_{p,q}^{l,m}\CI$}\index{$\ind_{0,0}^{l,m}\varepsilon^*$}
Let $p\leq l$ and $q\leq m$.

\noindent Define the \emph{induction functor} $\ind_{p,q}^{l,m}:\CE{p}{q} \rightarrow \CE{l}{m}$ by
\begin{equation*}
    \ind_{p,q}^{l,m}F:U\mapsto\Nat_{p,q}(\CJ{l}{m}(U,-),F),
\end{equation*}
where the space of natural transformations of objects of $\Epq$ is equipped with the conjugation $\C$-action (see Remark \ref{remNat}).
\noindent 

Define the \emph{inflation-induction functor} $\ind_{p,q}^{l,m}\CI:\OCE{p}{q} \rightarrow \OCE{l}{m}$ by
\begin{equation*}
    \ind_{p,q}^{l,m}\CI F:U\mapsto\Nat_{\OEpq}(\CJ{l}{m}(U,-),\CI_{p,q}^{l,m}\circ F).
\end{equation*}
When $p,q=0$, $\CI_{p,q}^{l,m}$ simply gives $F$ the trivial $O(l,m)$-action, hence we write $\ind_{0,0}^{l,m}\CI F$ as $\ind_{0,0}^{l,m}\varepsilon^*F$. This is what we call the $(l,m)$-th derivative of $F$, denoted by $$F^{(l,m)}:=\ind_{0,0}^{l,m}\varepsilon^* F.$$\index{$(-)^{(p,q)}$} 
\end{definition}

\begin{lemma}
The induction functor $\ind_{p,q}^{l,m}$ is right adjoint to the restriction functor $\res_{p,q}^{l,m}$.
The inflation-induction functor $\ind_{p,q}^{l,m}\CI$ is right adjoint to the restriction-orbit functor $\res_{p,q}^{l,m}/O(l-p,m-q)$.
\end{lemma}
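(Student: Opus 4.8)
The plan is to recognise both the induction functor and the inflation--induction functor as pointwise right Kan extensions, and then to read off the adjunctions from the universal property of the ends defining them together with the enriched Yoneda lemma; this is the $\C$-equivariant analogue of the classical adjunction recorded in Definition~\ref{def: res and ind for weiss calc}.

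\textbf{The restriction--induction adjunction.} The functor $\res_{p,q}^{l,m}$ is precomposition with the (identity-on-objects) functor $i_{p,q}^{l,m}\colon\Jpq\to\CJ{l}{m}$. Precomposition along an enriched functor always admits a right adjoint given by the pointwise right Kan extension, provided the requisite weighted limits --- here ends --- exist in the target $\CTop_*$, which they do since $\CTop_*$ is complete. For $F\in\Epq$ this right Kan extension is
\[
U\longmapsto\int_{V\in\Jpq}\CTop_*\big(\CJ{l}{m}(U,V),F(V)\big)=\Nat_{p,q}\big(\CJ{l}{m}(U,-),F\big),
\]
which is precisely $\ind_{p,q}^{l,m}F$. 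To make the adjunction explicit and to confirm $\C$-equivariance, I would fix $G\in\CE{l}{m}$ and $F\in\Epq$, write both $\C\Nat_{l,m}(G,\ind_{p,q}^{l,m}F)$ and $\C\Nat_{p,q}(\res_{p,q}^{l,m}G,F)$ as ends using the descriptions in Remark~\ref{remNat}, and build a natural isomorphism between them using Fubini for ends, the closed symmetric monoidal structure of $\CTop_*$, and the enriched Yoneda isomorphism $G(V)\cong\Nat_{l,m}(\CJ{l}{m}(V,-),G)$. Checking that each step is $\C$-equivariant lets the isomorphism descend to $\C$-fixed points, i.e.\ to $\C$-equivariant natural transformations.

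\textbf{The restriction--orbit / inflation--induction adjunction.} Write $O':=O(l-p,m-q)$. The restriction--orbit functor factors as precomposition with $i_{p,q}^{l,m}$ followed by the pointwise orbit functor $(-)/O'$, where in the intermediate step one restricts the enrichment along $O(p,q)\rtimes\C\hookrightarrow O(l,m)\rtimes\C$ (so that $O'$ acts trivially). By the preceding lemma $(-)/O'\dashv\CI_{p,q}^{l,m}$, and postcomposition with this adjunction induces an adjunction between the corresponding functor categories; precomposition with $i_{p,q}^{l,m}$ again has right adjoint the right Kan extension along $i_{p,q}^{l,m}$. Composing the two right adjoints sends $F\in\OCE{p}{q}$ to
\[
U\longmapsto\Nat_{\OEpq}\big(\CJ{l}{m}(U,-),\CI_{p,q}^{l,m}\circ F\big),
\]
which is exactly $\ind_{p,q}^{l,m}\CI F$, so the composite adjunction is the asserted one.

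\textbf{The main obstacle.} The delicate part is the bookkeeping of enrichments and group actions: tracking the change of enrichment along $O(p,q)\rtimes\C\hookrightarrow O(l,m)\rtimes\C$ and the trivial $O'$-action introduced by $\CI_{p,q}^{l,m}$, confirming that the end in Remark~\ref{remNat} really computes the weighted limit in the $\COTop_*$-enriched sense, and verifying that every hom-object bijection is equivariant for the relevant group so that it passes to genuine morphisms of equivariant natural transformations. The Kan-extension formalism itself is standard; the effort is in making it precise in this bi-equivariant ($\C$ genuine, $O(p,q)$ naive) setting.
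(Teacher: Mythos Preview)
Your proposal is correct and takes essentially the same approach as the paper: the paper verifies the first adjunction by exactly the end/coend manipulation you outline (expand $\res_{p,q}^{l,m}E$ via the coend description of $E$ from the enriched Yoneda lemma, apply Fubini and the internal hom adjunction, and collapse back to $\C\Nat_{l,m}(E,\ind_{p,q}^{l,m}F)$), and it leaves the second adjunction to the reader, which your composition-of-adjoints factorisation handles cleanly. Your framing in terms of right Kan extensions is a convenient way to package the same computation.
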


\begin{proof}
We will prove the adjunction between the restriction and induction functors, leaving the restriction-orbit inflation-induction adjunction to the reader.  
{\allowdisplaybreaks
\begin{align*}
&\quad \C\Nat_{p,q}\left(\res_{p,q}^{l,m} E,F\right)\\
&=\int\limits_{V\in\Jpq} \CTop_*\left(\res_{p,q}^{l,m} E(V),F(V)\right)\\
&=\int\limits_{V\in\Jpq} \CTop_* \left([(i_{p,q}^{l,m})^*E](V),F(V)\right)\\
&\cong \int\limits_{V\in\Jpq} \CTop_* \left((i_{p,q}^{l,m})^*\left(\int\limits^{W\in\CJ{l}{m}}E(W)\wedge\Jmor{l,m}{W}{-}\right)(V),F(V)\right)\\
&\cong \int\limits_{V\in\Jpq} \CTop_* \left(\int\limits^{W\in\CJ{l}{m}}E(W)\wedge\bigl[(i_{p,q}^{l,m})^*\Jmor{l,m}{W}{-}\bigr](V),F(V)\right)\\
&\cong \int\limits_{V\in\Jpq} \int\limits_{W\in\CJ{l}{m}}\CTop_* \left(E(W)\wedge\bigl[(i_{p,q}^{l,m})^*\Jmor{l,m}{W}{-}\bigr](V),F(V)\right)\\
&\cong \int\limits_{W\in\CJ{l}{m}}\int\limits_{V\in\Jpq} \CTop_* \left(E(W), \Top_*\left(\bigl[(i_{p,q}^{l,m})^*\Jmor{l,m}{W}{-}\bigr](V),F(V)\right)\right)\\
&\cong \int\limits_{W\in\CJ{l}{m}}\CTop_* \left(E(W), \int\limits_{V\in\Jpq} \Top_*\left(\bigl[(i_{p,q}^{l,m})^*\Jmor{l,m}{W}{-}\bigr](V),F(V)\right)\right)\\
&\cong \int\limits_{W\in\CJ{l}{m}}\CTop_* \left(E(W),\Nat_{p,q}\left((i_{p,q}^{l,m})^*\Jmor{l,m}{W}{-},F\right)\right)\\
&\cong \int\limits_{W\in\CJ{l}{m}}\CTop_* \left(E(W),\ind_{p,q}^{l,m}F(W)\right)\\
&=\C\Nat_{l,m}\left(E,\ind_{p,q}^{l,m}F\right)
\end{align*}}

where $(i_{p,q}^{l,m})^*$ represents precomposition with $i_{p,q}^{l,m}$. Here we have made use of standard results of enriched ends and coends, including the Yoneda Lemma \cite[Lemma 6.3.5]{BR20}.
\end{proof}

As a result of the adjuction above and the commutative diagrams involving the restriction functors, there are commutative diagrams of categories
\[\begin{tikzcd}
	{\C\mathcal{E}_{p,q}} && {\C\mathcal{E}_{p+1,q}} \\
	\\
	{\C\mathcal{E}_{p,q+1}} && {\C\mathcal{E}_{p+1,q+1}}
	\arrow["{\ind_{p,q}^{p+1,q}}", from=1-1, to=1-3]
	\arrow["{\ind_{p+1,q}^{p+1,q+1}}", from=1-3, to=3-3]
	\arrow["{\ind_{p,q}^{p,q+1}}"', from=1-1, to=3-1]
	\arrow["{\ind_{p,q+1}^{p+1,q+1}}"', from=3-1, to=3-3]
	\arrow["{\ind_{p,q}^{p+1,q+1}}"', from=1-1, to=3-3]
\end{tikzcd}\]

\begin{remark}
As in \cite{Wei95}, the induction functors give us a notion of differentiation of functors in our input category $\Ezero$ (see Definition \ref{jzero and ezero def}). In particular, for the $\C$-equivariant case, there are two directions in one can take a derivative; in the $p$ direction and in the $q$ direction. These two different directions of differentiating can be thought of as partial derivatives, and then the commuting diagram above tells us that taking both possible orders of mixed partial derivatives is the same as taking the total derivative. 
\end{remark}

We have already seen one relation between induction and the $(p,q)$-jet categories $\Jpq$ in Proposition \ref{cofibseq}. We now recreate another key result \cite[Theorem 2.2]{Wei95} in the $\C$-equivariant setting. The following proposition defines induction iteratively as a homotopy fibre, and acts as a tool for calculating the derivatives. 

\begin{proposition}\label{loops fibre sequence}
For all $U\in\Jzero$ and for all $F\in\Epq$, there are homotopy fibre sequences of $\C$-spaces
\begin{equation*}
\res_{p,q}^{p+1,q}\ind_{p,q}^{p+1,q} F(U)\rightarrow F(U)\rightarrow \Omega^{(p,q)\mathbb{R}}F(U\oplus \R)
\end{equation*}
and 
\begin{equation*}
\res_{p,q}^{p,q+1}\ind_{p,q}^{p,q+1} F(U)\rightarrow F(U)\rightarrow \Omega^{(p,q)\Rdelta}F(U\oplus \Rdelta),
\end{equation*}
\end{proposition}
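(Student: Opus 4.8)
The plan is to follow the strategy of Weiss's proof of \cite[Theorem 2.2]{Wei95} (reproduced in the excerpt for the non-equivariant case): feed the $\C$-equivariant cofibre sequences of Proposition \ref{cofibseq} into the contravariant functor $\Nat_{p,q}(-,F)$ and then identify the three resulting terms by the enriched Yoneda lemma. I carry out the $X=\R$ case; the $X=\Rdelta$ case is identical after relabelling.

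First, specialise Proposition \ref{cofibseq} by taking $W=\R^{p,q}$ and $X=\R$, so that $W\oplus X=\R^{p+1,q}$ and $W\otimes X=(p,q)\R$. For each fixed $U\in\Jzero$ this gives a homotopy cofibre sequence
\begin{equation*}
\Jmor{p,q}{U\oplus\R}{-}\wedge S^{(p,q)\R}\rightarrow\Jmor{p,q}{U}{-}\rightarrow\Jmor{p+1,q}{U}{-}.
\end{equation*}
Here I would first note that the first map is the restricted composition map of Proposition \ref{cofibseq}, which is natural in the second variable, and that the homeomorphism $\phi$ identifying the mapping cone with $\Jmor{p+1,q}{U}{-}$ is likewise natural; hence this is a homotopy cofibre sequence of objects of $\Epq$, where $\Jmor{p+1,q}{U}{-}$ is regarded as lying in $\Epq$ via restriction along $i_{p,q}^{p+1,q}$.

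Next, apply the contravariant functor $\Nat_{p,q}(-,F)$, equipped with its conjugation $\C$-action. By Remark \ref{remNat} this functor takes homotopy cofibre sequences to homotopy fibre sequences of $\C$-spaces — on each object via the internal hom $\Top_*(-,A)$, and then via the end description of $\Nat_{p,q}$; recall every object of $\CTop_*$ is fibrant. This yields a homotopy fibre sequence of $\C$-spaces
\begin{equation*}
\Nat_{p,q}(\Jmor{p+1,q}{U}{-},F)\rightarrow\Nat_{p,q}(\Jmor{p,q}{U}{-},F)\rightarrow\Nat_{p,q}(\Jmor{p,q}{U\oplus\R}{-}\wedge S^{(p,q)\R},F).
\end{equation*}
The left-hand term is $\res_{p,q}^{p+1,q}\ind_{p,q}^{p+1,q}F(U)$ by Definition \ref{induction definition}, since $\res_{p,q}^{p+1,q}$ is the identity on objects. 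The middle term is $F(U)$ by the enriched Yoneda lemma of Remark \ref{remNat}. For the right-hand term, $S^{(p,q)\R}$ is constant in the second variable, so the $\C$-equivariant adjunction homeomorphism $\Top_*(A\wedge K,B)\cong\Top_*(A,\Top_*(K,B))$ (part of the closed symmetric monoidal structure on $\CTop_*$), together with the compatibility of ends with $\Top_*(-,-)$, gives a $\C$-homeomorphism
\begin{equation*}
\Nat_{p,q}(\Jmor{p,q}{U\oplus\R}{-}\wedge S^{(p,q)\R},F)\cong\Nat_{p,q}(\Jmor{p,q}{U\oplus\R}{-},\Omega^{(p,q)\R}F),
\end{equation*}
and the right-hand side is $\Omega^{(p,q)\R}F(U\oplus\R)$ by Yoneda applied to $\Omega^{(p,q)\R}F\in\Epq$. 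Substituting gives the first fibre sequence of the statement; repeating the argument with $X=\Rdelta$, $W\oplus X=\R^{p,q+1}$, $W\otimes X=(p,q)\Rdelta$ gives the second.

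I do not expect a serious obstacle; the argument is formal once Proposition \ref{cofibseq} is available. The two points that need genuine care are (i) checking that the first map and the cofibre identification $\phi$ of Proposition \ref{cofibseq} are natural in the second variable, so that the first step produces a cofibre sequence inside $\Epq$ rather than merely objectwise; and (ii) tracking $\C$-equivariance through the Yoneda and tensor--hom isomorphisms — this is automatic, since these are exactly the structure isomorphisms of the $\CTop_*$-enrichment and of the closed symmetric monoidal structure on $\CTop_*$. The bookkeeping with $\res_{p,q}^{p+1,q}$ and $\res_{p,q}^{p,q+1}$ is harmless because all of the jet categories $\CJ{p}{q}$ share the same objects.
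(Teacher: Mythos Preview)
Your proposal is correct and follows essentially the same approach as the paper: specialise Proposition~\ref{cofibseq} at $W=\R^{p,q}$, apply the contravariant functor $\Nat_{p,q}(-,F)$ to convert the cofibre sequence into a fibre sequence, and then identify the three terms via the enriched Yoneda lemma and the definition of $\ind_{p,q}^{p+1,q}$. The paper's argument is terser but identical in content; your added remarks on naturality in the second variable and the tensor--hom identification for the loop-space term simply make explicit what the paper leaves implicit.
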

where $\Omega^{(p,q)V}Y$ represents the space of maps $S^{(p,q)V}\rightarrow Y$, for a $\C$-space $Y$, and is given the conjugation $\C$-action. 

\begin{proof}
We prove the existence of the first fibre sequence and leave the similar second case to the reader.
From Proposition \ref{cofibseq} there exists a homotopy cofibre sequence  
\begin{equation*}
    \Jmor{p,q}{U\oplus \R}{-}\wedge S^{(p,q)\R} \rightarrow \Jmor{p,q}{U}{-}\rightarrow \Jmor{p+1,q}{U}{-}.
\end{equation*}
Pick $F\in\Epq$ and apply the contravariant functor $\Nat_{p,q}(-,F)$ to the cofibre sequence above. This yields a homotopy fibre sequence (see Remark \ref{remNat})

\begin{align*}
\Nat_{p,q}\left(\Jmor{p,q}{U\oplus \R}{-}\wedge S^{(p,q)\R},F\right)\leftarrow \Nat_{p,q}&\left(\Jmor{p,q}{U}{-},F\right)\\
&\leftarrow \Nat_{p,q}\left(\Jmor{p+1,q}{U}{-},F\right).
\end{align*}

Application of the enriched Yoneda Lemma and the definition of  $\ind_{p,q}^{p+1,q}$ gives the desired fibre sequence, since the functor $\Nat_{p,q}(-,F)$ preserves $\C$-equivariant maps. 
\end{proof}

\section{The $(p,q)$-stable model structure}

We want to compare the $(p,q)$-th intermediate category $\OEpq$ with the category of orthogonal $\C$-spectra with an action of $O(p,q)$. The $(p,q)$-stable model structure constructed will be a modification of the stable model structure on orthogonal $\C$-spectra, see \cite[Section 3.4]{MM02}. This modification will account for the fact that the structure maps of objects in $\OEpq$ are of the form 
\begin{equation*}
    \sigma_X:S^{(p,q)V}\wedge X(W)\rightarrow X(W\oplus V).
\end{equation*}
The structure maps $\sigma_X$\index{$\sigma_X$} of an object $X\in\OEpq$ are induced by the identification of $S^{(p,q)V}$ as a subspace of $\Jpq(W,W\oplus V)$ (see the proof of Proposition \ref{cofibseq}) and the structure maps of $X$ being an enriched functor. 

\begin{remark}
In the underlying calculus, there exists a description of the intermediate category $O(n)\mathcal{E}_n$ as a category of diagram spectra (see \cite[Part 1]{MMSS01}). An analogous description is also true in the $\C$-equivariant setting for the $(p,q)$-th intermediate category $\OEpq$. Since this description will not be used in the remainder of the thesis, we omit the details. The statement is analogous to the underlying calculus version of Barnes and Oman \cite[Lemma 7.3 and Proposition 7.4]{BO13} and checking that the maps used are equivariant uses the same method as Taggart in \cite[Lemma 5.12 and Proposition 5.13]{Tag22real}.
\end{remark}

We begin by defining four functors. These functors will form adjoint pairs that when composed give an adjunction between the categories $\CTop_*$ and $\OEpq$. Recall from Definition \ref{def:O(p,q) and matrix A} that $\beta:\C\rightarrow O(p,q)\rtimes \C$ is defined by $\alpha\mapsto (\Id_{p+q},\alpha)$.

\begin{definition}
Let $\beta^*$ be the restriction functor $\COTop_*\rightarrow \CTop_*$, which sends $X$ to the underlying space $X$ with $\C$-action given by $\sigma x= (\beta (\sigma)) x$. 

Let $\beta_!$ be the functor $\CTop_*\rightarrow \COTop_*$ defined by 
\begin{equation*}
    X\mapsto  (O(p,q)\rtimes \C)_+\wedge_{\C} X.
\end{equation*}

Let $\Jmor{p,q}{U}{-}\wedge (-)$ be the free functor $\COTop_*\rightarrow \OEpq$ defined by 
\begin{equation*}
    X\mapsto \Jmor{p,q}{U}{-}\wedge X.
\end{equation*}
    
Let $\Ev_U$ be the evaluation at $U\in \Jzero$ functor $\OEpq\rightarrow \COTop_*$ defined by 
\begin{equation*}
   F\mapsto F(U).
\end{equation*}
\end{definition}

\begin{proposition}\label{adjunctions}
The restriction functor $\beta^*$ is right adjoint to the functor $\beta_!$. 
The evaluation at $U$ functor $\Ev_U$ is right adjoint to the free functor $\Jmor{p,q}{U}{-}\wedge (-)$. 
The right adjoints commute with colimits and hence pushouts. 
\end{proposition}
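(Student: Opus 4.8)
\emph{Proof proposal.} The plan is to recognise each of the two stated adjunctions as an instance of a standard adjunction, and then to note that the right adjoints $\beta^*$ and $\Ev_U$ are in fact cocontinuous. First I would deal with $\beta_!\dashv\beta^*$. The homomorphism $\beta\colon\C\to O(p,q)\rtimes\C$, $\alpha\mapsto(\Id_{p+q},\alpha)$, is injective with image the subgroup $\{\Id_{p+q}\}\rtimes\C\cong\C$, so $\beta^*$ is precisely restriction along this subgroup inclusion and $\beta_!=(O(p,q)\rtimes\C)_+\wedge_{\C}(-)$ is the associated induction functor. The adjunction is then exactly Proposition \ref{prop: mm3.1.2} applied with $G=O(p,q)\rtimes\C$, $H=\C$ and $i=\beta$, which gives, naturally in $X\in\CTop_*$ and $Y\in\COTop_*$,
\begin{equation*}
\COTop_*\big((O(p,q)\rtimes\C)_+\wedge_{\C}X,\,Y\big)\;\cong\;\CTop_*(X,\beta^*Y).
\end{equation*}

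Next I would deal with $\Jmor{p,q}{U}{-}\wedge(-)\dashv\Ev_U$, which is the enriched free--evaluation adjunction for the $\COTop_*$-enriched functor category $\OEpq$. Expressing the sets of $(O(p,q)\rtimes\C)$-equivariant natural transformations as enriched ends (as in Remark \ref{remNat}) and using that $\COTop_*$ is closed symmetric monoidal, I would chase
\begin{align*}
\Nat_{\OEpq}\big(\Jmor{p,q}{U}{-}\wedge X,\,F\big)
&=\int_{V\in\Jpq}\COTop_*\big(\Jmor{p,q}{U}{V}\wedge X,\,F(V)\big)\\
&\cong\int_{V\in\Jpq}\COTop_*\big(X,\,\Top_*(\Jmor{p,q}{U}{V},F(V))\big)\\
&\cong\COTop_*\Big(X,\,\int_{V\in\Jpq}\Top_*(\Jmor{p,q}{U}{V},F(V))\Big)\\
&\cong\COTop_*\big(X,\,F(U)\big)=\COTop_*(X,\Ev_U F),
\end{align*}
where the second isomorphism is the tensor--hom adjunction in $\COTop_*$, the third holds because the hom-functor out of $X$ commutes with limits and hence with ends, and the last is the enriched Yoneda lemma \cite[Lemma 6.3.5]{BR20} identifying the end with $F(U)$; all isomorphisms are natural in $X$ and $F$.

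Finally, for the colimit assertion I would argue directly. Colimits in the functor category $\OEpq$ are computed objectwise since $\COTop_*$ is cocomplete, so $\Ev_U$ preserves all colimits, in particular pushouts. For $\beta^*$, colimits of $(O(p,q)\rtimes\C)$-spaces and of $\C$-spaces are formed by taking the colimit of the underlying pointed spaces and equipping it with the induced action, and $\beta^*$ leaves the underlying space unchanged; hence $\beta^*$ preserves colimits (equivalently, $\beta^*$ admits a further right adjoint, coinduction along $\beta$, and so is cocontinuous). Pushouts being a particular colimit, the claim follows.

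I do not anticipate any genuine obstacle: everything reduces to the induction--restriction adjunction of Proposition \ref{prop: mm3.1.2}, the tensor--hom adjunction and enriched Yoneda lemma, and the objectwise computation of colimits in a functor category. The only point demanding care is tracking the $\C$- and $(O(p,q)\rtimes\C)$-actions through the end and coend manipulations, so that the displayed bijections are genuinely equivariant isomorphisms rather than merely bijections of underlying sets.
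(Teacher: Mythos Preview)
Your proposal is correct and matches the paper's own proof essentially line for line: the paper invokes Proposition~\ref{prop: mm3.1.2} for $\beta_!\dashv\beta^*$, carries out the same end computation using the smash--hom adjunction and the enriched Yoneda lemma for $\Jmor{p,q}{U}{-}\wedge(-)\dashv\Ev_U$, and justifies cocontinuity of the right adjoints by the same observation that colimits in $G$-spaces are built on underlying spaces and colimits in $\OEpq$ are objectwise.
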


\begin{proof}
The adjunction between the restriction functor $\beta^*$ and the functor $\beta_!$ is well known (see Proposition \ref{prop: mm3.1.2}). The following argument proves the second adjoint pair,
{\allowdisplaybreaks
\begin{align*}
&\quad \OEpq (\Jmor{p,q}{U}{-}\wedge X, F) \\
&= \int\limits_{W\in \Jpq} \COTop_*(\Jmor{p,q}{U}{W}\wedge X, F(W))\\
&\cong \int\limits_{W\in \Jpq} \COTop_*( X, \Top_*(\Jmor{p,q}{U}{W},F(W)))\\
&= \COTop_*\left( X,  \int\limits_{W\in \Jpq} \Top_*(\Jmor{p,q}{U}{W},F(W))\right)\\
&\cong \COTop_*( X, F(U))
\end{align*}}
where $X\in \COTop_*$ and $F\in\OEpq$. Here we have used a standard smash product adjunction along with the Yoneda Lemma, and $(O(p,q)\rtimes \C)$ acts on $\Top_*(\Jmor{p,q}{U}{W},F(W))$ by conjugation. 

The right adjoints $\beta^*$ and $\Ev_U$ commute with colimits, since colimits in $\C$-spaces are constructed in spaces and then given a $\C$-action, and colimits in $\OEpq$ are constructed objectwise. 
\end{proof}

There is a projective model structure on the intermediate categories $\OEpq$, similar to the levelwise model structure constructed by Barnes and Oman \cite[Lemma 7.6]{BO13}, in which fibrations and weak equivalences are defined objectwise. A left Bousfield localisation of this model structure will give the $(p,q)$-stable model structure. This projective model structure is exactly the level model structure of \cite[Section 6]{MMSS01}. 

\begin{definition}\label{def: obj WE}
Let $f:X\rightarrow Y$ be a map in $\OEpq$. Call $f$ an \emph{objectwise fibration} or an \emph{objectwise weak equivalence} if $\beta^*(f(U)):\beta^*(X(U))\rightarrow \beta^*(Y(U))$ is a fibration or weak equivalence of pointed $\C$-spaces, for each $U\in \Jzero$. Call $f$ a \emph{cofibration} if it has the left lifting property with respect to the objectwise acyclic fibrations. Denote the collection of objectwise weak equivalences by $W_{level}$. 
\end{definition}

Now we define two sets of maps in the $(p,q)$-th intermediate category $\OEpq$.

\begin{definition}\index{$\Ilevel$}\index{$\Jlevel$}
Define sets $\Ilevel$ and $\Jlevel$ in $\OEpq$ by 
\begin{align*}
    &\Ilevel=\{\Jmor{p,q}{U}{-}\wedge  \beta_!(i) : U\in\Jzero, i\in I_{\C}\}\\
    &\Jlevel=\{\Jmor{p,q}{U}{-}\wedge \beta_!(j) : U\in\Jzero, j\in J_{\C}\}
\end{align*}
where $I_{\C},J_{\C}$ are the generating cofibrations and acyclic cofibrations of the fine model structure on $\CTop_*$ (see Proposition \ref{finemodelstructure}).
\end{definition}

For a class of maps $K$ in a category $\mathcal{C}$, let $K$-inj denote the class of maps that have the right lifting property with respect to every map in $K$ (see \cite[Definition 2.1.7]{Hov99}).

\begin{proposition}\label{inj}
$\Ilevel$-inj is the class of objectwise acyclic fibrations and $\Jlevel$-inj is the class of objectwise fibrations. 
\end{proposition}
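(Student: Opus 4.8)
The plan is to reduce both assertions to the characterisation of (acyclic) fibrations in the cofibrantly generated model category $\CTop_*$, using the two adjunctions recorded in Proposition \ref{adjunctions}. Recall that in any cofibrantly generated model category the class of maps with the right lifting property against the generating cofibrations is precisely the class of acyclic fibrations, and the class with the right lifting property against the generating acyclic cofibrations is precisely the class of fibrations; for $\CTop_*$ equipped with its fine model structure, with generating sets $I_{\C}$ and $J_{\C}$, this is part of Proposition \ref{finemodelstructure}.

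First I would observe that the two adjunctions of Proposition \ref{adjunctions} compose: since $\Jmor{p,q}{U}{-}\wedge(-)\colon\COTop_*\to\OEpq$ is left adjoint to $\Ev_U$ and $\beta_!\colon\CTop_*\to\COTop_*$ is left adjoint to $\beta^*$, the composite $\Jmor{p,q}{U}{-}\wedge\beta_!(-)\colon\CTop_*\to\OEpq$ is left adjoint to the functor $\OEpq\to\CTop_*$ sending $F\mapsto\beta^*(F(U))$. Consequently, for a map $f\colon X\to Y$ in $\OEpq$, a lifting problem of $f$ against a generator $\Jmor{p,q}{U}{-}\wedge\beta_!(i)$ with $i\in I_{\C}$ transposes, bijectively on solutions, to a lifting problem of $\beta^*(f(U))$ against $i$ in $\CTop_*$. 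Therefore $f\in\Ilevel\text{-inj}$ if and only if $\beta^*(f(U))\in I_{\C}\text{-inj}$ for every $U\in\Jzero$, i.e.\ if and only if each $\beta^*(f(U))$ is an acyclic fibration of $\C$-spaces; by Definition \ref{def: obj WE} this is precisely the statement that $f$ is an objectwise acyclic fibration.

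The argument for $\Jlevel$ is identical with $I_{\C}$ replaced by $J_{\C}$ throughout: $f\in\Jlevel\text{-inj}$ if and only if $\beta^*(f(U))\in J_{\C}\text{-inj}$ for all $U\in\Jzero$, which by Proposition \ref{finemodelstructure} means each $\beta^*(f(U))$ is a fibration of $\C$-spaces, i.e.\ $f$ is an objectwise fibration. There is no genuine obstacle in this proof; the only point requiring care is the bookkeeping of the composite adjunction and the standard fact that an adjoint pair matches lifting problems against a map bijectively with lifting problems against its cotranspose.
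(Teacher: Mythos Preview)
Your proof is correct and follows essentially the same approach as the paper: both arguments transpose the lifting problem across the adjunctions of Proposition \ref{adjunctions} to reduce to the characterisation of (acyclic) fibrations in $\CTop_*$ from Proposition \ref{finemodelstructure}. The only cosmetic difference is that the paper applies the two adjunctions in separate steps with explicit diagrams, while you compose them first and transpose once.
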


\begin{proof}
Let $U\in\Jzero$, $i\in I_{\C}$ and consider a diagram 
\[\begin{tikzcd}
	{C_2\mathcal{J}_{p,q}(U,-)\wedge \beta_!(C_2/H_+\wedge S^{n-1}_+)} && X \\
	\\
	{C_2\mathcal{J}_{p,q}(U,-)\wedge \beta_!(C_2/H_+\wedge D^{n}_+)} && Y
	\arrow[from=1-1, to=1-3]
	\arrow["\Jmor{p,q}{U}{-}\wedge  \beta_!(i)"', from=1-1, to=3-1]
	\arrow[from=1-3, to=3-3]
	\arrow[from=3-1, to=3-3]
\end{tikzcd}\]
Using the adjunctions of Proposition \ref{adjunctions}, the square above has a lift if and only if the following square lifts in $\COTop_*$.
\[\begin{tikzcd}
	{ \beta_!(C_2/H_+\wedge S^{n-1}_+)} && {X(U)} \\
	\\
	{ \beta_!(C_2/H_+\wedge D^{n}_+)} && {Y(U)}
	\arrow[from=1-1, to=1-3]
	\arrow["\beta_!(i)"',from=1-1, to=3-1]
	\arrow[from=1-3, to=3-3]
	\arrow[from=3-1, to=3-3]
\end{tikzcd}\]
Again, by adjunctions, the square above has a lift if and only if the following square lifts in $\CTop_*$.
\[\begin{tikzcd}
	{C_2/H_+\wedge S^{n-1}_+} && {\beta^*(X(U))} \\
	\\
	{C_2/H_+\wedge D^{n}_+} && {\beta^*(Y(U))}
	\arrow[from=1-1, to=1-3]
	\arrow["i"',from=1-1, to=3-1]
	\arrow[from=1-3, to=3-3]
	\arrow[from=3-1, to=3-3]
\end{tikzcd}\]
By the fine model structure on $\CTop_*$ (Proposition \ref{finemodelstructure}), the above square lifts if and only if the map $\beta^*(X(U))\rightarrow \beta^*(Y(U))$ is an acyclic fibration of $\C$-spaces. Thus, the first diagram lifts if and only if the map $X\rightarrow Y$ is an objectwise acyclic fibration. 

A similar proof gives the $\Jlevel$ case. 
\end{proof}

\begin{lemma}\label{objectwise model structure proof}\index{$\OEpql$}
There is a cellular, proper, $\C$-topological model structure on the $(p,q)$-th intermediate category $\OEpq$ formed by the objectwise weak equivalences and objectwise fibrations. Denote this model category by $\OEpql$ and call it the projective model structure on $\OEpq$. The generating cofibrations and generating acyclic cofibrations are given by $\Ilevel$ and $\Jlevel$ respectively. 
\end{lemma}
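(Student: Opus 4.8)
The plan is to obtain the model structure via the standard recognition theorem for cofibrantly generated model categories (\cite[Theorem 2.1.19]{Hov99}), transferring the fine model structure on $\CTop_*$ along the family of adjunctions of Proposition \ref{adjunctions}. First I would record that $\OEpq$ is bicomplete, with all limits and colimits computed objectwise from $\CTop_*$, and that the composite left adjoints $\Jmor{p,q}{U}{-}\wedge\beta_!(-):\CTop_*\rightleftarrows\OEpq:\beta^*\circ\Ev_U$, ranging over $U\in\Jzero$, carry the generating cofibrations and generating acyclic cofibrations $I_{\C}$, $J_{\C}$ of $\CTop_*$ exactly onto the sets $\Ilevel$, $\Jlevel$. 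This makes the whole argument parallel to the level model structure of \cite[Section 6]{MMSS01} and to \cite[Lemma 7.6]{BO13}, the only novelty being the bookkeeping of the $\C$- and $O(p,q)$-actions, which has already been handled in Propositions \ref{adjunctions} and \ref{inj}.

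To apply the recognition theorem I would verify: (i) the domains of $\Ilevel$ and $\Jlevel$ are small, which follows because $I_{\C}$, $J_{\C}$ have small domains in $\CTop_*$, left adjoints preserve the relevant transfinite colimits, and each $\beta^*\Ev_U$ preserves filtered colimits; and (ii) the acyclicity condition $\Jlevel\text{-cell}\subseteq W_{level}\cap(\Ilevel\text{-cof})$, using that by Proposition \ref{inj} the classes $\Ilevel\text{-inj}$ and $\Jlevel\text{-inj}$ are exactly the objectwise acyclic fibrations and the objectwise fibrations. For the cofibration half, each map in $\Jlevel$ lifts against every objectwise acyclic fibration (since $j\in J_{\C}$ is a cofibration of $\C$-spaces), so $\Jlevel\subseteq\Ilevel\text{-cof}$ and this class is closed under pushouts, transfinite composition and retracts. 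For the weak-equivalence half, I would show that $\beta^*$ of any map in $\Jlevel$ is an acyclic cofibration of $\C$-spaces; then, since acyclic cofibrations of $\C$-spaces are closed under the objectwise pushouts and transfinite composites that build relative $\Jlevel$-cell complexes, every such complex is an objectwise weak equivalence.

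It then remains to upgrade the model structure to a cellular, proper, $\C$-topological one. Right properness is immediate, as pullbacks, objectwise fibrations and objectwise weak equivalences are detected objectwise and the fine model structure on $\CTop_*$ is right proper. For left properness I would note that each generator $\Jmor{p,q}{U}{-}\wedge\beta_!(i)$ is an objectwise cofibration of $\C$-spaces, so every projective cofibration is objectwise a cofibration, and left properness of $\CTop_*$ transfers through the objectwise pushouts. Cellularity I would check against the axioms of a cellular model category exactly as in \cite[Lemma 6.1]{BO13}: compactness of the domains and codomains of $\Ilevel$, smallness of the domains of $\Jlevel$ relative to $\Ilevel$, and the fact that $\Ilevel$-cofibrations are effective monomorphisms, each reducing to the corresponding statement in $\CTop_*$. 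The $\C$-topological (pushout-product) axiom for the enrichment of $\OEpq$ over $\CTop_*$ follows from the monoidality of the fine model structure on $\CTop_*$ (valid since $\C$ is a compact Lie group) together with the objectwise description of the tensor and cotensor, again mirroring \cite[Lemma 7.6]{BO13}.

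The main obstacle is the acyclicity condition — verifying that relative $\Jlevel$-cell complexes are objectwise weak equivalences. This comes down to understanding how $\beta^*$ interacts with the free functors $\beta_!$ and $\Jmor{p,q}{U}{-}\wedge(-)$, and in particular confirming that $\beta^*$ sends each generating map of $\Jlevel$ to an acyclic cofibration in the fine model structure on $\CTop_*$; the monoidality of $\CTop_*$, which holds because $\C$ is a compact Lie group, is precisely what makes this step work. Everything else is routine and mirrors the non-equivariant arguments of Barnes and Oman and of \cite{MMSS01}.
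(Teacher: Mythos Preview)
Your proposal is correct and follows essentially the same route as the paper: both invoke Hovey's recognition theorem \cite[Theorem 2.1.19]{Hov99}, use Proposition \ref{inj} to identify $\Ilevel$-inj and $\Jlevel$-inj, verify smallness via the adjunctions of Proposition \ref{adjunctions}, and establish the acyclicity of $\Jlevel$-cell by observing that $\beta^*\circ\Ev_U$ carries each generating map to an acyclic cofibration in $\CTop_*$ and commutes with the pushouts and sequential colimits that build relative cell complexes. Your treatment of cellularity and the $\C$-topological axiom is slightly more explicit than the paper's, which leaves these upgrades largely to the reader.
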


\begin{proof}
We will use the recognition theorem of Hovey \cite[Theorem 2.1.19]{Hov99} to prove the existence of the model structure. 

1. (2 out of 3) Objectwise weak equivalences clearly have the 2 out of 3 property, since they are defined objectwise. 

2. (Smallness) Let $B$ be small in $\CTop_*$ and $X_i$ in $\Ilevel$-cell (respectively $\Jlevel$-cell), where $\Ilevel$-cell (respectively $\Jlevel$-cell) denotes the collection of transfinite compositions of pushouts of elements of $\Ilevel$ (respectively $\Jlevel$). Let $\phi$ be a map 
\begin{align*}
    \phi: \colim_i \OEpq(&\Jmor{p,q}{U}{-}\wedge O(p,q)_+\wedge B, X_i)\\
    &\rightarrow \OEpq(\Jmor{p,q}{U}{-}\wedge O(p,q)_+\wedge B, \colim_iX_i).
\end{align*}
We let $S$ and $R$ denote the domain and codomain of $\phi$ respectively to save space. Using the adjunctions of Proposition \ref{adjunctions} gives a commutative diagram 
\[\begin{tikzcd}
	S && R \\
	{\colim_i\C\Top_*(B,\beta^*(X_i(U)))} && {\C\Top_*(B,\beta^*(\colim_i X_i(U)))} \\
	&& {\C\Top_*(B,\colim_i(\beta^*(X_i(U))))}
	\arrow["\phi", from=1-1, to=1-3]
	\arrow["\cong"', from=1-1, to=2-1]
	\arrow["\cong", from=1-3, to=2-3]
	\arrow[from=2-1, to=2-3]
	\arrow["\cong"', from=2-1, to=3-3]
	\arrow["\cong", from=2-3, to=3-3]
\end{tikzcd}\]
where the diagonal map is an isomorphism because $B$ is small in $\CTop_*$, and the bottom right vertical map is an isomorphism because the right adjoints commute with colimits by Proposition \ref{adjunctions}. Hence, the map $\phi$ is an isomorphism as required.

3. ($\Ilevel$-inj $= \Jlevel$-inj $\cap$ $W_{level}$) This is clear by Proposition \ref{inj}. 

4. ($\Jlevel$-cell $\subseteq \Ilevel$-cof $\cap$ $W_{level}$) Since $J_{\C}\subseteq I_{\C}$-cof, $\Jlevel \subseteq \Ilevel$-cof. Therefore $\Jlevel\text{-cell} \subseteq \Ilevel\text{-cof}$.

It remains to show that $\Jlevel$-cell $\subseteq$ $W_{level}$. Since $\Jlevel\subseteq \Jlevel$-cof, each map $f\in\Jlevel$ is such that $\beta^*f(U)$ is an acyclic cofibration, for each $U\in\Jzero$. Now consider a pushout square
\[\begin{tikzcd}
	A && B \\
	\\
	C && D
	\arrow[from=1-1, to=1-3]
	\arrow[from=3-1, to=3-3]
	\arrow["f"', from=1-1, to=3-1]
	\arrow["k", from=1-3, to=3-3]
\end{tikzcd}\]
where $f\in\Jlevel$.
Since the right adjoints $\beta^*$ and $\Ev_U$ commute with pushouts (see Proposition \ref{adjunctions}), $\beta^*(k(U))$ is the pushout of $\beta^*(f(U))$, for each $U\in\Jzero$. Since $\beta^*(f(U))$ is an acyclic cofibration and $\CTop_*$ is a model category, the pushout $\beta^*(k(U))$ is also an acyclic cofibration (see \cite[Corollary 1.1.11]{Hov99}). In particular, this means that the map $k$ is an objectwise weak equivalence. 

Now consider a diagram
\[\begin{tikzcd}
	{X_0} && {X_1} && {X_2} && \dots
	\arrow["{k_0}", from=1-1, to=1-3]
	\arrow["{k_1}", from=1-3, to=1-5]
	\arrow["{k_2}", from=1-5, to=1-7]
\end{tikzcd}\]
where each $k_i$ is a pushout of a map in $\Jlevel$. Since the right adjoints $\beta^*$ and $\Ev_U$ commute with colimits (see Proposition \ref{adjunctions}) and the maps $\beta^*(k_i(U))$ are acyclic cofibrations by above, the map $\beta^*(\alpha (U))$ is a weak equivalence in $\CTop_*$, where $\alpha : X_0\rightarrow\colim_i X_i$. Therefore, $\alpha$ is an objectwise weak equivalence. 

(Properness) The functors $\beta^*$ and $\Ev_U$ preserve weak equivalences, fibrations and cofibrations. Then, since $\beta^*$ and $\Ev_U$ commute with pushouts and pullbacks, properness follows from taking adjoints of the appropriate diagrams. 
\end{proof}

\begin{remark}\label{remark: model structure on semi direct}
A similar method using only the adjunction 
\[\begin{tikzcd}
	{C_2\Top_*} && {(O(p,q)\rtimes C_2)\Top_*}
	\arrow["{\beta_!}", shift left=2, from=1-1, to=1-3]
	\arrow["{\beta^*}", shift left=2, from=1-3, to=1-1]
\end{tikzcd}\]
shows that there exists a cellular, proper, $\C$-topological model structure on the category $(O(p,q)\rtimes \C)\Top_*$, where weak equivalences and fibrations are defined by restricting to $\CTop_*$ along $\beta^*$. The generating (acyclic) cofibrations are of the form $\beta_! (i)$ where $i$ is a generation (acyclic) cofibration of $\CTop_*$. This is exactly the model structure which is coarse with respect to $O(p,q)$ and fine with respect to $\C$ (see Remark \ref{rem: what is the ms on semi direct prod}). In our chosen notation for these model structures, this could be denoted by $\C\Top_*[O(p,q)]$, however we chose to denote it by $\COTop_*$ to remind ourselves of the underlying group. As such, the projective model structure could alternatively be constructed by evaluating at $U$ and using this model structure on $(O(p,q)\rtimes \C)$-spaces. 
\end{remark}

We will use this projective model structure to construct the $(p,q)$-stable model structure using a left Bousfield localisation. In the same was as for $\C$-spectra \cite[Chapter 3]{MM02}, we begin by first defining homotopy groups on objects of $\OEpq$. These homotopy groups detect the weak equivalences of the $(p,q)$-stable model structure.  

\begin{definition}\label{def: pq pi equiv}\index{$(p,q)\pi_k^H (-)$}
Define the \emph{$(p,q)$-homotopy groups} of $X\in\OEpq$ by 
\begin{equation*}
(p,q)\pi_k^H X =
\left\{
	\begin{array}{ll}
		\colim_V \pi_k\left( \Omega^{(p,q)V} X(V)\right)^H,  & \mbox{if } k \geq 0 \\
		\colim_{V\supset \mathbb{R}^{|k|}} \pi_0\left( \Omega^{p,q({V-\mathbb{R}^{|k|}})} X(V)\right)^H,& \mbox{if } k < 0
	\end{array}
\right.
\end{equation*} 
where $V$ runs over the indexing $\C$-representations in $\CL$, $H\leq\C$ is a closed subgroup, and $V-\mathbb{R}^{|k|}$ denotes the orthogonal complement of $\mathbb{R}^{|k|}$ in $V$. Define a map $f:X\rightarrow Y$ in $\OEpq$ to be a $(p,q)\pi_*$-equivalence if the map $(p,q)\pi_k^H f: (p,q)\pi_k^H X\rightarrow (p,q)\pi_k^H Y$ is an isomorphism for all $k$ and all closed subgroups $H\leq \C$. 
\end{definition}

One can easily verify that if $\C$ was replaced by the trivial group, the $(p,q)$-homotopy groups for $p+q=n$ are exactly the $n$-homotopy groups defined in \cite[Definition 7.7]{BO13}. The $(p,q)\pi_*$-equivalences will be the weak equivalences in our stable model structure. 

\begin{lemma}\label{Level equiv is pi equiv}
An objectwise weak equivalence in $\OEpq$ is a $(p,q)\pi_*$-equivalence. 
\end{lemma}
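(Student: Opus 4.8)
The plan is to reduce the statement to the fact that applying the based loop functor $\Omega^{(p,q)V}(-)=\Top_*(S^{(p,q)V},-)$, with its conjugation $\C$-action, preserves fine weak equivalences of $\C$-spaces, and then to pass to the filtered colimits defining the $(p,q)$-homotopy groups.

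First I would unwind the definitions. By Definition \ref{def: obj WE}, a map $f\colon X\to Y$ in $\OEpq$ is an objectwise weak equivalence exactly when $\beta^*(f(V))$ is a fine weak equivalence of $\C$-spaces for every $V\in\Jzero$; that is, $f(V)^K\colon X(V)^K\to Y(V)^K$ is a weak homotopy equivalence of spaces for every closed subgroup $K\leq\C$ and every $V$. I then want to show this forces $(p,q)\pi_k^H f$ to be an isomorphism for every $k\in\ZZ$ and every closed $H\leq\C$ (Definition \ref{def: pq pi equiv}).

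The middle step is that $\Omega^{(p,q)V}$ preserves this property. For each indexing representation $V$, the sphere $S^{(p,q)V}$ is the one-point compactification of a finite-dimensional $\C$-representation, hence a finite $\C$-CW complex, so it is cofibrant in the fine model structure on $\C\Top_*$ (Proposition \ref{finemodelstructure}); every $\C$-space is fibrant there, and $\C\Top_*$ is a $\C$-topological model category, so the functor $\Top_*(S^{(p,q)V},-)$ with the conjugation action carries fine weak equivalences to fine weak equivalences. Spelled out on fixed points, this is the standard fact that for a finite $H$-CW complex $A$ the functor $H\Top_*(A,-)$ preserves weak homotopy equivalences between $H$-spaces whose $K$-fixed-point maps are weak equivalences for all $K\leq H$, proved by induction on the $H$-cells of $A$ via the gluing lemma (cf.\ \cite{May96}). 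Applied to $f(V)$, this shows $(\Omega^{(p,q)V}f(V))^H$ is a weak homotopy equivalence for every $V$ and every closed $H\leq\C$, and likewise $(\Omega^{(p,q)(V-\R^{|k|})}f(V))^H$ for the indices $k<0$. Hence these maps induce isomorphisms on $\pi_k$ for $k\geq 0$ and bijections on $\pi_0$ in the $k<0$ case.

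Finally I would assemble the colimit. Since $f$ is an $(O(p,q)\rtimes\C)$-equivariant natural transformation it commutes with the structure maps $\sigma_X$, so it induces a morphism between the two colimit systems computing $(p,q)\pi_k^H X$ and $(p,q)\pi_k^H Y$ (the system over $V$ for $k\geq 0$, and over $V\supset\R^{|k|}$ for $k<0$). Each map in this system is an isomorphism by the previous step, so the induced map on colimits $(p,q)\pi_k^H f$ is an isomorphism for all $k$ and all closed $H\leq\C$; that is, $f$ is a $(p,q)\pi_*$-equivalence. The only genuinely delicate point is the middle step --- keeping the conjugation action and the passage to $H$-fixed points straight, rather than naively quoting the non-equivariant fact that $\Omega^n$ is a homotopy functor --- but this is standard equivariant homotopy theory.
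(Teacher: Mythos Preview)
Your proposal is correct and follows essentially the same approach as the paper: the paper's proof is a two-line sketch that invokes \cite[Lemma 3.3]{MM02} for the fact that $\Top_*(S^{(p,q)V},-)$ preserves fine $\C$-weak equivalences, and then says ``the lemma then follows'', while you have simply written out this argument in more detail (cofibrancy of representation spheres in the $\C$-topological model structure, passage to fixed points, and the colimit step). There is no substantive difference in strategy.
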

\begin{proof}
One can show that if $\beta^*(f(V)):\beta^*(X(V))\rightarrow \beta^*(Y(V))$ is a weak equivalence of $\C$-spaces, then the induced map $\Top_* (S^V,\beta^*(X(V)))\rightarrow \Top_* (S^V,\beta^*(Y(V)))$ is a weak equivalence of $\C$-spaces (see \cite[Lemma 3.3]{MM02}). The lemma then follows. 
\end{proof}

Now we want to identify the fibrant objects of the $(p,q)$-stable model structure. These are a generalisation of $\Omega$-spectra, which are the fibrant objects of the stable model structure on orthogonal spectra (see Barnes and Roitzheim \cite[Corollary 5.2.17]{BR20}). They are defined analogously to the $n\Omega$-spectra of orthogonal calculus \cite[Definition 7.9]{BO13}.

\begin{definition}\label{def: pq omega spectrum}
An object $X$ of $\OEpq$ has structure maps $$\sigma_X:S^{(p,q)V}\wedge X(W)\rightarrow X(W\oplus V)$$induced by the identification of $S^{(p,q)V}$ as a subspace of $\Jpq(W,W\oplus V)$ (see the proof of Proposition \ref{cofibseq}) and the structure maps of $X$ being an enriched functor. The object $X$ is called a \emph{$(p,q)\Omega$-spectrum} if its adjoint structure maps $$\tilde{\sigma}_X:X(W)\rightarrow \Omega^{(p,q)V} X(W\oplus V)$$\index{$\tilde{\sigma}_X$}are weak equivalences of $\C$-spaces, for all $V,W\in \Jzero$. \end{definition}

\begin{lemma}\label{omega spectra lemma}
$X$ is a $(p,q)\Omega$-spectrum if and only if for all $W\in\Jzero$ the maps
\begin{align*}
    X(W)&\rightarrow\Omega^{(p,q)\R}X(W\oplus\R)\\
    X(W)&\rightarrow\Omega^{(p,q)\Rdelta}X(W\oplus\Rdelta)
\end{align*}
are weak equivalences of $\C$-spaces.
\end{lemma}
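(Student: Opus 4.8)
The plan is to prove both implications, with the forward direction immediate and the reverse direction proceeding by induction on the dimension of the indexing representation. For the forward direction: if $X$ is a $(p,q)\Omega$-spectrum then by Definition \ref{def: pq omega spectrum} the adjoint structure map $\tilde{\sigma}_X : X(W)\rightarrow \Omega^{(p,q)V}X(W\oplus V)$ is a weak equivalence of $\C$-spaces for every $V,W\in\Jzero$; specialising to $V=\R$ and $V=\Rdelta$ gives exactly the two maps in the statement.

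For the reverse direction, suppose the two displayed maps are weak equivalences of $\C$-spaces for all $W\in\Jzero$. I would first record the two ingredients the argument rests on. The first is that structure maps compose: since $\R^{p,q}\otimes(V_1\oplus V_2)\cong (\R^{p,q}\otimes V_1)\oplus(\R^{p,q}\otimes V_2)$ we have $S^{(p,q)(V_1\oplus V_2)}\cong S^{(p,q)V_1}\wedge S^{(p,q)V_2}$, and the embeddings $S^{(p,q)V}\hookrightarrow \Jpq(W,W\oplus V)$ used to define the structure maps (see the proof of Proposition \ref{cofibseq}) are compatible with the composition law of $\Jpq$; hence $\tilde{\sigma}_X$ for $V=V_1\oplus V_2$ factors as
\[
X(W)\xrightarrow{\ \tilde{\sigma}_X\ } \Omega^{(p,q)V_1}X(W\oplus V_1)\xrightarrow{\ \Omega^{(p,q)V_1}\tilde{\sigma}_X\ } \Omega^{(p,q)V_1}\Omega^{(p,q)V_2}X(W\oplus V_1\oplus V_2),
\]
where the target is identified with $\Omega^{(p,q)(V_1\oplus V_2)}X(W\oplus V_1\oplus V_2)$. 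The second ingredient is that $\Omega^{(p,q)V'}(-)=\Top_*(S^{(p,q)V'},-)$ preserves weak equivalences of $\C$-spaces for any $V'\in\Jzero$; this is exactly the fact invoked in the proof of Lemma \ref{Level equiv is pi equiv} (see \cite[Lemma 3.3]{MM02}), applied to the $\C$-representation $\R^{p,q}\otimes V'$.

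With these in hand I would induct on $\Dim V$ to show $\tilde{\sigma}_X : X(W)\rightarrow \Omega^{(p,q)V}X(W\oplus V)$ is a weak equivalence for all $V,W\in\Jzero$ simultaneously. The base case $V=0$ is the identity. For the inductive step, every $V\in\Jzero$ with $\Dim V\geq 1$ is isomorphic to $V'\oplus X$ with $X=\R$ or $X=\Rdelta$ and $\Dim V'=\Dim V-1$, and $V'$ again lies in $\Jzero$ (it is $\R^{a-1,b}$ or $\R^{a,b-1}$); factoring $\tilde{\sigma}_X$ as above with $V_1=V'$, $V_2=X$, the first map is a weak equivalence by the inductive hypothesis applied to $V'$ (with the same $W$), while the second map is $\Omega^{(p,q)V'}$ applied to $X(W\oplus V')\rightarrow \Omega^{(p,q)X}X(W\oplus V'\oplus X)$, which is one of the two hypothesised maps with $W$ replaced by $W\oplus V'$ and hence a weak equivalence preserved by $\Omega^{(p,q)V'}$. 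A composite of two weak equivalences of $\C$-spaces is a weak equivalence, completing the induction.

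The main (and essentially only) non-formal point is the compatibility of structure maps in the first ingredient — that the embedding $S^{(p,q)V}\hookrightarrow\Jpq(W,W\oplus V)$ for a decomposable $V$ is the "product" of the embeddings for the summands under the composition law of $\Jpq$. This is the analogue of associativity of structure maps for diagram spectra; it follows by unwinding the constructions in the proof of Proposition \ref{cofibseq} and Definition \ref{def: p,q-jet cat}, so I would state it precisely and leave the routine bookkeeping to the reader. Everything else is a formal consequence of $\Omega^{(p,q)V'}$ preserving weak equivalences and of weak equivalences being closed under composition.
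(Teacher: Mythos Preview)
Your proposal is correct and takes essentially the same approach as the paper: the paper also handles the forward direction by specialisation, and for the reverse direction simply displays the lattice of maps
\[
X(W)\to\Omega^{(p,q)\R}X(W\oplus\R)\to\cdots\to\Omega^{(p,q)\R^{m,n}}X(W\oplus\R^{m,n})
\]
built from the two hypothesised maps, noting that each step is a weak equivalence. Your write-up is a cleaner formalisation of exactly this, making explicit the two points the diagram leaves implicit (that the adjoint structure map for $V_1\oplus V_2$ factors through those for $V_1$ and $V_2$, and that $\Omega^{(p,q)V'}$ preserves weak equivalences of $\C$-spaces).
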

\begin{proof}
If $X$ is a $(p,q)\Omega$-spectrum, then clearly both maps are weak equivalences by setting $V=\R$ and $V=\Rdelta$ in Definition \ref{def: pq omega spectrum}. 

If $X$ is such that the two maps are weak equivalences, then $X$ being a $(p,q)\Omega$-spectrum follows by repeated application of the weak equivalences, as demonstrated in the diagram below. 
\[\begin{tikzcd}
	{X(W)} & {\Omega^{(p,q)\mathbb{R}}X(W\oplus\mathbb{R})} & \dots & {\Omega^{(p,q)\mathbb{R}^m}X(W\oplus\mathbb{R}^m)} \\
	{\Omega^{(p,q)\mathbb{R}^\delta}X(W\oplus\mathbb{R}^\delta)} & {\Omega^{(p,q)\mathbb{R}^{1,1}}X(W\oplus\mathbb{R}^{1,1})} \\
	\vdots && \ddots \\
	{\Omega^{(p,q)\mathbb{R}^{n\delta}}X(W\oplus\mathbb{R}^{n\delta})} &&& {\Omega^{(p,q)\mathbb{R}^{m,n}}X(W\oplus\mathbb{R}^{m,n})}
	\arrow["\simeq", from=1-1, to=1-2]
	\arrow["\simeq"', from=1-1, to=2-1]
	\arrow["\simeq"', from=2-1, to=2-2]
	\arrow["\simeq"', from=3-1, to=4-1]
	\arrow["\simeq", from=1-2, to=2-2]
	\arrow["\simeq", dotted, from=1-4, to=4-4]
	\arrow["\simeq"', dotted, from=4-1, to=4-4]
	\arrow["\simeq", from=1-3, to=1-4]
	\arrow["\simeq", from=1-2, to=1-3]
	\arrow["\simeq"', from=2-1, to=3-1]
\end{tikzcd}\]
\end{proof}

The following Lemma is a partial converse to Lemma \ref{Level equiv is pi equiv}. 

\begin{lemma}\label{BOLem7.10}
A $(p,q)\pi_*$-equivalence between $(p,q)\Omega$-spectra is an objectwise weak equivalence. 
\end{lemma}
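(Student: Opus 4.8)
The plan is to mimic the proof of the underlying-calculus analogue \cite[Lemma 7.10]{BO13} in the bigraded $\C$-equivariant setting. By Definition \ref{def: obj WE} we must show that $f(U)\colon X(U)\to Y(U)$ is a weak equivalence of $\C$-spaces for every $U\in\Jzero$, and by the fine model structure (Proposition \ref{finemodelstructure}) this reduces to proving that $\pi_i\big(f(U)^H\big)$ is an isomorphism for each $i\geq 0$ and each closed subgroup $H\leq\C$.

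First I would exploit the rigidity that the $(p,q)\Omega$-spectrum condition imposes on the colimit systems of Definition \ref{def: pq pi equiv}. For any $(p,q)\Omega$-spectrum $Z$, Lemma \ref{omega spectra lemma} (really the definition itself) makes each adjoint structure map $\tilde\sigma_Z\colon Z(U)\to\Omega^{(p,q)V}Z(U\oplus V)$ a weak equivalence of $\C$-spaces; applying $\Omega^{(p,q)U}$ preserves this, so every bonding map in the system computing $(p,q)\pi_k^H(Z)$ induces an isomorphism on $\pi_k(-^H)$. Hence the colimit is achieved at every stage, and for an indexing representation $V$ chosen large enough that the fixed-point dimensions of $(p,q)V=\R^{p,q}\otimes V$ at every subgroup exceed $i$, one obtains a natural isomorphism $\pi_i\big(Z(U)^H\big)\cong (p,q)\pi_k^H(Z)$, where $k$ is $i$ shifted down by the relevant fixed-point dimension of $(p,q)V$ — possibly a negative integer, which is precisely why Definition \ref{def: pq pi equiv} also records the negative-degree groups. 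As $i$ runs over $\mathbb{N}$ and $U$ over $\Jzero$, every homotopy group $\pi_i(Z(U)^H)$ is obtained in this way.

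Granting this, the lemma follows at once: applying the identification to $X$ and to $Y$ and using its naturality in the spectrum variable, the comparison between $\pi_i\big(f(U)^H\big)$ and $(p,q)\pi_k^H(f)$ commutes; since $f$ is a $(p,q)\pi_*$-equivalence the latter is an isomorphism, hence so is the former. As $U$, $H$ and $i$ were arbitrary, $f$ is an objectwise weak equivalence.

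The step I expect to be the main obstacle is the index bookkeeping in the identification $\pi_i\big(Z(U)^H\big)\cong (p,q)\pi_k^H(Z)$ at the nontrivial subgroup $H=\C$. There $\big(\Omega^{(p,q)V}Z(U\oplus V)\big)^{\C}$ is the space of $\C$-equivariant maps out of the representation sphere $S^{(p,q)V}$, which is not an iterated ordinary loop space, so one must split $(p,q)V$ into its trivial and sign summands, track the resulting shifts carefully, and feed negative shift-values through the $k<0$ clause of Definition \ref{def: pq pi equiv}. By contrast the $H=e$ case is the plain non-equivariant computation, since $\big(\Omega^{(p,q)V}Z(U\oplus V)\big)^{e}=\Omega^{(p+q)\dim V}\big(Z(U\oplus V)^{e}\big)$, and there the argument is essentially a transcription of the proof in \cite{BO13}.
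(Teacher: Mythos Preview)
Your approach works cleanly for $H=e$ but has a genuine gap at $H=\C$, and this is not just bookkeeping. For a $(p,q)\Omega$-spectrum $Z$, every term in the colimit defining $(p,q)\pi_k^{\C}(Z)$ is of the form $[S^k\wedge S^{(p,q)V},Z(V)]^{\C}$, and since the bonding maps are isomorphisms this group equals $[S^k,Z(0)]^{\C}\cong [S^k\wedge S^{(p,q)U},Z(U)]^{\C}$ for any $U$. But the group you actually need is $\pi_i\big(Z(U)^{\C}\big)=[S^i,Z(U)]^{\C}$. Matching these would require $k+(p,q)U=i$ as virtual $\C$-representations, i.e.\ the sign part of $(p,q)U$ must vanish. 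For $U=\R^{a,b}$ one has $(p,q)U=\R^{pa+qb,\,pb+qa}$, so the sign part is $pb+qa$, which is nonzero whenever $U\neq0$ (recall $p,q\geq1$). Feeding negative integers through the $k<0$ clause of Definition~\ref{def: pq pi equiv} does not help: the shift there is still by a multiple of $\R^{p,q}$, and no integer combination cancels the sign part. In short, the integer-graded groups $(p,q)\pi_*^{\C}$ only see a one-dimensional slice of the $RO(\C)$-graded homotopy of $Z$, and $\pi_i(Z(U)^{\C})$ generally sits off that slice.

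The paper instead invokes the inductive argument of Mandell--May \cite[III \S3]{MM02}: pass to the homotopy fibre $F$ of $f$, which is again a $(p,q)\Omega$-spectrum with vanishing $(p,q)\pi_*^H$, and show each $F(U)$ is $\C$-weakly contractible by induction on subgroups. The base case $H=e$ is exactly your non-equivariant argument. For $H=\C$, write $(p,q)U=T\oplus W$ with $T$ trivial and $W^{\C}=0$; the cofibre sequence $S(W)_+\to S^0\to S^W$ yields a fibre sequence
\[
\big(\Omega^{W}F(U)\big)^{\C}\longrightarrow F(U)^{\C}\longrightarrow \Map^{\C}\big(S(W)_+,F(U)\big).
\]
The left term is weakly contractible because it is a loop space on $(\Omega^{(p,q)U}F(U))^{\C}\simeq F(0)^{\C}$, whose homotopy groups are the vanishing $(p,q)\pi_*^{\C}(F)$. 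The right term is weakly contractible because $S(W)$ is a free $\C$-CW complex (as $W^{\C}=0$), so the mapping space is built from $F(U)^{e}$, already handled. Hence $F(U)^{\C}$ is weakly contractible, completing the induction. This isotropy-separation step is the missing idea in your proposal.
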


\begin{proof}
The proof is identical to that of Mandell and May \cite[Section 3.9]{MM02}, where in the inductive steps we only need to consider the cases where $H\in\{e,\C\}$, since the only closed subgroups of $\C$ are $e$ and $\C$ itself. 
\end{proof}


We now want to identify the class of maps that will be used in the left Bousfield localisation. Let 
\begin{equation*}
    \lambda_{V,W}^{p,q} :\Jmor{p,q}{W\oplus V}{-}\wedge S^{(p,q)W} \rightarrow \Jmor{p,q}{V}{-}
\end{equation*}
be the restricted composition map, where $S^{(p,q)W}$ has been $\C$-equivariantly identified with the closure of the subspace of pairs $(i,x)\in \Jmor{p,q}{V}{W\oplus V}$ with $i$ the standard inclusion (see the proof of Proposition \ref{cofibseq}).

The maps $\lambda_{V,W}^{p,q}$ are $(p,q)\pi_*$-equivalences, by a similar argument as in the non-equivariant case \cite[Lemma 7.12]{BO13}. 

\begin{lemma}
    The maps $\lambda_{V,W}^{p,q}$ are $(p,q)\pi_*$-equivalences.
\end{lemma}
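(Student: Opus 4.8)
The plan is to follow the non-equivariant argument of Barnes and Oman \cite{BO13}, building the representation $W$ up one irreducible summand at a time, and using the cofibre sequences of Proposition \ref{cofibseq} together with an equivariant connectivity estimate to kill the connecting terms. The key formal input is that $\OEpq$ behaves like a category of (diagram) $\C$-spectra: the equivariant Freudenthal suspension theorem, applied inside the colimit defining $(p,q)\pi_*$, gives a natural suspension isomorphism $(p,q)\pi_k^H(Z\wedge S^1)\cong (p,q)\pi_{k-1}^H(Z)$, exactly as for orthogonal $\C$-spectra in \cite{MM02}. Consequently, applying $(p,q)\pi_*^H$ to the Puppe sequence of a homotopy cofibre sequence in $\OEpq$ yields a long exact sequence, and so any map whose homotopy cofibre $C$ satisfies $(p,q)\pi_k^H C=0$ for all $k\in\ZZ$ and all closed $H\leq\C$ is automatically a $(p,q)\pi_*$-equivalence.

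The main step is then to show that each functor $\Jmor{p+1,q}{V}{-}\wedge S^{(p,q)W'}$ and $\Jmor{p,q+1}{V}{-}\wedge S^{(p,q)W'}$ in $\OEpq$ is $(p,q)\pi_*$-trivial, for all $V,W'\in\Jzero$. By Definition \ref{def: pq pi equiv} it suffices to show, for fixed closed $H\leq\C$ and fixed $k$, that the group $[\,S^{(p,q)U}\wedge S^k,\ \Jmor{p',q'}{V}{U}\wedge S^{(p,q)W'}\,]_{\C}$ vanishes for all sufficiently large $U$, where $(p',q')\in\{(p+1,q),(p,q+1)\}$. Now $\Jmor{p',q'}{V}{U}$ is the Thom space of the rank $(p'+q')(\Dim U-\Dim V)$ bundle $\Gmor{p',q'}{V}{U}$ over $\LL(V,U)$, hence is $\bigl((p'+q')(\Dim U-\Dim V)-1\bigr)$-connected; by the Equivariant Splitting Theorems (Theorem \ref{splittingtheorems}) its $\C$-fixed points split as a smash of Thom spaces of bundles of ranks involving only the trivial parts $U^{\C},V^{\C}$ and the sign parts $(U^{\C})^\perp,(V^{\C})^\perp$, and are correspondingly highly connected. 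Since $(p'+q')=(p+q)+1$, a dimension count — the virtual dimension of the Thom space grows like $(p'+q')\Dim U$ while we only loop down by $(p,q)U$ of dimension $(p+q)\Dim U$, and similarly on each fixed-point summand, using that $U$ may be chosen with both $\Dim U^{\C}$ and $\Dim (U^{\C})^\perp$ arbitrarily large — shows that the connectivities of $\Omega^{(p,q)U}\bigl(\Jmor{p',q'}{V}{U}\wedge S^{(p,q)W'}\bigr)$ and of its $\C$-fixed points both tend to $+\infty$ with $\Dim U$. Hence, by the equivariant Whitehead theorem, the displayed homotopy classes vanish once $U$ is large enough, so the colimit defining $(p,q)\pi_k^H$ is zero.

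Finally I would assemble the result. Write $W\cong X_1\oplus\cdots\oplus X_d$ with each $X_i\in\{\R,\Rdelta\}$, set $V_j:=X_1\oplus\cdots\oplus X_j\oplus V$ (so $V_0=V$, $V_d=W\oplus V$), and factor $\lambda_{V,W}^{p,q}$ as the composite of the maps $\lambda_{V_{j-1},X_j}^{p,q}\wedge \id_{S^{(p,q)(X_1\oplus\cdots\oplus X_{j-1})}}$, for $j=d,d-1,\dots,1$; this equality of composites holds by associativity of the restricted composition in $\Jpq$ (the $\C$-equivariant identification of $S^{(p,q)W}$ inside $\Jmor{p,q}{V}{W\oplus V}$ is built from the iterated one-dimensional identifications). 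By Proposition \ref{cofibseq}, applied with the tensoring representation $\R^{p,q}$, the homotopy cofibre of $\lambda_{V_{j-1},X_j}^{p,q}$ is $\C\mathcal{J}_{\R^{p,q}\oplus X_j}(V_{j-1},-)$, which is $\Jmor{p+1,q}{V_{j-1}}{-}$ if $X_j=\R$ and $\Jmor{p,q+1}{V_{j-1}}{-}$ if $X_j=\Rdelta$ (since $\R^{p,q}\oplus\R\cong\R^{p+1,q}$ and $\R^{p,q}\oplus\Rdelta\cong\R^{p,q+1}$). Smashing with a cofibrant $\C$-space preserves homotopy cofibre sequences, so the homotopy cofibre of the $j$-th factor is one of these functors smashed with $S^{(p,q)(X_1\oplus\cdots\oplus X_{j-1})}$, which by the previous paragraph is $(p,q)\pi_*$-trivial. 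By the long exact sequence each factor is a $(p,q)\pi_*$-equivalence, and therefore so is $\lambda_{V,W}^{p,q}$.

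The hard part will be getting the equivariant connectivity estimate exactly right: one must identify correctly the $\C$-fixed points of the Thom spaces $\Jmor{p',q'}{V}{U}$ and of the loop coordinates $(p,q)U$ via the Splitting Theorems, and then invoke the equivariant Whitehead theorem in the precise form that controls $[-,-]_{\C}$ through the connectivities of \emph{both} fixed-point spaces against the cell dimensions of the source $S^{(p,q)U}\wedge S^k$ — this is the $\C$-analogue of the bookkeeping in \cite[Section 3.9]{MM02}. Everything else (the suspension isomorphism, the resulting long exact sequence, and the factorisation of $\lambda_{V,W}^{p,q}$) is routine diagram-spectrum formalism.
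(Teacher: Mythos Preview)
Your argument is correct, but it proceeds quite differently from the paper's. The paper does not factor $\lambda_{V,W}^{p,q}$ at all: instead it writes $\lambda_{V,W}^{p,q}(U)$ concretely as a map
\[
O(U)_+\wedge_{O(U-V-W)} S^{(p,q)(U-V)}\longrightarrow O(U)_+\wedge_{O(U-V)} S^{(p,q)(U-V)},
\]
suspends by $S^{(p,q)V}$ to reduce to a quotient map $O(U)/O(U-V-W)_+\wedge S^{(p,q)U}\to O(U)/O(U-V)_+\wedge S^{(p,q)U}$, and then reads off a dimension function for its connectivity directly (nonequivariantly and on $\C$-fixed points via the splitting theorems); since both values grow without bound as $U$ runs through the indexing colimit, the $(p,q)$-homotopy groups agree. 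Your route instead reuses Proposition~\ref{cofibseq} to peel off one irreducible summand of $W$ at a time, identifies each cofibre as a $\C\mathcal{J}_{p+1,q}$- or $\C\mathcal{J}_{p,q+1}$-representable smashed with a sphere, and shows these cofibres are $(p,q)\pi_*$-acyclic by the same kind of connectivity count. The trade-off is that the paper's argument is shorter and avoids setting up the long exact sequence in $(p,q)\pi_*$, while yours is more structural, leaning on machinery (Proposition~\ref{cofibseq}, Puppe sequences in diagram $\C$-spectra) already in place and making the role of the one-dimensional irreducibles explicit.
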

\begin{proof}
    As in the non-equivariant case, by fixing a linear isometry $W\oplus V\rightarrow U$, the map $\lambda_{V,W}^{p,q}$ can be written
    \begin{equation*}
        \lambda_{V,W}^{p,q}(U): O(U)_+\wedge_{O(U-V-W)} S^{(p,q)(U-V)}\rightarrow O(U)_+\wedge_{O(U-V)} S^{(p,q)(U-V)}.
    \end{equation*}
    This map is a $(p,q)\pi_*$-equivalence if and only if its suspension by $(p,q)V$ is (see for example \cite[Theorem III.3.7]{MM02}). Therefore, we only need to consider the map 
    \begin{equation*}
        \Sigma^{(p,q)V}\lambda_{V,W}^{p,q}(U): O(U)/O(U-V-W)_+\wedge S^{(p,q)U}\rightarrow O(U)/O(U-V)_+\wedge S^{(p,q)U}.
    \end{equation*}
    This map is $v$-connected, for the dimension function $v$ defined by 
    \begin{align*}
        v(e)&=(p+q+1)\dim U -\dim V -\dim W\\
        v(\C)&=(p+1)\dim U^{\C} + (q+1)\dim (U^{\C})^\perp -\dim (W\oplus V)^{\C} -\dim ((W\oplus V)^{\C})^\perp,
    \end{align*}
    where $(U^{\C})^\perp$ denotes the orthogonal complement of $U^{\C}$. That is, $(\Sigma^{(p,q)V}\lambda_{V,W}^{p,q}(U))^{e}$ is $v(e)$-connected, and $(\Sigma^{(p,q)V}\lambda_{V,W}^{p,q}(U))^{\C}$ is $v(\C)$-connected. 
    
    When we take $(p,q)\pi_k^H$-homotopy groups, the dimension of $U$ (and in turn the dimensions of $U^{\C}$ and $(U^{\C})^\perp$) increases in the colimit, and we get an isomorphism of homotopy groups. 
\end{proof}

Now we will follow the same procedure as Mandell and May \cite[Section 3.4]{MM02} to turn these maps into cofibrations, in order to make generating sets for the $(p,q)$-stable model structure.


Let $M\lambda_{V,W}^{p,q}$ be the mapping cylinder of $\lambda_{V,W}^{p,q}$. Then the map $\lambda_{V,W}^{p,q}$ can be factored as a cofibration $k_{V,W}^{p,q}$ and a deformation retract $r_{V,W}^{p,q}$ as follows. 
\begin{equation*}
    \Jmor{p,q}{W\oplus V}{-}\wedge S^{p,q W}\xrightarrow{k_{V,W}^{p,q}} M\lambda_{V,W}^{p,q} \xrightarrow{r_{V,W}^{p,q}} \Jmor{p,q}{V}{-}
\end{equation*}

\begin{definition}\index{$\Jstable$}
Define $\Jstable=\Jlevel\cup \{i\square k_{V,W}^{p,q} : i\in I_{\C}\text{ and } V,W\in \Jzero\}$, where $f\square g $ denotes the pushout product of two maps $f:A\rightarrow B$ and $g:X\rightarrow Y$, which is defined by 
\begin{equation*}
    f\square g: A\wedge Y \coprod_{A\wedge X} B\wedge X \rightarrow B\wedge Y.
\end{equation*}
\end{definition}

The next lemma classifies the fibrations of the $(p,q)$-stable model structure. The proof is identical to that of Mandell and May \cite[Proposition 4.8]{MM02}, so we can omit it here.
\begin{lemma}
A map $f:E\rightarrow B$ in $\OEpq$ has the right lifting property with respect to $\Jstable$ if and only if $f$ is an objectwise fibration and the diagram
\[\begin{tikzcd}
	{E(V)} && {\Omega^{(p,q)W}E(V\oplus W)} \\
	\\
	{B(V)} && {\Omega^{(p,q)W}B(V\oplus W)}
	\arrow["{\tilde{\sigma}_E}", from=1-1, to=1-3]
	\arrow["{f(V)}"', from=1-1, to=3-1]
	\arrow["{\tilde{\sigma}_B}"', from=3-1, to=3-3]
	\arrow["{\Omega^{(p,q)W}f(V\oplus W)}", from=1-3, to=3-3]
\end{tikzcd}\]

\noindent is a homotopy pullback for all $V,W$. 
\end{lemma}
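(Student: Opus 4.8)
The plan is to split $\Jstable=\Jlevel\cup\{\,i\square k_{V,W}^{p,q}:i\in I_{\C},\ V,W\in\Jzero\,\}$ and treat the two families separately. By Proposition \ref{inj} (applied exactly as in the proof of Lemma \ref{objectwise model structure proof}), having the right lifting property with respect to $\Jlevel$ is equivalent to being an objectwise fibration, so the remaining work is to show that, once $f$ is an objectwise fibration, it has the right lifting property with respect to all the maps $i\square k_{V,W}^{p,q}$ if and only if each displayed square is a homotopy pullback of $\C$-spaces. So first I would assume $f$ is an objectwise fibration and fix $V,W\in\Jzero$.

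The next step is to apply the pushout--product/pullback--hom adjunction for the $\C$-topological enrichment of $\OEpq$ provided by Lemma \ref{objectwise model structure proof}: $f$ lifts against $i\square k_{V,W}^{p,q}$ for every $i\in I_{\C}$ precisely when the pullback-corner map of $\C$-spaces
\[
\nu_{V,W}(f)\colon \Nat_{p,q}(M\lambda_{V,W}^{p,q},E)\longrightarrow \Nat_{p,q}(M\lambda_{V,W}^{p,q},B)\times_{\Nat_{p,q}(\Jmor{p,q}{W\oplus V}{-}\wedge S^{(p,q)W},B)}\Nat_{p,q}(\Jmor{p,q}{W\oplus V}{-}\wedge S^{(p,q)W},E)
\]
lifts against every $i\in I_{\C}$, i.e.\ is an acyclic fibration of $\C$-spaces. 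Since $k_{V,W}^{p,q}$ is a cofibration and $f$ is an objectwise fibration, the pushout--product axiom already forces $\nu_{V,W}(f)$ to be a fibration of $\C$-spaces, so the condition collapses to: $\nu_{V,W}(f)$ is a weak equivalence of $\C$-spaces. This is exactly why one first replaces $\lambda_{V,W}^{p,q}$ by the mapping-cylinder factorization $\lambda_{V,W}^{p,q}=r_{V,W}^{p,q}\circ k_{V,W}^{p,q}$: $\lambda_{V,W}^{p,q}$ itself is not a cofibration and cannot be fed into the pushout--product axiom.

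Then I would identify $\nu_{V,W}(f)$ up to homotopy. By the enriched Yoneda lemma and the smash--hom adjunction of Proposition \ref{adjunctions}, together with the $\C$-equivariant identification of $S^{(p,q)W}$ with a subspace of $\Jmor{p,q}{V}{V\oplus W}$ recorded in the proof of Proposition \ref{cofibseq}, one gets natural isomorphisms of $\C$-spaces $\Nat_{p,q}(\Jmor{p,q}{W\oplus V}{-}\wedge S^{(p,q)W},E)\cong\Omega^{(p,q)W}E(V\oplus W)$ (and similarly for $B$), under which the map induced by $\lambda_{V,W}^{p,q}$ becomes the adjoint structure map $\tilde\sigma_E$. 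Because $r_{V,W}^{p,q}$ is a deformation retract and the enriched mapping-space functors $\Nat_{p,q}(-,E)$, $\Nat_{p,q}(-,B)$ preserve deformation retracts, the legs of $\nu_{V,W}(f)$ coming from $r_{V,W}^{p,q}$ are $\C$-equivariant homotopy equivalences; chasing the identifications then exhibits $\nu_{V,W}(f)$ as homotopy equivalent to the canonical comparison map $E(V)\to B(V)\times_{\Omega^{(p,q)W}B(V\oplus W)}\Omega^{(p,q)W}E(V\oplus W)$. Since $f$ is an objectwise fibration, $f(V)$ and $\Omega^{(p,q)W}f(V\oplus W)$ are fibrations of $\C$-spaces, so the strict pullback here models the homotopy pullback; hence $\nu_{V,W}(f)$ is a weak equivalence exactly when the displayed square is a homotopy pullback of $\C$-spaces. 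Letting $(V,W)$ range over $\Jzero$ gives both implications.

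The main obstacle, which is why the paper is content to cite \cite[Proposition 4.8]{MM02}, is essentially bookkeeping: one must verify that $k_{V,W}^{p,q}$ is genuinely a cofibration between suitably cofibrant objects so that the $\C$-topological pushout--product axiom applies, and --- the only truly equivariant input --- that the conjugation $\C$-action on $\Nat_{p,q}(\Jmor{p,q}{W\oplus V}{-}\wedge S^{(p,q)W},E)$ matches the conjugation $\C$-action on $\Omega^{(p,q)W}E(V\oplus W)$ under the Yoneda identification. That compatibility rests on the $\C$-equivariance of the inclusion $S^{(p,q)W}\hookrightarrow\Jmor{p,q}{V}{V\oplus W}$ established in the proof of Proposition \ref{cofibseq}; granting it, the rest of the argument is the standard Mandell--May manipulation transported verbatim.
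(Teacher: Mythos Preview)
Your proposal is correct and is precisely the Mandell--May argument the paper defers to; the paper itself omits the proof entirely, stating only that it is identical to \cite[Proposition~4.8]{MM02}. You have faithfully reconstructed that argument in the present $\C$-equivariant setting, including the one genuinely new point (the $\C$-equivariance of the identification $\Nat_{p,q}(\Jmor{p,q}{W\oplus V}{-}\wedge S^{(p,q)W},E)\cong\Omega^{(p,q)W}E(V\oplus W)$), so there is nothing to add.
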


\begin{proposition}\label{prop: stable ms}\index{$\OEpq^s$}
There is a cofibrantly generated, proper, cellular $\C$-topological model structure on the $(p,q)$-th intermediate category $\OEpq$ called the $(p,q)$-stable model structure. The cofibrations are the same as for the projective model structure, the weak equivalences are the $(p,q)\pi_*$-equivalences, and the fibrant objects are the $(p,q)\Omega$-spectra. The generating cofibrations and generating acyclic cofibrations are the sets $\Ilevel$ and $\Jstable$ respectively. Denote this model category by $\OEpq^s$.
\end{proposition}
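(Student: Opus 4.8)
The plan is to obtain the $(p,q)$-stable model structure as a left Bousfield localisation of the projective model structure $\OEpql$ of Lemma~\ref{objectwise model structure proof}, verified directly via Hovey's recognition theorem \cite[Theorem 2.1.19]{Hov99} applied with weak equivalences the $(p,q)\pi_*$-equivalences, generating cofibrations $\Ilevel$, and generating acyclic cofibrations $\Jstable$. The whole argument runs in parallel with the construction of the stable model structure on orthogonal $\C$-spectra in \cite[Sections 3.4 and 4]{MM02} and on $O(n)\mathcal{E}_n$ in \cite[Section 7]{BO13}; the only structural changes are that the relevant (de)loopings are $\Omega^{(p,q)V}$ rather than $\Omega^V$, that $\C$ has only the two closed subgroups $e$ and $\C$ (so the inductive verifications of \cite[Section 3.9]{MM02} have two cases, cf. Lemma~\ref{BOLem7.10}), and that the cofibre sequences of Proposition~\ref{cofibseq} play the role of the usual jet-category cofibre sequences.

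Concretely: the $2$-out-of-$3$ axiom is immediate since $(p,q)\pi_*$-equivalences are detected by isomorphisms of abelian groups; smallness of the domains of $\Ilevel$ and of the pushout-product maps $i\square k_{V,W}^{p,q}$ follows exactly as in Lemma~\ref{objectwise model structure proof}, using that smash products and pushouts of small pointed $\C$-spaces with the representables $\Jmor{p,q}{U}{-}$ are small. The identity $\Ilevel\text{-inj}=\Jstable\text{-inj}\cap W$ is the core. The inclusion $\subseteq$ is easy: an objectwise acyclic fibration is a $(p,q)\pi_*$-equivalence by Lemma~\ref{Level equiv is pi equiv}, and being an objectwise weak equivalence it trivially satisfies the homotopy-pullback condition of the preceding Lemma, so it lies in $\Jstable\text{-inj}$. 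For $\supseteq$, take $f\in\Jstable\text{-inj}\cap W$; by the preceding Lemma $f$ is an objectwise fibration whose structure square $\tilde\sigma$ is a homotopy pullback for all $V,W$. Passing to objectwise fibres $F$ (strict fibres, which model the homotopy fibres since $f$ is an objectwise fibration) and using that $\Omega^{(p,q)\R}$ and $\Omega^{(p,q)\Rdelta}$ preserve fibre sequences, the maps $F(W)\to\Omega^{(p,q)\R}F(W\oplus\R)$ and $F(W)\to\Omega^{(p,q)\Rdelta}F(W\oplus\Rdelta)$ are weak equivalences, so $F$ is a $(p,q)\Omega$-spectrum by Lemma~\ref{omega spectra lemma}. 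The long exact sequence in $(p,q)\pi_*^H$ attached to the objectwise fibration $f$, together with $f\in W$, forces $(p,q)\pi_*^H F=0$, so $\ast\to F$ is a $(p,q)\pi_*$-equivalence of $(p,q)\Omega$-spectra; Lemma~\ref{BOLem7.10} then shows $F$ is objectwise weakly contractible, hence $f$ is an objectwise weak equivalence, i.e.\ $f\in\Ilevel\text{-inj}$.

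The remaining condition, $\Jstable\text{-cell}\subseteq\Ilevel\text{-cof}\cap W$, splits likewise. Since $J_{\C}\subseteq I_{\C}\text{-cof}$ and the projective model structure is $\C$-topological, every element of $\Jstable$ is an $\Ilevel$-cofibration, hence $\Jstable\text{-cell}\subseteq\Ilevel\text{-cof}$. For acyclicity: elements of $\Jlevel$ are objectwise acyclic cofibrations, hence $(p,q)\pi_*$-equivalences by Lemma~\ref{Level equiv is pi equiv}; each $k_{V,W}^{p,q}$ is a cofibration and, by $2$-out-of-$3$ applied to the factorisation $\lambda_{V,W}^{p,q}=r_{V,W}^{p,q}\circ k_{V,W}^{p,q}$ with $r_{V,W}^{p,q}$ a deformation retract and $\lambda_{V,W}^{p,q}$ a $(p,q)\pi_*$-equivalence (the Lemma just proven), also a $(p,q)\pi_*$-equivalence; the $i\square k_{V,W}^{p,q}$ are then cofibrations that are $(p,q)\pi_*$-equivalences by the smash-product stability argument of \cite[Section 3.4]{MM02}. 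Finally, cofibrations that are $(p,q)\pi_*$-equivalences are stable under cobase change (their cofibres are preserved and are $(p,q)\pi_*$-trivial, using stability) and under transfinite composition ($(p,q)\pi_*^H$ commutes with such filtered colimits of cofibrations), so $\Jstable\text{-cell}\subseteq W$. Properness, cellularity and the $\C$-topological enrichment are inherited from $\OEpql$ as in \cite[Section 7]{BO13}, with right properness checked as in \cite[Section 9]{MM02} via stability of $(p,q)\pi_*$-equivalences under pullback along $(p,q)$-fibrations. The main obstacle is precisely the acyclicity of the new generators $i\square k_{V,W}^{p,q}$ and the closure of ``cofibration $+$ $(p,q)\pi_*$-equivalence'' under the cellular constructions: this is where formal model-categorical manipulation does not suffice and one must import the smash-product and long-exact-sequence machinery for $(p,q)\pi_*$ developed by analogy with \cite{MM02}.
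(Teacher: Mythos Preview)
Your proof is correct, but it takes a different route from the paper's. The paper's argument is much shorter and higher-level: it simply invokes Hirschhorn's existence theorem for left Bousfield localisation \cite[Theorem~4.1.1]{Hir03} at the set $\lambda=\{\lambda_{V,W}^{p,q}\}$, observes that the $\lambda$-local objects are precisely the $(p,q)\Omega$-spectra, and concludes that $L_\lambda\OEpql$ is the desired model structure since the cofibrations and fibrant objects match. The identification of the weak equivalences with $(p,q)\pi_*$-equivalences and of $\Jstable$ as a generating set for the acyclic cofibrations is left implicit, to be read off from the preceding lemma on $\Jstable$-inj together with Lemma~\ref{BOLem7.10}.

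By contrast, you carry out the explicit Hovey-style verification \`a la \cite{MM02} and \cite[Section~7]{BO13}: the core identity $\Ilevel\text{-inj}=\Jstable\text{-inj}\cap W$ and the closure of ``cofibration $+$ $(p,q)\pi_*$-equivalence'' under cellular constructions. Your approach is more self-contained and makes the role of the $(p,q)\pi_*$ long exact sequences and the smash-product stability of $(p,q)\pi_*$-equivalences explicit, at the cost of importing more of the \cite{MM02} machinery. The paper's approach buys brevity by offloading existence to Hirschhorn, but is less transparent about why the weak equivalences are exactly the $(p,q)\pi_*$-equivalences and why $\Jstable$ (rather than Hirschhorn's horn set) generates. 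Both are standard and valid; yours is closer in spirit to how \cite{MM02} actually builds the stable model structure on equivariant orthogonal spectra.
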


\begin{proof}
Letting $\lambda$ be the class of maps $\lambda_{V,W}^{p,q}$. By a theorem of Hirschhorn \cite[Theorem 4.1.1]{Hir03}, the fibrant objects of the left Bousfield localisation $L_{\lambda}\OEpql$ are the $\lambda$-local objects. It can easily be seen that a $\lambda$-local object is the same as a $(p,q)\Omega$-spectrum. It follows that the $(p,q)$-stable model structure on $\OEpq$ is exactly the left Bousfield localisation of the projective model structure with respect to the class of maps $\lambda$, since the fibrant objects and cofibrations are the same. 
\end{proof}

We get the following corollary as an application of \cite[Proposition 3.3.4]{Hir03}.
\begin{corollary}
There is a Quillen adjunction 
\begin{equation*}
    \Id: \OEpq^l \rightleftarrows \OEpq^s:\Id.
\end{equation*}
\end{corollary}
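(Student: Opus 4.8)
The plan is to invoke the general theory of left Bousfield localisation. Recall from the proof of Proposition \ref{prop: stable ms} that the $(p,q)$-stable model structure $\OEpq^s$ is precisely the left Bousfield localisation $L_{\lambda}\OEpq^l$ of the projective model structure at the class $\lambda$ of maps $\lambda_{V,W}^{p,q}$. For any left Bousfield localisation, the identity functor from the original model category to the localisation is a left Quillen functor, with right adjoint the identity functor the other way; this is the content of \cite[Proposition 3.3.4]{Hir03}. Applying this to $\OEpq^l$ and its localisation $\OEpq^s$ yields the desired Quillen adjunction.

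To spell out why the criterion applies, one checks directly that $\Id:\OEpq^l\rightarrow\OEpq^s$ is a left Quillen functor. First, it preserves cofibrations, since by Proposition \ref{prop: stable ms} the cofibrations of the $(p,q)$-stable model structure are exactly the cofibrations of the projective model structure. Second, it preserves acyclic cofibrations: an acyclic cofibration of $\OEpq^l$ is a cofibration which is an objectwise weak equivalence, and by Lemma \ref{Level equiv is pi equiv} an objectwise weak equivalence is a $(p,q)\pi_*$-equivalence, hence a weak equivalence of $\OEpq^s$; thus such a map is again an acyclic cofibration in the stable model structure. Since it suffices for a left adjoint to preserve cofibrations and acyclic cofibrations, $(\Id,\Id)$ is a Quillen pair.

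There is essentially no obstacle here: the two model structures share the same cofibrations and the weak equivalences of $\OEpq^s$ form a larger class than those of $\OEpq^l$, which is exactly the situation in which the identity adjunction is Quillen. The only input beyond the formal machinery is Lemma \ref{Level equiv is pi equiv}, which guarantees the inclusion of weak equivalences, together with the already-established fact (Proposition \ref{prop: stable ms}) that the localisation does not alter the cofibrations.
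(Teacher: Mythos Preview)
Your proposal is correct and takes essentially the same approach as the paper: the corollary is stated as a direct application of \cite[Proposition 3.3.4]{Hir03}, exactly as you do. Your additional unpacking via Lemma \ref{Level equiv is pi equiv} is accurate but goes beyond what the paper records.
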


\chapter{Equivariant polynomial functors}
\label{ch:EquivPolyFunct}

\fancyhf{}
\fancyhead[C]{\rightmark}
\fancyhead[R]{\thepage}

In differential calculus, polynomial functions and derivatives are used to approximate real functions via the Taylor series. In orthogonal calculus, Weiss defines a class of input functors with properties analogous to those of polynomial functions, called polynomial functors, see \cite[Section 5]{Wei95}. These polynomial functors can be constructed into a tower that approximates a given functor, much like the Taylor series does for functions. It is the fibres of the maps between these polynomial approximation functors that are classified as spectra by the classification theorem, see \cite[Theorem 9.1]{Wei95} and \cite[Theorem 10.3]{BO13}. 

One can define a class of functors with polynomial properties in the $\C$-equivariant input category $\Ezero$. The addition of the $\C$-action makes it necessary to introduce an indexing shift from the underlying calculus. In particular, $\tau_n$ in the underlying calculus is defined using the poset $\{0\neq U\subseteq \R^{n+1}\}$ and $\taupq$ in the $\C$-calculus is defined using the poset $\{0\neq U\subseteq \R^{p,q}\}$.  The author introduces a new class of `strongly' polynomial functors to account for this indexing shift, see Definition \ref{def: polynomial}.

\section{Polynomial functors}
In this section, we will adapt the definition of polynomial functors from the underlying calculus to fit the new $\C$-equivariant categories defined in Chapter \ref{ch:EquivFunct}. These functors should be input functors (see Definition \ref{jzero and ezero def}), which have properties that mimic those of polynomial functions.    

We begin by defining a functor $\taupq$ on the input category. The functor $\taupq$ is analogous to the functor $\tau_n$ from the underlying calculus, but requires a slightly different indexing due to the group action. The $(p,q)$-th complement bundle $\Gmor{p,q}{U}{V}$ has an associated sphere bundle $S\Gmor{p,q}{U}{V}$\index{$S\Gmor{p,q}{U}{V}$}. Considering $S\Gmor{p,q}{-}{-}:\Jzero^{op}\times \Jzero\rightarrow \CTop_*$ as a $\C\Top_*$-enriched functor, we can define the functor $\taupq:\Ezero\rightarrow \Ezero$ as follows.
\begin{definition}\index{$\taupq$}
Let $E\in \Ezero$. Define the functor $\taupq E\in\Ezero$ by 
\begin{equation*}
    \taupq E(V)=\Nat_{0,0}(S\Gmor{p,q}{V}{-}_+, E).
\end{equation*}
\end{definition}

The composition of the sphere bundle inclusion map $S\Gmor{p,q}{V}{W}_+\rightarrow\Gmor{p,q}{V}{W}_+$ with the projection map $\Gmor{p,q}{V}{W}_+\rightarrow \Jzero(V,W)_+ $, defined by $(f,x)\mapsto f$, results in a map $S\Gmor{p,q}{V}{W}_+\rightarrow \Jzero(V,W)_+$. This map induces a natural transformation $ S\Gmor{p,q}{V}{-}_+ \rightarrow \Jzero(V,-)_+$. Applying the contravariant functor $\Nat_{0,0}(-,E)$, for some $E\in\Ezero$, gives a map $$\Nat_{0,0}(\Jzero(V,-)_+,E)\rightarrow \Nat_{0,0}(S\Gmor{p,q}{V}{-}_+,E).$$ Identifying the left side with $E(V)$ using the Yoneda Lemma, and the right side with $\taupq E(V)$ by definition, yields a map $\rho_{p,q}E(V):E(V)\rightarrow \taupq E(V)$. That is, there is a $\C$-equivariant natural transformation $$\rho_{p,q}:\Id\rightarrow\taupq.$$\index{$\rho_{p,q}$} 

There is an alternative description of $\taupq$ as a homotopy limit, that can be derived as a consequence of the following Proposition. This a generalisation of another key result of orthogonal calculus by Weiss \cite[Proposition 4.2]{Wei95}. The proposition states that the sphere bundle of the vector bundle $\Gmor{p,q}{U}{V}$ can be written as a homotopy colimit, and in particular we can do this in a $\C$-equivariant way. 

Recall that $\Jzero$ is the category of finite dimensional subrepresentations of $\oplus_0^\infty \R[\C]$ with inner product and with morphism spaces $\Jzero(U,V)=\LL (U,V)_+$ (Definition \ref{jzero and ezero def}). 
\begin{definition}\label{def: jzerobar}
Let $\Jzerobar$\index{$\Jzerobar$} denote the $\Top_*$-enriched category of finite dimensional subspaces of $\oplus_0^\infty \R[\C]$ with inner product and with morphism spaces 
\begin{equation*}
\Jzerobar(U,V)=\LL (U,V)_+.    
\end{equation*}
\end{definition} 
There is a $\Top_*$-enriched inclusion functor $$\Jzero \hookrightarrow \Jzerobar,$$since $\Jzero$ forms a subcategory of $\Jzerobar$. 

Let $V,W,X\in\Jzero$. Let $\mathcal{C}$ be the category of non-zero subspaces of $X$ ordered by reverse inclusion. Consider the functor $Z$ from $\mathcal{C}$ to $\Top_*$ defined by
    \begin{equation*}
        Z:U\mapsto \Jzerobar (U\oplus V,W).
    \end{equation*}
By \cite[Section 4]{Wei95}, the homotopy colimit of $Z$ is the geometric realisation of the simplicial space
    \begin{equation*}
        [k]\mapsto \coprod\limits_{G:[k]\rightarrow\mathcal{C}} \LL(G(0)\oplus V,W),
    \end{equation*}
where $G$ runs over the order-preserving injections from the poset $[k]=\{0,1,...,k\}$ to $\mathcal{C}$. 

The geometric realisation is, by definition, a quotient of 
    \begin{equation*}
    \coprod\limits_{k\geq 0}\coprod\limits_{G:[k]\rightarrow\mathcal{C}} \LL(G(0)\oplus V,W)\times \triangle^k.
\end{equation*}
Therefore, there is a $\C$-action on the homotopy colimit of $Z$ given by 
\begin{equation*}
    \sigma (G,l,s)= (\sigma G, \sigma l, s),
\end{equation*}
where $G:[k]\rightarrow \mathcal{C}$, $l\in \LL(G(0)\oplus V,W)$ and $s\in\triangle^k$. Details of this action are given in the following proof. Details of this homotopy colimit construction are discussed for the dual homotopy limit in Lemma \ref{e.3}

\begin{proposition}\label{hocolimprop}
For all $V,W,X\in \Jzero$, there is a $\C$-homeomorphism 
\begin{equation*}
    S\Gmor{X}{V}{W}_+\cong \underset{0\neq U \subseteq X}
    {\hocolim} \Jzerobar (U\oplus V,W),
\end{equation*}
where $U$ is a non-zero subspace of $X$. 
\end{proposition}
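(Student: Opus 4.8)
The plan is to upgrade the homeomorphism from Weiss's proof of \cite[Proposition 4.2]{Wei95} to a $\C$-equivariant one. The argument splits into three parts: pinning down the $\C$-action on the homotopy colimit and checking it is well defined, recalling Weiss's explicit homeomorphism, and verifying that it intertwines the two $\C$-actions. Only the last part involves anything new, and it will turn out to be essentially formal, powered by the single observation that $\C$ acts on $X$ (and on $V$, $W$) through linear isometries.

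First I would make precise the $\C$-action on $\underset{0\neq U\subseteq X}{\hocolim}\Jzerobar(U\oplus V,W)$. Since $X$ is a $\C$-representation, $\sigma\colon X\to X$ is a linear isometry and hence induces an order-automorphism $U\mapsto\sigma(U)$ of the topological poset $\mathcal{C}$ of non-zero subspaces of $X$. Given an order-preserving injection $G\colon[k]\to\mathcal{C}$ and $l\in\LL(G(0)\oplus V,W)$, I set $\sigma G=\sigma\circ G$ and $(\sigma l)(u+v)=\sigma\bigl(l(\sigma u+\sigma v)\bigr)$; because $\sigma$ is an isometry of each of $X$, $V$, $W$ (here $V\in\Jzero$ is a subrepresentation, so $\sigma V=V$), this $\sigma l$ lies in $\LL(\sigma(G(0))\oplus V,W)$. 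It then remains to check $\sigma^2=\id$ and compatibility with the face identifications of the geometric realisation — the same bookkeeping carried out for the dual homotopy limit in Lemma \ref{e.3} — so that $\sigma(G,l,s)=(\sigma G,\sigma l,s)$ descends to the claimed action.

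Next I would recall Weiss's homeomorphism and simply cite it in the non-equivariant case. On the cell labelled by a chain $G(0)\supsetneq\cdots\supsetneq G(k)$ of non-zero subspaces of $X$, a linear isometry $l\colon G(0)\oplus V\to W$, and $s=(s_0,\dots,s_k)\in\triangle^k$, it sends $(G,l,s)$ to $(l|_V,x)$, where $l|_V\in\LL(V,W)$ and $x\in X\otimes l(V)^\perp$ is the unit vector $\bigl(\sum_j t_j\dim A_j\bigr)^{-1/2}\sum_j\sqrt{t_j}\,\theta_{A_j}$: here $A_j=G(j)\ominus G(j+1)$ (with $G(k+1)=0$) gives the orthogonal decomposition $G(0)=\bigoplus_j A_j$, $t_j=\sum_{i\le j}s_i$, and $\theta_{A_j}\in X\otimes l(V)^\perp$ is the image of the basis-independent element $\sum_a e_a\otimes e_a\in A_j\otimes A_j$ under $(A_j\hookrightarrow X)\otimes(l|_{A_j})$, noting $l(A_j)\subseteq l(G(0))\subseteq l(V)^\perp$. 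Weiss verifies that this is well defined on the realisation (the crucial identity being $\theta_{A\oplus A'}=\theta_A+\theta_{A'}$, which matches the face relations), continuous, and a pointed homeomorphism onto $S\Gmor{X}{V}{W}_+$, the construction being evidently natural in $V$, $W$ and in the coefficient space $X$.

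Finally I would check equivariance by inspecting the formula. For $v\in V$, $(\sigma l)(v)=\sigma(l(\sigma v))$, so the first coordinate of the image of $\sigma(G,l,s)$ is $\sigma\circ(l|_V)\circ\sigma=\sigma\ast(l|_V)$. For the second, $\sigma$ carries $G(0)=\bigoplus_j A_j$ to $\sigma(G(0))=\bigoplus_j\sigma(A_j)$ with $\sigma(A_j)=\sigma(G(j))\ominus\sigma(G(j+1))$ (isometries preserve orthogonal complements), $\{\sigma e_a\}$ is an orthonormal basis of $\sigma(A_j)$, $(\sigma l)(\sigma e_a)=\sigma(l(e_a))$, and the weights $t_j$ and dimensions $\dim A_j$ are unchanged; hence the vector produced is $\sum_j\sqrt{t_j}\sum_a\sigma(e_a)\otimes\sigma(l(e_a))$, up to the same normalising constant, which is exactly $\sigma x$. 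So $\sigma(G,l,s)$ maps to $(\sigma\ast(l|_V),\sigma x)=\sigma(l|_V,x)$, and a $\C$-homeomorphism results. I expect the main obstacle to lie entirely in the first part — making the $\C$-action genuinely well defined on the topological homotopy colimit, with $\mathcal{C}$ treated as a poset internal to $\Top_*$, and matching it with Weiss's realisation; once the explicit formula is available, equivariance is immediate because each ingredient (orthogonal complement, restriction of a linear map, the canonical tensor $\sum_a e_a\otimes e_a$) manifestly commutes with the isometric $\C$-actions on $X$, $V$ and $W$.
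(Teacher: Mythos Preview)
Your proposal is correct and follows essentially the same strategy as the paper: both invoke Weiss's non-equivariant homeomorphism from \cite[Proposition 4.2]{Wei95} and then verify that it is $\C$-equivariant. The only tactical difference is the direction in which the map is written out. The paper describes the map $\phi$ from $\Jmor{X}{V}{W}\setminus\Jzero(V,W)$ to $(0,\infty)\times\hocolim$ via the eigenspace decomposition of the self-adjoint operator $x^*x$, and then checks equivariance by showing that the eigenvalues of $(\sigma\cdot x)^*(\sigma\cdot x)$ coincide with those of $x^*x$ while the eigenspaces are carried to one another by $\sigma$. You instead check the inverse map $(G,l,s)\mapsto(l|_V,x)$ directly, using that the canonical tensor $\sum_a e_a\otimes e_a$ and the other ingredients are manifestly natural under the isometric $\C$-actions. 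These are equivalent verifications of the same homeomorphism, and your route is arguably a little cleaner since it avoids the eigenvalue bookkeeping.
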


\begin{remark}
Since $U$ is not necessarily closed under the induced $\C$-action from $X$, the notation $\Jzero(U\oplus V,W)$ would not make sense, however this morphism spaces is exactly the space $\Jzerobar(U\oplus V,W)$ by definition. 
\end{remark}

\begin{proof}
To prove this result, we will construct a homeomorphism 
\begin{equation*}
    \phi:\Jmor{X}{V}{W}\backslash\Jzero(V,W)\rightarrow(0,\infty)\times \underset{U}{\hocolim}\Jzerobar(U\oplus V,W)
\end{equation*}
and then the $\C$-equivariant identification (see \cite[Proposition 4.2]{Wei95})
\begin{align*}
 \Jmor{X}{V}{W}\backslash\Jzero(V,W)&\rightarrow(0,\infty)\times S\Gmor{X}{V}{W}_+   \\
 (f,x)&\mapsto (\| x\|, (f,x/\| x\|))
\end{align*}
yields the desired homeomorphism.

Let $(f,x)\in \Jmor{X}{V}{W}\backslash\Jzero(V,W) $. That is, $f\in \LL(V,W)$ and $x\in X\otimes f(V)^\perp$ such that $x\neq 0$. Since $X$ and $f(V)^\perp$ are finite dimensional inner product spaces, the duality isomorphism tells us that 
\begin{align*}
    X\otimes f(V)^\perp &\cong X^*\otimes f(V)^\perp\\
    &=\text{Hom}(X,\R)\otimes f(V)^\perp\\
    &\cong \text{Hom}(X,f(V)^\perp).
\end{align*}
Hence, we can think of $x$ as a linear map from $X$ to $f(V)^\perp$. In this way, $x$ has an adjoint $x^*:f(V)^\perp\rightarrow X$, and their composition $x^*x:X\rightarrow X$ is self adjoint, since 
\begin{equation*}
    (x^* x)^*=x^*(x^*)^*=x^*x.
\end{equation*}

As a result of \cite[Theorem 6.25]{FIS89}, we can write $X$ as a direct sum of the eigenspaces of $x^*x$. Thus
\begin{equation*}
    X=\text{ker}(x^*x)\oplus E(\lambda_0)\oplus \dots \oplus E(\lambda_k),
\end{equation*}
where $0<\lambda_0<\dots<\lambda_k$ are the non-zero eigenvalues of $x^*x$ and $E(\lambda_i)$ is the eigenspace corresponding to the eigenvalue $\lambda_i$. Note that all of the $\lambda_i$ are real. 

Given the data of $(f,x)\in\Jmor{X}{V}{W}\backslash\Jzero(V,W) $ we can then define the following.

1. A functor $G:[k]\rightarrow \mathcal{C}$ defined by 
\begin{equation*}
    r\mapsto E(\lambda_0)\oplus \dots \oplus E(\lambda_{k-r}),
\end{equation*}
where $\mathcal{C}$ is the category of non-zero subspaces of $X$ with reverse inclusion ordering, and $[k]$ is the category with objects $\{0,1,\dots,k\}$ with the standard ordering. 

2. A linear isometry $l\in\LL(G(0)\oplus V,W)$ defined by  
\begin{equation*}
    l=\begin{cases}
    f \text{ on } V \\
    \lambda_{i}^{-\frac{1}{2}} x \text{ on } E(\lambda_i)
    \end{cases}
\end{equation*}

3. $s\in \triangle^k$ defined in barycentric coordinates by 
\begin{equation*}
    \lambda_k^{-1}(\lambda_0, \lambda_1-\lambda_0, \dots,\lambda_k-\lambda_{k-1}).
\end{equation*}

4. $t=\lambda_k>0$.

Now, by Bousfield and Kan \cite[Section VIII.2.6]{BK72}, we know that the homotopy colimit $\underset{U}{\hocolim}\Jzerobar(U\oplus V,W)$ is a quotient of 
\begin{equation*}
    \coprod\limits_{k\geq 0}\coprod\limits_{G:[k]\rightarrow\mathcal{C}} \LL(G(0)\oplus V,W)\times \triangle^k.
\end{equation*}
We can define the desired homeomorphism as follows.
\begin{align*}
    \phi:\Jmor{X}{V}{W}\backslash\Jzero(V,W)&\rightarrow(0,\infty)\times \underset{U}{\hocolim}\Jzerobar(U\oplus V,W)\\
    (f,x)&\mapsto (t,G,l,s)
\end{align*}

Note that this is a well defined homeomorphism, since it was in the non-equivariant case, as demonstrated by Weiss in \cite[Theorem 4.2]{Wei95}. What remains is to show that this homeomorphism is $\C$-equivariant. 

Let $(f,x)\in \Jmor{X}{V}{W}\backslash\Jzero(V,W) $. We know that $\sigma (f,x)=(\sigma *f, \sigma x)$. Via the duality isomorphism, we see that the vector $\sigma x$ is the map $\sigma\cdot x:=\sigma x \sigma$. In the same way as for the map $x$, $\sigma \cdot x$ has an adjoint, which is given by $(\sigma \cdot x)^*=\sigma x^* \sigma$. Hence, in the same way as for $x^*x$, the composition $(\sigma \cdot x)^*(\sigma \cdot x)$ is self-adjoint, and it is defined by
$\sigma x^*x \sigma$. Now, consider the following calculations.
\begin{align*}
x^*x(v)&=\lambda v\\
\Leftrightarrow \sigma(x^*x(v))&=\sigma(\lambda v)\\
\Leftrightarrow \sigma x^*x\sigma (\sigma v)&=\sigma(\lambda v)\\
\Leftrightarrow (\sigma \cdot x)^*(\sigma \cdot x)(\sigma v)&=\lambda \sigma v\\
& \\
(\sigma \cdot x)^*(\sigma \cdot x)(v)&=\lambda v\\
\Leftrightarrow \sigma x^*x \sigma (v)&=\lambda v\\
\Leftrightarrow \sigma \sigma x^*x\sigma  (v)&=\sigma(\lambda v)\\
\Leftrightarrow x^*x(\sigma v)&=\lambda \sigma v
\end{align*}
These arguments show that the eigenvalues of $x^*x$ and $(\sigma \cdot x)^*(\sigma \cdot x)$ are the same, and the eigenvectors of $(\sigma \cdot x)^*(\sigma \cdot x)$ associated to the eigenvalue $\lambda$ are of the form $\sigma v$, where $v$ is an eigenvector of $x^*x$ corresponding to the eigenvalue $\lambda$. Denote by $\sigma E(\lambda_i)$ the eigenspace of eigenvectors of $(\sigma \cdot x)^*(\sigma \cdot x)$ associated to the eigenvalue $\lambda_i$.

The image of $(\sigma f \sigma, \sigma x)$ under the homeomorphism $\phi$ is $(t,\sigma G, \sigma l, s)$, where 

1. $\sigma G :[k]\rightarrow\mathcal{C}$ is the functor 
\begin{equation*}
    r\mapsto \sigma E(\lambda_0)\oplus \dots \oplus \sigma E(\lambda_{k-r}).
\end{equation*}

2. $\sigma l\in\LL(G(0)\oplus V,W)$ is the linear isometry below. 
\begin{equation*}
    \sigma l=\begin{cases}
    \sigma *f  \text{ on } V \\
    \lambda_{i}^{-\frac{1}{2}} (\sigma \cdot x) \text{ on } \sigma E(\lambda_i)
    \end{cases}
\end{equation*}

3. $s\in \triangle^k$ defined in barycentric coordinates by 
\begin{equation*}
    \lambda_k^{-1}(\lambda_0, \lambda_1-\lambda_0, \dots,\lambda_k-\lambda_{k-1}).
\end{equation*}

4. $t=\lambda_k>0$.

The $\C$-action on $(0,\infty)\times \underset{U}{\hocolim}\Jzerobar(U\oplus V,W)$ is given by
\begin{equation*}
    (t,G,l,s)\mapsto(t,\sigma G, \sigma l, s),
\end{equation*}
hence we conclude that $\sigma \phi (f,x)=\phi (\sigma *f, \sigma x)$. That is, the homeomorphism $\phi$ is $\C$-equivariant. 
\end{proof}

\begin{remark}
Notice that the splitting of $X$ into eigenspaces is not a $C_2$-equivariant splitting. That is, the eigenspaces $E(\lambda_i)$ are not necessarily closed under the $C_2$-action inherited from $X$. This forces the homotopy colimit to be taken over the poset of non-zero subspaces of $\mathbb{R}^{p,q}$, rather than non-zero subrepresentations. Since the proof doesn't rely on any specific properties of $\C$, it should also hold if $\C$ were replaced by an arbitrary finite group $G$ (with suitable replacements for the categories involved). In particular, this implies that for a general $G$-equivariant calculus there should be a natural description of polynomial functors analogous to the $\C$-equivariant polynomial functors discussed in the remainder of this section. 
\end{remark}

Using Proposition \ref{hocolimprop}, we get the following alternative description of $\taupq$. Recall from Definition \ref{def: jzerobar} that $\Jzerobar$ is the $\Top_*$-enriched category whose objects are finite dimensional subspaces of $\oplus_0^\infty \RC$ with inner product. Note that the categories $\Jzero$ and $\CTop_*$ are also $\Top_*$-enriched, and that $\C\Top_*$ is powered over $\Top_*$. We use the notation $\overline{E}$\index{$\overline{E}$} to represent the right Kan extension of an input functor $E\in\Ezero$ along the inclusion $\Jzero\hookrightarrow\Jzerobar$. In particular, $\overline{E}$ is the $\Top_*$-enriched functor $\Jzerobar \rightarrow \C\Top_*$ defined by 
\begin{equation*}
    \overline{E}(X)=\int\limits_{W\in\Jzero} \Top_*\left(\Jzerobar (X,W),E(W)\right).
\end{equation*}
with the $\C$-action induced by the following $\C$-action on $\Top_*\left(\Jzerobar (X,W),E(W)\right)$
\begin{equation*}
    \sigma * f (x) := \sigma (f(x))
\end{equation*}
for all $f\in \Top_*\left(\Jzerobar (X,W),E(W)\right)$ and $x\in \Jzerobar (X,W)$. For details of this construction see \cite[Chapter 4]{Kel05}.

The homotopy limit $\underset{0\neq U \subseteq \R^{p,q}}{\holim} \overline{E}(U\oplus V)$ is the homotopy limit of the functor 
\begin{align*}
    Z:\{0\neq U\subseteq \R^{p,q}\}&\rightarrow \Top_*\\
    U&\mapsto \overline{E}(U\oplus V).
\end{align*}
This homotopy limit has a $\C$-action, since it can be expressed as the totalization of a cosimplicial space, which has a $\C$-action. This is discussed in more detail in Lemma \ref{e.3}.

\begin{proposition}
Let $E\in\Ezero$. There is a $\C$-equivariant homeomorphism $$\taupq E(V) \cong \underset{0\neq U \subseteq \R^{p,q}}{\holim} \overline{E}(U\oplus V).$$
\begin{proof}
Using Proposition \ref{hocolimprop}, we get that
\begin{align*}
\taupq E(V)&=\Nat_{0,0}(S \Gmor{p,q}{V}{-}_+,E) \\
&\cong \Nat_{0,0} \left( \underset{0\neq U \subseteq \R^{p,q} } {\hocolim} \Jzerobar (U\oplus V,-),E\right)\\
&= \int\limits_{W\in\Jzero} \Top_*\left(\underset{0\neq U \subseteq \R^{p,q} } {\hocolim} \Jzerobar (U\oplus V,W),E(W)\right)\\
&\cong \underset{0\neq U \subseteq \R^{p,q}}{\holim} \int\limits_{W\in\Jzero} \Top_*\left(\Jzerobar (U\oplus V,W),E(W)\right)\\
&= \underset{0\neq U \subseteq \R^{p,q}}{\holim} \overline{E}(U\oplus V)
\end{align*}    
The homeomorphism in line 4 above is the comparison, made by Weiss, between the homotopy colimit and homotopy limit constructions (see \cite[Proposition 5.2]{Wei95}). The comparison used in the underlying calculus is naturally $\C$-equivariant.
\end{proof}

\end{proposition}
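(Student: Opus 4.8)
The plan is to prove the homeomorphism by a short chain of natural isomorphisms, with Proposition \ref{hocolimprop} as the key input. First I would unwind the definition,
\begin{equation*}
\taupq E(V) = \Nat_{0,0}\big(S\Gmor{p,q}{V}{-}_+, E\big) = \int\limits_{W\in\Jzero} \Top_*\big(S\Gmor{p,q}{V}{W}_+, E(W)\big),
\end{equation*}
and then substitute the $\C$-equivariant identification of the sphere bundle as a homotopy colimit supplied by Proposition \ref{hocolimprop}, namely $S\Gmor{p,q}{V}{W}_+ \cong \hocolim_{0\neq U\subseteq\R^{p,q}} \Jzerobar(U\oplus V,W)$, naturally in $W$.

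Next I would commute the mapping-space/end functor $\Nat_{0,0}(-,E)$ past this homotopy colimit: since $\Top_*(-,A)$ sends colimits to limits, and a homotopy colimit is built out of coproducts, pushouts and geometric realizations, mapping out of it produces a homotopy limit. Concretely this is Weiss' comparison \cite[Proposition 5.2]{Wei95} between the homotopy-colimit and homotopy-limit models, together with an enriched Fubini interchange of the two ends, giving
\begin{equation*}
\taupq E(V) \cong \holim_{0\neq U\subseteq\R^{p,q}} \int\limits_{W\in\Jzero}\Top_*\big(\Jzerobar(U\oplus V,W), E(W)\big).
\end{equation*}
Finally I would recognize the inner end as $\overline{E}(U\oplus V)$, which is exactly the definition of the right Kan extension of $E$ along $\Jzero\hookrightarrow\Jzerobar$ evaluated at $U\oplus V$; here the passage to $\Jzerobar$ is forced, since a non-zero subspace $U\subseteq\R^{p,q}$ need not be $\C$-stable, so $U\oplus V\in\Jzerobar\setminus\Jzero$ in general.

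The only step demanding real care, and the step I expect to be the main obstacle, is $\C$-equivariance. The left-hand side carries the conjugation action on isometries; the homotopy colimit carries the action $\sigma\cdot(G,l,s)=(\sigma G,\sigma\cdot l,s)$ from Proposition \ref{hocolimprop}, sending a chain $G$ of subspaces to $\sigma G$ and conjugating the isometry while fixing the simplicial coordinate; and the homotopy limit $\holim_U\overline{E}(U\oplus V)$ carries the action obtained by realizing it as the totalization of a cosimplicial space (Lemma \ref{e.3}), again with $\C$ acting on chains and on $\overline{E}$. I would check that Weiss' comparison homeomorphism, which is a priori only non-equivariant, intertwines these actions; this should follow because the comparison is defined level-by-level on the underlying (co)simplicial objects, matching chains with chains and isometries with isometries, and both $\C$-actions are defined in that same levelwise manner, so the comparison is automatically equivariant. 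The remaining isomorphisms (the Fubini swap and the Kan-extension identification) are equivariant because all the $\C$-actions in sight are induced from the action on the values $E(W)$ and the conjugation action on the morphism objects, which enriched (co)end manipulations preserve.
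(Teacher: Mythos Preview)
Your proposal is correct and follows essentially the same approach as the paper: unwind the definition of $\taupq E(V)$ as an end, substitute the hocolim description of the sphere bundle from Proposition~\ref{hocolimprop}, pass the mapping-space/end functor across the hocolim via Weiss' comparison \cite[Proposition~5.2]{Wei95} to obtain a holim, and identify the inner end as $\overline{E}(U\oplus V)$. Your discussion of the $\C$-equivariance of Weiss' comparison is in fact more detailed than the paper's, which simply records that the comparison is naturally $\C$-equivariant.
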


Now we can define what it means for a functor to be (strongly) polynomial. This definition is a $\C$-equivariant version of \cite[Definition 5.1]{Wei95}. 

\begin{definition}\label{def: polynomial}
A functor $E\in\Ezero$ is called \emph{strongly $(p,q)$-polynomial} if and only if the map 
\begin{equation*}
    \rho_{p,q}E:E\rightarrow\taupq E
\end{equation*}
is an objectwise weak equivalence. 

A functor $E\in\Ezero$ is called \emph{$(p,q)$-polynomial} if and only if $E$ is both strongly $(p+1,q)$-polynomial and strongly $(p,q+1)$-polynomial. 
\end{definition}

\begin{remark}
The term `strongly' is needed in order to keep notation as consistent as possible. In particular, we do this so that $(p,q)$-homogeneous functors are indeed $(p,q)$-polynomial (Section \ref{sec: homog functors}). 
\end{remark}

Functors that are strongly $(p,q)$-polynomial satisfy properties that one might expect based on the properties of polynomial functions. For example, a strongly $(p,q)$-polynomial functor is also strongly $(p+1,q)$ and strongly $(p,q+1)$-polynomial, see Proposition \ref{pq poly implies more poly}. In particular, this means that a strongly $(p,q)$-polynomial functor is $(p,q)$-polynomial. A number of other properties of polynomial functors are discussed in this section. 

The fibre of the map $\rho_{p,q}E$ determines how far a functor $E$ is from being strongly $(p,q)$-polynomial. Lemma \ref{hofiblemma} is a $\C$-equivariant generalisation of \cite[Proposition 5.3]{Wei95}, and it describes how this fibre can be calculated. In particular, this fibre is a derivative of the functor in question. To prove this, we first need the following $\C$-equivariant cofibre sequence. 

\begin{proposition}\label{sphere cofibre seq}
For all $V,W\in \Jzero$ there is a homotopy cofibre sequence in $\C\Top_*$
\begin{equation*}
    S\Gmor{p,q}{V}{W}_+\rightarrow\Jzero(V,W)\rightarrow\Jmor{p,q}{V}{W}.
\end{equation*}
\end{proposition}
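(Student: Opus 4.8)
The plan is to realise this as the standard disk-bundle / sphere-bundle / Thom-space cofibre sequence attached to a vector bundle, carried out $\C$-equivariantly. Write $E := \Gmor{p,q}{V}{W}$ for the total space of the $(p,q)$-th complement bundle over the Stiefel manifold $\LL(V,W)$, which is a compact smooth $\C$-manifold, and let $\pi : E \to \LL(V,W)$ be the projection. By Definition \ref{def: gamma pq bundle} the group $\C$ acts on $E$ by bundle maps which on each fibre $\R^{p,q}\otimes f(V)^\perp$ act linearly and orthogonally, so the standard fibrewise inner product is $\C$-invariant; hence the unit disk bundle $D := D\Gmor{p,q}{V}{W}$ and the unit sphere bundle $S := S\Gmor{p,q}{V}{W}$ are $\C$-subspaces of $E$, and by Definition \ref{thom} we have $\Jmor{p,q}{V}{W} = T(E) = D/S$ as a pointed $\C$-space.

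First I would record the elementary cofibre sequence. Since $\LL(V,W)$ is a compact smooth manifold with smooth $\C$-action and $E$ is a smooth $\C$-vector bundle, the pair $(D,S)$ is a $\C$-cofibration pair (for instance it admits a relative $\C$-CW structure), so that $S_+ \hookrightarrow D_+$ is a cofibration of pointed $\C$-spaces and its strict quotient $D_+/S_+ = D/S$ computes the homotopy cofibre. This gives a homotopy cofibre sequence in $\C\Top_*$
\begin{equation*}
S\Gmor{p,q}{V}{W}_+ \longrightarrow D\Gmor{p,q}{V}{W}_+ \longrightarrow \Jmor{p,q}{V}{W}.
\end{equation*}

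Next I would replace the disk bundle by its base. The zero section $z : \LL(V,W) \to D$ and the projection $\pi|_D : D \to \LL(V,W)$ satisfy $\pi|_D \circ z = \id$, and the fibrewise rescaling $H_t(f,x) = (f,tx)$, $t\in[0,1]$, is a homotopy from $z\circ\pi|_D$ to $\id_D$; it is $\C$-equivariant because $\sigma(f,x) = (\sigma*f,\sigma x)$ is linear in the fibre coordinate, so $H_t \circ \sigma = \sigma \circ H_t$. Thus $\pi|_D$ is a $\C$-homotopy equivalence, and after adding disjoint basepoints it induces a pointed $\C$-homotopy equivalence $D\Gmor{p,q}{V}{W}_+ \xrightarrow{\ \simeq\ } \LL(V,W)_+ = \Jzero(V,W)$. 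Since homotopy cofibre sequences are preserved under $\C$-weak equivalences of their terms, substituting this equivalence into the sequence above yields the homotopy cofibre sequence
\begin{equation*}
S\Gmor{p,q}{V}{W}_+ \longrightarrow \Jzero(V,W) \longrightarrow \Jmor{p,q}{V}{W},
\end{equation*}
and tracing through the identifications shows that the first map is the composite of the sphere-bundle inclusion $S \hookrightarrow E$ with $(f,x)\mapsto f$, which is precisely the map entering the definition of $\taupq$. This is the $\C$-equivariant analogue of the Thom-space cofibre sequence $S\gamma_m(V,W)_+ \to \mathcal{J}_0(V,W) \to \mathcal{J}_m(V,W)$ underlying the non-equivariant $\tau_n$.

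The only delicate points are the equivariance bookkeeping: that a $\C$-invariant fibre metric (hence $\C$-invariant $D$ and $S$, and a $\C$-equivariant rescaling homotopy) may be chosen — automatic here since $\C$ acts through fibrewise isometries — and that $(D,S)$ is a $\C$-cofibration pair, so that the strict quotient genuinely models the homotopy cofibre. The latter is the standard fact that the unit sphere bundle of a smooth $G$-vector bundle over a compact smooth $G$-manifold is a $G$-NDR in the unit disk bundle (via equivariant triangulation, or a partition-of-unity construction of an equivariant Urysohn function). Everything else is routine diagram manipulation, so I expect the main obstacle to be simply stating these equivariant refinements of classical bundle facts cleanly rather than any genuine difficulty.
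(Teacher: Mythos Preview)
Your approach is correct and yields the result, but the paper takes a more direct route. Rather than passing through the disk bundle and invoking that $(D,S)$ is a $\C$-cofibration pair, the paper simply writes down the mapping cone of the projection $S\Gmor{p,q}{V}{W}_+\rightarrow\Jzero(V,W)$ as the pushout along the inclusion into $S\Gmor{p,q}{V}{W}_+\wedge[0,\infty]$, and then exhibits an explicit $\C$-homeomorphism $\psi$ from this mapping cone to $\Jmor{p,q}{V}{W}$, given on non-basepoint elements by $(f,x,t)\mapsto(f,tx)$ and sending $t=\infty$ to the basepoint; the $\C$-equivariance of $\psi$ is then a one-line check. This avoids any appeal to equivariant NDR structure or $\C$-CW structure on $(D,S)$. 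Your route trades that explicit homeomorphism for the cleaner conceptual picture (Thom space as cofibre of sphere bundle in disk bundle, followed by collapsing the disk bundle to the base), at the cost of needing the equivariant cofibration fact for $(D,S)$, which---as you note---is standard but not completely content-free.
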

\begin{proof}
The proof follows that of the non-equivariant setting, see \cite[Proposition 5.4]{BO13}. In particular, we construct the relevant cofibre using a pushout diagram and define a specific homeomorphism from this pushout to $\Jmor{p,q}{V}{W}$. 

The relevant cofibre is the pushout $P$ of the diagram 
\[\begin{tikzcd}
	{S\C\gamma_{p,q}(V,W)_+} && {\C\mathcal{J}_{0,0}(V,W)} \\
	\\
	{S\C\gamma_{p,q}(V,W)_+\wedge[0,\infty]} && P
	\arrow[from=1-1, to=1-3]
	\arrow[from=3-1, to=3-3]
	\arrow[from=1-1, to=3-1]
	\arrow[from=1-3, to=3-3]
\end{tikzcd}\]
\noindent where $[0,\infty]$ is the one point compactification of $[0,\infty)$ with base point $\infty$, the top horizontal map is the projection $(f,x)\mapsto f$, and the left vertical map is $y\mapsto (y,0)$. 

Elements of the pushout are points $(f,x,t)$, where $(f,x)\in S\Gmor{p,q}{V}{W}$ and $t\in [0,\infty]$, with the following identifications. 
\begin{align*}
    (f,x,\infty)&=(f',x',\infty)\\
    (f,x,0)&= (f',x',0)
\end{align*}

The homeomorphism $\psi$ from $P$ to $\Jpq(V,W)$ is then defined by \begin{align*}
    (f,x,\infty)&\mapsto \text{basepoint}\\
    (f,x,t)&\mapsto (f,xt).
\end{align*}
From the non-equivariant case, it is clear that this map is a well defined homeomorphism (see \cite[Proposition 5.4]{BO13}). It remains to show that it is a $\C$-equivariant map. 
\begin{align*}
    &\psi \circ \sigma (f,x,t)=\psi(\sigma *f , \sigma x, t)=(\sigma *f, (\sigma x) t)\\
    &\sigma \circ \psi (f,x,t)=\sigma (f,xt)=(\sigma *f , (\sigma x)t)\qedhere
\end{align*}
\end{proof}

Using this proposition, we can prove that the functors $S \Gmor{p,q}{V}{-}_+$ and $\Jmor{p,q}{V}{-}$ are cofibrant in the projective model structure. 
\begin{lemma}\label{sphereandmorpharecofibrant}
The functors $S \Gmor{p,q}{V}{-}_+$ and $\Jmor{p,q}{V}{-}$ are cofibrant objects in $\Ezero$. 
\end{lemma}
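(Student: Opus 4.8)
The plan is to deduce both cofibrancy statements from Proposition~\ref{hocolimprop} and Proposition~\ref{sphere cofibre seq}, using that representable functors in $\Ezero$ are cofibrant. First I would check that $\Jzero(V,-)=\LL(V,-)_+$ is cofibrant; then that $S\Gmor{p,q}{V}{-}_+$ is cofibrant, by realising it as a homotopy colimit of representables via Proposition~\ref{hocolimprop}; and finally that $\Jmor{p,q}{V}{-}$ is cofibrant, because the proof of Proposition~\ref{sphere cofibre seq} exhibits it as the reduced mapping cone in $\Ezero$ of a map between cofibrant functors.

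The functor $\Jzero(V,-)$ is cofibrant because $\ast\to\Jzero(V,-)$ is exactly the generating cofibration $\Jzero(V,-)\wedge(\ast\to S^0)$ of Proposition~\ref{proj model structure}, with $\ast\to S^0=\C/\C_+$ the $n=0$, $H=\C$ generating cofibration of $\CTop_*$. For $S\Gmor{p,q}{V}{-}_+$, Proposition~\ref{hocolimprop} gives an isomorphism in $\Ezero$ with the homotopy colimit of the functors $\Jzerobar(U\oplus V,-)=\LL(U\oplus V,-)_+$ over the poset $\{0\neq U\subseteq\R^{p,q}\}$, formed as the geometric realisation of the bar construction and carrying the $\C$-action described there. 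Choosing for each $U$ a linear isometry of $U\oplus V$ onto a trivial subrepresentation identifies each $\Jzerobar(U\oplus V,-)$ with a representable functor, hence with a cofibrant object of $\Ezero$. In each simplicial degree the bar construction is a coproduct over the $\C$-set of chains of such functors, which is cofibrant; the latching maps are inclusions of wedge summands, so the simplicial object is Reedy cofibrant; and since $\Ezero$ is a cofibrantly generated $\CTop_*$-model category, its realisation is a cofibrant object. For $\Jmor{p,q}{V}{-}$: the proof of Proposition~\ref{sphere cofibre seq} builds a $\C$-equivariant homeomorphism, natural in the argument, presenting $\Jmor{p,q}{V}{-}$ as the pushout of $\Jzero(V,-)\leftarrow S\Gmor{p,q}{V}{-}_+\to S\Gmor{p,q}{V}{-}_+\wedge[0,\infty]$; since $S^0\hookrightarrow[0,\infty]$ is a cofibration of based $\C$-spaces and $S\Gmor{p,q}{V}{-}_+$ is cofibrant, the pushout--product axiom for $\Ezero$ makes the right-hand map a cofibration, so its pushout $\Jzero(V,-)\to\Jmor{p,q}{V}{-}$ is a cofibration out of a cofibrant object, whence $\Jmor{p,q}{V}{-}$ is cofibrant.

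The main obstacle is the middle step: the $\C$-action of Proposition~\ref{hocolimprop} genuinely permutes the indexing poset, so ``coproduct of representables over the set of chains'' must be interpreted $\C$-equivariantly, and one must verify that a coproduct over a $\C$-set of cofibrant objects of $\Ezero$ is cofibrant. This is handled by splitting the set of chains into $\C$-orbits: a fixed orbit contributes a representable at a genuine subrepresentation of $\bigoplus_{i=1}^{\infty}\RC$, and a free orbit contributes a functor of the form $(\text{representable})\wedge\C_+$, both cofibrant. Everything else is formal bookkeeping with the generating cofibrations of the projective model structure, so this orbit decomposition is the only genuinely delicate point.
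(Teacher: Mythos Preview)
Your treatment of $\Jzero(V,-)$ and of $\Jmor{p,q}{V}{-}$ agrees with the paper. The interesting difference is your argument for $S\Gmor{p,q}{V}{-}_+$: the paper does \emph{not} go through Proposition~\ref{hocolimprop} at all, but instead uses the very definition $\taupq E(V)=\Nat_{0,0}(S\Gmor{p,q}{V}{-}_+,E)$. Since the homotopy limit defining $\taupq$ is built from products, totalization and cotensors by $\C$-CW complexes, it preserves objectwise acyclic fibrations; hence any lifting problem of $\ast\to S\Gmor{p,q}{V}{-}_+$ against an acyclic fibration $E\to F$ translates, after passing to $(-)^{\C}$, into a lifting problem for a point against the acyclic fibration $\taupq E(V)\to\taupq F(V)$, which is trivially solvable. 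This adjunction argument is two lines and sidesteps every subtlety you raise.

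Your Reedy approach has a genuine gap beyond the one you flag. The poset $\{0\neq U\subseteq\R^{p,q}\}$ is not discrete: it is a category internal to $\C$-spaces, with object space a disjoint union of $\C$-Grassmannians (see the discussion in Lemma~\ref{e.3}). Consequently the ``space of chains'' in each simplicial degree is a flag manifold, and the simplicial level is a total space of a fibre bundle over it, not a coproduct over a $\C$-set. Your orbit decomposition into fixed and free $\C$-orbits therefore does not apply as written; making it precise would require producing a $\C$-CW structure on each flag variety and then arguing cell-by-cell that the associated functor of $W$ is projectively cofibrant. This can probably be pushed through, but it is substantially more work than you indicate, and the paper's adjunction argument avoids it completely.
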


\begin{proof}
The representable functor $\Jzero (V,-)$ is cofibrant by construction.

The homotopy limit used to construct $\taupq$ (see Lemma \ref{e.3}) preserves objectwise acyclic fibrations, since (indexed) products, totalization and the functor $\Top_*(A,-)$, for a $\C$-CW complex $A$, all preserve acyclic fibrations in $\C\Top_*$. It follows that $S\C\gamma_{p,q}(V,-)_+$ is cofibrant, by applying $\taupq$ to the diagram
\[\begin{tikzcd}
	{*} & E \\
	{S\C\gamma_{p,q}(V,-)_+} & F
	\arrow[from=1-1, to=1-2]
	\arrow[from=1-1, to=2-1]
	\arrow[from=1-2, to=2-2]
	\arrow[from=2-1, to=2-2]
\end{tikzcd}\]
where $E\rightarrow F$ is an objectwise acyclic fibration. 

Since, by Proposition \ref{sphere cofibre seq}, $\Jmor{p,q}{V}{-}$ is the cofibre of a map of cofibrant objects, it is also cofibrant. 
\end{proof}

The following result describes the relation between derivatives and polynomial functors. From this fibration sequence, one can see that if a functor $E$ is strongly $(p,q)$-polynomial, then $\ind_{0,0}^{p,q}E(V)$ is contractible. This is analogous to an $n$-polynomial function having zero $(n+1)^{\text{st}}$ derivative. 

\begin{lemma}\label{hofiblemma}
For all $E\in \Ezero$, and for all $V\in \Jzero$ there is a homotopy fibre sequence in $\C\Top_*$
\begin{equation*}
    \ind_{0,0}^{p,q} E(V)\rightarrow E(V)\rightarrow\taupq E(V).
\end{equation*}
\end{lemma}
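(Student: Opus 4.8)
The plan is to mirror Weiss' proof of \cite[Proposition 5.3]{Wei95} (and the equivariant refinement used earlier), starting from the $\C$-equivariant cofibre sequence that was just established in Proposition \ref{sphere cofibre seq}. First I would take that homotopy cofibre sequence of pointed $\C$-spaces
\begin{equation*}
S\Gmor{p,q}{V}{W}_+\rightarrow\Jzero(V,W)\rightarrow\Jmor{p,q}{V}{W}
\end{equation*}
and regard it as a cofibre sequence of functors in $\Ezero$, natural in $W$, i.e. obtained by letting $W$ vary in $\Jzero$. By Lemma \ref{sphereandmorpharecofibrant} all three functors are cofibrant, so this is a genuine homotopy cofibre sequence in the projective model structure on $\Ezero$.

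Next I would apply the contravariant functor $\Nat_{0,0}(-,E)$ to this cofibre sequence. As recorded in Remark \ref{remNat}, $\Nat_{0,0}(-,E)$ sends homotopy cofibre sequences to homotopy fibre sequences (it is built out of $\Top_*(-,A)$, which is contravariant and turns colimits into limits, together with the end defining $\Nat_{0,0}$), and it does so $\C$-equivariantly since the $\C$-action on each $\Nat_{0,0}$-space is the conjugation action. This yields a homotopy fibre sequence of $\C$-spaces
\begin{equation*}
\Nat_{0,0}\!\left(\Jmor{p,q}{V}{-},E\right)\rightarrow \Nat_{0,0}\!\left(\Jzero(V,-),E\right)\rightarrow \Nat_{0,0}\!\left(S\Gmor{p,q}{V}{-}_+,E\right).
\end{equation*}
Now I identify the three terms. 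The middle term is $E(V)$ by the enriched Yoneda lemma (Remark \ref{remNat}); the right-hand term is $\taupq E(V)$ by the very definition of $\taupq$; and the left-hand term is $\ind_{0,0}^{p,q}E(V)$ by Definition \ref{induction definition}, since $\ind_{0,0}^{p,q}E(U)=\Nat_{0,0}(\Jmor{p,q}{U}{-},E)$. One should also check that the first map of this fibre sequence is (up to the identifications) the map $\ind_{0,0}^{p,q}E(V)\to E(V)$ induced by the projection $\Jmor{p,q}{V}{-}\to\Jzero(V,-)$, and that the second map is exactly $\rho_{p,q}E(V)$; both follow by unwinding the construction of $\rho_{p,q}$ given just after the definition of $\taupq$, which was itself defined as $\Nat_{0,0}(-,E)$ applied to the map $S\Gmor{p,q}{V}{-}_+\to\Jzero(V,-)_+$.

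The main obstacle is bookkeeping rather than anything deep: one must make sure that the naturality in $W$ genuinely produces a cofibre sequence \emph{in $\Ezero$} (not just objectwise), that the cofibrancy hypotheses of Lemma \ref{sphereandmorpharecofibrant} are in force so that $\Nat_{0,0}(-,E)$ computes the homotopy fibre correctly, and that all the identifications are $\C$-equivariant — in particular that the Yoneda identification $\Nat_{0,0}(\Jzero(V,-),E)\cong E(V)$ respects the conjugation $\C$-action. Given Proposition \ref{sphere cofibre seq}, Lemma \ref{sphereandmorpharecofibrant}, and the remarks on $\Nat_{p,q}$, none of these present real difficulty, so the proof is essentially a three-line diagram chase once the inputs are assembled. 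Finally, I would note the consequence: if $E$ is strongly $(p,q)$-polynomial then $\rho_{p,q}E(V)$ is an equivalence, so the fibre $\ind_{0,0}^{p,q}E(V)$ is contractible, as promised in the surrounding discussion.
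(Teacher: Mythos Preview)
Your proposal is correct and follows essentially the same route as the paper: start from the cofibre sequence of Proposition \ref{sphere cofibre seq}, apply $\Nat_{0,0}(-,E)$ (using Remark \ref{remNat}) to obtain a homotopy fibre sequence, and then identify the three terms via the enriched Yoneda lemma and the definitions of $\ind_{0,0}^{p,q}$ and $\taupq$. One small wording slip: the map $\ind_{0,0}^{p,q}E(V)\to E(V)$ is induced by the inclusion $\Jzero(V,-)\to\Jmor{p,q}{V}{-}$ (coming from $i_{0,0}^{p,q}$), not by a ``projection'' in the other direction, but this does not affect the argument.
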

\begin{proof}
By the previous proposition, there is a $\C$-equivariant homotopy cofibre sequence in $\Ezero$
\begin{equation*}
     S\Gmor{p,q}{V}{-}_+\rightarrow\Jzero(V,-)\rightarrow\Jmor{p,q}{V}{-}.
\end{equation*}

Pick $E\in\Ezero$ and apply the contravariant functor $\Nat_{p,q}(-,E)$ to the cofibre sequence above. This yields a homotopy fibre sequence (see Remark \ref{remNat})
\begin{equation*}
\Nat_{0,0}\left(S\Gmor{p,q}{V}{-}_+,E\right)\leftarrow \Nat_{0,0}\left(\Jzero(V,-),E\right)\leftarrow \Nat_{0,0}\left(\Jmor{p,q}{V}{-},E\right).
\end{equation*}
Application of the Yoneda Lemma and the definitions of  $\ind_{0,0}^{p,q}$ and $\taupq E$ gives the desired fibre sequence.
\end{proof}

\begin{corollary}\label{poly implies ind contractible}
If $E\in\Ezero$ is strongly $(p,q)$-polynomial, then $\ind_{0,0}^{p,q}E$ and $\ind_{0,0}^{p,q}\varepsilon^*E$ are objectwise contractible.  
\end{corollary}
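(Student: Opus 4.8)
The plan is to derive the corollary directly from Lemma \ref{hofiblemma} together with the definitions of the induction and inflation-induction functors. First I would recall the homotopy fibre sequence in $\C\Top_*$
\begin{equation*}
    \ind_{0,0}^{p,q} E(V)\rightarrow E(V)\xrightarrow{\rho_{p,q}E(V)}\taupq E(V)
\end{equation*}
valid for every $V\in\Jzero$. By hypothesis $E$ is strongly $(p,q)$-polynomial, so $\rho_{p,q}E$ is an objectwise weak equivalence; in particular $\rho_{p,q}E(V):E(V)\rightarrow\taupq E(V)$ is a weak equivalence of $\C$-spaces for each $V$. The homotopy fibre of a weak equivalence of pointed $\C$-spaces is $\C$-equivariantly contractible (its fixed points for each closed subgroup $H\leq\C$ are the homotopy fibre of a weak homotopy equivalence of spaces, hence weakly contractible), so $\ind_{0,0}^{p,q}E(V)$ is objectwise contractible, which is the first assertion.

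For the second assertion, the key observation is that the inflation-induction functor $\ind_{0,0}^{p,q}\varepsilon^*$ and the induction functor $\ind_{0,0}^{p,q}$ agree on underlying $\C$-spaces: by Definition \ref{induction definition}, when $p,q=0$ the functor $\CI_{0,0}^{p,q}$ simply equips a $\C$-space with the trivial $O(p,q)$-action, so $\ind_{0,0}^{p,q}\varepsilon^*E(V)=\Nat_{\C\mathcal{E}_{0,0}}(\CJ{p}{q}(V,-),\varepsilon^*E)$ has the same underlying $\C$-space as $\ind_{0,0}^{p,q}E(V)=\Nat_{0,0}(\CJ{p}{q}(V,-),E)$, the only difference being the additional (naive) $O(p,q)$-action. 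Since objectwise weak equivalences in $\OEpq$ are detected on underlying $\C$-spaces via $\beta^*$ (Definition \ref{def: obj WE}), and we have just shown the underlying $\C$-space $\ind_{0,0}^{p,q}E(V)$ is contractible for each $V$, it follows that $\ind_{0,0}^{p,q}\varepsilon^*E$ is objectwise contractible as well.

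I expect the only genuine subtlety to be bookkeeping about which functor acts on which category and verifying that the fibre sequence of Lemma \ref{hofiblemma} really is a fibre sequence of $\C$-spaces compatible with the $O(p,q)$-structure — but since the $O(p,q)$-action on $\ind_{0,0}^{p,q}\varepsilon^*E$ is naive and plays no role in the contractibility statement, this does not cause any real difficulty. The main step is simply the general fact that the homotopy fibre of a weak equivalence of $\C$-spaces is weakly contractible, applied objectwise.
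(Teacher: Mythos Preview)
Your proposal is correct and follows exactly the approach implicit in the paper: the corollary is stated immediately after Lemma~\ref{hofiblemma} without a separate proof, precisely because it follows at once from the fibre sequence and the definition of strongly $(p,q)$-polynomial. Your additional remark that $\ind_{0,0}^{p,q}\varepsilon^*E$ has the same underlying $\C$-space as $\ind_{0,0}^{p,q}E$ (with only a naive $O(p,q)$-action added) is the correct justification for the second clause, and is exactly what the paper is relying on implicitly.
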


\section{Polynomial approximation}\label{sec: poly approx}

The partial sums of the Taylor series for a real function are known as the Taylor polynomials. These polynomial functions approximate the given function, and in general become better approximations as the degree of polynomial increases. In orthogonal calculus, Weiss defines a polynomial approximation functor $T_n$, see \cite[Theorem 6.3]{Wei95}. For an input functor $E\in\Ezero$, each $T_nE$ is indeed an $n$-polynomial functor. In the $\C$-equivariant setting, we define an analogous functor $T_{p,q}$, and the $(p,q)$-polynomial approximation functor is given by the composition $T_{p+1,q}T_{p,q+1}$.

\begin{definition}\index{$\Tpq$}
Let $E\in \Ezero$, define the functor $\Tpq$ by
\begin{equation*}
    \Tpq E=\hocolim\left(E\rightarrow\taupq E\rightarrow\taupq^2 E\rightarrow\dots\right),
\end{equation*}
where the homotopy colimit is taken over the maps $$\rho_{p,q}(\taupq^kE):\taupq^k E\rightarrow \taupq^{k+1} E.$$
\end{definition}
There is a natural transformation $\eta :E\rightarrow \Tpq E$, which is inclusion as the first term in the colimit. Alternatively, this map can be thought of as a map of homotopy colimits 
\[\begin{tikzcd}
	{\hocolim(E\rightarrow E\rightarrow E\rightarrow \dots)} \\
	{\hocolim\left(E\rightarrow\tau_{p,q} E\rightarrow\tau_{p,q}^2 E\rightarrow\dots\right)}
	\arrow["\eta", from=1-1, to=2-1]
\end{tikzcd}\]
where each $E\rightarrow \taupq^k E$ is made from the maps $\rho_{p,q}(\taupq^kE)$.

\begin{example}\label{exampleT10T01}
Let $E\in\Ezero$. Then $T_{1,0}E$ is $E(-\oplus \mathbb{R}^\infty)$ and $T_{0,1}E$ is $E(-\oplus \mathbb{R}^{\infty\delta})$. This can be seen by the following calculations.
\begin{equation*}
    T_{1,0}E(V)=\hocolim_j \tau_{1,0}^j E(V)=\hocolim_j E(V\oplus \mathbb{R}^j):= E(V\oplus \mathbb{R}^\infty)
\end{equation*}
\begin{equation*}
    T_{0,1}E(V)=\hocolim_j \tau_{0,1}^j E(V)=\hocolim_j E(V\oplus \mathbb{R}^{j\delta}):= E(V\oplus \mathbb{R}^{\infty\delta}),
\end{equation*}
where $\mathbb{R}^\infty:=\hocolim_j \mathbb{R}^j$ and $\mathbb{R}^{\infty\delta}:=\hocolim_j \mathbb{R}^{j\delta}$. Recall that, as a consequence of Proposition \ref{hocolimprop}, $\taupq E(V)\cong \underset{0\neq U \subseteq \R^{p,q}}{\holim}\overline{E}(U\oplus V)$, where $\overline{E}$ is the right Kan extension of $E$ along $\Jzero\hookrightarrow \Jzerobar$. However, since $V\oplus \R^j$ and $V\oplus \R^{j\delta}$ are elements of $\Jzero$, the right Kan extension $\overline{E}$ is exactly the functor $E$ on these values.

The strongly $(0,0)$-polynomial approximation is the constant functor $T_{0,0}E(V)=*$, since $\tau_{0,0}E$ is the homotopy limit over the empty set. 
\end{example}

\begin{remark}
In orthogonal calculus, the $0$-polynomial approximation $T_0 F$ of an input functor $F\in\mathcal{E}_0$ is the constant functor taking value $F(\mathbb{R}^\infty)$. In particular, $T_0 F$ is a constant functor, but $T_{1,0} E$ and $T_{0,1}E$ are not in general. See Example \ref{ex: 00-poly approx} for the correct $\C$-analogue of $T_0 F$. 
\end{remark}

The functor $\Tpq E$ is strongly $(p,q)$-polynomial for all $E\in \Ezero$. To prove this, we need to generalise the erratum to orthogonal calculus \cite{Wei98} to the $\C$-equivariant setting. This is done over the following collection of lemmas. A formula similar to that used for the connectivity of $(\taupq s(W))^{\C}$ in Part 2 of the following lemma is used by Dotto in \cite[Corollary A.2]{Dot16b}. 
Recall that for a functor $G\in\Ezero$, the functor $\taupq G\in\Ezero$ is given by 
$$\taupq G(W)\cong \underset{0\neq U \subseteq \R^{p,q}}{\holim} \overline{G}(U\oplus W).$$ By abuse of notation, for a functor $G\in \text{Fun}_{\Top_*}(\Jzerobar,\C\Top_*)$, we define the functor $\taupq G\in\Ezero$ by 
$$\taupq G(W):= \underset{0\neq U \subseteq \R^{p,q}}{\holim} G(U\oplus W).$$

\begin{lemma}\label{e.3}
Let $s:G\rightarrow F$ be a morphism in $\Fun_{\Top_*}(\Jzerobar,\C\Top_*)$ and $p,q\geq 0$. If there exists integers $b,c$ such that $s(W)$ is $v$-connected for all $W\in \Jzero$, where 
\begin{align*}
    v(e)&=(p+q)\Dim W -b\\
    v(\C)&=\Min\{(p+q)\Dim W -b, p\Dim W^{\C} + q\Dim (W^{\C})^\perp -c\},
\end{align*}
then $\taupq s(W)$ is $(v+1)$-connected.

\end{lemma}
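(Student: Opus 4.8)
The plan is to analyze the homotopy limit $\taupq s(W) = \holim_{0\neq U\subseteq \R^{p,q}} s(U\oplus W)$ level by level on fixed points, using that a map of $\C$-spaces is $(v+1)$-connected precisely when it is $(v(e)+1)$-connected on underlying spaces and $(v(\C)+1)$-connected on $\C$-fixed points. Since homotopy limits over a fixed finite poset commute with taking fixed points (the poset $\{0\neq U\subseteq \R^{p,q}\}$ is internal to $\Top_*$ but its fixed-point behaviour is controlled by the cosimplicial/totalization description mentioned after Proposition \ref{hocolimprop}), it suffices to estimate the connectivity of the induced map on each of the two fixed-point systems separately and then apply a standard lemma on connectivity of maps of homotopy limits over a poset of dimension $\leq$ (something depending on $p+q$).

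First I would record the key input: the homotopy limit over the poset of non-zero subspaces of a $d$-dimensional space has cohomological/homotopical dimension related to $d-1$, so a map of diagrams that is uniformly $k$-connected on each object induces a $(k - (d-1))$-connected map (this is the standard estimate, e.g. from Weiss \cite{Wei98} adapted equivariantly; here $d$ will be $p+q$ underlying and $\max\{p,q\}$-type quantities on fixed points via the splitting $\R^{p,q}$, although for the underlying statement we just need the poset of subspaces of $\R^{p+q}$). So on underlying spaces: $s(U\oplus W)^e$ is $v(e) = (p+q)\Dim(U\oplus W) - b$-connected $= (p+q)(\Dim U + \Dim W) - b$-connected; taking the worst case $\Dim U \geq 1$ only helps, so each object of the diagram is at least $(p+q)(1+\Dim W)-b$-connected, and we want the $\holim$ over a poset associated to $\R^{p,q}$ to drop connectivity by at most $(p+q)(\Dim U)$ worth $+1$; a careful bookkeeping (matching the erratum \cite{Wei98}) shows $(\taupq s(W))^e$ is $((p+q)\Dim W - b + 1) = (v(e)+1)$-connected.

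Next I would handle the $\C$-fixed points. Here $(\taupq s(W))^{\C}$ is the homotopy limit of $W'\mapsto s(U\oplus W)^{\C}$, and by the Equivariant Splitting Theorems (Theorem \ref{splittingtheorems}) the combinatorics of the poset of subspaces interacts with the $\R^p/\R^{q\delta}$ decomposition; the connectivity of $s(U\oplus W)^{\C}$ is bounded below by $v(\C)$ evaluated at $U\oplus W$, i.e. the minimum of $(p+q)\Dim(U\oplus W)-b$ and $p\Dim(U\oplus W)^{\C} + q\Dim((U\oplus W)^{\C})^\perp - c$. One then re-runs the dimension-drop estimate for the fixed-point poset and checks that the minimum is preserved up to the required $+1$, i.e. $(\taupq s(W))^{\C}$ is $(v(\C)+1)$-connected. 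This is essentially the content of Dotto's connectivity formula \cite[Corollary A.2]{Dot16b} referenced in the statement, transported to this setting.

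The main obstacle will be the equivariant bookkeeping in the second step: one must be careful that passing to $\C$-fixed points of the homotopy limit really does give the homotopy limit of the fixed-point diagram (this needs the explicit totalization model and the fact that the $\C$-action on the indexing data is through functoriality, not through the poset itself), and that the dimension of the relevant fixed-point poset — governed by subspaces of $\R^p$ and $\R^{q\delta}$ via the splitting — yields exactly the claimed drop rather than something weaker. I would isolate the general connectivity-of-homotopy-limits lemma as a separate sub-lemma (valid for $\C$-spaces with the fine model structure), prove it once using the Bousfield--Kan spectral sequence or an inductive skeletal filtration argument, and then the two connectivity estimates become formal substitutions. The remaining verification — that the minimum defining $v(\C)$ behaves additively enough under $U\oplus W$ — is routine linear algebra on dimensions of fixed subspaces.
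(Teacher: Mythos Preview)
Your proposal has a genuine conceptual gap in the $\C$-fixed-point step. You assert that $(\taupq s(W))^{\C}$ is the homotopy limit of $U\mapsto s(U\oplus W)^{\C}$, and you flag as a thing to check that ``the $\C$-action on the indexing data is through functoriality, not through the poset itself.'' That expectation is wrong: the indexing poset $\{0\neq U\subseteq \R^{p,q}\}$ is a topological poset \emph{with a nontrivial $\C$-action} (it permutes the subspaces via the action on $\R^{p,q}$), so fixed points of the homotopy limit are not the homotopy limit of the objectwise fixed points. This is not a cosmetic issue; it is precisely why $v(\C)$ in the statement is a \emph{minimum} of two terms.

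The paper's proof handles this as follows. The homotopy limit is modelled as the totalization of a cosimplicial space whose level $[k]$ is a section space $\Gamma(e_k^*\xi)$ of a bundle over the $\C$-space $\mathcal{C}$ of flags in $\R^{p,q}$; this $\mathcal{C}$ decomposes as a disjoint union of flag manifolds $C(\lambda)$ indexed by dimension vectors $\lambda:[k]\to[p+q]$. The underlying connectivity then comes from the inequality $\Dim C(\lambda)<(p+q)\lambda(k)-k$ (a direct count, not an abstract ``$d-1$'' estimate), which exactly cancels the gain $(p+q)\lambda(k)$ in $\conn s(L(k)\oplus W)^e$ to leave a net $+1$. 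For the $\C$-fixed points one uses the elementary estimate
\[
\conn\bigl(\Top_*(A,B)^{\C}\bigr)\geq \min_{H\leq \C}\{\conn B^H-\Dim A^H\},
\]
applied with $A=C(\lambda)$, together with the splitting theorems to bound $\Dim C(\lambda)^{\C}<p\lambda(k)^{\C}+q(\lambda(k)^{\C})^\perp-k$. The minimum over $H\in\{e,\C\}$ in this mapping-space formula is exactly the source of the $\min$ in $v(\C)$; the $H=e$ branch contributes the $(p+q)\Dim W-b$ term and the $H=\C$ branch contributes the $p\Dim W^{\C}+q\Dim(W^{\C})^\perp-c$ term. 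Your outline never produces this minimum because it never confronts the action of $\C$ on the flag spaces, and your reference to ``the fixed-point poset'' of subspaces of $\R^p$ and $\R^{q\delta}$ would only capture the $H=\C$ branch. Once you adopt the totalization-over-flags model and the mapping-space fixed-point estimate, the remaining bookkeeping is exactly as in Weiss's erratum together with the splitting-theorem computation of $\Dim C(\lambda)^{\C}$.
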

\begin{proof}
Let $\mathcal{D}$ be the topological poset of non-zero linear subspaces of $\R^{p+q\delta}$. Similar to \cite[Lemma e.3]{Wei98}, the homotopy limit in $\taupq s(W)$ is the totalization of a cosimplicial object as follows. For $Z:\mathcal{D}\rightarrow \Top_*$, the homotopy limit of $Z$ is the totalization of the cosimplicial space $$ [k]\mapsto \prod\limits_{L:[k]\rightarrow \mathcal{D}} Z(L(K))$$ taken over all monotone injections $[k]\rightarrow \mathcal{D}$. In particular, we are interested in the cases $Z(U):= G(U\oplus W)$ and $Z(U):=F(U\oplus W)$

Note that $\mathcal{D}$ is a category internal to $\C$-spaces. The space of objects is given by a disjoint union of $\C$-Grassmann manifolds 
\begin{equation*}
    \coprod\limits_{0\leq i\leq p+q} \mathcal{L}(\mathbb{R}^i,\mathbb{R}^{p+q\delta}) / O(i)
\end{equation*}
and the space of morphisms is the space of flags of subspaces of $\mathbb{R}^{p+q\delta}$ of length two
\begin{equation*}
    \coprod\limits_{0\leq i\leq j\leq p+q} \mathcal{L}(\mathbb{R}^j,\mathbb{R}^{p+q\delta}) / O(j-i)\times O(i),
\end{equation*}
where $\C$ acts by conjugation. 

To capture this structure, we can replace the cosimplicial space above by another. Let $\xi$ be the fibre bundle over $\mathcal{D}$, 
\begin{equation*}
    \coprod\limits_{0\leq i\leq p+q} Z(\R^i)\times_{O(i)}\LL(\R^i,\R^{p,q})\overset{\text{proj}}{\rightarrow} \coprod\limits_{0\leq i\leq p+q} \mathcal{L}(\mathbb{R}^i,\mathbb{R}^{p+q\delta}) / O(i)=\text{ob}(D),
\end{equation*}
such that the fibre of $U\in\mathcal{D}$ is $Z(U)$. Let $e_k: \mathcal{C}\rightarrow \mathcal{D}$ be defined by $L\mapsto L(k)$, where $\mathcal{C}$ is the $\C$-space of monotone injections $[k]\rightarrow \mathcal{D}$. Then we can replace the previous cosimplicial space by $$[k]\mapsto \Gamma (e_k^*\xi),$$ where $e_k^*\xi$ is the pullback bundle over $\mathcal{C}$ and $\Gamma$ denotes taking the section space. 

The space $\mathcal{C}$ is a disjoint union of $\C$-manifolds $C(\lambda)$, taken over monotone injections $\lambda:[k]\rightarrow [p+q]$ that avoid $0\in [p+q]$. These manifolds are defined by $$C(\lambda)=\{L:[k]\rightarrow\mathcal{D} : \Dim(L(i))=\lambda(i),\space \forall i\}.$$
That is, $C (\lambda)$ is the space of all flags of length $k$ and weight $\lambda$. Writing this as a quotient of orthogonal groups (where $\lambda_i=\lambda(i)$)
\begin{equation*}
    C(\lambda)= \coprod\limits_{0\leq \lambda_0\leq ...\leq \lambda_k\leq p+q} \mathcal{L}(\mathbb{R}^{\lambda_k},\mathbb{R}^{p+q\delta}) / O(\lambda_k-\lambda_{k-1})\times O(\lambda_{k-1}-\lambda_{k-2})\times ...\times O(\lambda_{0}),
\end{equation*}
one can calculate the dimension of $C(\lambda)$. 

\begin{align*}
    \Dim(C(\lambda))&=((p+q)-\lambda(k))\lambda(k) +\sum\limits_{i=0}^{k-1} (\lambda(i+1)-\lambda(i))\lambda(i)\\
    &= (p+q)\lambda(k) +\sum\limits_{i=0}^{k-1}\lambda(i)\lambda(i+1) -\sum\limits_{i=0}^{k}\lambda(i)^2\\
    &< (p+q)\lambda(k) -k
\end{align*}
The final inequality results from expanding the two sums and noting that $-\lambda(0)^2\leq -1$ and $\lambda(i-1)\lambda(i)-\lambda(i)^2\leq -1$. 

Using the discussion of the homotopy limit as the totalization above, we see that the connectivity of $(\taupq s(W))^e$ is greater than or equal to the minimum of $$\conn(s(L(k)\oplus W))-\Dim((C(\lambda)) -k$$ taken over triples $(L,\lambda,k)$ with $L\in C(\lambda)$ and $\lambda:[k]\rightarrow [p+q]$. That is, the connectivity of the map $s$ at the level $k$ minus the dimension of the space we are mapping from when considering the totalization as an enriched end. Substituting in the hypothesis on the connectivity of $s(L(k)\oplus W)$ and the bound on the dimension of $C(\lambda)$ yields that the connectivity of $(\taupq s(W))^e$ is at least $v(e)+1$. 

Note that for $G$-spaces $A,B$   
$$\conn (\Top_*(A,B)^G)>\underset{H\leq G}{\text{min}}\{\conn B^H-\Dim A^H\}$$
taken over closed subgroups $H$ of $G$. Using this, along with the fact that fixed points commute with totalization, we see that the connectivity of $(\taupq s(W))^{\C}$ is greater than or equal to the minimum of 
\begin{equation*}
    \underset{H\leq \C}{\text{min}}\{\conn(s(L(k)\oplus W)^{H})-\Dim((C(\lambda)^{H}) -k\}
\end{equation*}
taken over triples $(L,\lambda,k)$ with $L\in C(\lambda)$ and $\lambda:[k]\rightarrow [p+q]$. A similar formula is used by Dotto in \cite[Corollary A.2]{Dot16b}.

One can determine $C(\lambda)^{\C}$ by applying the splitting theorem, Theorem \ref{splittingtheorems}, to the definition of $C(\lambda)$. Then a calculation similar to that above for $\Dim(C(\lambda))$ shows that $$\Dim(C(\lambda)^{\C}) < p\lambda(k)^{\C} +q(\lambda(k)^{\C})^\perp -k,$$ where $\lambda(k)^{\C}= \Dim(L(k)^{\C})$ and $(\lambda(k)^{\C})^{\perp}= \Dim((L(k)^{\C})^{\perp})$. The result then follows by substituting in the hypothesis on the connectivity of $s(L(k)\oplus W)^{\C}$ as we did for the first result above. 
\end{proof}

The following corollary can be proved using the same method as Lemma \ref{e.3}. 

\begin{corollary}\label{erratum corollary}Let $s:G\rightarrow F$ be a morphism in $\Fun_{\Top_*}(\Jzerobar,\C\Top_*)$ and $p,q\geq 1$.
\begin{enumerate}
    \item If there exists integers $b,c$ such that $s(W)$ is $v$-connected for all $W\in \Jzero$, where 
        \begin{align*}
            v(e)&=2(p+q)\Dim W -b\\
            v(\C)&=\Min\{2(p+q)\Dim W -b, 2p\Dim W^{\C} + 2q\Dim (W^{\C})^\perp -c\},
        \end{align*}
        then $\tau_{p+1,q}\tau_{p,q+1} s(W)$ is at least $(v+1)$-connected.
    \item If there exists integers $b,c$ such that $s(W)$ is $v$-connected for all $W\in \Jzero$, where 
        \begin{align*}
            v(e)&=2(p+q)\Dim W -b\\
            v(\C)&=\Min\{2(p+q)\Dim W -b, (p+q)\Dim W -c\},
        \end{align*}
        then $\tau_{p+1,q}\tau_{p,q+1} s(W)$ is at least $(v+1)$-connected.
\end{enumerate}
\end{corollary}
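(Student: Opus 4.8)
\textbf{Proof proposal for Corollary \ref{erratum corollary}.}

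The plan is to reduce both parts to iterated applications of Lemma \ref{e.3}, reading the statement as ``input a $v$-connected map $s$, get out a $(v+1)$-connected map $\tau_{p+1,q}\tau_{p,q+1}s$''. We apply $\tau_{p,q+1}$ first, then $\tau_{p+1,q}$, tracking what happens to the two-component dimension function at each stage. Note that $\tau_{p,q+1}$ and $\tau_{p+1,q}$ each fit the hypothesis shape of Lemma \ref{e.3} with bigrading $(p,q+1)$ and $(p+1,q)$ respectively, each contributing a total-dimension coefficient $p+q+1$. The key bookkeeping is that $(p+q+1)+(p+q+1) = 2(p+q)+2$, which is where the leading coefficient $2(p+q)$ in the hypothesis comes from, with slack $+2$ available to absorb the two $+1$ connectivity gains; similarly on $\C$-fixed points the relevant coefficients are $p$ on $\Dim W^{\C}$ and $q+1$ then $p+1$ and $q$ in the two stages.

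First I would set up the induction carefully at the level of a single $\tau$. Given $s:G\to F$ in $\Fun_{\Top_*}(\Jzerobar,\C\Top_*)$ which is $v$-connected with $v(e) = 2(p+q)\Dim W - b$ and $v(\C) = \Min\{2(p+q)\Dim W - b,\ 2p\Dim W^{\C} + 2q\Dim(W^{\C})^\perp - c\}$, the first step is to check that this $v$ satisfies (perhaps after weakening) the hypothesis of Lemma \ref{e.3} for the bigrading $(p, q+1)$: we need $v(e) \le (2p+2q+1)\Dim W - b'$ and the corresponding bound on $v(\C)$ with coefficients $p$ and $q+1$. Since $2(p+q) \le 2p + 2q + 1$ this holds with room to spare, so Lemma \ref{e.3} gives that $\tau_{p,q+1}s(W)$ is $(v_1+1)$-connected where $v_1$ records the connectivity of $s$ against the $(p,q+1)$-weighting. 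The second step is to feed $\tau_{p,q+1}s$, now known to be $(v_1+1)$-connected, into Lemma \ref{e.3} for the bigrading $(p+1,q)$; here the coefficient on the nonequivariant part becomes $2p+2q+1$ again from the other side, and combining the two total coefficients recovers $2(p+q)\Dim W$ exactly with the $+2$ slack accounting for the two gains in connectivity, leaving a net $(v+1)$-connectivity for $\tau_{p+1,q}\tau_{p,q+1}s$.

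Part (2) is handled identically, the only change being in the $\C$-fixed-point component: instead of distributing into $2p\Dim W^{\C} + 2q\Dim(W^{\C})^\perp$ we distribute into $(p+q)\Dim W - c$, which corresponds to the case where at the fixed-point level the map $s$ only has ``ordinary'' (non-equivariantly enhanced) connectivity. The two applications of Lemma \ref{e.3} each contribute a $(p+q)\Dim W$-type bound on $(\tau\,s(W))^{\C}$ coming from the $\Min$ in that lemma, and one checks the arithmetic $(p+q) + \text{(extra)}$ closes up. I would organize this as a single auxiliary lemma (``one $\tau$ in each direction shifts the dimension function predictably and raises connectivity by $1$''), then quote it twice.

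The main obstacle I anticipate is the $\C$-fixed-point bookkeeping, not the nonequivariant part. In Lemma \ref{e.3} the conclusion on $(\tau_{p,q}s(W))^{\C}$ involves a $\Min$ of two terms, and when one composes two $\tau$'s the inner $\Min$ interacts with the outer one: the output of the first $\tau$ is a $\Min$ of a ``total'' term and a ``fixed'' term, and the second $\tau$ then takes $\Min$ over $H \in \{e, \C\}$ of connectivity-minus-dimension, so one has to be careful that the worst case genuinely matches the claimed $v(\C)$. In particular one must verify that the dimension bound $\Dim(C(\lambda)^{\C}) < p\lambda(k)^{\C} + q(\lambda(k)^{\C})^\perp - k$ from the proof of Lemma \ref{e.3}, applied twice with the shifted bigradings $(p,q+1)$ and $(p+1,q)$, does not overcount a coordinate — i.e. that the flag dimensions on the $C_2$-fixed stratum add up correctly across the two stages. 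I would also double-check that the functor $\tau_{p,q+1}s$, which a priori lives in $\Ezero$, is still in the class of functors to which Lemma \ref{e.3} applies as the input to the second stage (this is the ``abuse of notation'' remark preceding Lemma \ref{e.3}, extended via the right Kan extension $\overline{(-)}$), so that the composition is legitimate.
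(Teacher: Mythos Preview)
Your high-level plan matches the paper's approach---the paper simply says the corollary ``can be proved using the same method as Lemma~\ref{e.3}''---and your arithmetic observation $(p+q+1)+(p+q+1)=2(p+q)+2$ is exactly the right one.

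The gap is in how you invoke Lemma~\ref{e.3}. You cannot apply it as a black box twice: its hypothesis fixes the leading coefficient of $v(e)$ to be $p'+q'$, the bigrading of the $\tau$ being applied. Your input has coefficient $2(p+q)$, not $p+q+1$. Weakening $v$ down to something of the form $(p+q+1)\Dim W - b'$ (as you suggest) only yields that $\tau_{p,q+1}s$ is $(v'+1)$-connected with leading coefficient $p+q+1$; a second black-box application with $(p+1,q)$ keeps that same coefficient, so you never recover the target $2(p+q)\Dim W-b+1$. (Your intermediate line ``$v(e)\le (2p+2q+1)\Dim W - b'$'' also has the wrong coefficient---for bigrading $(p,q+1)$ it should be $p+q+1$---and the inequality direction needed for weakening is the reverse of what you wrote.)

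What does work is going into the \emph{proof} of Lemma~\ref{e.3}: for the double totalization, $s$ is evaluated at $L_1(k)\oplus L_2(k')\oplus W$ with $L_1(k)\subseteq\R^{p,q+1}$ and $L_2(k')\subseteq\R^{p+1,q}$; the hypothesis on $s$ contributes $2(p+q)(\lambda(k)+\mu(k'))$ to the connectivity, while the two flag-dimension bounds subtract at most $(p+q+1)(\lambda(k)+\mu(k'))$, and since $p+q\ge 2$ and $\lambda(k),\mu(k')\ge 1$ the former dominates with the required $+1$ slack. Equivalently, you could first prove the mild strengthening of Lemma~\ref{e.3} that allows $v(e)=A\Dim W - b$ for any $A\ge p+q$ (the existing proof already gives this), and then your two-stage iteration goes through. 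The $\C$-fixed-point bookkeeping you flag is handled the same way and is not the real obstacle.
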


\begin{remark}
    Let $F,G\in\Ezero$. Recall that $\overline{G},\overline{F}$ are the right Kan extensions of $F,G$ respectively along the inclusion $\Jzero\rightarrow\Jzerobar$. If the map $s(W):F(W)\rightarrow G(W)$ is $v$-connected for all $W\in\Jzero$, then the map $\overline{s}(V):\overline{F}(V)\rightarrow \overline{G}(V)$ is $v$-connected for all $V\in\Jzerobar$. In particular, this means that given the connectivity of the map $s(W)$, Lemma \ref{e.3} can be applied to the map $\overline{s}(V)$ to make conclusions about the connectivity of the map $\taupq s(W)=\taupq \overline{s}(W)$.
\end{remark}

We aim to show that $\Tpq E$ is strongly $(p,q)$-polynomial for any $E\in \Ezero$. That is, we wish to show that $\rho: \Tpq E \rightarrow \taupq \Tpq E$ is an objectwise weak equivalence. To do this we will need the following two Lemmas, the first of which is a $\C$-version of \cite[Lemma e.7]{Wei98}. 

\begin{lemma}\label{e.7}
Let $G:= S\Gmor{p,q}{V}{-}$, $F:=\Jzero(V,-)$, and let $s:G\rightarrow F$ be the projection sphere bundle map. $\Tpq s$ is an objectwise weak equivalence. 
\end{lemma}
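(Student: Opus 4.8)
The plan is to mimic the proof of Weiss' erratum \cite[Lemma e.7]{Wei98} in the $\C$-equivariant setting, using the connectivity estimates established in Lemma \ref{e.3} and Corollary \ref{erratum corollary}. The key point is that the map $s:S\Gmor{p,q}{V}{-}\rightarrow\Jzero(V,-)$ is highly connected as the target variable grows, in a way measured by a dimension function, so that applying $\taupq$ (or rather iterating it in the homotopy colimit defining $\Tpq$) pushes the connectivity up by one each time and the colimit becomes a weak equivalence.

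First I would pin down the connectivity of $s(W):S\Gmor{p,q}{V}{W}\rightarrow\Jzero(V,W)$. Using Proposition \ref{sphere cofibre seq}, the homotopy cofibre of $s(W)$ is $\Jmor{p,q}{V}{W}$, so the connectivity of $s$ is controlled by the connectivity of this Thom space. Since $\Gmor{p,q}{V}{W}$ is a bundle over $\LL(V,W)$ with fibre $\R^{p,q}\otimes f(V)^\perp$, and $\dim f(V)^\perp = \dim W - \dim V$ generically, the space $\Jmor{p,q}{V}{W}$ is roughly $((p+q)(\dim W-\dim V)-1)$-connected at the level of $e$-fixed points, and on $\C$-fixed points one uses the Equivariant Splitting Theorems (Theorem \ref{splittingtheorems}) to see that $\Jmor{p,q}{V}{W}^{\C}$ splits as a smash of Thom spaces over $\LL(\R^a,\R^c)$ and $\LL(\R^{b\delta},\R^{d\delta})$, giving a connectivity bound of the shape $p\dim W^{\C}+q\dim(W^{\C})^\perp - (\text{const})$. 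Thus $s(W)$ is $v$-connected for a dimension function $v$ of exactly the form appearing in the hypothesis of Lemma \ref{e.3} (with $b,c$ depending on $\dim V$, $\dim V^{\C}$).

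Next I would feed this into Lemma \ref{e.3}: each application of $\taupq$ to the map $s$ (really to its right Kan extension $\overline{s}$, via the remark following Corollary \ref{erratum corollary}) raises the connectivity function $v$ to $v+1$. Iterating, $\taupq^k s(W)$ is $(v+k)$-connected, so for each fixed $W$ and each closed subgroup $H\in\{e,\C\}$ the maps on homotopy groups $\pi_j(\taupq^k s(W)^H)$ are isomorphisms once $k$ is large relative to $j$. Since $\Tpq s$ is the map induced on sequential homotopy colimits $\hocolim_k \taupq^k G(W)\to\hocolim_k\taupq^k F(W)$, and homotopy groups of $\C$-spaces commute with such filtered homotopy colimits (checked on $e$- and $\C$-fixed points separately, both being ordinary sequential homotopy colimits of spaces), it follows that $\Tpq s(W)$ induces isomorphisms on all homotopy groups of all fixed-point spaces, i.e. is an objectwise weak equivalence in the fine model structure on $\CTop_*$.

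The main obstacle I expect is the bookkeeping in the connectivity estimate of step one — in particular getting the $\C$-fixed-point connectivity of $\Jmor{p,q}{V}{W}$ correct, since the eigenspace splitting used in Proposition \ref{hocolimprop} is not $\C$-equivariant and one must instead invoke the splitting theorems carefully, keeping track of which pieces of $\R^{p,q}\otimes f(V)^\perp$ are fixed (namely $\R^p\otimes(\text{Im} f_1)^\perp$ and $\R^{q\delta}\otimes(\text{Im} f_2)^\perp$ in the notation of Theorem \ref{splittingtheorems}). Once the dimension function $v$ is correctly identified so that it matches the hypotheses of Lemma \ref{e.3}, the rest is a formal consequence of the connectivity-raising property of $\taupq$ and the interaction of homotopy groups with sequential homotopy colimits, exactly as in the non-equivariant case.
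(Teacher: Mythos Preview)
Your proposal is correct and follows essentially the same approach as the paper: identify the cofibre of $s(W)$ as $\Jmor{p,q}{V}{W}$, use the Equivariant Splitting Theorems to compute the connectivity of its $\C$-fixed points (yielding the constants $b=(p+q)\dim V+1$ and $c=p\dim V^{\C}+q\dim(V^{\C})^\perp+1$), and then iterate Lemma~\ref{e.3} so that the connectivity of $\taupq^l s(W)$ tends to infinity in both $e$- and $\C$-fixed points. The only remark is that Corollary~\ref{erratum corollary} is not needed here---it concerns $\tau_{p+1,q}\tau_{p,q+1}$ rather than $\taupq$---so Lemma~\ref{e.3} alone suffices, exactly as you use it in the body of your argument.
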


\begin{proof}
We know from the underlying calculus that $s(W)^e$ is $[(p+q)(\Dim W -\Dim V) -1]$-connected.
Using the splitting theorem (see Theorem \ref{splittingtheorems}), we see that 
\begin{align*}
    \Jpq(V,W)^{\C} &= [T(\C\gamma_{p,q}(V,W))]^{\C}\\
    &\cong T[\C\gamma_{p,q}(V,W)^{\C}]\\
    &\cong T[\C\gamma_{p,0}(V^{\C},W^{\C})\times \C\gamma_{0,q}((V^{\C})^\perp, (W^{\C})^\perp)]\\
    &=\C\mathcal{J}_{p,0}(V^{\C},W^{\C}) \wedge \C\mathcal{J}_{0,q}((V^{\C})^\perp, (W^{\C})^\perp)
\end{align*}
which is $[p(\dim W^{\C}-\dim V^{\C} ) + q(\dim (W^{\C})^\perp -\dim (V^{\C})^\perp ) -1]$-connected. Therefore, so is the map $s(W)^{\C}$, since $\Jpq(V,W)^{\C}$ is the homotopy cofibre of $s(W)^{\C}$. 

Thus, $s(W)$ satisfies the hypothesis of Lemma \ref{e.3} with 
\begin{align*}
    b&=(p+q)\Dim V+1\\
    c&=p\Dim V^{\C}+q\Dim (V^{\C})^{\perp}+1.
\end{align*}
Repeated application of Lemma $\ref{e.3}$ shows that the connectivity of both $(\taupq ^l s(W))^e$ and $(\taupq ^l s(W))^{\C}$ tend to infinity as $l$ tends to infinity. Thus, $(\Tpq s(W))^e$ and $(\Tpq s(W))^{\C}$ are weak homotopy equivalences, which is exactly that $\Tpq s$ is an objectwise weak equivalence. 
\end{proof}

The following Lemma is a $\C$-equivariant version of the discussion above \cite[Theorem 6.3.1]{Wei98}. Note that when $H=e$ these diagrams are similar to \cite[e.8 and e.9]{Wei98}.

\begin{lemma}\label{lem: enlarging diagram}
Let $p,q\geq 0$ and $E\in \Ezero$. The commutative diagram 
\[\begin{tikzcd}
	{E(V)^H} && {(T_{p,q}E(V))^H} \\
	\\
	{(\taupq E(V))^H} && {(\taupq T_{p,q}E(V))^H}
	\arrow["\eta^H", from=1-1, to=1-3]
	\arrow["{\rho^H}", from=1-3, to=3-3]
	\arrow["\eta^H"', from=3-1, to=3-3]
	\arrow["{\rho^H}"', from=1-1, to=3-1]
\end{tikzcd}\]
can be enlarged to a commutative diagram 
\[\begin{tikzcd}
	{E(V)^H} && X && {(T_{p,q}E(V))^H} \\
	\\
	{(\taupq E(V))^H} && Y && {(\taupq T_{p,q}E(V))^H}
	\arrow["{\rho^H}", from=1-5, to=3-5]
	\arrow["{\rho^H}"', from=1-1, to=3-1]
	\arrow["g"', from=1-3, to=3-3]
	\arrow[from=1-1, to=1-3]
	\arrow[from=1-3, to=1-5]
	\arrow[from=3-1, to=3-3]
	\arrow[from=3-3, to=3-5]
\end{tikzcd}\]
where $g$ is a weak homotopy equivalence. 
\end{lemma}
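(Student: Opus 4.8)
The plan is to treat this as the $H$-fixed-point incarnation of the combinatorial bookkeeping Weiss carries out in the erratum to orthogonal calculus (the discussion around \cite[Theorem 6.3.1]{Wei98}, and the diagrams e.8 and e.9 that the statement already refers to), transported to the $\C$-equivariant setting. The objects $X$ and $Y$ will be mapping telescopes built from the telescope $\Tpq E=\hocolim_k\taupq^k E$ and its image under $\taupq$, and $g$ will be the comparison map between two cofinal ways of forming one and the same sequential homotopy colimit, so that it is automatically a weak equivalence; the maps into and out of $X$ and $Y$, and hence the precise choice of telescope maps, are then forced by the requirement that the top and bottom composites recover $\eta^H$ and $\taupq\eta^H$ and that the two resulting squares commute.

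The formal input I would record first is the naturality identity obtained by applying $\rho_{p,q}\colon\Id\Rightarrow\taupq$ to the map $\taupq^{k-1}\rho_{p,q}(E)$, namely $\rho_{p,q}(\taupq^k E)\circ\taupq^{k-1}\rho_{p,q}(E)=\taupq^k\rho_{p,q}(E)\circ\rho_{p,q}(\taupq^{k-1}E)$ for all $k$; this links the telescope of the ``outer'' maps $\rho_{p,q}(\taupq^k E)$ (which is $\Tpq E$) with the telescope of the ``inner'' maps $\taupq^k\rho_{p,q}(E)$ through a ladder of commuting squares, so that the two have weakly equivalent homotopy colimits. I would then pass to $H$-fixed points, using that $(-)^H$ commutes with sequential homotopy colimits and --- crucially --- with $\taupq$, so that the $(\taupq^k E(V))^H$ and the maps between them again form a telescope of the same shape, weakly equivalent to $(\Tpq E(V))^H$. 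With these identifications, $X$ is taken to be the telescope of the $(\taupq^k E(V))^H$ with first-term inclusion from $E(V)^H$ and a canonical weak equivalence to $(\Tpq E(V))^H$ whose composite is $\eta^H$, and $Y$ the telescope of the $(\taupq^{k+1}E(V))^H$ with first-term inclusion from $(\taupq E(V))^H$ and a map to $(\taupq\Tpq E(V))^H$ obtained by applying $\taupq$ to the structure maps $\taupq^k E\to\Tpq E$ of the telescope, with composite $\taupq\eta^H$; the map $g\colon X\to Y$ is the shift map induced by the telescope maps (it sends the $k$-th stage of $X$ to the $k$-th stage of $Y$), and is a weak equivalence because, composed with the cofinality equivalence $Y\to X$, it is homotopic to the identity of $X$. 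A diagram chase driven by the displayed identity then verifies that the two squares commute.

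The hard part will be the one genuinely $\C$-equivariant ingredient used above: that $(-)^H$ commutes with $\taupq$, i.e.\ that $\bigl(\holim_{0\neq U\subseteq\R^{p,q}}\overline{E}(U\oplus V)\bigr)^H$ is again a homotopy limit of the same shape. This is exactly what the cosimplicial model of $\taupq$ built from the compact $\C$-manifolds $C(\lambda)$ in the proof of Lemma~\ref{e.3}, together with the Equivariant Splitting Theorems (Theorem~\ref{splittingtheorems}), are for: $(-)^H$ commutes with the totalization since that is a homotopy limit, and the section-space description of the layers behaves well under fixed points because $\Gamma(C(\lambda);\xi)^H\cong\Gamma(C(\lambda)^H;\xi^H)$ with $C(\lambda)^H$ still compact by the splitting theorem (compare the fixed-point computations in the proof of Lemma~\ref{e.7}). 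The rest --- assembling the telescope maps and checking commutativity of the two squares --- is routine but somewhat lengthy, and is where one pins down precisely which of $\rho_{p,q}(\taupq^k E)$ and $\taupq^k\rho_{p,q}(E)$ appears in each position of the enlarged diagram.
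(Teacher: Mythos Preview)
Your proposal has a genuine gap in the cofinality step, and it is precisely the step on which the whole argument rests.

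You want $X=\hocolim_k(\taupq^kE(V))^H$ with structure maps $\rho_k:=\rho(\taupq^kE)^H$, and $Y=\hocolim_k(\taupq^{k+1}E(V))^H$. For the map $Y\to(\taupq\Tpq E(V))^H$ built from $\taupq(\iota_k)$ to be compatible with the telescope, $Y$ must carry the structure maps $\taupq(\rho_k)^H$. With that choice your $g$ (components $\rho_k$) is indeed a map of telescopes, by naturality of $\rho$. But the ``cofinality equivalence $Y\to X$'' you invoke would have to send stage $k$ of $Y$ to stage $k+1$ of $X$ by the identity, and compatibility then forces $\taupq(\rho_k)=\rho_{k+1}$. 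These two maps $\taupq^{k+1}E\to\taupq^{k+2}E$ are \emph{not} equal in general (this is the familiar $T\rho\neq\rho T$ for a pointed endofunctor), so no such cofinality map exists. Worse, the map you call $g$ is, on passing to colimits, exactly the map $r\colon(\Tpq E)^H\to(\taupq\Tpq E)^H$ that Theorem~\ref{weiss6.3.1} is trying to prove is a weak equivalence; declaring $g$ a weak equivalence ``by cofinality'' is therefore circular. The acknowledgement that one must ``pin down precisely which of $\rho_{p,q}(\taupq^kE)$ and $\taupq^k\rho_{p,q}(E)$ appears'' is where the argument actually breaks.

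The paper's proof avoids this entirely by a different mechanism. Via Yoneda one rewrites $E(V)\cong\Nat_{0,0}(F,E)$ and $\taupq E(V)\cong\Nat_{0,0}(G,E)$ with $F=\Jzero(V,-)$, $G=S\C\gamma_{p,q}(V,-)_+$, so that $\rho$ becomes $s^*$ for the sphere-bundle projection $s\colon G\to F$. The enlarged diagram is obtained by inserting $\Nat_{0,0}(M,\Tpq E)^H$ and $\Nat_{0,0}(N,\Tpq E)^H$ in the middle, where $M,N$ are cofibrant replacements of $\Tpq F,\Tpq G$. The middle vertical map is then a weak equivalence because $\Tpq s$ is an objectwise weak equivalence (Lemma~\ref{e.7}, which in turn rests on the connectivity estimates of Lemma~\ref{e.3}) and the projective model structure is $\C$-topological. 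In other words, the key non-formal input is Lemma~\ref{e.7}, not any telescope bookkeeping; your outline never invokes it, and without it the weak-equivalence claim for $g$ cannot be established.
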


\begin{proof}

By the Yoneda Lemma and definition of $\taupq$, we have a commutative diagram similar to \cite[e.6]{Wei98}, where $G:= S\Gmor{p,q}{V}{-}$, $F:=\Jzero(V,-)$ and $s:G\rightarrow F$ is the projection sphere bundle map.. 
\[\begin{tikzcd}
	{E(V)^H} && {(\taupq E(V))^H} \\
	\\
	{\Nat_{0,0}(F,E(-))^H} && {\Nat_{0,0}(G,E(-))^H}
	\arrow["{\rho^H}", from=1-1, to=1-3]
	\arrow["=", from=1-3, to=3-3]
	\arrow["{s^*}"', from=3-1, to=3-3]
	\arrow["\cong"', from=1-1, to=3-1]
\end{tikzcd}\]
Using this, the first diagram above can be written as
\[\begin{tikzcd}
	{\Nat_{0,0}(F,E(-))^H} && {\Nat_{0,0}(F,T_{p,q}E(-))^H} \\
	\\
	{\Nat_{0,0}(G,E(-))^H} && {\Nat_{0,0}(G,T_{p,q}E(-))^H}
	\arrow[from=1-1, to=1-3]
	\arrow[from=3-1, to=3-3]
	\arrow["{s^*}", from=1-3, to=3-3]
	\arrow["{s^*}"', from=1-1, to=3-1]
\end{tikzcd}\]
which in the same way as \cite[e.10]{Wei98} can be enlarged as below.
\[\begin{tikzcd}
	{\Nat_{0,0}(F,E(-))^H} & {\Nat_{0,0}(T_{p,q}F,T_{p,q}E(-))^H} && {\Nat_{0,0}(F,T_{p,q}E(-))^H} \\
	\\
	{\Nat_{0,0}(G,E(-))^H} & {\Nat_{0,0}(T_{p,q}G,T_{p,q}E(-))^H} && {\Nat_{0,0}(G,T_{p,q}E(-))^H}
	\arrow["{s^*}", from=1-4, to=3-4]
	\arrow["{s^*}"', from=1-1, to=3-1]
	\arrow["{T_{p,q}}", from=1-1, to=1-2]
	\arrow["{T_{p,q}}"', from=3-1, to=3-2]
	\arrow["\res", from=1-2, to=1-4]
	\arrow["\res"', from=3-2, to=3-4]
	\arrow["{(T_{p,q}s)^*}", from=1-2, to=3-2]
\end{tikzcd}\]
In the projective model structure defined in Proposition \ref{proj model structure}, we can factorize the maps $\eta_F: F \rightarrow \Tpq F$ and $\eta_G: G \rightarrow \Tpq G$ as a cofibration followed by an acyclic fibration. This looks as follows. 
\begin{equation*}
    F\hookrightarrow M \tilde{\twoheadrightarrow} \Tpq F
\end{equation*}
\begin{equation*}
   G\hookrightarrow N \tilde{\twoheadrightarrow} \Tpq G
\end{equation*}
We know that $G$ and $F$ are cofibrant in the projective model structure, by Lemma \ref{sphereandmorpharecofibrant}, therefore $M$ and $N$ are also cofibrant. Hence, they act as cofibrant replacements for $\Tpq F$ and $\Tpq G$ respectively. 

Applying these cofibrant replacements in the above diagram we get another commutative diagram 
\[\begin{tikzcd}
	{\Nat_{0,0}(F,E(-))^H} & {\Nat_{0,0}(M,T_{p,q}E(-))^H} && {\Nat_{0,0}(F,T_{p,q}E(-))^H} \\
	\\
	{\Nat_{0,0}(G,E(-))^H} & {\Nat_{0,0}(N,T_{p,q}E(-))^H} && {\Nat_{0,0}(G,T_{p,q}E(-))^H}
	\arrow["{s^*}", from=1-4, to=3-4]
	\arrow["{s^*}"', from=1-1, to=3-1]
	\arrow[from=1-1, to=1-2]
	\arrow[from=3-1, to=3-2]
	\arrow["\res", from=1-2, to=1-4]
	\arrow["\res"', from=3-2, to=3-4]
	\arrow["j", from=1-2, to=3-2]
\end{tikzcd}\]
Since $\Tpq s$ is an objectwise weak equivalence, by Lemma \ref{e.7}, we get that $M$ and $N$ are objectwise weakly equivalent. Finally, since the projective model structure is $\C$-topological, $\Nat_{0,0}(-,A)$ preserves weak equivalences of cofibrant objects, and we can conclude that $j$ is a weak homotopy equivalence as required. 
    
\end{proof}

\begin{theorem}\label{weiss6.3.1}
$\Tpq E$ is strongly $(p,q)$-polynomial for all $E\in\Ezero$ and all $p,q\geq 0$.
\end{theorem}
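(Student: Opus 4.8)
The proof strategy follows Weiss's erratum \cite{Wei98} closely, adapted to the $\C$-equivariant setting using the lemmas just established. The goal is to show that $\rho_{p,q}:\Tpq E\to\taupq\Tpq E$ is an objectwise weak equivalence, i.e.\ that $(\rho_{p,q}\Tpq E(V))^H$ is a weak homotopy equivalence for all $V\in\Jzero$ and all closed subgroups $H\leq\C$ (so $H\in\{e,\C\}$). The key observation is that $\taupq$ commutes with filtered homotopy colimits up to weak equivalence (the homotopy limit defining $\taupq$ is a finite totalization of finite products, by the analysis in Lemma~\ref{e.3}, and finite homotopy limits commute with filtered homotopy colimits of $\C$-spaces, checked on fixed points). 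Hence it suffices to compare the two towers $E\to\taupq E\to\taupq^2 E\to\cdots$ and $\taupq E\to\taupq^2 E\to\cdots$, whose homotopy colimits are $\Tpq E$ and $\taupq\Tpq E$ respectively, and the canonical map between them is an equivalence on homotopy colimits essentially by a cofinality/shift argument.

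\medskip
\noindent\textbf{Plan of the argument.} First I would reduce, via the Yoneda Lemma together with the homotopy-cofibre sequence $S\Gmor{p,q}{V}{-}_+\to\Jzero(V,-)\to\Jmor{p,q}{V}{-}$ of Proposition~\ref{sphere cofibre seq}, to a statement about the maps $s:G\to F$ where $G=S\Gmor{p,q}{V}{-}$ and $F=\Jzero(V,-)$, exactly as in the proof of Lemma~\ref{lem: enlarging diagram}. Second, I would take the commutative square relating $E(V)^H$, $(\Tpq E(V))^H$, $(\taupq E(V))^H$ and $(\taupq\Tpq E(V))^H$, and enlarge it using Lemma~\ref{lem: enlarging diagram} to insert a middle column $X\to Y$ with the comparison map $g$ (built from $\Nat_{0,0}(M,\Tpq E(-))$ and $\Nat_{0,0}(N,\Tpq E(-))$, where $M,N$ are the cofibrant replacements of $\Tpq F,\Tpq G$) a weak homotopy equivalence. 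Third, I would argue that the horizontal maps out of $X$ and $Y$ to the right-hand column are weak equivalences: this uses that $\res$ from $\Nat_{0,0}(\Tpq F,\Tpq E(-))$ to $\Nat_{0,0}(F,\Tpq E(-))$ is a weak equivalence because $\Tpq E$ is built from a homotopy colimit along the $\taupq$-tower and $F\to\Tpq F$ is a $\Tpq$-equivalence (formally, $\Tpq E$ is $\taupq$-local up to the homotopy colimit, so precomposition with $F\to\Tpq F$ is an equivalence on $\Nat_{0,0}(-,\Tpq E)$). Finally, chasing the enlarged diagram and using the two-out-of-three property for weak equivalences on each set of $H$-fixed points yields that $(\rho_{p,q}\Tpq E(V))^H$ is a weak homotopy equivalence, and hence $\Tpq E$ is strongly $(p,q)$-polynomial.

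\medskip
\noindent\textbf{The main obstacle.} The delicate point is the interchange of $\taupq$ with the sequential homotopy colimit defining $\Tpq$, and more precisely controlling this \emph{equivariantly} on both $e$- and $\C$-fixed points simultaneously. In the non-equivariant erratum this is where Weiss's careful bookkeeping of connectivities enters; here I would invoke Lemma~\ref{e.3} (and its refinement Corollary~\ref{erratum corollary}) to guarantee that $\taupq$ raises connectivity, so that applying $\taupq$ repeatedly along the tower does not destroy the colimit comparison, with the fixed-point connectivity estimates handled by the splitting theorems (Theorem~\ref{splittingtheorems}) as in Lemma~\ref{e.7}. Concretely, one shows $\taupq\Tpq E(V)\simeq\Tpq\taupq E(V)$ and then that $\Tpq\taupq E\simeq\Tpq E$ because $\rho_{p,q}:E\to\taupq E$ becomes an equivalence after applying $\Tpq$, which is precisely the content packaged by Lemma~\ref{e.7} and Lemma~\ref{lem: enlarging diagram}. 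Once the interchange is justified, the remainder is a formal diagram chase; the connectivity estimates are the technical heart.
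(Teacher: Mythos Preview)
Your proposal invokes the correct ingredients (Lemma~\ref{e.7} and Lemma~\ref{lem: enlarging diagram}), but step~3 of your plan is circular. You claim that the restriction map $\Nat_{0,0}(M,\Tpq E)^H\to\Nat_{0,0}(F,\Tpq E)^H$ (and similarly for $N,G$) is a weak equivalence because ``$\Tpq E$ is $\taupq$-local up to the homotopy colimit''. But this restriction map is precisely $\Nat_{0,0}(-,\Tpq E)$ applied to the $S_{p,q}$-equivalence $F\to\Tpq F$ between cofibrant objects, and asking it to be a weak equivalence is exactly the statement that $\Tpq E$ is $S_{p,q}$-local, i.e.\ strongly $(p,q)$-polynomial, which is what you are trying to prove. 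Your cofinality/shift sketch in the opening paragraph has a related hidden issue: even if $\taupq$ commutes with sequential homotopy colimits, the two natural maps $\taupq^k E\to\taupq^{k+1}E$ given by $\rho_{\taupq^k E}$ and $\taupq(\rho_{\taupq^{k-1}E})$ are not a priori the same, so the identification of $\rho_{\Tpq E}$ with a shift map needs justification (this is precisely why Weiss wrote the erratum).

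The paper avoids the circularity by never asserting that the right horizontal maps are equivalences. Instead it applies Lemma~\ref{lem: enlarging diagram} with $E$ replaced by $\taupq^k E$ for each $k$, obtaining for each $k$ an enlarged diagram with a middle weak equivalence $g_k:X_k\to Y_k$ factoring the square
\[
\begin{tikzcd}
(\taupq^k E(V))^H \ar[r]\ar[d,"\rho^H"'] & (\Tpq E(V))^H \ar[d,"r"]\\
(\taupq^{k+1}E(V))^H \ar[r] & (\taupq\Tpq E(V))^H.
\end{tikzcd}
\]
Now use compactness: any class in $\pi_*(\Tpq E(V))^H$ or $\pi_*(\taupq\Tpq E(V))^H$ is represented at some finite stage $k$, and the weak equivalence $g_k$ lets you chase that class through the middle column to prove injectivity and surjectivity of $\pi_*r$ directly. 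The point is that the enlarged diagram is used stage-by-stage together with compactness of homotopy groups with respect to sequential colimits, rather than as a single global statement about $\Tpq E$.
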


\begin{proof}
We must show that the map $\Tpq E \rightarrow \taupq \Tpq E$ is an objectwise weak equivalence. The proof follows in the same way as \cite[Theorem 6.3.1]{Wei98}. 
\[\begin{tikzcd}
	{E(V)^H} && {(\taupq E(V))^H} && {(\taupq^2 E(V))^H} && \dots \\
	\\
	{(\taupq E(V))^H} && {(\taupq^2 E(V))^H} && {(\taupq^3 E(V))^H} && \dots
	\arrow["{\rho^H}", from=1-1, to=3-1]
	\arrow["{\rho^H}", from=1-1, to=1-3]
	\arrow["{\rho^H}", from=1-3, to=1-5]
	\arrow["{\rho^H}", from=1-5, to=1-7]
	\arrow["{\rho^H}", from=1-3, to=3-3]
	\arrow["{\rho^H}", from=1-5, to=3-5]
	\arrow["{\taupq\rho^H}"', from=3-1, to=3-3]
	\arrow["{\taupq\rho^H}"', from=3-3, to=3-5]
	\arrow["{\taupq\rho^H}"', from=3-5, to=3-7]
\end{tikzcd}\]
It suffices to show that the the vertical maps in the diagram above induce a weak homotopy equivalence, $r:(\Tpq E(V))^H \rightarrow (\taupq \Tpq E(V))^H$ for all closed subgroups $H\leq \C$,  between the homotopy colimits of the rows. By Lemma \ref{lem: enlarging diagram}, each diagram 
\[\begin{tikzcd}
	{(\tau_{p,q}^kE(V))^H} & {(T_{p,q}E(V))^H} \\
	{(\tau_{p,q}^{k+1}E(V))^H} & {(\tau_{p,q}T_{p,q}E(V))^H}
	\arrow["\subseteq", from=1-1, to=1-2]
	\arrow["\subseteq"', from=2-1, to=2-2]
	\arrow["{\rho^H}"', from=1-1, to=2-1]
	\arrow["r", from=1-2, to=2-2]
\end{tikzcd}\]
can be enlarged to a commutative diagram 
\[\begin{tikzcd}
	{(\tau_{p,q}^kE(V))^H} & X & {(T_{p,q}E(V))^H} \\
	{(\tau_{p,q}^{k+1}E(V))^H} & Y & {(\tau_{p,q}T_{p,q}E(V))^H}
	\arrow["{\rho^H}"', from=1-1, to=2-1]
	\arrow[from=1-1, to=1-2]
	\arrow[from=1-2, to=1-3]
	\arrow[from=2-1, to=2-2]
	\arrow[from=2-2, to=2-3]
	\arrow["r", from=1-3, to=2-3]
	\arrow["g", from=1-2, to=2-2]
\end{tikzcd}\]
where $g$ is a weak homotopy equivalence. 

Any element in the homotopy group $\pi_*(T_{p,q}E(V))^H$ may be realised as an element of the corresponding homotopy group $\pi_*(\tau_{p,q}^kE(V))^H$, for some $k$. Proving injectivity and surjectivity of $\pi_*r$ follows from the existence of $g$. Then $\Tpq E$ is strongly $(p,q)$-polynomial. 
\end{proof}

In particular, this allows us to define the $(p,q)$-polynomial approximation functor. 

\begin{definition}
Define the \emph{$(p,q)$-polynomial approximation} to $E\in \Ezero$ to be the functor $T_{p+1,q}T_{p,q+1}E$. By the above lemma, this is indeed a $(p,q)$-polynomial functor. 
\end{definition}

\begin{example}\label{ex: 00-poly approx}
The $(0,0)$-polynomial approximation of a functor $E$ is the constant functor 
\begin{equation*}
    T_{1,0}T_{0,1}E(V)\cong \underset{k}{\hocolim}E(\R^{k,k})\cong \underset{a,b}{\hocolim}E(\R^{a,b})=: E(\R^{\infty,\infty}).
\end{equation*}
This is analogous to the $0$-polynomial approximation of a functor $E$ being the constant functor $T_0E(V)=E(\R^\infty)$ in the underlying calculus. 
\end{example}

The following is the $\C$-equivariant generalisation of \cite[Theorem 6.3.2]{Wei95}. The lemma demonstrates another property that one might expect strongly $(p,q)$-polynomial functors to satisfy based on the properties of polynomial functions. That is, the strongly $(p,q)$-polynomial approximation of a strongly $(p,q)$-polynomial functor is the functor itself. 

\begin{lemma}\label{weiss6.3.2}
If $E$ is strongly $(p,q)$-polynomial, then $\eta: E\rightarrow\Tpq E$ is an objectwise weak equivalence. 
\end{lemma}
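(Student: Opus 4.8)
The plan is to exploit the description $\Tpq E=\hocolim(E\xrightarrow{\rho_{p,q}E}\taupq E\xrightarrow{\taupq\rho_{p,q}E}\taupq^2E\to\cdots)$ and show that, when $E$ is strongly $(p,q)$-polynomial, every map in this tower is an objectwise weak equivalence, so that $\eta$ is identified with a levelwise weak equivalence of sequential diagrams and therefore induces an objectwise weak equivalence on homotopy colimits.

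First I would record the key auxiliary fact that $\taupq:\Ezero\to\Ezero$ preserves objectwise weak equivalences. By the homotopy-limit description $\taupq G(V)\cong\holim_{0\neq U\subseteq\R^{p,q}}\overline{G}(U\oplus V)$ (and its cosimplicial/totalization realisation from Lemma \ref{e.3}), this reduces to two standard points: the right Kan extension $\overline{(-)}$ along $\Jzero\hookrightarrow\Jzerobar$ preserves objectwise weak equivalences (it is built from $\Top_*(A,-)$ for the CW-complexes $A=\Jzerobar(X,W)$, applied inside an end), and homotopy limits over $\{0\neq U\subseteq\R^{p,q}\}$ preserve objectwise weak equivalences in $\C\Top_*$ since every $\C$-space is fibrant in the fine model structure. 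This is exactly the $\C$-equivariant analogue of the corresponding step in \cite{Wei95,BO13}.

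Next comes the induction. By hypothesis $\rho_{p,q}E\colon E\to\taupq E$ is an objectwise weak equivalence. Naturality of $\rho_{p,q}\colon\Id\to\taupq$ gives, for each $k\geq 0$, a commuting square with top edge $\rho_{p,q}(\taupq^kE)$, left edge $\rho_{p,q}(\taupq^kE)$, and bottom edge $\taupq\rho_{p,q}(\taupq^kE)$. Assuming inductively that $\rho_{p,q}(\taupq^kE)$ is an objectwise weak equivalence, the bottom edge is one by the auxiliary fact, and then the two-out-of-three property in $\C\Top_*$ forces the right edge $\rho_{p,q}(\taupq^{k+1}E)$ to be an objectwise weak equivalence. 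Hence every structure map $\taupq^k E\to\taupq^{k+1}E$ in the defining tower of $\Tpq E$ is an objectwise weak equivalence, and so is each composite $E\to\taupq^k E$.

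Finally I would conclude. The map $\eta\colon E\to\Tpq E$ is, by construction, the map of homotopy colimits from $\hocolim(E= E= E=\cdots)$ to $\hocolim(E\to\taupq E\to\taupq^2E\to\cdots)$ induced levelwise by the composites $E\to\taupq^k E$. The source is objectwise weakly equivalent to $E$ (mapping telescope of identities), and the induced map of sequential diagrams is a levelwise objectwise weak equivalence by the previous step; since sequential homotopy colimits are computed objectwise and preserve objectwise weak equivalences, $\eta$ is an objectwise weak equivalence. The main obstacle is genuinely the auxiliary fact that $\taupq$ preserves objectwise weak equivalences: it cannot be extracted from the connectivity estimates of Lemma \ref{e.3} alone (those only give a finite bound growing with $\dim W$), so it must be argued directly from the homotopy-limit description as above; everything after that is formal.
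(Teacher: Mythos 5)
Your proof is correct and follows the same underlying idea as the paper's terse one-paragraph argument: strong $(p,q)$-polynomiality plus the fact that $\taupq$ preserves objectwise weak equivalences forces every map in the defining tower of $\Tpq E$ to be an objectwise weak equivalence, hence so is the induced map on the sequential homotopy colimit. You have merely made explicit the induction and the auxiliary preservation fact that the paper leaves implicit (the latter is established in the proof of Lemma~\ref{sphereandmorpharecofibrant} and cited in Proposition~\ref{Tpq model structure}); your description of the naturality square has the right and bottom edges swapped relative to the usual convention, but the two-out-of-three conclusion is unaffected.
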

\begin{proof}
If $E$ is strongly $(p,q)$-polynomial, then by definition $\rho^H:E(V)^H\rightarrow (\taupq E(V))^H$ is a weak homotopy equivalence, for all $V\in \Jzero$ and all $H$ closed subgroups of $\C$. Therefore, $E(V)^H\rightarrow (\hocolim_k \taupq^k E(V))^H$ is a weak homotopy equivalence, since fixed points commute with sequential homotopy colimits, which is exactly the map $\eta$. 
\end{proof}

\begin{remark}\label{rem: (1,0)/(0,1)-polynomial is constant}
Combining this result with Example \ref{exampleT10T01} shows that a strongly $(1,0)$-polynomial functor is constant in the $\mathbb{R}$ direction, and a strongly $(0,1)$-polynomial functor is constant in the $\mathbb{R}^\delta$ direction. We can think of these types of functors as being `horizontally constant' and `vertically constant' respectively. This can be illustrated by the following diagram for a functor $E$. 

\[\begin{tikzcd}
	{E(\mathbb{R}^{0,\infty})} &&& {E(\mathbb{R}^{\infty,\infty})} \\
	\vdots && {\iddots} \\
	{E(\mathbb{R}^{0,1})} & {E(\mathbb{R}^{1,1})} \\
	{E(\mathbb{R}^{0,0})} & {E(\mathbb{R}^{1,0})} & \dots & {E(\mathbb{R}^{\infty,0})}
	\arrow[from=4-1, to=4-2]
	\arrow[from=4-1, to=3-1]
	\arrow[dotted, from=1-1, to=1-4]
	\arrow[dotted, from=4-4, to=1-4]
	\arrow[from=4-2, to=3-2]
	\arrow[from=3-1, to=3-2]
	\arrow[from=4-2, to=4-3]
	\arrow[from=4-3, to=4-4]
	\arrow[from=3-1, to=2-1]
	\arrow[from=2-1, to=1-1]
\end{tikzcd}\]
If $E$ is strongly $(1,0)$-polynomial, then each horizontal arrow is a weak equivalence and $E$ is `horizontally constant'. If $E$ is strongly $(0,1)$-polynomial, then each vertical arrow is a weak equivalence and $E$ is `vertically constant'. If $E$ is $(0,0)$-polynomial, then all arrows are weak equivalences and we call $E$ `constant'. 
\end{remark}

Combining Lemma \ref{weiss6.3.2} with Theorem \ref{weiss6.3.1} gives the following Corollary. 

\begin{corollary}\label{cor: TpqTpq we to Tpq}
Let $E\in\Ezero$, then $\Tpq E$ is objectwise weakly equivalent to $\Tpq \Tpq E$. 
\end{corollary}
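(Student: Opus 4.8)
The plan is to deduce this directly from the two ingredients flagged in the statement: Theorem \ref{weiss6.3.1} (which says $\Tpq E$ is strongly $(p,q)$-polynomial for every $E \in \Ezero$) and Lemma \ref{weiss6.3.2} (which says that if a functor $G$ is strongly $(p,q)$-polynomial then the unit $\eta : G \rightarrow \Tpq G$ is an objectwise weak equivalence). The argument is essentially a one-line syllogism, so the ``proof'' is really just the act of instantiating the lemma at the right object.

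First I would apply Theorem \ref{weiss6.3.1} to the functor $E$ to conclude that $G := \Tpq E$ is a strongly $(p,q)$-polynomial object of $\Ezero$. Next I would invoke Lemma \ref{weiss6.3.2} with this $G$ in place of $E$: since $G = \Tpq E$ is strongly $(p,q)$-polynomial, the unit natural transformation
\begin{equation*}
    \eta_G : \Tpq E \longrightarrow \Tpq(\Tpq E)
\end{equation*}
is an objectwise weak equivalence. That is exactly the assertion of Corollary \ref{cor: TpqTpq we to Tpq}, so the proof is complete. If one wanted to be slightly more careful, one could note that $\eta_G = \eta_{\Tpq E}$ is the specific map constructed in the definition of $\Tpq$ (inclusion of the first term of the homotopy colimit), and that naturality of $\eta$ in its argument means no compatibility check is needed.

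I do not anticipate any real obstacle here, since both inputs are already proved earlier in the excerpt; the only thing to be mildly attentive to is making sure the hypotheses match — Lemma \ref{weiss6.3.2} requires \emph{strong} $(p,q)$-polynomiality (not merely $(p,q)$-polynomiality), and Theorem \ref{weiss6.3.1} delivers precisely that, for all $p,q \geq 0$, so the ranges align. Thus the statement follows formally, and I would present it as a two-sentence proof citing these two results.

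\begin{proof}
By Theorem \ref{weiss6.3.1}, the functor $\Tpq E$ is strongly $(p,q)$-polynomial. Applying Lemma \ref{weiss6.3.2} to the strongly $(p,q)$-polynomial functor $\Tpq E$, the unit map $\eta : \Tpq E \rightarrow \Tpq \Tpq E$ is an objectwise weak equivalence.
\end{proof}
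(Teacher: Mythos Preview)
Your proposal is correct and matches the paper's approach exactly: the corollary is stated immediately after Lemma \ref{weiss6.3.2} with the remark that it follows by combining that lemma with Theorem \ref{weiss6.3.1}, which is precisely the two-line argument you give.
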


The following Lemma is a $\C$-equivariant version \cite[Lemma 5.11]{Wei95}. It says that $\tau_{l,m}$ preserves strongly $(p,q)$-polynomial functors. 
\begin{lemma}\label{lem: 5.11}
If $E$ is strongly $(p,q)$-polynomial, then so is $\tau_{l,m}E$ for all $l,m\geq 0$. 
\end{lemma}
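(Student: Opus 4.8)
I want to show that if $E$ is strongly $(p,q)$-polynomial, then $\tau_{l,m}E$ is strongly $(p,q)$-polynomial for all $l,m \geq 0$. The cleanest route mimics Weiss's proof of \cite[Lemma 5.11]{Wei95}: exploit the fact that both $\tau_{p,q}$ and $\tau_{l,m}$ are defined as (homotopy) limits, so they commute up to natural weak equivalence, and then chase the commuting square. More precisely, I will use the homotopy-limit description established just above, namely $\tau_{p,q}E(V) \cong \holim_{0\neq U\subseteq \R^{p,q}} \overline{E}(U\oplus V)$, and the analogous formula for $\tau_{l,m}$.

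First I would observe that $\rho_{p,q}$ and $\rho_{l,m}$ are natural transformations of functors on $\Ezero$, so there is a commuting square
\begin{equation*}
\begin{tikzcd}
\tau_{l,m}E \arrow[r, "\tau_{l,m}\rho_{p,q}"] \arrow[d, "\rho_{l,m}(\tau_{p,q}E)"'] & \tau_{l,m}\tau_{p,q}E \arrow[d, "\rho_{l,m}(\tau_{p,q}E)"] \\
\tau_{p,q}\tau_{l,m}E \arrow[r, "\tau_{p,q}\rho_{l,m}E"'] & \tau_{p,q}\tau_{l,m}E
\end{tikzcd}
\end{equation*}
wait — more carefully, naturality of $\rho_{l,m}$ applied to the map $\rho_{p,q}E\colon E\to\tau_{p,q}E$ gives a square relating $\tau_{l,m}E$, $\tau_{l,m}\tau_{p,q}E$, $\tau_{p,q}E$ (via $\rho_{l,m}$ on $\tau_{p,q}E$... ) — the point I actually need is that the two homotopy limits commute, i.e. the canonical map $\tau_{l,m}\tau_{p,q}E \to \tau_{p,q}\tau_{l,m}E$ coming from Fubini for homotopy limits is a weak equivalence. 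Since $E$ is strongly $(p,q)$-polynomial, $\rho_{p,q}E\colon E\to\tau_{p,q}E$ is an objectwise weak equivalence; applying the functor $\tau_{l,m}$ — which preserves objectwise weak equivalences between functors whose values are built by (indexed) products, totalisation, and mapping spaces out of $\C$-CW complexes, exactly as noted in the proof of Lemma \ref{sphereandmorpharecofibrant} — shows $\tau_{l,m}\rho_{p,q}E\colon \tau_{l,m}E \to \tau_{l,m}\tau_{p,q}E$ is an objectwise weak equivalence. Dually, applying $\tau_{p,q}$ to the same map shows $\tau_{p,q}\tau_{l,m}\rho_{p,q}E$ is an objectwise weak equivalence, but I want instead the map $\rho_{p,q}(\tau_{l,m}E)\colon \tau_{l,m}E \to \tau_{p,q}\tau_{l,m}E$.

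The key step is therefore: combine the Fubini equivalence $\tau_{l,m}\tau_{p,q}E \simeq \tau_{p,q}\tau_{l,m}E$ with the weak equivalence $\tau_{l,m}\rho_{p,q}E\colon \tau_{l,m}E \xrightarrow{\simeq} \tau_{l,m}\tau_{p,q}E$, and check that the resulting composite $\tau_{l,m}E \to \tau_{p,q}\tau_{l,m}E$ agrees (up to homotopy) with $\rho_{p,q}(\tau_{l,m}E)$. This compatibility follows from naturality: $\rho_{p,q}$ is a natural transformation $\Id\Rightarrow\tau_{p,q}$, so for the map $\rho_{l,m}E\colon E\to \tau_{l,m}E$ the naturality square is
\begin{equation*}
\begin{tikzcd}
E \arrow[r, "\rho_{l,m}E"] \arrow[d, "\rho_{p,q}E"'] & \tau_{l,m}E \arrow[d, "\rho_{p,q}(\tau_{l,m}E)"] \\
\tau_{p,q}E \arrow[r, "\tau_{p,q}\rho_{l,m}E"'] & \tau_{p,q}\tau_{l,m}E
\end{tikzcd}
\end{equation*}
Since $\rho_{p,q}E$ is an objectwise weak equivalence by hypothesis and $\tau_{p,q}\rho_{l,m}E$ is obtained by applying $\tau_{p,q}$ to the map $\rho_{l,m}E$ whose target and source... here I need $\tau_{p,q}\rho_{l,m}E$ to be a weak equivalence, which is the genuinely non-formal input: it requires knowing $\tau_{p,q}$ sends the map $\rho_{l,m}E$ (which in general is \emph{not} a weak equivalence) to something whose failure to be an equivalence is "the same" on both sides — in other words I should instead run the argument with the Fubini comparison as the backbone. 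Concretely: $\rho_{p,q}(\tau_{l,m}E)$ is (up to the natural Fubini iso $\tau_{p,q}\tau_{l,m}E\cong\tau_{l,m}\tau_{p,q}E$) identified with $\tau_{l,m}(\rho_{p,q}E)$, because both are induced on the double homotopy limit by the single transformation $\rho_{p,q}$ applied in the inner variable, and $\tau_{l,m}$ of a weak equivalence is a weak equivalence. Hence $\rho_{p,q}(\tau_{l,m}E)$ is an objectwise weak equivalence, i.e. $\tau_{l,m}E$ is strongly $(p,q)$-polynomial.

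\textbf{Main obstacle.} The formal heart is the Fubini statement — that the two iterated homotopy limits $\tau_{l,m}\tau_{p,q}$ and $\tau_{p,q}\tau_{l,m}$ agree naturally, as maps out of the product of posets $\{0\neq U\subseteq\R^{l,m}\}\times\{0\neq U'\subseteq\R^{p,q}\}$ — together with the bookkeeping that $\tau_{l,m}$ (built from products, totalisation, and $\Top_*(A,-)$ for $\C$-CW $A$) preserves objectwise weak equivalences of functors $\Jzerobar\to\C\Top_*$, and that the identification $\rho_{p,q}(\tau_{l,m}E)\simeq\tau_{l,m}(\rho_{p,q}E)$ under Fubini is genuinely the one coming from naturality of the structure maps of the homotopy limit (this last point is the place where Weiss's argument needs a little care, and where the indexing shift to $\R^{p,q}$ rather than $\R^{n+1}$ must be respected throughout). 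Once these are in hand the conclusion is immediate, so I expect the write-up to be short, citing the homotopy-limit description of $\taupq$ established above and the behaviour of homotopy limits under weak equivalences and interchange.
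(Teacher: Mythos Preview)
Your proposal is correct and follows essentially the same approach as the paper: the paper's proof is the two-line argument
\[
\tau_{p,q}\tau_{l,m}E(V) = \tau_{l,m}\tau_{p,q}E(V) \simeq \tau_{l,m}E(V),
\]
invoking exactly the two ingredients you isolated, namely that homotopy limits commute (Fubini) and that $\tau_{l,m}$ preserves objectwise weak equivalences. Your extra care about identifying $\rho_{p,q}(\tau_{l,m}E)$ with $\tau_{l,m}(\rho_{p,q}E)$ under the Fubini isomorphism is a point the paper leaves implicit, but the approach is the same.
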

\begin{proof}
Since homotopy limits commute and $\tau_{l,m}$ preserves objectwise weak equivalences we get the following. 
\begin{align*}
    \tau_{p,q}\tau_{l,m}E(V) &= \tau_{l,m}\taupq E(V)\\
    &\simeq \tau_{l,m}E(V)\qedhere
\end{align*}
\end{proof}

\begin{corollary}\label{Tlm is pq poly if E is}
If $E$ is strongly $(p,q)$-polynomial, then so is $T_{l,m}E$ for all $l,m\geq 0$.    
\end{corollary}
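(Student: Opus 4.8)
The plan is to exploit the description of $T_{l,m}$ as a sequential homotopy colimit, $T_{l,m}E = \hocolim_k \tau_{l,m}^k E$, together with Lemma \ref{lem: 5.11}. Iterating Lemma \ref{lem: 5.11} shows that every term $\tau_{l,m}^k E$ of this sequence is again strongly $(p,q)$-polynomial, so the corollary amounts to saying that a sequential homotopy colimit of strongly $(p,q)$-polynomial functors is strongly $(p,q)$-polynomial. Concretely, I would show directly that the map $\rho_{p,q}(T_{l,m}E)\colon T_{l,m}E \to \taupq T_{l,m}E$ is an objectwise weak equivalence.

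For this I would compute, for $V\in\Jzero$,
\begin{align*}
\taupq T_{l,m}E(V) &= \taupq\Bigl(\hocolim_k \tau_{l,m}^k E\Bigr)(V)\\
&\simeq \hocolim_k\, \taupq \tau_{l,m}^k E(V)\\
&\cong \hocolim_k\, \tau_{l,m}^k \taupq E(V)\\
&\simeq \hocolim_k\, \tau_{l,m}^k E(V)\\
&= T_{l,m}E(V),
\end{align*}
where the second line is the assertion that $\taupq$ commutes with sequential homotopy colimits, the third line is the commutation of the homotopy limits defining $\taupq$ and $\tau_{l,m}$ exactly as in the proof of Lemma \ref{lem: 5.11}, and the fourth line uses that $E$ is strongly $(p,q)$-polynomial (so $\rho_{p,q}E\colon E\to\taupq E$ is an objectwise weak equivalence) combined with the fact that the homotopy limit $\tau_{l,m}^k$ preserves objectwise weak equivalences. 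I would then check, as in \cite[Lemma 5.11]{Wei95}, that this chain of equivalences is compatible with $\rho_{p,q}$, so that $\rho_{p,q}(T_{l,m}E)$ is identified with an objectwise weak equivalence.

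The main obstacle is the second step, the commutation of $\taupq$ with sequential homotopy colimits; this is the $\C$-equivariant analogue of the corresponding technical point in Weiss's orthogonal calculus and is the only non-formal ingredient. I would reduce it to the non-equivariant statement by passing to $H$-fixed points for each closed subgroup $H\leq\C$: fixed points commute with sequential homotopy colimits (as already used in the proof of Lemma \ref{weiss6.3.2}) and with the totalisation presenting $\taupq$ (cf.\ Lemma \ref{e.3}), leaving the fact that the homotopy limit over the poset $\{0\neq U\subseteq\R^{p,q}\}$ of non-zero subspaces — which, though topological, is homotopy-finite for fixed $p,q$ — commutes with sequential homotopy colimits of spaces. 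Once this is established the corollary follows; an alternative organisation using Theorem \ref{weiss6.3.1} and Lemma \ref{lem: 5.11} to compare $T_{l,m}E$ with $T_{l,m}\taupq E$ merely reshuffles rather than removes the same commutation issue.
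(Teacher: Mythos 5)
Your proposal matches the paper's proof exactly: the paper also deduces the corollary from Lemma \ref{lem: 5.11} together with the commutation of $\taupq$ with sequential homotopy colimits. Your extra discussion about verifying the commutation by passing to $H$-fixed points, and checking compatibility with $\rho_{p,q}$, only spells out details that the paper leaves implicit.
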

\begin{proof}
This is clear from Lemma \ref{lem: 5.11} using that $\taupq$ commutes with sequential homotopy colimits. 
\end{proof}

Combining Lemma \ref{weiss6.3.2} and Corollary \ref{Tlm is pq poly if E is}, extends the result of Lemma \ref{weiss6.3.2} from strongly polynomial functors to polynomial functors.
\begin{lemma}
If $E$ is $(p,q)$-polynomial, then $E\simeq T_{p+1,q}T_{p,q+1}E$. \qed
\end{lemma}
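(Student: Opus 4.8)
The plan is to deduce this directly from Lemma \ref{weiss6.3.2} (the strongly $(p,q)$-polynomial approximation of a strongly $(p,q)$-polynomial functor is objectwise weakly equivalent to that functor) together with Corollary \ref{Tlm is pq poly if E is} ($T_{l,m}$ preserves strongly $(p,q)$-polynomial functors), applying them in sequence. Suppose $E\in\Ezero$ is $(p,q)$-polynomial, so that by definition $E$ is both strongly $(p+1,q)$-polynomial and strongly $(p,q+1)$-polynomial.

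First I would use that $E$ is strongly $(p,q+1)$-polynomial: by Lemma \ref{weiss6.3.2} the natural map $\eta\colon E\rightarrow T_{p,q+1}E$ is an objectwise weak equivalence.

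Next I would apply Corollary \ref{Tlm is pq poly if E is} with $(l,m)=(p,q+1)$ to the strongly $(p+1,q)$-polynomial functor $E$, concluding that $T_{p,q+1}E$ is itself strongly $(p+1,q)$-polynomial. Lemma \ref{weiss6.3.2} applied to $T_{p,q+1}E$ then shows that $\eta\colon T_{p,q+1}E\rightarrow T_{p+1,q}T_{p,q+1}E$ is an objectwise weak equivalence. Composing the two weak equivalences gives $E\simeq T_{p+1,q}T_{p,q+1}E$.

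There is no genuine obstacle here: the statement is a formal consequence of the two cited results, and the only point requiring care is the bookkeeping — keeping track of which of the two ``strongly polynomial'' hypotheses is invoked at each stage and the order in which $T_{p,q+1}$ and $T_{p+1,q}$ are applied so that the composite lands on $T_{p+1,q}T_{p,q+1}E$ exactly. (One could instead run the argument with the two directions swapped, using that the $\tau_{l,m}$, hence the $T_{l,m}$, commute up to objectwise weak equivalence by Lemma \ref{lem: 5.11}, but the route above reaches $T_{p+1,q}T_{p,q+1}E$ on the nose and avoids that extra step.)
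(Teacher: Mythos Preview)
Your proof is correct and follows exactly the approach the paper intends: the paper's proof is nothing more than the sentence ``Combining Lemma \ref{weiss6.3.2} and Corollary \ref{Tlm is pq poly if E is}'' followed by a \qed, and you have simply spelled out the two-step application of those results in the correct order. There is nothing to add.
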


\section{The $(p,q)$-polynomial model structure}
Similar to Barnes and Oman \cite{BO13}, we would like to construct a model structure on the input category $\Ezero$ (see Definition \ref{jzero and ezero def}), that captures the homotopy theory of polynomial functors. We will construct the $(p,q)$-polynomial model structure on $\Ezero$ whose fibrant objects are functors that are $(p,q)$-polynomial. We construct this model structure, using the same method as Barnes and Oman in \cite[Section 6]{BO13}, by Bousfield-Friedlander localisation and left Bousfield localisation. To do this, we will also need the projective model structure on $\Ezero$ defined in Proposition \ref{proj model structure}.

To begin, we will construct a model structure on $\Ezero$ whose fibrant objects are the strongly $(p,q)$-polynomial functors. This model structure will allow us to easily deduce results about strongly $(p,q)$-polynomial functors, without having to keep track of the more complex indexing of the $(p,q)$-polynomial model structure.  

\begin{definition}
A morphism $f\in \Ezero$ is a \emph{$\Tpq$-equivalence} if $\Tpq f$ is an objectwise weak equivalence (see Definition \ref{def: obj WE}). 
\end{definition}

\begin{proposition}\label{Tpq model structure}\index{$(p,q)\poly\Ezero^S$}
There is a proper model structure on $\Ezero$ such that a morphism $f$ is a weak equivalence if and only if it is a $\Tpq$-equivalence. The cofibrations are the same as for the projective model structure. The fibrant objects are the strongly $(p,q)$-polynomial functors. A morphism $f$ is a fibration if and only if it is an objectwise fibration and the diagram 
\[\begin{tikzcd}
	X && Y \\
	\\
	{T_{p,q}X} && {T_{p,q}Y}
	\arrow["f", from=1-1, to=1-3]
	\arrow["\eta"', from=1-1, to=3-1]
	\arrow["\eta", from=1-3, to=3-3]
	\arrow["{T_{p,q}f}"', from=3-1, to=3-3]
\end{tikzcd}\]
\noindent is a homotopy pullback square in $\Ezero$. Denote this model structure by $(p,q)\poly\Ezero^S$. 
\end{proposition}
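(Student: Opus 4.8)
The plan is to obtain this model structure as a \emph{Bousfield--Friedlander localisation} of the projective model structure on $\Ezero$ (Proposition \ref{proj model structure}) with respect to the coaugmented functor $(\Tpq,\eta)$, exactly as Barnes and Oman do for $\tau_n$ in \cite[Section 6]{BO13}. First I would recall the general machinery: given a left proper, cellular (here $\C$-topological) model category $\mathcal{M}$ and an endofunctor $Q:\mathcal{M}\to\mathcal{M}$ with a natural transformation $\eta:\Id\to Q$ satisfying Bousfield and Friedlander's axioms (A1) $Q$ preserves weak equivalences, (A2) the two maps $\eta_{QX},Q\eta_X:QX\to QQX$ are weak equivalences, and (A3) a pullback-stability axiom for $Q$-equivalences along $Q$-fibrations, there is a model structure on $\mathcal{M}$ whose weak equivalences are the $Q$-equivalences (maps $f$ with $Qf$ a weak equivalence), whose cofibrations are unchanged, and whose fibrations are the maps $f$ that are objectwise fibrations and make the evident naturality square a homotopy pullback; moreover this localised structure is again left proper, and right properness is inherited here because all objects of the projective structure have the relevant lifting behaviour. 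The fibrant objects come out as the $X$ for which $\eta_X:X\to QX$ is a weak equivalence, which by Definition \ref{def: polynomial} is precisely the strongly $(p,q)$-polynomial functors.

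The substantive work is verifying the three axioms for $Q=\Tpq$. Axiom (A1) is immediate: $\Tpq$ is a sequential homotopy colimit of iterates of $\taupq$, and both $\taupq$ (being built from homotopy limits and cotensors, cf. Lemma \ref{e.3} and Lemma \ref{sphereandmorpharecofibrant}) and filtered homotopy colimits preserve objectwise weak equivalences. Axiom (A2) is exactly the content already established: Theorem \ref{weiss6.3.1} shows $\Tpq E$ is strongly $(p,q)$-polynomial, hence by Lemma \ref{weiss6.3.2} the map $\eta:\Tpq E\to\Tpq\Tpq E$ is an objectwise weak equivalence, which is the statement $Q\eta\simeq\eta_Q$ are weak equivalences (Corollary \ref{cor: TpqTpq we to Tpq} records precisely this). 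Axiom (A3), the gluing/properness-type condition — that the class of $\Tpq$-equivalences is stable under base change along maps which are objectwise fibrations inducing homotopy-pullback squares after applying $\Tpq$ — is the main obstacle. I expect to handle it as in \cite[Proposition 6.5]{BO13} and \cite[Appendix A]{Wei98}: one uses that $\taupq$ is a finite homotopy limit (homotopy limit over the finite poset of non-zero subspaces of $\R^{p,q}$, realised as a totalization of a cosimplicial space, cf. Lemma \ref{e.3}), so it commutes with the relevant homotopy pullbacks up to the connectivity estimates of Lemma \ref{e.3}; passing to the sequential homotopy colimit defining $\Tpq$ and using that sequential homotopy colimits of $\C$-spaces commute with finite homotopy limits (and with fixed points, as repeatedly exploited above) then gives that $\Tpq$ preserves the pullbacks in question, from which (A3) follows by a diagram chase on long exact sequences of homotopy groups at both fixed-point levels $H=e$ and $H=\C$.

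Once the three axioms are in place, the existence of the model structure, its left properness, the description of cofibrations (unchanged from the projective structure), and the identification of the fibrant objects with the strongly $(p,q)$-polynomial functors all follow formally from the Bousfield--Friedlander theorem. The characterisation of fibrations as objectwise fibrations $f:X\to Y$ for which the square
\[\begin{tikzcd}
	X && Y \\
	{T_{p,q}X} && {T_{p,q}Y}
	\arrow["f", from=1-1, to=1-3]
	\arrow["\eta"', from=1-1, to=2-1]
	\arrow["\eta", from=1-3, to=2-3]
	\arrow["{T_{p,q}f}"', from=2-1, to=2-3]
\end{tikzcd}\]
is a homotopy pullback is again part of the Bousfield--Friedlander output. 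Finally I would note right properness: since in the projective model structure on $\Ezero$ every object is fibrant in the objectwise sense needed and pullbacks of objectwise weak equivalences along objectwise fibrations are objectwise weak equivalences (the fine model structure on $\C\Top_*$ is right proper, Proposition \ref{finemodelstructure}), right properness of the localised structure follows from the general fact that a Bousfield--Friedlander localisation of a right proper model category at a functor satisfying (A1)--(A3) with the above fibration description remains right proper. This completes the verification; the only genuinely delicate point, as flagged, is the commutation of $\Tpq$ with the homotopy pullbacks in axiom (A3), which rests on the equivariant connectivity bookkeeping of Lemma \ref{e.3}.
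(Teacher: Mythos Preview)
Your proposal is correct and follows essentially the same route as the paper: Bousfield--Friedlander localisation of the projective model structure at $\Tpq$, verifying (A1)--(A3) via Theorem~\ref{weiss6.3.1}, Lemma~\ref{weiss6.3.2}, and Corollary~\ref{cor: TpqTpq we to Tpq}, with (A3) handled as in \cite[Proposition~6.5]{BO13}. One small simplification: the paper does not invoke the connectivity estimates of Lemma~\ref{e.3} for (A3), but instead uses the more direct fact that $\Tpq$ preserves fibrations of fibrant objects (since $\taupq$, being built from products, totalization, and cotensors, preserves objectwise fibrations; cf.\ the proof of Lemma~\ref{sphereandmorpharecofibrant}), so the ``delicate point'' you flag is in fact handled by elementary model-categorical reasoning rather than connectivity bookkeeping.
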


\begin{proof}
We claim that this model structure is the Bousfield-Friedlander localisation of $\Ezero$ with respect to the functor $T_{p,q}:\Ezero\rightarrow \Ezero$. Since $\Ezero$ is a proper model category, \cite[Theorem 9.3]{Bou01} applies and to prove the existence of the Bousfield-Friedlander localisation we must verify the following three axioms: 

(A1) $\Tpq$ preserves objectwise weak equivalences. 

(A2) For every $E\in \Ezero$, the morphisms $\eta_{\Tpq E}$, $\Tpq \eta_E:\Tpq E\rightarrow\Tpq \Tpq E $ are objectwise weak equivalences. 

(A3) If given a pullback square in $\Ezero$
\[\begin{tikzcd}
	V && X \\
	\\
	W && Y
	\arrow["k", from=1-1, to=1-3]
	\arrow["f", from=1-3, to=3-3]
	\arrow["g"', from=1-1, to=3-1]
	\arrow["h"', from=3-1, to=3-3]
\end{tikzcd}\]
where $f$ is a fibration of fibrant objects and $\eta_x$, $\eta_Y$ and $\Tpq h$ are all objectwise weak equivalences, then $k$ is a $\Tpq$-equivalence. 

(A1) amounts to using that taking fixed points commutes with sequential homotopy colimits, homotopy colimits preserve weak equivalences in $\Top_*$ and $\taupq$ preserves objectwise weak equivalences (see the proof of Lemma \ref{sphereandmorpharecofibrant}). 

(A2) follows by combining Theorem \ref{weiss6.3.1}, Lemma \ref{weiss6.3.2} and Corollary \ref{cor: TpqTpq we to Tpq}. 

(A3) follows in the same way as \cite[Proposition 6.5]{BO13}, and using that $\Tpq$ preserves fibrations of fibrant objects in $\Ezero$ (see the proof of Lemma \ref{sphereandmorpharecofibrant}). 

Now that we know the model structure exists, we can use the classification of fibrations provided by the localisation to classify the fibrant objects. By the localisation, $X\in \Ezero$ is fibrant if $X\rightarrow *$ is an objectwise fibration (which it is for all $X$) and the diagram below is a homotopy pullback in $\Ezero$. 
\[\begin{tikzcd}
	X && {*} \\
	\\
	{T_{p,q}X} && {T_{p,q}*=*}
	\arrow[from=1-1, to=1-3]
	\arrow[from=1-3, to=3-3]
	\arrow[from=1-1, to=3-1]
	\arrow[from=3-1, to=3-3]
\end{tikzcd}\]
This diagram is a homotopy pullback square if and only if $\eta:X\rightarrow \Tpq X$ is an objectwise weak equivalence. Hence we have a commutative diagram
\[\begin{tikzcd}
	X && {\taupq X} \\
	\\
	{T_{p,q}X} && {\taupq T_{p,q}X}
	\arrow["\rho", from=1-1, to=1-3]
	\arrow["\eta"', from=1-1, to=3-1]
	\arrow["\rho"', from=3-1, to=3-3]
	\arrow["\taupq\eta", from=1-3, to=3-3]
\end{tikzcd}\]
where the bottom map along with the two vertical maps are all objectwise weak equivalences, making the top map an objectwise weak equivalence, and thus making $X$ strongly $(p,q)$-polynomial as required. 
\end{proof}

\begin{remark}\label{Tpq homotopy category isoms}
Since the cofibrations of $\Ezero$ and $(p,q)\poly\Ezero^S$ are the same and $\Tpq$ preserves objectwise weak equivalences, there is a Quillen adjunction 
\begin{equation*}
    \Id:\Ezero\rightleftarrows (p,q)\poly\Ezero^S:\Id.
\end{equation*}
\end{remark}

In the same way as in \cite[Section 6]{BO13}, if we let $[-,-]^{T_{p,q}}$ denote maps in the homotopy category of $(p,q)\poly\Ezero^S$, then we find that 
\begin{equation*}
    [X,\Tpq Y]\cong [X,Y]^{T_{p,q}}.
\end{equation*}
Hence, $\Tpq$ is indeed a fibrant replacement in $(p,q)\poly\Ezero^S$. 

We could alternatively construct this model structure by a left Bousfield localisation. The benefit of this type of localisation is that we will then be able to conclude that $(p,q)\poly\Ezero^S$ is cellular. 

\begin{proposition}\label{TpqEzero as left bousfield localisation}\index{$S_{p,q}$}
The model category $(p,q)\poly\Ezero^S$ is the left Bousfield localisation of $\Ezero$ with respect to the class of maps 
\begin{equation*}
    S_{p,q}=\{S\Gmor{p,q}{V}{-}_+\rightarrow\Jzero(V,-):V\in\Jzero\}.
\end{equation*}
\end{proposition}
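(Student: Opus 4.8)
The plan is to show that the left Bousfield localisation $L_{S_{p,q}}\Ezero$ coincides with the Bousfield--Friedlander localisation $(p,q)\poly\Ezero^S$ constructed in Proposition \ref{Tpq model structure}. Since both are obtained from the projective model structure $\Ezero$ and \emph{neither} localisation alters the cofibrations, it suffices to check that the two model structures have the same fibrant objects; by a standard uniqueness argument (see \cite[Section 6]{BO13}), a model structure on a fixed underlying category with the same cofibrations and the same fibrant objects is unique, hence the two agree, and consequently $(p,q)\poly\Ezero^S$ inherits cellularity from the left Bousfield localisation (the projective model structure is cellular by Proposition \ref{proj model structure}, and left Bousfield localisations of cellular model categories are cellular by \cite[Theorem 4.1.1]{Hir03}). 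So the whole proof reduces to identifying the $S_{p,q}$-local objects with the strongly $(p,q)$-polynomial functors.

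First I would recall, via \cite[Theorem 4.1.1]{Hir03}, that the fibrant objects of $L_{S_{p,q}}\Ezero$ are the objectwise fibrant functors $E$ (all functors are objectwise fibrant since $\CTop_*$ has all objects fibrant) that are \emph{$S_{p,q}$-local}: for every map $f\colon S\Gmor{p,q}{V}{-}_+\rightarrow \Jzero(V,-)$ in $S_{p,q}$, the induced map of homotopy function complexes
\begin{equation*}
    f^*\colon \operatorname{map}(\Jzero(V,-),E)\rightarrow \operatorname{map}(S\Gmor{p,q}{V}{-}_+,E)
\end{equation*}
is a weak equivalence of spaces. Next I would use that both $S\Gmor{p,q}{V}{-}_+$ and $\Jzero(V,-)$ are cofibrant in $\Ezero$ (Lemma \ref{sphereandmorpharecofibrant}, together with the representable being cofibrant by construction), so these homotopy function complexes may be computed as the derived mapping spaces, and because $\Ezero$ is a $\C$-topological model category the relevant derived mapping space into a (automatically fibrant) target is modelled by the $\C$-fixed points of the enriched natural-transformation space. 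Concretely, $f^*$ is weakly equivalent to the map
\begin{equation*}
    \C\Nat_{0,0}(\Jzero(V,-),E)\rightarrow \C\Nat_{0,0}(S\Gmor{p,q}{V}{-}_+,E),
\end{equation*}
which by the Yoneda lemma and the definition of $\taupq$ is exactly $\rho_{p,q}E(V)^{\C}\colon E(V)^{\C}\rightarrow (\taupq E(V))^{\C}$; one must also run the same argument fixed-point-wise for the subgroup $e$ (using that $S_{p,q}$-locality is detected levelwise and the model structure on $\CTop_*$ is fine), giving that $E$ is $S_{p,q}$-local if and only if $\rho_{p,q}E(V)^H$ is a weak homotopy equivalence for all $V\in\Jzero$ and all closed $H\leq\C$, i.e.\ if and only if $\rho_{p,q}E\colon E\rightarrow\taupq E$ is an objectwise weak equivalence. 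By Definition \ref{def: polynomial} this is precisely the condition that $E$ be strongly $(p,q)$-polynomial, which is exactly the class of fibrant objects of $(p,q)\poly\Ezero^S$ identified in Proposition \ref{Tpq model structure}.

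The main obstacle I anticipate is the careful bookkeeping in the second step: identifying the homotopy function complex $\operatorname{map}(S\Gmor{p,q}{V}{-}_+,E)$ with the (fixed points of the) enriched mapping space $\C\Nat_{0,0}(S\Gmor{p,q}{V}{-}_+,E)$ requires knowing that the source is cofibrant and the target is fibrant in the appropriate sense, and then translating ``local with respect to a single map $f$'' through the $\C$-equivariant enrichment so that it becomes a fixed-point statement for every subgroup $H\leq\C$ simultaneously --- this is where the fine model structure on $\CTop_*$ genuinely enters, since for the coarse structure one would only see the underlying map. Once this translation is in place the rest is formal: the Yoneda identification $\C\Nat_{0,0}(\Jzero(V,-),E)\cong E(V)$, the definitional identity $\C\Nat_{0,0}(S\Gmor{p,q}{V}{-}_+,E)=\taupq E(V)$, and the comparison of cofibrations and fibrant objects to conclude the two localisations coincide. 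I would close by remarking, as the paper does for the underlying calculus, that since left Bousfield and Bousfield--Friedlander localisations both leave cofibrations unchanged and we have matched fibrant objects, the two constructions agree and $(p,q)\poly\Ezero^S$ is cellular and proper.
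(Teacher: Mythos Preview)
Your proposal is correct and follows essentially the same approach as the paper: reduce to matching fibrant objects (since both localisations leave cofibrations unchanged), invoke \cite[Theorem 4.1.1]{Hir03} to identify fibrant with $S_{p,q}$-local, use cofibrancy of the domains (Lemma \ref{sphereandmorpharecofibrant}) and automatic fibrancy of the target together with the $\C\Top_*$-enrichment to model the homotopy function complex by $\Nat_{0,0}(-,E)$, and then apply Yoneda and the definition of $\taupq$ to recognise locality as strong $(p,q)$-polynomiality. The paper phrases the enrichment step as asking that $\Nat_{0,0}(\Jzero(V,-),X)\rightarrow\Nat_{0,0}(S\Gmor{p,q}{V}{-}_+,X)$ be a weak equivalence in $\C\Top_*$ (fine model structure), which packages your separate ``for each $H\leq\C$'' checks into a single statement, but the content is identical.
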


\begin{proof}
The proof is analogous to that of Barnes and Oman \cite[Proposition 6.6]{BO13}, where is sufficient to show that the fibrant objects of the left Bousfield localisation are the strongly $(p,q)$-polynomial functors, since both classes of cofibrations are the same.

Note that given a weak equivalence of $\C$-spaces, applying the fixed point functor $(-)^H$ and then the singular functor $S:\Top_*\rightarrow \text{sSet}$ (see \cite[Chapter 16]{May99}) gives a weak equivalence of simplicial sets. By the model theoretic properties of $\C\Top_*$-enrichment, it suffices to use $\Nat_{0,0}(\hat{c}X,Y)$, where $\hat{c}$ denotes cofibrant replacement in $\Ezero$, as a homotopy mapping object $X\rightarrow Y$ in $\Ezero$, since $\Ezero$ is enriched over $\CTop_*$ and all objects of $\Ezero$ are fibrant. Note that the domains of $S_{p,q}$ are already cofibrant by Lemma \ref{sphereandmorpharecofibrant}. 

By a theorem of Hirschhorn \cite[Theorem 4.1.1]{Hir03}, the fibrant objects of $L_{S_{p,q}}\Ezero$ are the $S_{p,q}$-local objects of $\Ezero$. That is, $X\in\Ezero$ is fibrant in $L_{S_{p,q}}\Ezero$ if 
\begin{equation*}
    \Nat_{0,0}(\Jzero(V,-),X)\rightarrow \Nat_{0,0}(S\Gmor{p,q}{V}{-}_+, X)
\end{equation*}
is a weak equivalence in $\CTop_*$, for all $V\in\Jzero$. Application of the Yoneda lemma and the definition of $\taupq$ yields that $X(V)\rightarrow \taupq X(V)$ is a weak equivalence in $\CTop_*$ for all $V\in\Jzero$, which is exactly that $X$ is strongly $(p,q)$-polynomial. 
\end{proof} 

\begin{corollary}\label{tpqisspq}
The class of $\Tpq$-equivalences is the collection of $S_{p,q}$-local equivalences. 
\end{corollary}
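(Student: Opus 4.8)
The plan is to read this off directly from the two preceding propositions, since the corollary is really just a restatement of the identification of model structures. First I would recall that, by definition, the weak equivalences of a left Bousfield localisation $L_{S_{p,q}}\Ezero$ are precisely the $S_{p,q}$-local equivalences (see \cite[Definition 3.1.4 and Definition 3.3.1]{Hir03}). Next, Proposition \ref{Tpq model structure} identifies the weak equivalences of the model category $(p,q)\poly\Ezero^S$ as exactly the $\Tpq$-equivalences.

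The key step is then to invoke Proposition \ref{TpqEzero as left bousfield localisation}, which asserts that $(p,q)\poly\Ezero^S$ and $L_{S_{p,q}}\Ezero$ are the same model category on $\Ezero$. Since a model structure determines its class of weak equivalences, the two classes described above must coincide: a morphism $f$ in $\Ezero$ is a $\Tpq$-equivalence if and only if it is an $S_{p,q}$-local equivalence. This gives the corollary.

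There is no real obstacle here; the only thing to be careful about is that the identification in Proposition \ref{TpqEzero as left bousfield localisation} is genuinely an equality of model structures (same cofibrations, same fibrant objects, hence — together with properness — the same weak equivalences), rather than merely a Quillen equivalence, so that the classes of weak equivalences literally agree rather than just agreeing up to homotopy. Since that equality is precisely what Proposition \ref{TpqEzero as left bousfield localisation} establishes, the argument is complete in a single line once both propositions are in hand.

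\begin{proof}
By Proposition \ref{Tpq model structure}, the weak equivalences of the model category $(p,q)\poly\Ezero^S$ are exactly the $\Tpq$-equivalences. By Proposition \ref{TpqEzero as left bousfield localisation}, this model category coincides with the left Bousfield localisation $L_{S_{p,q}}\Ezero$, whose weak equivalences are, by definition, the $S_{p,q}$-local equivalences. Since a model structure is determined by its classes of cofibrations, fibrations and weak equivalences, the two descriptions of the weak equivalences agree, so a morphism of $\Ezero$ is a $\Tpq$-equivalence if and only if it is an $S_{p,q}$-local equivalence.
\end{proof}
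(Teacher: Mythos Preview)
Your proposal is correct and matches the paper's approach: the corollary is stated without proof in the paper precisely because it follows immediately from Propositions \ref{Tpq model structure} and \ref{TpqEzero as left bousfield localisation} by comparing the two descriptions of the weak equivalences in the common model structure, exactly as you argue.
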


Using this model structure, one can finally prove that a strongly $(p,q)$-polynomial functor is indeed $(p,q)$-polynomial. The underlying version is proven in by Weiss in \cite[Proposition 5.4]{Wei95} and by Barnes and Oman in \cite[Proposition 6.7]{BO13}. 

\begin{proposition}\label{pq poly implies more poly}
If $X\in \Ezero$ is strongly $(p,q)$-polynomial, then it is $(p,q)$-polynomial.
\end{proposition}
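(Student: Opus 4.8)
The plan is to deduce this from the localisation picture, exactly as in the underlying calculus (\cite[Proposition 6.7]{BO13}, \cite[Proposition 5.4]{Wei95}): localising at a larger set of maps yields a stronger fibrancy condition. By Definition \ref{def: polynomial} it is enough to show that a strongly $(p,q)$-polynomial functor is both strongly $(p+1,q)$-polynomial and strongly $(p,q+1)$-polynomial. By Proposition \ref{TpqEzero as left bousfield localisation}, being strongly $(p,q)$-polynomial means being $S_{p,q}$-local (fibrant in $(p,q)\poly\Ezero^S=L_{S_{p,q}}\Ezero$), and likewise for the indices $(p+1,q)$ and $(p,q+1)$. Since every object of $\Ezero$ is fibrant and the domains and codomains of the maps in $S_{p+1,q}\cup S_{p,q+1}$ are cofibrant (Lemma \ref{sphereandmorpharecofibrant} applied with the relevant bi-index, together with the fact that representable functors are cofibrant), it suffices to prove that every map in $S_{p+1,q}$ and in $S_{p,q+1}$ is an $S_{p,q}$-local equivalence; then any $S_{p,q}$-local object inverts these maps on homotopy function complexes and is therefore $S_{p+1,q}$-local and $S_{p,q+1}$-local. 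Equivalently, $L_{S_{p,q}}\Ezero$ is then a further left Bousfield localisation of $(p+1,q)\poly\Ezero^S$ and of $(p,q+1)\poly\Ezero^S$, so the identity functors $(p,q)\poly\Ezero^S\to(p+1,q)\poly\Ezero^S$ and $(p,q)\poly\Ezero^S\to(p,q+1)\poly\Ezero^S$ are right Quillen and hence preserve fibrant objects, which gives the claim.

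By Corollary \ref{tpqisspq} a map is an $S_{p,q}$-local equivalence precisely when it is a $\Tpq$-equivalence, so the crux is to show that $\Tpq$ sends
\begin{equation*}
    S\Gmor{p+1,q}{V}{-}_+\longrightarrow \Jzero(V,-)\qquad\text{and}\qquad S\Gmor{p,q+1}{V}{-}_+\longrightarrow \Jzero(V,-)
\end{equation*}
to objectwise weak equivalences for every $V\in\Jzero$. I would prove this by rerunning the proof of Lemma \ref{e.7}. By Proposition \ref{sphere cofibre seq} the homotopy cofibres of these projection maps are $\Jmor{p+1,q}{V}{-}$ and $\Jmor{p,q+1}{V}{-}$ respectively, and the connectivity of a cofibre bounds the connectivity of the map, so it is enough to estimate the connectivity of these Thom-space functors. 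From the underlying calculus $\Jmor{p+1,q}{V}{W}^e=\mathcal{J}_{p+q+1}(V,W)$ is $[(p+q+1)(\Dim W-\Dim V)-1]$-connected, and by the Equivariant Splitting Theorem \ref{splittingtheorems}
\begin{equation*}
    \Jmor{p+1,q}{V}{W}^{\C}\cong \C\mathcal{J}_{p+1,0}(V^{\C},W^{\C})\wedge \C\mathcal{J}_{0,q}((V^{\C})^\perp,(W^{\C})^\perp),
\end{equation*}
which is $[(p+1)(\Dim W^{\C}-\Dim V^{\C})+q(\Dim(W^{\C})^\perp-\Dim(V^{\C})^\perp)-1]$-connected; the analogous estimates hold for $\Jmor{p,q+1}{V}{-}$ after exchanging the roles of $p$ and $q$. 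Hence both projection maps satisfy the hypotheses of Lemma \ref{e.3} (with the same indices $p,q$) for the constant $b=(p+q)\Dim V+1$ used in Lemma \ref{e.7} and for a suitably enlarged $c$, the enlargement absorbing the single extra copy of $\R$ (resp.\ $\Rdelta$) in $\Gmor{p+1,q}$ (resp.\ $\Gmor{p,q+1}$); the degenerate values $W$ with $\LL(V,W)=\emptyset$ give homeomorphisms and so do not affect uniformity of the bound. Iterating Lemma \ref{e.3} exactly as in the proof of Lemma \ref{e.7}, the connectivities of $\taupq^l$ applied to these maps tend to infinity at both $e$- and $\C$-fixed points, so passing to the homotopy colimit over $l$ shows that $\Tpq$ of each map is an objectwise weak equivalence, as required.

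I expect the connectivity bookkeeping of the second paragraph to be the only real obstacle: one must check that the extra factor in $\Gmor{p+1,q}{V}{-}$ (respectively $\Gmor{p,q+1}{V}{-}$) only makes the relevant cofibres \emph{more} connected, so that the inductive hypothesis of Lemma \ref{e.3} still propagates after enlarging $c$, and one must keep careful track of both fixed-point connectivities simultaneously, as in Lemma \ref{e.3}. Everything else is formal manipulation of left Bousfield localisations together with Corollary \ref{tpqisspq} and Proposition \ref{TpqEzero as left bousfield localisation}.
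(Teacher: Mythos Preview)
Your reduction is the same as the paper's: show that every map in $S_{p+1,q}$ and in $S_{p,q+1}$ is an $S_{p,q}$-local equivalence (equivalently, by Corollary \ref{tpqisspq}, a $\Tpq$-equivalence), whence strongly $(p,q)$-polynomial implies strongly $(p+1,q)$- and strongly $(p,q+1)$-polynomial. From there, however, you and the paper diverge.

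The paper argues geometrically. It decomposes $\C\gamma_{p+1,q}(V,-)\cong \C\gamma_{p,q}(V,-)\oplus \C\gamma_{1,0}(V,-)$, uses that the sphere bundle of a Whitney sum is the fibrewise join of the summand sphere bundles to write $S\C\gamma_{p+1,q}(V,-)_+$ as a homotopy pushout, and then invokes Proposition \ref{prop: weiss 4.3} to identify the relevant pullback along $\res_{\R}$ (resp.\ $\res_{\Rdelta}$). After these manipulations, the map in question is exhibited as built from maps already in $S_{p,q}$ via homotopy pushouts and smashing with $\C$-CW complexes. No connectivity estimates enter.

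Your route is analytic: you bound the connectivity of the cofibre $\Jmor{p+1,q}{V}{W}$ (and its $\C$-fixed points via the splitting theorem) and feed this into Lemma \ref{e.3} iteratively, exactly as in Lemma \ref{e.7}. This is correct; in fact the same constants $b=(p+q)\Dim V+1$ and $c=p\Dim V^{\C}+q\Dim(V^{\C})^\perp+1$ from Lemma \ref{e.7} work without enlargement, because the extra $\R$ (resp.\ $\Rdelta$) only \emph{increases} the relevant connectivities. Your approach is more direct once Lemma \ref{e.3} is in hand and sidesteps the bundle-theoretic Proposition \ref{prop: weiss 4.3} entirely; the paper's approach is closer to the original Weiss argument, is independent of the erratum machinery, and makes the structural reason (Whitney sum decomposition) transparent.
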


To prove Proposition \ref{pq poly implies more poly} we will use the following proposition. 

\begin{proposition}\label{prop: weiss 4.3}
Let $\res_\R$ and $\res_{\Rdelta}$ be the restriction maps
\begin{align*}
    &\res_\R: \Jzero (\R\oplus V,W)\rightarrow \Jzero(V,W)\\
    &\res_{\Rdelta}:\Jzero(\Rdelta\oplus V,W)\rightarrow\Jzero(V,W).
\end{align*}
There exist $\C$-equivariant homeomorphisms 
\begin{align*}
    &\res^*_\R\C\gamma_{p,q}(V,W)\cong \epsilon^{p,q}_\R\oplus \C\gamma_{p,q}(\R\oplus V,W)\\
    &\res^*_{\Rdelta}\C\gamma_{p,q}(V,W)\cong \epsilon^{q,p}_{\Rdelta}\oplus \C\gamma_{p,q}(\Rdelta\oplus V,W),
\end{align*}
where $\epsilon^{m,n}_X$ is the total space of the trivial bundle 
\begin{equation*}
    \R^{m,n}\times \Jzero(X\oplus V,W)\rightarrow \Jzero(X\oplus V,W),
\end{equation*}
and $\res^*_X \C\gamma_{p,q}(V,W)$ is the pullback of the following diagram
\[\begin{tikzcd}
	{C_2\mathcal{J}_{0,0}(X\oplus V,W)} && {C_2\mathcal{J}_{0,0}(V,W)} && {C_2\gamma_{p,q}(V,W)_+}
	\arrow["{p_1}"', from=1-5, to=1-3]
	\arrow["{\res_X}", from=1-1, to=1-3]
\end{tikzcd}\]
with $p_1$ the fibre bundle map (projection onto first factor). 
\end{proposition}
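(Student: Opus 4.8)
The plan is to imitate the non-equivariant calculation in Weiss \cite[Proposition 4.3]{Wei95}, carefully tracking the $\C$-actions throughout. First I would unpack the pullback $\res^*_X \C\gamma_{p,q}(V,W)$: a point consists of a linear isometry $g \colon X\oplus V \to W$ together with a point $(f,x) \in \C\gamma_{p,q}(V,W)$ with $\res_X(g) = g|_V = f$; equivalently, it is a pair $(g, x)$ with $g \colon X\oplus V \to W$ and $x \in \R^{p,q}\otimes g(V)^\perp$, where $g(V)^\perp$ is the orthogonal complement of $g(V)$ inside $W$. The key geometric observation is the orthogonal decomposition $g(V)^\perp = g(X)' \oplus g(X\oplus V)^\perp$, where $g(X)'$ denotes the image of $g|_X$ projected into $g(V)^\perp$ — more precisely, $g(V)^\perp = (g|_X(X)\text{ inside }g(V)^\perp) \oplus g(X\oplus V)^\perp$, which is a genuine direct sum because $g$ is an isometry, so $g(X)$ and $g(V)$ are orthogonal and together span $g(X\oplus V)$. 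Tensoring with $\R^{p,q}$ gives
\begin{equation*}
    \R^{p,q}\otimes g(V)^\perp \cong \bigl(\R^{p,q}\otimes g|_X(X)\bigr) \oplus \bigl(\R^{p,q}\otimes g(X\oplus V)^\perp\bigr),
\end{equation*}
and since $g|_X$ is an isometry onto its image, $\R^{p,q}\otimes g|_X(X) \cong \R^{p,q}\otimes X$. For $X = \R$ this is $\R^{p,q}\otimes \R = \R^{p,q} = \R^{p,q}$, i.e. a copy of $\R^{p,q}$ giving the trivial bundle summand $\epsilon^{p,q}_\R$; the second summand is precisely $\C\gamma_{p,q}(\R\oplus V, W)$ pulled back appropriately. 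For $X = \Rdelta$ the twist by the sign representation swaps the roles of the trivial and sign coordinates: $\R^{p,q}\otimes \Rdelta = (\R^p \oplus \R^{q\delta})\otimes \Rdelta = \R^{p\delta}\oplus \R^{q} \cong \R^{q,p}$, which accounts for the index swap in $\epsilon^{q,p}_{\Rdelta}$.

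Next I would assemble these fibrewise identifications into a bundle map over $\Jzero(X\oplus V, W) = \LL(X\oplus V, W)_+$ and check continuity; this is routine since the decomposition $g(V)^\perp = g(X)\oplus g(X\oplus V)^\perp$ varies continuously in $g$ (the orthogonal projections are continuous). The heart of the matter is then verifying $\C$-equivariance. Recall $\sigma$ acts on $\C\gamma_{p,q}(V,W)$ by $\sigma(f,x) = (\sigma * f, \sigma x)$ with $\sigma * f = \sigma f \sigma$, and on $\res^*_X\C\gamma_{p,q}(V,W)$ via the analogous action on pairs $(g,x)$. One must check that under the splitting, the $\C$-action on the summand $\R^{p,q}\otimes g|_X(X)$ corresponds to the $\C$-action on $\R^{p,q}\otimes X = \R^{p,q}$ (for $X=\R$, this is just $x \mapsto \sigma x$ on $\R^{p,q}$; for $X = \Rdelta$, the extra sign from $\Rdelta$ combines with the action on $\R^{p,q}$ to give the action on $\R^{q,p}$), and that it corresponds to the standard action on $\C\gamma_{p,q}(X\oplus V, W)$ on the other summand. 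Here I would use exactly the argument already deployed in the proof of the Equivariant Splitting Theorems (Theorem \ref{splittingtheorems}) and in Definition \ref{def: gamma pq bundle}: on $(\R^{q\delta}\otimes\R^{c})\oplus(\R^{q\delta}\otimes\R^{d\delta})$ type summands $\C$ acts with the appropriate signs, and $\sigma$-invariance forces the bookkeeping that produces the index swap.

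I expect the main obstacle to be the equivariance check for the $X = \Rdelta$ case, specifically making precise the claim that tensoring the embedded image $g|_{\Rdelta}(\Rdelta)$ with $\R^{p,q}$ intertwines the naive $\C$-action on $\res^*_{\Rdelta}\C\gamma_{p,q}(V,W)$ with the action on $\epsilon^{q,p}_{\Rdelta}$ — one has to be careful that the sign representation on the $\Rdelta$ factor of $\Rdelta \oplus V$ genuinely swaps the $p$ and $q$ indices and is not absorbed elsewhere. A clean way to handle this is to pick, for each $g$, an orthonormal basis vector spanning $g|_{\Rdelta}(\Rdelta)$ and observe how $\sigma$ acts: since $\sigma$ acts as $-1$ on $\Rdelta$ and $g|_\Rdelta$ need not be equivariant, $\sigma(g|_\Rdelta) = \sigma g|_\Rdelta \sigma$ sends $1 \mapsto -g|_\Rdelta(-1) = g|_\Rdelta(1)$ by linearity (exactly as in the example computation $x \overset{\sigma}\mapsto -x \overset{f}\mapsto f(-x) \overset{\sigma}\mapsto f(-x) = -f(x)$ in the excerpt), so the twist lands entirely on the $\R^{p,q}$ tensor factor, yielding $\R^{p,q}\otimes\Rdelta \cong \R^{q,p}$ with its diagonal action. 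Once this is pinned down, the remaining verifications (well-definedness, the homeomorphism property, naturality) follow formally from the non-equivariant statement, which I am free to quote.
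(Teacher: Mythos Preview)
Your approach is essentially the same as the paper's: both use the orthogonal splitting $g(V)^\perp = g(X)\oplus g(X\oplus V)^\perp$ to split off a trivial summand, and both identify $\R^{p,q}\otimes g|_X(X)$ with $\R^{p,q}$ (for $X=\R$) or $\R^{q,p}$ (for $X=\Rdelta$) via the basis vector $g|_X(1)$. The paper simply packages this as the explicit map $(f,x,y)\mapsto (f,\, x + (y\otimes 1_X)\otimes f(1_X,0))$ and writes down an explicit inverse.

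There is, however, a genuine slip in your equivariance argument for $X=\Rdelta$. You claim that $\sigma * g|_{\Rdelta}=\sigma g|_{\Rdelta}\sigma$ sends $1_\delta$ to $g|_{\Rdelta}(1_\delta)$, citing the example computation from the excerpt. That example has codomain $\R^n$ with \emph{trivial} $\C$-action, so the outer $\sigma$ is the identity; here $W$ is a general object of $\Jzero$ and the outer $\sigma$ is not. The correct computation is
\[
(\sigma * g|_{\Rdelta})(1_\delta)=\sigma\bigl(g|_{\Rdelta}(-1_\delta)\bigr)=-\sigma\bigl(g|_{\Rdelta}(1_\delta)\bigr),
\]
which is \emph{not} $g|_{\Rdelta}(1_\delta)$ in general. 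The equivariance still goes through, but for a different reason: under $\sigma$ the element $y\otimes g|_{\Rdelta}(1_\delta)$ maps to $\sigma y\otimes \sigma(g|_{\Rdelta}(1_\delta)) = -\sigma y\otimes (\sigma * g|_{\Rdelta})(1_\delta)$, and under the identification with $\R^{p,q}\otimes\Rdelta$ this corresponds to $-\sigma y\otimes 1_\delta=\sigma y\otimes(-1_\delta)=\sigma(y\otimes 1_\delta)$, exactly the diagonal action on $\R^{q,p}$. The paper carries out precisely this sign-chase; you should replace your appeal to the trivial-codomain example with it.
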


\begin{proof}
The space $\res^*_\R\C\gamma_{p,q}(V,W)$ consists of pairs $(f,x)$, where $f$ is a linear isometry $\R\oplus V\rightarrow W$ and $x\in \R^{p,q}\otimes f(V)^\perp$. The space $\epsilon^{p,q}_\R\oplus \C\gamma_{p,q}(\R\oplus V,W)$ consists of triples $(f,x,y)$, where $f$ is a linear isometry $\R\oplus V\rightarrow W$, $x\in \R^{p,q}\otimes f(\R\oplus V)^\perp$ and $y\in \R^{p,q}$. We claim that the map 
\begin{align*}
    t: \epsilon^{p,q}_\R\oplus \C\gamma_{p,q}(\R\oplus V,W) &\rightarrow \res^*_\R\C\gamma_{p,q}(V,W)\\
    (f,x,y)&\mapsto (f, x+(y\otimes f(1,0))
\end{align*}
is a $\C$-equivariant homeomorphism. Indeed, the map $t$ is a homeomorphism since it is continuous and has continuous inverse 
\begin{align*}
    t^{-1}: \res^*_\R\C\gamma_{p,q}(V,W)&\rightarrow \epsilon^{p,q}_\R\oplus \C\gamma_{p,q}(\R\oplus V,W)\\
    (f,x)&\mapsto (f,x',y')
\end{align*}
where 
\begin{align*}
x'&=x-\left( \Sigma_{i=1}^{p+q}\langle x,e_i\otimes f(1,0)\rangle e_i \otimes f(1,0) \right)    \\
y'&= \Sigma_{i=1}^{p+q}\langle x,e_i\otimes f(1,0)\rangle e_i
\end{align*}
and $e_i$ is the $i$-th unit vector in $\R^{p,q}$. 

It remains to show that $t$ is $\C$-equivariant. 
\begin{align*}
    \sigma(t(f,x,y))&=\sigma (f,x+(y\otimes f(1,0))\\
    &=(\sigma *f, \sigma x + (\sigma y \otimes \sigma f(1, 0))\\
    &=(\sigma *f, \sigma x + (\sigma y \otimes (\sigma *f) (1,0))\\
    &=t(\sigma *f, \sigma x, \sigma y)\\
    &=t(\sigma (f,x,y))
\end{align*}

We now prove the second homeomorphism. The space $\res^*_{\Rdelta}\C\gamma_{p,q}(V,W)$ consists of pairs $(f,x)$, where $f$ is a linear isometry $\Rdelta\oplus V\rightarrow W$ and $x\in \R^{p,q}\otimes f(V)^\perp$. The space $\epsilon^{q,p}_{\Rdelta}\oplus \C\gamma_{p,q}(\Rdelta\oplus V,W)$ consists of triples $(f,x,y)$, where $f$ is a linear isometry $\Rdelta\oplus V\rightarrow W$, $x\in \R^{p,q}\otimes f(\Rdelta\oplus V)^\perp$ and $y\in \R^{q,p}\cong \R^{p,q}\otimes \Rdelta$. We claim that the map 
\begin{align*}
    s: \epsilon^{q,p}_{\Rdelta}\oplus \C\gamma_{p,q}(\Rdelta\oplus V,W) &\rightarrow \res^*_{\Rdelta}\C\gamma_{p,q}(V,W)\\
    (f,x,y)&\mapsto (f, x+((y\otimes 1_\delta)\otimes f(1_\delta,0))
\end{align*}
is a $\C$-equivariant homeomorphism, where $1_\delta$ is the identity element in $\Rdelta$. 

Indeed, the map $s$ is a homeomorphism since it is continuous and has continuous inverse 
\begin{align*}
    s^{-1}: \res^*_{\Rdelta}\C\gamma_{p,q}(V,W)&\rightarrow \epsilon^{q,p}_{\Rdelta}\oplus \C\gamma_{p,q}(\Rdelta\oplus V,W)\\
    (f,x)&\mapsto (f,x',y')
\end{align*}
where 
\begin{align*}
x'&=x-\left( \Sigma_{i=1}^{p+q}\langle x,e_i\otimes f(1_\delta,0)\rangle e_i \otimes f(1_\delta,0) \right)    \\
y'&= \Sigma_{i=1}^{p+q}\langle x,e_i\otimes f(1_\delta,0)\rangle e_i\otimes 1_\delta
\end{align*}
and $e_i$ is the $i$-th unit vector in $\R^{p,q}$. It remains to show that $s$ is $\C$-equivariant. 
\begin{align*}
    \sigma(s(f,x,y))&=\sigma (f,x+((y\otimes 1_\delta)\otimes f(1_\delta,0))\\
    &=(\sigma *f, \sigma x + (\sigma (y\otimes 1_\delta) \otimes \sigma f(1_\delta, 0))\\
    &=(\sigma *f, \sigma x + ((\sigma y\otimes -1_\delta) \otimes (\sigma *f)(-1_\delta, 0))\\
    &=(\sigma *f, \sigma x + (-(\sigma y\otimes 1_\delta) \otimes \sigma f (1_\delta, 0))\\
    &=(\sigma *f, \sigma x + ((\sigma y\otimes 1_\delta) \otimes -\sigma f (1_\delta, 0))\\
    &=(\sigma *f, \sigma x + ((\sigma y\otimes 1_\delta) \otimes (\sigma *f)(1_\delta, 0))\\
    &=s(\sigma *f, \sigma x, \sigma y)\\
    &=s(\sigma (f,x,y))\qedhere
\end{align*}
\end{proof}

\begin{proof}[Proof of Proposition \ref{pq poly implies more poly}]
Since the weak equivalences in $(p,q)$-poly-$\Ezero^S$ are the $S_{p,q}$-local equivalences (see Corollary \ref{tpqisspq}), it suffices to show that $S_{p+1,q}$-equivalences and $S_{p,q+1}$-equivalences are $S_{p,q}$-equivalences. This is sufficient, since it proves the existence of Quillen adjunctions
\begin{align*}
    &\Id:(p+1,q)\poly\Ezero^S \rightleftarrows (p,q)\poly\Ezero^S:\Id\\
      &\Id:(p,q+1)\poly\Ezero^S \rightleftarrows (p,q)\poly\Ezero^S:\Id,
\end{align*}
and $\Id$ being right Quillen yields the desired result. 

We start by showing that an $S_{p+1,q}$-equivalence is an $S_{p,q}$-equivalence. The proof is analogous to \cite[Proposition 6.7]{BO13}. 

We must prove that the sphere bundle map $S\C\gamma_{p+1,q}(V,-)_+\rightarrow \Jzero(V,)$ is an $S_{p,q}$-equivalence for any $V\in\Jzero$. There is a map of unit sphere bundles 
\begin{equation*}
    \alpha:S\C\gamma_{p,q}(V,-)_+\rightarrow S\C\gamma_{p+1,q}(V,-)_+
\end{equation*}
induced by the standard inclusion $\R^{p,q}\rightarrow \R^{p+1,q}$, $(x,y)\mapsto (x,0,y)$. It then suffices to show that $\alpha$ is an $S_{p,q}$-equivalence, since there is a commutative diagram 
\[\begin{tikzcd}
	{S\C\gamma_{p,q}(V,-)_+} && {S\C\gamma_{p+1,q}(V,-)_+} \\
	&& {\C\mathcal{J}_{0,0}(V,-)}
	\arrow["\alpha", from=1-1, to=1-3]
	\arrow[from=1-3, to=2-3]
	\arrow[from=1-1, to=2-3]
\end{tikzcd}\]
where the diagonal map is an $S_{p,q}$-equivalence, since it is an element of $S_{p,q}$. 

The vector bundle $\C\gamma_{p+1,q}(V,-)_+$ is $\C$-equivariantly homeomorphic to the Whitney sum of vector bundles $\C\gamma_{p,q}(V,-)_+\oplus\C\gamma_{1,0}(V,-)_+$. Since the unit sphere of a Whitney sum of $\C$-vector bundles is $\C$-equivariantly equivalent to the fibrewise join of the unit sphere bundles, we know that $S\C\gamma_{p+1,q}(V,-)_+$ is the homotopy pushout in the following diagram, where $\boxtimes$ denotes the fibrewise product. 

\[\begin{tikzcd}
	{S\C\gamma_{p,q}(V,-)_+\boxtimes S\C\gamma_{1,0}(V,-)_+} && {S\C\gamma_{1,0}(V,-)_+} \\
	\\
	{S\C\gamma_{p,q}(V,-)_+} && {S\C\gamma_{p+1,q}(V,-)_+}
	\arrow["{p_2}", from=1-1, to=1-3]
	\arrow[from=1-3, to=3-3]
	\arrow["\alpha"', from=3-1, to=3-3]
	\arrow["{p_1}"', from=1-1, to=3-1]
\end{tikzcd}\]
Therefore, since homotopy pushouts preserve $S_{p,q}$-equivalences, it suffices to show that $p_2$ is an $S_{p,q}$-equivalence. 

By Proposition \ref{prop: weiss 4.3}, there is a pullback square 
\[\begin{tikzcd}
	{(\epsilon^{p,q}_\R\oplus\C\gamma_{p,q}(\mathbb{R}\oplus V,-))_+} && {\C\gamma_{p,q}(V,-)_+} \\
	\\
	{\C\mathcal{J}_{0,0}(\mathbb{R}\oplus V,-)} && {\C\mathcal{J}_{0,0}(V,-)}
	\arrow["{\res_\mathbb{R}}"', from=3-1, to=3-3]
	\arrow[from=1-1, to=1-3]
	\arrow[from=1-3, to=3-3]
	\arrow[from=1-1, to=3-1]
\end{tikzcd}\]
where $\epsilon^{p,q}_\R$ is the total space of the trivial bundle
\begin{equation*}
    \R^{p,q}\times \Jzero(\R\oplus V,-)\rightarrow \Jzero(\R\oplus V,-)
\end{equation*}
and $\res_\R$ is the restriction map. Note that $\Jzero(\R\oplus V,-)$ is $\C$-equivariantly homeomorphic to $S\C\gamma_{1,0}(V,-)_+$ by Proposition \ref{hocolimprop}. Hence, the diagram 
\[\begin{tikzcd}
	{S(\epsilon^{p,q}_\R\oplus\C\gamma_{p,q}(\mathbb{R}\oplus V,-))_+} && {S\C\gamma_{p,q}(V,-)_+} \\
	\\
	{S\C\gamma_{1,0}(V,-)_+} && {S\C\mathcal{J}_{0,0}(V,-)}
	\arrow["{\res_\mathbb{R}}"', from=3-1, to=3-3]
	\arrow[from=1-1, to=1-3]
	\arrow[from=1-3, to=3-3]
	\arrow[from=1-1, to=3-1]
\end{tikzcd}\]
is a homotopy pullback square. The homotopy pullback is $S\C\gamma_{p,q}(V,-)_+\boxtimes S\C\gamma_{1,0}(V,-)_+$, by the definition of the fibrewise product. Therefore, the map $p_2$ can be $\C$-equivariantly identified as the sphere bundle map 
\begin{equation*}
    S(\epsilon^{p,q}_\R\oplus\C\gamma_{p,q}(\mathbb{R}\oplus V,-))_+\rightarrow \Jzero(\R\oplus V,-).
\end{equation*}

Since the unit sphere of a Whitney sum of $\C$-vector bundles is $\C$-equivariantly equivalent to the fibrewise join of the unit sphere bundles, the following diagram is a homotopy pushout.   
\[\begin{tikzcd}
	{S^{p-1,q}_+ \wedge S\C\gamma_{p,q}(\mathbb{R}\oplus V,-)_+} && {S^{p-1,q}_+\wedge \C\mathcal{J}_{0,0}(\mathbb{R}\oplus V,-)} \\
	\\
	{S\C\gamma_{p,q}(\mathbb{R}\oplus V,-)_+} && {S(\epsilon^{p,q}_\R\oplus\C\gamma_{p,q}(\mathbb{R}\oplus V,-))_+}
	\arrow["\mu"', from=3-1, to=3-3]
	\arrow["\delta", from=1-1, to=1-3]
	\arrow[from=1-3, to=3-3]
	\arrow[from=1-1, to=3-1]
\end{tikzcd}\]
The map $\delta$ is an $S_{p,q}$-equivalence, since it is an element of $S_{p,q}$ smashed with a $\C$-CW complex. Therefore, the pushout $\mu$ is also an $S_{p,q}$-equivalence. 

There is a commutative diagram 
\[\begin{tikzcd}
	{S\C\gamma_{p,q}(\mathbb{R}\oplus V,-)_+} && {S(\epsilon^{p,q}_\R\oplus\C\gamma_{p,q}(\mathbb{R}\oplus V,-))_+} \\
	&& {\C\mathcal{J}_{0,0}(\mathbb{R}\oplus V,-)}
	\arrow["{p_2}", from=1-3, to=2-3]
	\arrow["\mu", from=1-1, to=1-3]
	\arrow[from=1-1, to=2-3]
\end{tikzcd}\]
where $\mu$ is an $S_{p,q}$-equivalence, by above, and the diagonal map is an $S_{p,q}$-equivalence, since it is an element of $S_{p,q}$. Hence, by the two-out-of-three property, the map $p_2$ is an $S_{p,q}$-equivalence as required.

Proving that an $S_{p,q+1}$-equivalence is an $S_{p,q}$-equivalence is similar, and uses the other case of Proposition \ref{prop: weiss 4.3}.\end{proof}

\begin{corollary}\label{Tpq Tmn adjunction}
There exists a Quillen adjunction 
\begin{equation*}
      \Id:(p,q)\poly\Ezero^S\rightleftarrows (m,n)\poly\Ezero^S;\Id
\end{equation*}
for $m<p$ and $n<q$.
\end{corollary}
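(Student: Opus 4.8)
The plan is to reduce the general statement to the two single-step cases already established, namely the Quillen adjunctions
\begin{align*}
    &\Id:(p+1,q)\poly\Ezero^S \rightleftarrows (p,q)\poly\Ezero^S:\Id,\\
    &\Id:(p,q+1)\poly\Ezero^S \rightleftarrows (p,q)\poly\Ezero^S:\Id,
\end{align*}
which were produced inside the proof of Proposition \ref{pq poly implies more poly}. The whole content of the corollary is that these adjunctions compose. First I would observe that all of the model structures $(a,b)\poly\Ezero^S$ share the same underlying category $\Ezero$ and, by Proposition \ref{Tpq model structure}, the same class of cofibrations (the projective cofibrations). Hence a functor between any two of them which is the identity on underlying categories automatically preserves cofibrations; to check it is left Quillen it only remains to check it preserves acyclic cofibrations, equivalently (since we already know it preserves cofibrations) that its right adjoint, also the identity, preserves fibrations and acyclic fibrations, or most cleanly that $\Id$ sends $(p,q)\poly\Ezero^S$-weak equivalences forward — wait, that is the wrong direction. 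The efficient route is: a composite of left Quillen functors is left Quillen, and a composite of right Quillen functors is right Quillen; so it suffices to exhibit the desired adjunction as a finite composite of the two single-step adjunctions above.

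The key step is the bookkeeping of the composition. Given $m<p$ and $n<q$, I would factor the identity functor $\Ezero\to\Ezero$ as a chain that first decreases the first index from $p$ down to $m$ one unit at a time, then decreases the second index from $q$ down to $n$ one unit at a time:
\begin{equation*}
(p,q)\poly\Ezero^S \to (p-1,q)\poly\Ezero^S \to \dots \to (m,q)\poly\Ezero^S \to (m,q-1)\poly\Ezero^S \to \dots \to (m,n)\poly\Ezero^S.
\end{equation*}
Each arrow here is the left adjoint $\Id$ of one of the two single-step Quillen adjunctions from the proof of Proposition \ref{pq poly implies more poly}: the arrows in the first run are instances of $\Id:(a+1,q)\poly\Ezero^S\rightleftarrows (a,q)\poly\Ezero^S:\Id$ for $m\le a<p$, and the arrows in the second run are instances of $\Id:(m,b+1)\poly\Ezero^S\rightleftarrows (m,b)\poly\Ezero^S:\Id$ for $n\le b<q$. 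Since each is a Quillen adjunction, and Quillen adjunctions compose, the composite adjunction
\begin{equation*}
\Id:(p,q)\poly\Ezero^S\rightleftarrows (m,n)\poly\Ezero^S:\Id
\end{equation*}
is a Quillen adjunction, as required.

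There is essentially no obstacle here; the work was all done in Proposition \ref{pq poly implies more poly}, and this corollary is the purely formal consequence that iterating and juxtaposing those single-step adjunctions still yields an adjunction between model structures. The only minor point worth stating carefully is that the two runs can indeed be concatenated: the intermediate categories line up because the first run terminates at $(m,q)\poly\Ezero^S$, which is exactly the starting point of the second run, and because every model structure involved has the same cofibrations, so that ``left Quillen'' at each stage really does mean the same thing. One could alternatively prove it in one shot by noting that, since the cofibrations agree across all these model structures, $\Id:(p,q)\poly\Ezero^S\to(m,n)\poly\Ezero^S$ is left Quillen as soon as it preserves acyclic cofibrations, and an acyclic cofibration of $(p,q)\poly\Ezero^S$ is in particular a $\Tpq$-equivalence which is a cofibration; applying Corollary \ref{tpqisspq} and the chain of implications ``$S_{p,q}$-equivalence $\Rightarrow S_{m,n}$-equivalence'' extracted from Proposition \ref{pq poly implies more poly} shows it is an $S_{m,n}$-equivalence, hence an acyclic cofibration of $(m,n)\poly\Ezero^S$. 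Either presentation works; I would present the composition-of-Quillen-adjunctions version as the primary proof since it is the shortest.
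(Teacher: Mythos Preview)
Your proposal is correct and matches the paper's intent: the corollary is stated without proof immediately after Proposition \ref{pq poly implies more poly}, so the paper is treating it as the evident formal consequence of composing the single-step Quillen adjunctions established there, exactly as you spell out.
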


Combining Proposition \ref{pq poly implies more poly} with Theorem \ref{weiss6.3.1} and Lemma \ref{weiss6.3.2}, gives an important result about how the functors $\Tpq$ interact. 

\begin{corollary}\label{tlmtpq=tpq}
Let $E\in\Ezero$. If $l\geq p$ and $m\geq q$, then $$T_{p,q}T_{l,m}E\simeq T_{l,m}T_{p,q}E\simeq T_{p,q}E.$$
\end{corollary}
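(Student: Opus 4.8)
The plan is to deduce Corollary \ref{tlmtpq=tpq} formally from the three facts already established for the functors $T_{a,b}$: (i) $T_{a,b}E$ is always strongly $(a,b)$-polynomial (Theorem \ref{weiss6.3.1}); (ii) a strongly $(a,b)$-polynomial functor $X$ satisfies $X\simeq T_{a,b}X$ via the natural map $\eta$ (Lemma \ref{weiss6.3.2}); and (iii) if $E$ is strongly $(p,q)$-polynomial then so is $T_{l,m}E$ for all $l,m\geq 0$ (Corollary \ref{Tlm is pq poly if E is}). The only genuinely new input needed is that $T_{l,m}E$ is strongly $(p,q)$-polynomial whenever $l\geq p$ and $m\geq q$; this follows by combining Theorem \ref{weiss6.3.1} (which gives that $T_{l,m}E$ is strongly $(l,m)$-polynomial) with Proposition \ref{pq poly implies more poly}/Corollary \ref{Tpq Tmn adjunction}, i.e.\ that a strongly $(l,m)$-polynomial functor is strongly $(p,q)$-polynomial for $p\leq l$, $q\leq m$.

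First I would establish the middle equivalence $T_{l,m}T_{p,q}E\simeq T_{p,q}E$. Since $T_{p,q}E$ is strongly $(p,q)$-polynomial by Theorem \ref{weiss6.3.1}, Corollary \ref{Tlm is pq poly if E is} gives that $T_{l,m}T_{p,q}E$ is again strongly $(p,q)$-polynomial. But more is needed: I want $T_{l,m}$ to not change $T_{p,q}E$ at all. For this, note that $T_{p,q}E$ being strongly $(p,q)$-polynomial implies (by the $p\leq l$, $q\leq m$ case of Proposition \ref{pq poly implies more poly}, applied via its corollaries) that $T_{p,q}E$ is strongly $(l,m)$-polynomial; then Lemma \ref{weiss6.3.2} applied with the pair $(l,m)$ gives $\eta: T_{p,q}E\xrightarrow{\ \sim\ } T_{l,m}T_{p,q}E$ is an objectwise weak equivalence. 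This gives the second equivalence.

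Next I would handle $T_{p,q}T_{l,m}E\simeq T_{p,q}E$. By Theorem \ref{weiss6.3.1}, $T_{l,m}E$ is strongly $(l,m)$-polynomial, hence (again by the $p\leq l$, $q\leq m$ consequence of Proposition \ref{pq poly implies more poly}) strongly $(p,q)$-polynomial; so by Lemma \ref{weiss6.3.2} the map $\eta:T_{l,m}E\to T_{p,q}T_{l,m}E$ is an objectwise weak equivalence, giving $T_{p,q}T_{l,m}E\simeq T_{l,m}E$. It remains to compare $T_{l,m}E$ with $T_{p,q}E$, which they are \emph{not} in general; instead I compare $T_{p,q}T_{l,m}E$ with $T_{p,q}E$ directly. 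Since $\eta: E\to T_{l,m}E$ is a $T_{l,m}$-equivalence and every $T_{l,m}$-equivalence is a $T_{p,q}$-equivalence for $p\le l,q\le m$ — this is exactly the content of the Quillen adjunction $\Id:(l,m)\poly\Ezero^S\rightleftarrows(p,q)\poly\Ezero^S:\Id$ of Corollary \ref{Tpq Tmn adjunction}, whose left adjoint sends weak equivalences of the finer localisation to weak equivalences of the coarser one on cofibrant objects, combined with Corollary \ref{tpqisspq} identifying these with $S$-local equivalences — applying $T_{p,q}$ to $\eta:E\to T_{l,m}E$ yields an objectwise weak equivalence $T_{p,q}E\xrightarrow{\ \sim\ }T_{p,q}T_{l,m}E$. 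Chaining the three established equivalences $T_{p,q}E\simeq T_{p,q}T_{l,m}E\simeq T_{l,m}E$ and $T_{p,q}E\simeq T_{l,m}T_{p,q}E$ gives the claim.

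The main obstacle is the bookkeeping around which ``polynomial'' notion applies where: one must be careful that Proposition \ref{pq poly implies more poly} (strongly $(p,q)$-polynomial $\Rightarrow$ strongly $(p+1,q)$- and $(p,q+1)$-polynomial, hence by induction strongly $(l,m)$-polynomial for all $l\geq p$, $m\geq q$) is genuinely an ``upward'' implication, and that Lemma \ref{weiss6.3.2} then applies with the larger index pair rather than the original one. The subtle point in the third paragraph — that $T_{p,q}$ carries the $T_{l,m}$-equivalence $\eta_E: E\to T_{l,m}E$ to an objectwise weak equivalence — should be spelled out using Corollary \ref{Tpq Tmn adjunction} and Corollary \ref{tpqisspq}, or alternatively proved hands-on: $T_{p,q}$ preserves objectwise weak equivalences and sequential homotopy colimits, and $\taupq$ of a $T_{l,m}$-equivalence is again one (since $\tau_{p,q}$ commutes with $\tau_{l,m}$ and with homotopy colimits, Lemma \ref{lem: 5.11} and its corollary), so $T_{p,q}$ applied to a $T_{l,m}$-equivalence is a $T_{l,m}$-equivalence between strongly $(p,q)$-polynomial objects, hence an objectwise weak equivalence by the partial converse argument. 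Everything else is formal chaining of weak equivalences, so no delicate estimates are involved.
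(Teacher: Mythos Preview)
Your overall strategy matches the paper's (the paper simply says the result follows by combining Proposition~\ref{pq poly implies more poly}, Theorem~\ref{weiss6.3.1} and Lemma~\ref{weiss6.3.2}), and your arguments for $T_{l,m}T_{p,q}E\simeq T_{p,q}E$ and for $T_{p,q}E\simeq T_{p,q}T_{l,m}E$ via the $T_{l,m}$-equivalence $\eta:E\to T_{l,m}E$ being a $T_{p,q}$-equivalence are both correct.

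There is, however, a genuine error in the middle of your second paragraph. You write that $T_{l,m}E$ is strongly $(l,m)$-polynomial and ``hence (again by the $p\leq l$, $q\leq m$ consequence of Proposition~\ref{pq poly implies more poly}) strongly $(p,q)$-polynomial''. This reverses the direction of the implication: Proposition~\ref{pq poly implies more poly} says strongly $(p,q)$-polynomial $\Rightarrow$ strongly $(p+1,q)$- and strongly $(p,q+1)$-polynomial, so the \emph{smaller} index pair is the \emph{stronger} condition. (You even state the correct direction yourself in the final paragraph.) Consequently the claimed equivalence $T_{p,q}T_{l,m}E\simeq T_{l,m}E$ is false in general: for instance with $p=q=0$ one has $T_{0,0}T_{l,m}E\simeq *$ while $T_{l,m}E$ is typically nontrivial. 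This false equivalence then contaminates your summary chain ``$T_{p,q}E\simeq T_{p,q}T_{l,m}E\simeq T_{l,m}E$''.

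The fix is simply to delete that detour: you do not need $T_{p,q}T_{l,m}E\simeq T_{l,m}E$ at all, and you already established $T_{p,q}E\simeq T_{p,q}T_{l,m}E$ correctly via the localisation argument. With that excision your proof is complete and is exactly the argument the paper's one-line proof is pointing to.
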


We now construct another model structure on $\Ezero$. We call this model structure the $(p,q)$-polynomial model structure, and denote it by $(p,q)$-poly-$\Ezero$. The fibrant objects of this model structure will be the functors which are $(p,q)$-polynomial. The construction of this model structure is done in a similar way as for the model structure $(p,q)$-poly-$\Ezero^S$. We use a Bousfield-Friedlander localisation to prove the existence of the model structure and then use a left Bousfield localisation to show that it is cellular. This ensures that we can construct a right Bousfield localisation of the $(p,q)$-polynomial model structure and, by choosing the right set of objects to localise at, this right Bousfield localisation has fibrant-cofibrant objects which are the $(p,q)$-homogeneous functors, see Section \ref{sec:homog ms}. 

\begin{proposition}\label{polynomial model structure}\index{$(p,q)\poly\Ezero$}
There is a proper model structure on $\Ezero$ such that a morphism $f$ is a weak equivalence if and only if it is a $T_{p+1,q}T_{p,q+1}$-equivalence. The cofibrations are the same as for the projective model structure. The fibrant objects are the functors that are $(p,q)$-polynomial. A morphism $f$ is a fibration if and only if it is an objectwise fibration and the diagram 
\[\begin{tikzcd}
	X && Y \\
	\\
	{T_{p+1,q}T_{p,q+1}X} && {T_{p+1,q}T_{p,q+1}Y}
	\arrow["{T_{p+1,q}T_{p,q+1}f}"', from=3-1, to=3-3]
	\arrow["f", from=1-1, to=1-3]
	\arrow["\eta", from=1-3, to=3-3]
	\arrow["\eta"', from=1-1, to=3-1]
\end{tikzcd}\] is a homotopy pullback square in $\Ezero$. Denote this model structure by $(p,q)\poly\Ezero$.
\end{proposition}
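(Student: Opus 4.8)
The plan is to construct the $(p,q)$-polynomial model structure as a Bousfield--Friedlander localisation of the projective model structure on $\Ezero$ with respect to the endofunctor $T_{p+1,q}T_{p,q+1}$, exactly mirroring the construction of the strongly polynomial model structure in Proposition \ref{Tpq model structure}, which itself follows Barnes and Oman \cite[Section 6]{BO13}. Since $\Ezero$ is a proper model category (Proposition \ref{proj model structure}), Bousfield's theorem \cite[Theorem 9.3]{Bou01} reduces the existence statement to verifying the three axioms (A1), (A2), (A3) for the functor $Q := T_{p+1,q}T_{p,q+1}$ together with its natural transformation $\eta_E: E \to QE$ (the composite $E \to T_{p,q+1}E \to T_{p+1,q}T_{p,q+1}E$).

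First I would verify axiom (A1), that $Q$ preserves objectwise weak equivalences: this follows because each $T_{l,m}$ preserves objectwise weak equivalences (as established in the proof of Lemma \ref{sphereandmorpharecofibrant} and used repeatedly, via the facts that fixed points commute with sequential homotopy colimits, that homotopy colimits preserve weak equivalences in $\Top_*$, and that $\taupq$ preserves objectwise weak equivalences), and $Q$ is a composite of two such functors. Next, for axiom (A2) I must show that $\eta_{QE}$ and $Q\eta_E$ are objectwise weak equivalences. By Theorem \ref{weiss6.3.1} the functor $T_{p+1,q}T_{p,q+1}E$ is both strongly $(p+1,q)$-polynomial and strongly $(p,q+1)$-polynomial, hence $(p,q)$-polynomial, so by the final Lemma of Section \ref{sec: poly approx} (stating $E \simeq T_{p+1,q}T_{p,q+1}E$ for $(p,q)$-polynomial $E$) applied to $QE$, together with Corollary \ref{tlmtpq=tpq} and Corollary \ref{cor: TpqTpq we to Tpq} to rearrange the iterated approximations, both maps $QE \to QQE$ are objectwise weak equivalences. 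Axiom (A3), concerning pullback squares along a fibration of fibrant objects with $Q h$, $\eta_X$, $\eta_Y$ objectwise weak equivalences, follows in the same way as \cite[Proposition 6.5]{BO13}, using that $Q$ preserves fibrations of fibrant objects in $\Ezero$ (again from the proof of Lemma \ref{sphereandmorpharecofibrant}, since each $T_{l,m}$ does).

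Once existence is established, the classification of cofibrations is automatic (Bousfield--Friedlander localisation does not change cofibrations), and properness comes for free from \cite[Theorem 9.3]{Bou01}. To identify the fibrant objects I would argue as in Proposition \ref{Tpq model structure}: an object $X$ is fibrant iff the square with corners $X$, $*$, $QX$, $Q* = *$ is a homotopy pullback in $\Ezero$, which holds iff $\eta: X \to QX$ is an objectwise weak equivalence; combining this with the defining map $\rho$ and the commuting square relating $X \to QX$ to $\taupq X \to \taupq QX$ (for both $(p+1,q)$ and $(p,q+1)$) shows $X$ is strongly $(p+1,q)$- and strongly $(p,q+1)$-polynomial, i.e.\ $(p,q)$-polynomial, and conversely. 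The fibration characterisation is then read off from the general localisation description of fibrations as objectwise fibrations making the displayed square a homotopy pullback.

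The main obstacle I anticipate is axiom (A2), specifically verifying that $Q\eta_E : QE \to QQE$ is an objectwise weak equivalence. Unlike the single-functor case in Proposition \ref{Tpq model structure}, here $Q$ is a composite of two different polynomial approximation functors, so one must carefully manipulate the nested expression $T_{p+1,q}T_{p,q+1}T_{p+1,q}T_{p,q+1}E$ and reduce it to $T_{p+1,q}T_{p,q+1}E$ using that $T_{p+1,q}$ and $T_{p,q+1}$ commute up to objectwise weak equivalence on strongly polynomial functors (Corollary \ref{tlmtpq=tpq}, Corollary \ref{cor: TpqTpq we to Tpq}) and that each preserves the strongly polynomial property of the other (Corollary \ref{Tlm is pq poly if E is}). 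This bookkeeping is the genuinely new content compared with the strongly polynomial case; everything else is a direct transcription of the arguments already carried out for $(p,q)\poly\Ezero^S$.
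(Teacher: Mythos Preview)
Your proposal is correct and follows exactly the approach the paper intends: the paper's proof is simply ``similar to Proposition \ref{Tpq model structure}, using $T_{p+1,q}T_{p,q+1}$ in place of $\Tpq$'', and you have spelled out precisely those details, including the key bookkeeping for (A2) via Theorem \ref{weiss6.3.1}, Corollary \ref{Tlm is pq poly if E is}, and Lemma \ref{weiss6.3.2}.
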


The proof is similar to Proposition \ref{Tpq model structure}, using $T_{p+1,q}T_{p,q+1}$ in place of $\Tpq$, so we omit it here. 

\begin{remark}\label{ezero and poly adjunction}
Since the cofibrations of $\Ezero$ and $(p,q)$-poly-$\Ezero$ are the same and $T_{p+1,q}T_{p,q+1}$ preserves objectwise weak equivalences, there is again a Quillen adjunction 
\begin{equation*}
    \Id:\Ezero\rightleftarrows (p,q)\poly\Ezero:\Id,
\end{equation*}
and similarly to Corollary \ref{Tpq Tmn adjunction}, there is a Quillen adjunction 
\begin{equation*}
    \Id:(p,q)\poly\Ezero\rightleftarrows (m,n)\poly\Ezero:\Id
\end{equation*}
where $m\leq p$ and $n\leq q$.

\noindent Additionally, the two localisations are related by the following Quillen adjunction.
\begin{equation*}
    \Id:(p,q)\poly\Ezero\rightleftarrows (p,q)\poly\Ezero^S:\Id
\end{equation*}

\end{remark}

We will now construct this model structure again, this time by a left Bousfield localisation. 

\begin{proposition}
The model category $(p,q)\poly\Ezero$ is the left Bousfield localisation of $\Ezero$ with respect to the class of maps $S_{p+1,q}\coprod S_{p,q+1}$ where 
\begin{align*}
    S_{p+1,q}&=\{S\Gmor{p+1,q}{V}{-}_+\rightarrow\Jzero(V,-):V\in\Jzero\}\\
    S_{p,q+1}&=\{S\Gmor{p,q+1}{V}{-}_+\rightarrow\Jzero(V,-):V\in\Jzero\}.
\end{align*}
\end{proposition}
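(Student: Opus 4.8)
The plan is to mirror the proof strategy used for the analogous statement $(p,q)\poly\Ezero^S = L_{S_{p,q}}\Ezero$ in Proposition \ref{TpqEzero as left bousfield localisation}, now carried out for the two-parameter localizing set $S_{p+1,q}\amalg S_{p,q+1}$. Since both model structures have the same cofibrations as the projective model structure on $\Ezero$ (the Bousfield--Friedlander localization $(p,q)\poly\Ezero$ leaves cofibrations unchanged, and so does a left Bousfield localization), it suffices to show that the two model structures have the same fibrant objects. So the entire content of the proof is the identification of the $(S_{p+1,q}\amalg S_{p,q+1})$-local objects of $\Ezero$ with the $(p,q)$-polynomial functors.

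First I would invoke Hirschhorn \cite[Theorem 4.1.1]{Hir03} exactly as in Proposition \ref{TpqEzero as left bousfield localisation}: the fibrant objects of $L_{S_{p+1,q}\amalg S_{p,q+1}}\Ezero$ are the $(S_{p+1,q}\amalg S_{p,q+1})$-local objects. As in the earlier proof, I would note that the domains of all maps in $S_{p+1,q}$ and $S_{p,q+1}$ are cofibrant (Lemma \ref{sphereandmorpharecofibrant}), that every object of $\Ezero$ is fibrant, and that $\C\Top_*$-enrichment lets us use $\Nat_{0,0}(\hat c(-),-)$ as a homotopy mapping object, together with the observation that applying $(-)^H$ and the singular functor converts weak equivalences of $\C$-spaces into weak equivalences of simplicial sets. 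An object $X$ is therefore local with respect to $S_{p+1,q}\amalg S_{p,q+1}$ if and only if it is local with respect to $S_{p+1,q}$ \emph{and} local with respect to $S_{p,q+1}$ (a map out of a coproduct of localizing sets is a local equivalence iff it is so for each piece; more directly, $X$ is $(S_{p+1,q}\amalg S_{p,q+1})$-local iff $\Nat_{0,0}(\Jzero(V,-),X)\to\Nat_{0,0}(S\C\gamma_{p+1,q}(V,-)_+,X)$ and $\Nat_{0,0}(\Jzero(V,-),X)\to\Nat_{0,0}(S\C\gamma_{p,q+1}(V,-)_+,X)$ are both weak equivalences of $\C$-spaces for all $V\in\Jzero$). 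Applying the Yoneda lemma and the definition of $\tau_{l,m}$ to each of these, this says precisely that $X(V)\to\tau_{p+1,q}X(V)$ and $X(V)\to\tau_{p,q+1}X(V)$ are weak equivalences of $\C$-spaces for all $V$; that is, $X$ is both strongly $(p+1,q)$-polynomial and strongly $(p,q+1)$-polynomial, which is by Definition \ref{def: polynomial} exactly the condition that $X$ is $(p,q)$-polynomial.

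Finally I would reconcile this with the description of $(p,q)\poly\Ezero$ as a Bousfield--Friedlander localization: by Proposition \ref{polynomial model structure} the fibrant objects of $(p,q)\poly\Ezero$ are the $(p,q)$-polynomial functors, which we have just shown coincide with the $(S_{p+1,q}\amalg S_{p,q+1})$-local objects. Since the two model structures have identical cofibrations and identical fibrant objects, they coincide (here one uses, as in \cite[Proposition 6.6]{BO13} and Proposition \ref{TpqEzero as left bousfield localisation}, that a model structure is determined by its cofibrations together with its fibrant objects, or alternatively by comparing weak equivalences directly: the $T_{p+1,q}T_{p,q+1}$-equivalences are the $(S_{p+1,q}\amalg S_{p,q+1})$-local equivalences via Corollary \ref{tpqisspq} applied in each parameter).

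I expect the only genuine subtlety — and hence the main obstacle — to be the step identifying $(S_{p+1,q}\amalg S_{p,q+1})$-locality with the conjunction of $S_{p+1,q}$-locality and $S_{p,q+1}$-locality in a way that interacts correctly with the $\C$-enriched homotopy mapping objects; one must be slightly careful that "local with respect to a union of sets of maps" behaves as expected in this enriched, equivariant setting, but this follows formally once the homotopy function complex is correctly set up as in Proposition \ref{TpqEzero as left bousfield localisation}. The rest is a routine two-fold application of the argument already given for the single-parameter case.
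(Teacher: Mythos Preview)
Your proposal is correct and follows essentially the same approach as the paper, which simply states that the proof follows in the same way as for Proposition \ref{TpqEzero as left bousfield localisation}. You have spelled out in detail precisely what that means: match cofibrations, then identify the $(S_{p+1,q}\amalg S_{p,q+1})$-local objects with the $(p,q)$-polynomial functors via Yoneda and the definition of $\tau_{l,m}$ applied to each factor.
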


\begin{proof}The proof follows in the same way as for Proposition \ref{TpqEzero as left bousfield localisation}. 
\end{proof}

\chapter{Equivariant homogeneous functors}
\label{ch:EquivHomogFunct}

\fancyhf{}
\fancyhead[C]{\rightmark}
\fancyhead[R]{\thepage}

In orthogonal calculus, the homotopy fibre of the map $T_n F\rightarrow T_{n-1} F$ is $n$-polynomial and has trivial $(n-1)$-polynomial approximation. Functors of this type are called $n$-homogeneous, and are completely determined by orthogonal spectra with $O(n)$-action, see \cite[Theorem 7.3]{Wei95}. In this section we define a new class of $(p,q)$-homogeneous functors in the $\C$-equivariant input category. The main goal will be to classify such functors by a category of spectra, as is done in the underlying calculus. The relation between the $(p,q)$-homogeneous model structure and the stable model structure in Proposition \ref{prop: stable ms}, forms one half of the zig-zag of equivalences that gives this classification, see Theorem \ref{zigzagclassification}.

\section{Homogeneous functors}\label{sec: homog functors}

We want to use the properties of the homotopy fibre $$D_{p,q} F\rightarrow T_{p+1,q}T_{p,q+1}F\rightarrow \Tpq F$$to define a class of $(p,q)$-homogeneous functors in the input category $\Ezero$ (see Definition \ref{jzero and ezero def}). First, we will need to determine what these properties are, and to do so we will need to give some important and useful properties of polynomial functors. Many of these properties will also be needed when constructing the homogeneous model structure in Section \ref{sec:homog ms}. 

The following is the $\C$-version of \cite[Lemma 5.5]{Wei95}.

\begin{lemma}\label{lem: c2 wes 5.5}
Let $g:E\rightarrow G$ in $\Ezero$ be such that $\ind_{0,0}^{p,q}G$ is objectwise contractible and $E$ is strongly $(p,q)$-polynomial, then the homotopy fibre of $g$ is strongly $(p,q)$-polynomial. 
\end{lemma}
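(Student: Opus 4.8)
The plan is to run the same argument Weiss uses for \cite[Lemma 5.5]{Wei95}, transported through the $\C$-equivariant machinery we have set up. The key input is Lemma \ref{hofiblemma}: for any $E\in\Ezero$ there is a homotopy fibre sequence $\ind_{0,0}^{p,q}E(V)\rightarrow E(V)\rightarrow\taupq E(V)$, natural in $E$, and the natural transformation realising the second map is exactly $\rho_{p,q}$. Applying this functorially to the map $g:E\rightarrow G$ gives a commutative ladder of homotopy fibre sequences in $\CTop_*$, one for $E$ and one for $G$, connected by $\ind_{0,0}^{p,q}g$, $g$, and $\taupq g$.

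First I would form the homotopy fibre $\hofibre(g)$ objectwise and compare the two homotopy fibre sequences from Lemma \ref{hofiblemma}. Since $\taupq$ is built from a homotopy limit (over the poset $\{0\neq U\subseteq\R^{p,q}\}$), it preserves homotopy fibre sequences, so $\taupq\hofibre(g)(V)$ is the homotopy fibre of $\taupq g(V)$. Similarly $\ind_{0,0}^{p,q}$, being defined via $\Nat_{p,q}(-,?)$ applied to a cofibre sequence (Remark \ref{remNat}), sends the objectwise homotopy fibre sequence to a homotopy fibre sequence, so $\ind_{0,0}^{p,q}\hofibre(g)(V)$ is the homotopy fibre of $\ind_{0,0}^{p,q}g(V)$. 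By hypothesis $\ind_{0,0}^{p,q}G$ is objectwise contractible, so the map $\ind_{0,0}^{p,q}g$ has objectwise contractible target, hence $\ind_{0,0}^{p,q}\hofibre(g)(V)\simeq \hofibre[\ind_{0,0}^{p,q}E(V)\rightarrow \ast]\simeq\ind_{0,0}^{p,q}E(V)$. But $E$ is strongly $(p,q)$-polynomial, so by Corollary \ref{poly implies ind contractible} $\ind_{0,0}^{p,q}E$ is objectwise contractible; therefore $\ind_{0,0}^{p,q}\hofibre(g)$ is objectwise contractible.

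Now I would feed this back into Lemma \ref{hofiblemma} applied to the functor $\hofibre(g)$ itself: there is a homotopy fibre sequence
\begin{equation*}
\ind_{0,0}^{p,q}\hofibre(g)(V)\rightarrow \hofibre(g)(V)\xrightarrow{\rho_{p,q}} \taupq\hofibre(g)(V).
\end{equation*}
Since the fibre term is objectwise contractible, $\rho_{p,q}\hofibre(g)$ is an objectwise weak equivalence of $\C$-spaces (checking this fixed-point-wise, using that $(-)^H$ preserves homotopy fibre sequences for $H\leq\C$), which is precisely the statement that $\hofibre(g)$ is strongly $(p,q)$-polynomial. The one thing to be careful about throughout is equivariance of the homotopy fibre: all homotopy fibres should be taken in $\CTop_*$ so that the relevant long exact sequences of homotopy groups hold on both $e$- and $\C$-fixed points; this is routine since $\CTop_*$ is closed symmetric monoidal and the standard homotopy fibre construction is functorial.

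The main obstacle, such as it is, is bookkeeping rather than conceptual: one must confirm that $\taupq$ and $\ind_{0,0}^{p,q}$ genuinely preserve homotopy fibre sequences \emph{$\C$-equivariantly}. For $\ind_{0,0}^{p,q}$ this follows from the observation in Remark \ref{remNat} that $\Nat_{p,q}(-,F)$ turns homotopy cofibre sequences into homotopy fibre sequences together with the fact that the relevant functors preserve $\C$-equivariant maps; for $\taupq$ it follows from its description as a homotopy limit (totalisation of a cosimplicial $\C$-space, cf. Lemma \ref{e.3}), and homotopy limits commute with the homotopy fibre, which is itself a homotopy limit. Once these preservation facts are in hand the lemma falls out exactly as in Weiss.
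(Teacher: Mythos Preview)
Your proposal is correct and follows essentially the same route as the paper, which simply defers to Weiss's argument via the fibre sequence of Lemma~\ref{hofiblemma}; your write-up is considerably more detailed than what the paper actually provides.

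One small caveat worth tightening: the inference ``homotopy fibre contractible $\Rightarrow$ $\rho_{p,q}\hofibre(g)$ is a weak equivalence'' does not by itself control surjectivity on $\pi_0$. The clean fix is already implicit in your setup: run the five lemma on the ladder of long exact sequences coming from the two columns $\hofibre(g)\to E\to G$ and $\taupq\hofibre(g)\to\taupq E\to\taupq G$, using that $\rho_{p,q}E$ is an equivalence and that $\rho_{p,q}G$ (having contractible fibre $\ind_{0,0}^{p,q}G$) is a $\pi_{\geq 1}$-iso and $\pi_0$-mono. This is a bookkeeping refinement, not a change of strategy.
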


\begin{proof}
This follows, as in \cite{Wei95}, from the homotopy fibre sequence in Lemma \ref{hofiblemma}.  
\end{proof}

\begin{corollary}\label{cor: c2 weiss 5.5}
Let $g:E\rightarrow F$ in $\Ezero$ be such that $\ind_{0,0}^{p+1,q}F$ and $\ind_{0,0}^{p,q+1}F$ are both objectwise contractible and $E$ is $(p,q)$-polynomial, then the homotopy fibre of $g$ is $(p,q)$-polynomial. \qed
\end{corollary}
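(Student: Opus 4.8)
The plan is to deduce Corollary \ref{cor: c2 weiss 5.5} from Lemma \ref{lem: c2 wes 5.5} by applying the latter twice, once in each of the two directions of differentiation. Recall that $E$ being $(p,q)$-polynomial means, by Definition \ref{def: polynomial}, that $E$ is both strongly $(p+1,q)$-polynomial and strongly $(p,q+1)$-polynomial. Likewise, a functor is $(p,q)$-polynomial precisely when it is strongly $(p+1,q)$-polynomial and strongly $(p,q+1)$-polynomial, so it suffices to verify these two conditions separately for the homotopy fibre $\hofibre(g)$.

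First I would apply Lemma \ref{lem: c2 wes 5.5} with the pair $(p+1,q)$ in place of $(p,q)$: we are given that $\ind_{0,0}^{p+1,q}F$ is objectwise contractible, and $E$ is strongly $(p+1,q)$-polynomial since it is $(p,q)$-polynomial; hence the lemma gives that $\hofibre(g)$ is strongly $(p+1,q)$-polynomial. Then I would apply Lemma \ref{lem: c2 wes 5.5} again, this time with the pair $(p,q+1)$: we are given that $\ind_{0,0}^{p,q+1}F$ is objectwise contractible, and $E$ is strongly $(p,q+1)$-polynomial, again because it is $(p,q)$-polynomial; the lemma then yields that $\hofibre(g)$ is strongly $(p,q+1)$-polynomial. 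Combining the two conclusions, $\hofibre(g)$ is both strongly $(p+1,q)$-polynomial and strongly $(p,q+1)$-polynomial, which by Definition \ref{def: polynomial} is exactly the statement that $\hofibre(g)$ is $(p,q)$-polynomial.

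There is essentially no obstacle here; the only minor point worth spelling out is a notational/indexing one, namely being careful that ``strongly $(p+1,q)$-polynomial'' in the hypothesis of Lemma \ref{lem: c2 wes 5.5} (which is stated with parameters called $p,q$) matches the condition extracted from ``$E$ is $(p,q)$-polynomial'' via Definition \ref{def: polynomial}. One also relies implicitly on the fact, recorded just after Definition \ref{def: polynomial} (and made precise in Proposition \ref{pq poly implies more poly} and its corollaries), that the two strong-polynomiality conditions together are literally the definition of $(p,q)$-polynomial, so no further argument — such as invoking $E \simeq T_{p+1,q}T_{p,q+1}E$ — is needed. The proof is therefore a two-line application of the preceding lemma, and is appropriately left to the reader or dispatched with a one-sentence justification, which is what the ``\qed'' immediately after the statement indicates.
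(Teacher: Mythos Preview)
Your proposal is correct and is exactly the argument the paper intends: the corollary is marked with \qed\ and no proof, precisely because it follows by applying Lemma~\ref{lem: c2 wes 5.5} once with parameters $(p+1,q)$ and once with $(p,q+1)$, using that $(p,q)$-polynomial is by Definition~\ref{def: polynomial} the conjunction of strongly $(p+1,q)$-polynomial and strongly $(p,q+1)$-polynomial.
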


The following is a $\C$-generalisation of \cite[Corollary 5.6]{Wei95}, which is an instant consequence Corollary \ref{cor: c2 weiss 5.5} by setting $E=*$. 
\begin{corollary}\label{cor: F delooping}
Let $F\in\Ezero$ be such that $\ind_{0,0}^{p+1,q}F$ and $\ind_{0,0}^{p,q+1}F$ are both objectwise contractible, then the functor 
\begin{equation*}
    V\mapsto \Omega F(V)
\end{equation*}
is $(p,q)$-polynomial. \qed
\end{corollary}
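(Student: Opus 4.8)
\textbf{Proof plan for Corollary \ref{cor: F delooping}.}

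The statement claims that if $F\in\Ezero$ has objectwise contractible $(p+1,q)$- and $(p,q+1)$-derivatives, then the functor $V\mapsto \Omega F(V)$ is $(p,q)$-polynomial. As the corollary explicitly indicates, the plan is to deduce this directly from Corollary \ref{cor: c2 weiss 5.5} by choosing the source functor to be trivial. First I would recall that Corollary \ref{cor: c2 weiss 5.5} asserts: given a map $g:E\to F$ in $\Ezero$ with $\ind_{0,0}^{p+1,q}F$ and $\ind_{0,0}^{p,q+1}F$ both objectwise contractible and $E$ being $(p,q)$-polynomial, the homotopy fibre of $g$ is $(p,q)$-polynomial. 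So the main task is to identify $V\mapsto \Omega F(V)$ with the homotopy fibre of a suitable map into $F$.

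The natural choice is to take $E$ to be the trivial (constant) functor $\ast$, equipped with its unique map $g:\ast\to F$. The constant functor at a point is strongly $(p,q)$-polynomial for every $p,q$ — indeed $\taupq(\ast)(V)$ is a homotopy limit of a diagram of one-point spaces, hence contractible, so $\rho_{p,q}$ is an objectwise weak equivalence — and therefore by Proposition \ref{pq poly implies more poly} it is in particular $(p,q)$-polynomial. (Alternatively one can invoke Example \ref{exampleT10T01}-style reasoning: $T_{p,q}\ast\simeq\ast$.) The hypothesis on the derivatives of $F$ is exactly what Corollary \ref{cor: c2 weiss 5.5} requires, so it applies to $g:\ast\to F$ and tells us that $\hofibre[\ast\to F]$ is $(p,q)$-polynomial. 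It then remains to observe the standard identification $\hofibre[\ast\to F(V)]\simeq \Omega F(V)$, objectwise in $V$, and to note that this equivalence is natural in $V$ and $\C$-equivariant (the loop space here carries the conjugation $\C$-action, matching the $\C$-action coming from the path-space construction on the pointed $\C$-space $F(V)$). Since being $(p,q)$-polynomial is invariant under objectwise weak equivalence (the weak equivalences of $(p,q)\poly\Ezero$ are the $T_{p+1,q}T_{p,q+1}$-equivalences, and objectwise weak equivalences are certainly such), we conclude that $V\mapsto\Omega F(V)$ is $(p,q)$-polynomial.

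There is essentially no hard step here; the only points requiring a line of care are (i) checking that the constant functor $\ast$ really is $(p,q)$-polynomial, which follows immediately from the empty/one-point homotopy limit computation for $\taupq$ together with Proposition \ref{pq poly implies more poly}, and (ii) confirming that the homotopy fibre of $\ast\to F$ computes the loop functor objectwise as functors in $\Ezero$, with the correct $\C$-actions — this is the usual fibre sequence $\Omega F(V)\to \ast\to F(V)$ applied levelwise, and all constructions involved (path spaces, loop spaces, homotopy fibres) are built from $\Top_*(-,-)$ with conjugation actions, so equivariance is automatic. Thus the proof is a one-paragraph specialization: set $E=\ast$, apply Corollary \ref{cor: c2 weiss 5.5}, and identify the resulting homotopy fibre with $\Omega F$.
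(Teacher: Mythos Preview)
Your proposal is correct and follows exactly the approach indicated by the paper: the corollary is stated as an instant consequence of Corollary~\ref{cor: c2 weiss 5.5} obtained by setting $E=\ast$, and you have simply spelled out the details of that specialization.
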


Now we can determine the properties of the homotopy fibre $D_{p,q}F$. 

\begin{definition}\label{def: pq reduced}
Let $E\in \Ezero$. Define $E$ to be \emph{$(p,q)$-reduced} if $\Tpq E$ is objectwise contractible. 
\end{definition}

\begin{remark}
    Note that if $E\in\Ezero$ is $(p,q)$-reduced, then $E$ is also $(a,b)$-reduced for all pairs $(a,b)$ with $0\leq a\leq p$ and $0\leq b\leq q$. This follows from Corollary \ref{tlmtpq=tpq}.
\end{remark}

\begin{theorem}\label{thm: DYpq is homog}\index{$D_{p,q}$}
The homotopy fibre $D_{p,q} F$ of the map $r_{p,q}: T_{p+1,q}T_{p,q+1} F\rightarrow T_{p,q} F$ is $(p,q)$-polynomial and $(p,q)$-reduced.
\end{theorem}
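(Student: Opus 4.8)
The plan is to verify the two asserted properties of $D_{p,q}F$ separately: first that it is $(p,q)$-polynomial, and then that it is $(p,q)$-reduced. For the first property, I would use Corollary \ref{cor: c2 weiss 5.5}. Take the map $r_{p,q}: T_{p+1,q}T_{p,q+1}F \rightarrow T_{p,q}F$. The source is $(p,q)$-polynomial by construction (it is the $(p,q)$-polynomial approximation, cf.\ Theorem \ref{weiss6.3.1} and Proposition \ref{pq poly implies more poly}). For Corollary \ref{cor: c2 weiss 5.5} to apply with $E = T_{p+1,q}T_{p,q+1}F$ and $F$ there equal to $T_{p,q}F$, I need $\ind_{0,0}^{p+1,q}T_{p,q}F$ and $\ind_{0,0}^{p,q+1}T_{p,q}F$ to be objectwise contractible. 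Since $T_{p,q}F$ is strongly $(p,q)$-polynomial (Theorem \ref{weiss6.3.1}), it is strongly $(p+1,q)$- and strongly $(p,q+1)$-polynomial by Proposition \ref{pq poly implies more poly}, hence $(p+1,q)$- and $(p,q+1)$-polynomial, and then Corollary \ref{poly implies ind contractible} gives exactly the required contractibility of these derivatives. Thus the homotopy fibre $D_{p,q}F$ is $(p,q)$-polynomial.

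For the second property, I would compute $T_{p,q}D_{p,q}F$ directly and show it is objectwise contractible. The key inputs are that $T_{p,q}$ commutes with homotopy fibres (it is built from homotopy limits and sequential homotopy colimits, and fixed points commute with both, so this follows as in the underlying calculus), and the identities from Corollary \ref{tlmtpq=tpq}: for $l \geq p$, $m \geq q$ we have $T_{p,q}T_{l,m}E \simeq T_{l,m}T_{p,q}E \simeq T_{p,q}E$. Applying $T_{p,q}$ to the homotopy fibre sequence
\begin{equation*}
D_{p,q}F \rightarrow T_{p+1,q}T_{p,q+1}F \xrightarrow{r_{p,q}} T_{p,q}F
\end{equation*}
yields
\begin{align*}
T_{p,q}D_{p,q}F &\simeq \hofibre\left[T_{p,q}T_{p+1,q}T_{p,q+1}F \rightarrow T_{p,q}T_{p,q}F\right]\\
&\simeq \hofibre\left[T_{p,q}F \rightarrow T_{p,q}F\right]\\
&\simeq *,
\end{align*}
where the first equivalence in the second line uses $T_{p,q}T_{p+1,q}T_{p,q+1}F \simeq T_{p,q}T_{p,q+1}F \simeq T_{p,q}F$ (two applications of Corollary \ref{tlmtpq=tpq}, noting $p+1 \geq p$, $q \geq q$ and then $p \geq p$, $q+1 \geq q$) and $T_{p,q}T_{p,q}F \simeq T_{p,q}F$ (Corollary \ref{cor: TpqTpq we to Tpq}), and the map in question is the identity up to these equivalences. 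Hence $D_{p,q}F$ is $(p,q)$-reduced in the sense of Definition \ref{def: pq reduced}.

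The main obstacle I anticipate is the bookkeeping around the indexing shift peculiar to the $\C$-equivariant setting: one must be careful that "$(p,q)$-polynomial" means strongly $(p+1,q)$- and strongly $(p,q+1)$-polynomial, so that the hypotheses of Corollary \ref{cor: c2 weiss 5.5} (which are stated in terms of the two partial derivatives $\ind_{0,0}^{p+1,q}$ and $\ind_{0,0}^{p,q+1}$) line up correctly, and that the various applications of Corollary \ref{tlmtpq=tpq} respect the required inequalities $l \geq p$, $m \geq q$. A secondary point worth spelling out carefully is the claim that $T_{p,q}$ commutes with homotopy fibres; this is the $\C$-equivariant analogue of a standard fact in the underlying calculus, and relies on the homotopy limit description of $\taupq$ from Lemma \ref{e.3} together with the fact that in $\C\Top_*$ fixed points commute with both totalisation and sequential homotopy colimits, so the argument is essentially the same as in \cite{Wei95}, applied one fixed-point set at a time for $H \in \{e, \C\}$.
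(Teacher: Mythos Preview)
Your proposal is correct and follows essentially the same approach as the paper: the paper applies $T_{p,q}$ to the fibre sequence and invokes Corollary~\ref{tlmtpq=tpq} to show $(p,q)$-reducedness, then cites Corollary~\ref{cor: c2 weiss 5.5} with both $T_{p+1,q}T_{p,q+1}F$ and $T_{p,q}F$ being $(p,q)$-polynomial to conclude $(p,q)$-polynomiality. You do the same two steps in the opposite order and spell out more explicitly why the hypotheses of Corollary~\ref{cor: c2 weiss 5.5} are met (via Proposition~\ref{pq poly implies more poly} and Corollary~\ref{poly implies ind contractible}), which is a harmless elaboration of what the paper leaves implicit.
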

\begin{proof}
Indeed, applying $\Tpq$ to the homotopy fibre sequence and applying Corollary \ref{tlmtpq=tpq} gives
\[\begin{tikzcd}
	{T_{p,q}D_{p,q} F} & {T_{p,q}T_{p+1,q}T_{p,q+1}F} & {T_{p,q}T_{p,q}F} \\
	& {T_{p,q}F} & {T_{p,q}F}
	\arrow[from=1-1, to=1-2]
	\arrow["r_{p,q}", from=1-2, to=1-3]
	\arrow["\simeq"{marking}, draw=none, from=1-2, to=2-2]
	\arrow["\simeq"{marking}, draw=none, from=1-3, to=2-3]
	\arrow["\id"',from=2-2, to=2-3]
\end{tikzcd}\]
Therefore, $\Tpq D_{p,q}F\simeq \hofibre[\Tpq F\overset{\id}{\rightarrow} \Tpq F]\simeq *$. That is, $ D_{p,q}F$ is $(p,q)$-reduced (see Definition \ref{def: pq reduced}). 

The functors $T_{p+1,q}T_{p,q+1}F$ and $\Tpq F$ are both $(p,q)$-polynomial (see Definition \ref{def: polynomial}). This follows from Theorem \ref{weiss6.3.1} and Proposition \ref{pq poly implies more poly}. Therefore, $D_{p,q} F$ is $(p,q)$-polynomial, by Corollary \ref{cor: c2 weiss 5.5}.
\end{proof}

\begin{definition}
Let $E\in \Ezero$. $E$ is defined to be \emph{$(p,q)$-homogeneous} if it is $(p,q)$-polynomial and $(p,q)$-reduced.
\end{definition}

We now define the equivariant generalisation of the term `connected at infinity', this is analogous to \cite[Definition 5.9]{Wei95}. 

\begin{definition}
$E\in\Ezero$ is defined to be \emph{connected at infinity} if the $\C$-space 
\begin{equation*}
    \underset{a,b}{\hocolim} E(\R^{a,b})=:E(\R^{\infty,\infty})
\end{equation*}
is connected (meaning that the equivariant homotopy groups $\pi_0^H E(\R^{\infty,\infty})$ are trivial for all closed subgroups $H$ of $\C$). 
\end{definition}

\begin{lemma}\label{lem: homog implies connected at infinity}
If $E$ is $(p,q)$-homogeneous for pairs $(p,q)$ where at least one of $p,q$ is greater than zero, then $E$ is connected at infinity. 
\end{lemma}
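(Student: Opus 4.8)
The statement to prove is: if $E\in\Ezero$ is $(p,q)$-homogeneous with $p>0$ or $q>0$, then $E$ is connected at infinity, i.e. $\pi_0^H E(\R^{\infty,\infty})$ is trivial for all closed subgroups $H\leq\C$. The key input is that $(p,q)$-homogeneous means $(p,q)$-polynomial \emph{and} $(p,q)$-reduced, so $\Tpq E$ is objectwise contractible. The plan is to exploit the relationship between $E(\R^{\infty,\infty})$ and the value of various $T_{a,b}E$ on the point, combined with the fibre sequences describing how the $T_{a,b}$ interact.

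First I would reduce to understanding $T_{0,0}E$. By Example \ref{ex: 00-poly approx}, the $(0,0)$-polynomial approximation $T_{1,0}T_{0,1}E(V)\cong\hocolim_{a,b}E(\R^{a,b})=E(\R^{\infty,\infty})$ is the constant functor at $E(\R^{\infty,\infty})$. So ``connected at infinity'' is precisely the statement that $E(\R^{\infty,\infty})$ is connected, i.e. that the $(0,0)$-polynomial approximation is $0$-connected objectwise. Now, since $E$ is $(p,q)$-reduced, $\Tpq E\simeq *$; and by the remark following Definition \ref{def: pq reduced}, $E$ is $(a,b)$-reduced for all $0\le a\le p$, $0\le b\le q$ — in particular, assuming WLOG $p\ge 1$, $E$ is $(1,0)$-reduced, so $T_{1,0}E\simeq *$. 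By Example \ref{exampleT10T01}, $T_{1,0}E(V)=E(V\oplus\R^\infty)$, hence $E(V\oplus\R^\infty)\simeq *$ for all $V$. Then $E(\R^{\infty,\infty})=\hocolim_{a,b}E(\R^{a,b})=\hocolim_a\hocolim_b E(\R^{a,b})$, and since homotopy colimits commute and $T_{1,0}E(\R^{0,b})=E(\R^{\infty,b})=\hocolim_a E(\R^{a,b})$ is contractible for each $b$, the colimit over $b$ of contractible spaces is contractible. (If instead $q\ge 1$, use $(0,1)$-reducedness and $T_{0,1}E(V)=E(V\oplus\R^{\infty\delta})$ in the symmetric way.) In fact this argument shows $E(\R^{\infty,\infty})$ is not merely connected at infinity but objectwise contractible-at-infinity, which certainly implies $\pi_0^H$ triviality for all $H\leq\C$.

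The step I expect to be the main obstacle is justifying the manipulation of iterated homotopy colimits and the claim that $\hocolim$ over a filtered diagram of $\C$-spaces preserves connectivity / contractibility at the level of each fixed-point space $(-)^H$. This needs: (i) that fixed points $(-)^H$ commute with sequential (filtered) homotopy colimits — which is used repeatedly elsewhere in the thesis (e.g. proof of Lemma \ref{weiss6.3.2}); (ii) that a filtered homotopy colimit of contractible spaces is contractible, and more generally that $\pi_0$ of a filtered colimit is the colimit of $\pi_0$; and (iii) the identification $\hocolim_{a,b}=\hocolim_a\hocolim_b$ for the square-indexed diagram, together with cofinality of the diagonal $\{\R^{k,k}\}$, as already used in Example \ref{ex: 00-poly approx}. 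All of these are standard and already invoked in the text, so the proof should be short once the reduction to $(1,0)$- or $(0,1)$-reducedness is made.

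Concretely, the write-up would run: assume $p\ge 1$ (the case $q\ge 1$ is symmetric, using $\R^\delta$ in place of $\R$). Since $E$ is $(p,q)$-reduced it is $(1,0)$-reduced, so $T_{1,0}E$ is objectwise contractible; by Example \ref{exampleT10T01}, $T_{1,0}E(V)=E(V\oplus\R^\infty)$, so $E(\R^{\infty,0})\simeq *$, and likewise $E(\R^{\infty,b})=\hocolim_a E(\R^{a,b})\simeq *$ for every $b$ (apply $T_{1,0}$ at $\R^{0,b}$). Then
\begin{equation*}
E(\R^{\infty,\infty})=\underset{a,b}{\hocolim}\,E(\R^{a,b})\simeq \underset{b}{\hocolim}\,\Big(\underset{a}{\hocolim}\,E(\R^{a,b})\Big)\simeq \underset{b}{\hocolim}\,* \simeq *.
\end{equation*}
Taking $H$-fixed points commutes with these filtered homotopy colimits, so $E(\R^{\infty,\infty})^H\simeq *$ and in particular $\pi_0^H E(\R^{\infty,\infty})=0$ for all closed $H\le\C$; thus $E$ is connected at infinity.
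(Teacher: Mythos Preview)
Your proof is correct and is essentially an explicit spelling-out of the paper's one-line proof, which simply says the result ``follows from Example~\ref{ex: 00-poly approx}''. Your reduction to $(1,0)$- (or $(0,1)$-)reducedness via the remark after Definition~\ref{def: pq reduced}, together with the identification $T_{1,0}E(\R^{b\delta})=\hocolim_a E(\R^{a,b})$ and the Fubini-style manipulation of iterated filtered homotopy colimits, is exactly the ``straightforward calculation'' the paper has in mind; you even obtain the stronger conclusion $E(\R^{\infty,\infty})\simeq *$.
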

\begin{proof}
This is a straightforward calculation that follows from Example \ref{ex: 00-poly approx}. 
\end{proof}
 
The next result is \cite[Proposition 5.10]{Wei95}. It is used in conjunction with Lemma \ref{lem: homog implies connected at infinity} to construct the $(p,q)$-homogeneous model structure. The proof follows as in \cite[Lemma 5.10]{BO13}, replacing $T_n$ by $T_{p,q}$, so we omit it here.

\begin{lemma}\label{conn at infinity lemma}
Let $g:E\rightarrow F$ be a map between strongly $(p,q)$-polynomial objects such that the homotopy fibre of $g$ is objectwise contractible and $F$ is connected at infinity. Then $g$ is an objectwise weak equivalence. 
\end{lemma}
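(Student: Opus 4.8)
The plan is to follow the strategy of \cite[Lemma 5.10]{BO13}, adapted to the $\C$-equivariant and $(p,q)$-indexed setting. First I would factor the analysis through the polynomial model structure: since both $E$ and $F$ are strongly $(p,q)$-polynomial, Lemma \ref{weiss6.3.2} tells us that the unit maps $E\to\Tpq E$ and $F\to\Tpq F$ are objectwise weak equivalences, so we are free to work with $\Tpq E$ and $\Tpq F$ interchangeably with $E$ and $F$. The key point is that $\Tpq$ is built from the functor $\taupq$, which is a homotopy limit over the poset of non-zero subspaces of $\R^{p,q}$, and such homotopy limits interact well with homotopy fibre sequences.

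Next I would exploit the fact that the homotopy fibre $\hofibre(g)$ is objectwise contractible. Applying $\taupq$ (hence $\Tpq$) to the fibre sequence $\hofibre(g)\to E\xrightarrow{g} F$, and using that $\taupq$ preserves objectwise weak equivalences and takes homotopy fibre sequences to homotopy fibre sequences (the latter because homotopy limits commute with homotopy limits, and a homotopy fibre is itself a homotopy limit), we deduce that $\Tpq g$ has objectwise contractible homotopy fibre. Combined with Lemma \ref{weiss6.3.2}, this shows $g$ itself has objectwise contractible homotopy fibre after identifying $E\simeq\Tpq E$, $F\simeq\Tpq F$ — which we already knew — so the real content is to upgrade ``objectwise contractible fibre'' to ``objectwise weak equivalence''. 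This is where the connectivity-at-infinity hypothesis enters: a map with contractible homotopy fibre is a weak equivalence on each fixed-point space provided the target is path-connected in the relevant equivariant sense, since then $\pi_0$ bijectivity is automatic and the long exact sequence of the fibration gives isomorphisms on all higher homotopy groups $\pi_k^H$ for each closed subgroup $H\leq\C$.

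The subtlety — and the main obstacle — is that $F$ is only assumed \emph{connected at infinity}, i.e.\ $F(\R^{\infty,\infty})$ has trivial $\pi_0^H$ for all $H$, not that each $F(V)^H$ is path-connected. The resolution, mirroring Weiss, is that strong $(p,q)$-polynomiality lets us propagate connectivity inward: because $F\simeq\Tpq F$ and $\Tpq F(V)$ is assembled from values $\overline F(U\oplus V)$ with $U$ running over subspaces of $\R^{p,q}$ (and, through iteration of $\taupq$, over larger and larger spaces), the $\pi_0^H$ of $F(V)$ is controlled by the $\pi_0^H$ of $F$ evaluated on a cofinal system approaching $\R^{\infty,\infty}$; triviality there forces each $F(V)^H$ to be path-connected. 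With path-connectivity of each $F(V)^H$ in hand, the five-lemma applied to the long exact sequences of homotopy groups of the fibrations $\hofibre(g)(V)^H\to E(V)^H\to F(V)^H$ — using that $\hofibre(g)(V)^H$ is contractible — yields that $g(V)^H$ is a weak homotopy equivalence for every $V\in\Jzero$ and every closed $H\leq\C$, which is precisely the statement that $g$ is an objectwise weak equivalence. I would expect the write-up to cite Lemma \ref{hofiblemma}, Lemma \ref{weiss6.3.2}, Example \ref{ex: 00-poly approx}, and the $\C$-equivariant five-lemma, with the connectivity-propagation step being the one requiring the most care.
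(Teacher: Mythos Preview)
Your proposal is correct and matches the paper's approach: the paper itself omits the proof, stating only that it ``follows as in \cite[Lemma 5.10]{BO13}, replacing $T_n$ by $T_{p,q}$,'' which is exactly the strategy you outline. Your identification of the $\pi_0^H$-surjectivity obstruction and its resolution via the connected-at-infinity hypothesis together with strong $(p,q)$-polynomiality is the heart of that argument; note your second paragraph is indeed redundant (as you observe), so in a write-up you can pass directly from the hypothesis to the $\pi_0$ analysis.
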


Finally we will prove a version of \cite[Corollary 5.12]{Wei95}. We will use this result later, along with Lemma \ref{omega spectra lemma}, to show that the induction functor takes objects that are $(p,q)$-polynomial to $(p,q)\Omega$-spectra. This guarantees that induction is a right Quillen functor from the $(p,q)$-polynomial model structure on $\Ezero$ to the $(p,q)$-stable model structure on $\Epq$.

\begin{proposition}\label{5.12}
Let $E\in \Ezero$ be $(p,q)$-polynomial. Then for all $V\in\Jzero$ there exist weak equivalences of $\C$-spaces
\begin{align*}
    \ind_{0,0}^{p,q}E(V)&\rightarrow\Omega^{p,q\R}\ind_{0,0}^{p,q}E(V\oplus\R)\\
    \ind_{0,0}^{p,q}E(V)&\rightarrow\Omega^{p,q\Rdelta}\ind_{0,0}^{p,q}E(V\oplus\Rdelta).
\end{align*}
\end{proposition}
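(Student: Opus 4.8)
The plan is to mimic Weiss's proof of \cite[Corollary 5.12]{Wei95}, using the iterative homotopy fibre sequences for induction from Proposition \ref{loops fibre sequence} together with the hypothesis that $E$ is $(p,q)$-polynomial, hence (by Proposition \ref{pq poly implies more poly}) strongly $(p+1,q)$- and strongly $(p,q+1)$-polynomial. First I would recall from Corollary \ref{poly implies ind contractible} that $\ind_{0,0}^{p+1,q}E$ and $\ind_{0,0}^{p,q+1}E$ are objectwise contractible, since $E$ is strongly $(p+1,q)$- and strongly $(p,q+1)$-polynomial. This is the key input: it says that the ``next'' derivatives vanish, which is precisely what forces the adjoint structure maps of $\ind_{0,0}^{p,q}E$ to be equivalences.

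Now I would apply the commuting diagrams of induction functors. We have $\ind_{0,0}^{p+1,q} = \ind_{p,q}^{p+1,q}\circ\ind_{0,0}^{p,q}$ and similarly $\ind_{0,0}^{p,q+1} = \ind_{p,q}^{p,q+1}\circ\ind_{0,0}^{p,q}$ (up to the restriction functors, which I will suppress as is standard). Writing $F := \ind_{0,0}^{p,q}E \in \Epq$, the vanishing of $\ind_{0,0}^{p+1,q}E$ becomes the statement that $\ind_{p,q}^{p+1,q}F$ is objectwise contractible, and likewise $\ind_{p,q}^{p,q+1}F$ is objectwise contractible. Feeding this into the first homotopy fibre sequence of Proposition \ref{loops fibre sequence},
\begin{equation*}
\res_{p,q}^{p+1,q}\ind_{p,q}^{p+1,q} F(V)\rightarrow F(V)\rightarrow \Omega^{(p,q)\R}F(V\oplus \R),
\end{equation*}
the left-hand term is contractible, so $F(V)\rightarrow \Omega^{(p,q)\R}F(V\oplus\R)$ is a weak equivalence of $\C$-spaces; this is exactly the first claimed equivalence with $F = \ind_{0,0}^{p,q}E$. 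The second claimed equivalence follows identically from the other homotopy fibre sequence in Proposition \ref{loops fibre sequence}, using contractibility of $\ind_{p,q}^{p,q+1}F$.

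The only genuine subtlety, and the step I would be most careful about, is matching up the map in the fibre sequence of Proposition \ref{loops fibre sequence} with the adjoint structure map $\tilde\sigma_F$ appearing in Lemma \ref{omega spectra lemma}: one must check that the map $F(V)\to\Omega^{(p,q)\R}F(V\oplus\R)$ produced from the cofibre sequence of Proposition \ref{cofibseq} by applying $\Nat_{p,q}(-,F)$ is the same (up to homotopy, and $\C$-equivariantly) as the adjoint of the structure map induced by the identification of $S^{(p,q)\R}$ as a subspace of $\Jpq(V,V\oplus\R)$. This is a naturality/bookkeeping check that holds verbatim as in the non-equivariant case \cite[Section 2]{Wei95}, since $\Nat_{p,q}(-,F)$ preserves $\C$-equivariant maps (Remark \ref{remNat}) and all the identifications in Proposition \ref{cofibseq} were shown to be $\C$-equivariant. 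Once this is in place, the proof is immediate, and moreover combining it with Lemma \ref{omega spectra lemma} shows that $\ind_{0,0}^{p,q}E$ is a $(p,q)\Omega$-spectrum whenever $E$ is $(p,q)$-polynomial, which is the use we will make of it when establishing that induction is right Quillen into the $(p,q)$-stable model structure.
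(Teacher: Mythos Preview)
Your argument has a genuine gap at the step ``the left-hand term is contractible, so $F(V)\rightarrow \Omega^{(p,q)\R}F(V\oplus\R)$ is a weak equivalence of $\C$-spaces''. A homotopy fibre sequence $A\to B\to C$ with $A$ contractible only gives isomorphisms on $\pi_n^H$ for $n\geq 1$ and an injection on $\pi_0^H$; you do not get surjectivity on $\pi_0^H$ for free. Since the spaces $\ind_{0,0}^{p,q}E(V)$ and $\Omega^{(p,q)\R}\ind_{0,0}^{p,q}E(V\oplus\R)$ have no reason to be connected (nor their $\C$-fixed points), this step does not go through as written.

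The paper closes this gap exactly as in \cite[Corollary 5.12]{Wei95} and \cite[Proposition 5.12]{BO13}: after observing that the fibre $\ind_{0,0}^{p+1,q}E(V)$ is contractible, one shows that both $\ind_{0,0}^{p,q}E$ and $V\mapsto\Omega^{(p,q)\R}\ind_{0,0}^{p,q}E(V\oplus\R)$ are strongly $(p+1,q)$-polynomial and that the latter is connected at infinity, and then invokes Lemma~\ref{conn at infinity lemma}, which is precisely the device that upgrades ``contractible fibre'' to ``objectwise weak equivalence'' under those hypotheses. Verifying these polynomial and connectedness conditions is where the remaining work lies (it uses Lemma~\ref{lem: 5.11} and the structure of $\ind$), and your proposal omits it entirely. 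The bookkeeping check about identifying the map with the adjoint structure map is fine, but it is not the subtle point here.
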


\begin{proof}
We will prove the first weak equivalence, since the second follows by a similar argument.

If $p=q=0$, then there is nothing to prove, since $E$ is constant (see Remark \ref{rem: (1,0)/(0,1)-polynomial is constant}) and $\ind_{0,0}^{0,0}E\simeq E$ by the enriched Yoneda lemma. 

Let $p,q$ be such that at least one of $p,q$ is non-zero. By Proposition \ref{loops fibre sequence}, there exists a $\C$-homotopy fibre sequence
\begin{equation*}
    \res_{p,q}^{p+1,q}\ind_{0,0}^{p+1,q}E(V)\rightarrow \ind_{0,0}^{p,q}E(V)\rightarrow\Omega^{p,q\R}\ind_{0,0}^{p,q}E(V\oplus\R).
\end{equation*}
We know that $\ind_{0,0}^{p+1,q}E(V)$ is a contractible $\C$-space, since $E$ is strongly $(p+1,q)$-polynomial and by Corollary \ref{poly implies ind contractible}. Thus, if we can show that both $\ind_{0,0}^{p,q}E$ and $$F:V\mapsto \Omega^{p,q\R}\ind_{0,0}^{p,q}E(V\oplus\R)$$ are strongly $(p+1,q)$-polynomial and $F$ is connected at infinity, then Lemma \ref{conn at infinity lemma} gives the weak equivalence. Showing this is the same as for \cite[Proposition 5.12]{BO13}.
\end{proof}

\section{The $(p,q)$-homogeneous model structure}\label{sec:homog ms}
We now construct a right Bousfield localisation of the $(p,q)$-polynomial model structure in order to build a model structure on $\Ezero$ analogous to the $n$-homogeneous structure in \cite[Proposition 6.9]{BO13}. The cofibrant-fibrant objects of this model structure are the projectively cofibrant $(p,q)$-homogeneous functors and the weak equivalences are detected by derivatives. This model structure allows for the classification of $(p,q)$-homogeneous functors in terms of a Quillen equivalence, see Theorem \ref{boclassification}. 

\begin{proposition}\index{$(p,q)\homog\Ezero$}
There exists a model structure on $\Ezero$ whose cofibrant-fibrant objects are the $(p,q)$-homogeneous functors that are cofibrant in the projective model structure on $\Ezero$. Fibrations are the same as $(p,q)\poly\Ezero$ and weak equivalences are morphisms $f$ such that $\res_{0,0}^{p,q}\ind_{0,0}^{p,q}T_{p+1,q}T_{p,q+1}f$ is an objectwise weak equivalence. 
We call this the $(p,q)$-homogeneous model structure on $\Ezero$ and denote it by $(p,q)\homog\Ezero$. 

\noindent There is a Quillen adjunction 
\begin{equation*}
    \Id: (p,q)\homog\Ezero\rightleftarrows (p,q)\poly\Ezero:\Id
\end{equation*}
\end{proposition}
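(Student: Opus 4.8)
The plan is to obtain this model structure as a right Bousfield localisation of the $(p,q)$-polynomial model structure $(p,q)\poly\Ezero$ (Proposition \ref{polynomial model structure}) with respect to the set of objects
\begin{equation*}
    K_{p,q}=\{\Jmor{p,q}{V}{-}:V\in\Jzero\},
\end{equation*}
mirroring the construction of the $n$-homogeneous model structure in \cite[Proposition 6.9]{BO13}. Since $(p,q)\poly\Ezero$ was shown to be cellular (via its description as a left Bousfield localisation) and right proper, the right Bousfield localisation $R_{K_{p,q}}(p,q)\poly\Ezero$ exists by a theorem of Hirschhorn \cite[Theorem 5.1.1]{Hir03}; this immediately gives that the fibrations agree with those of $(p,q)\poly\Ezero$ and that the identity functors form the claimed Quillen adjunction (right Bousfield localisation does not change fibrations and leaves the right adjoint $\Id$ a right Quillen functor).

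The substantive work is then to identify the weak equivalences and the cofibrant–fibrant objects. First I would show that the $K_{p,q}$-colocal equivalences are exactly the maps $f$ for which $\res_{0,0}^{p,q}\ind_{0,0}^{p,q}T_{p+1,q}T_{p,q+1}f$ is an objectwise weak equivalence. By definition, $f$ is $K_{p,q}$-colocal iff it induces a weak equivalence on derived mapping spaces out of each $\Jmor{p,q}{V}{-}$; using that $\Ezero$ is $\C\Top_*$-enriched with all objects fibrant, such a mapping space computed in $(p,q)\poly\Ezero$ is $\Nat_{0,0}(\Jmor{p,q}{V}{-}, T_{p+1,q}T_{p,q+1}\hat{c}(-))$ (fibrant replacement in the polynomial structure being $T_{p+1,q}T_{p,q+1}$, up to the usual cofibrant replacement), and by the enriched Yoneda lemma together with the definition of the derivative $\ind_{0,0}^{p,q}$ this is precisely $\ind_{0,0}^{p,q}T_{p+1,q}T_{p,q+1}(-)(V)$ evaluated appropriately. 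Running this through all $V$ and all fixed points $H\leq\C$ yields the stated description of the weak equivalences; this is the step I expect to be the main obstacle, since it requires care with which fibrant/cofibrant replacements appear in the homotopy mapping object and with the interaction of $\ind_{0,0}^{p,q}$ with the localisation, exactly as in \cite[Section 6]{BO13} but now in the bigraded $\C$-equivariant setting.

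Finally I would identify the cofibrant–fibrant objects. The fibrant objects of $R_{K_{p,q}}(p,q)\poly\Ezero$ are the fibrant objects of $(p,q)\poly\Ezero$, i.e. the $(p,q)$-polynomial functors, and the cofibrant objects are the $K_{p,q}$-colocal (cellular) objects, which are in particular projectively cofibrant. It remains to check that a projectively cofibrant $(p,q)$-polynomial functor is $K_{p,q}$-colocal precisely when it is $(p,q)$-reduced, i.e. $(p,q)$-homogeneous: a $(p,q)$-polynomial functor $E$ is $K_{p,q}$-colocal iff the map $\ast\to E$ is a $K_{p,q}$-colocal equivalence, which by the description of weak equivalences just obtained means $\res_{0,0}^{p,q}\ind_{0,0}^{p,q}T_{p,q}E\simeq\res_{0,0}^{p,q}\ind_{0,0}^{p,q}E$ is objectwise contractible against $\ind_{0,0}^{p,q}$ of the trivial functor — and one translates this, using Proposition \ref{5.12}, Lemma \ref{hofiblemma} and the characterisation via $\Tpq$, into the condition $\Tpq E\simeq\ast$, which is exactly $(p,q)$-reducedness (Definition \ref{def: pq reduced}). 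Combining with Theorem \ref{thm: DYpq is homog}, the cofibrant–fibrant objects are the projectively cofibrant $(p,q)$-homogeneous functors, as claimed.
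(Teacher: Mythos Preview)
Your overall strategy matches the paper exactly: construct the model structure as the right Bousfield localisation $R_{K_{p,q}}(p,q)\poly\Ezero$ at the set $K_{p,q}=\{\Jmor{p,q}{V}{-}\}$, invoke Hirschhorn for existence, and identify the weak equivalences via the derived mapping spaces $\Nat_{0,0}(\Jmor{p,q}{V}{-},T_{p+1,q}T_{p,q+1}(-))\cong \ind_{0,0}^{p,q}T_{p+1,q}T_{p,q+1}(-)(V)$. That part is fine.

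The gap is in your identification of the cofibrant--fibrant objects. You write that ``a $(p,q)$-polynomial functor $E$ is $K_{p,q}$-colocal iff the map $\ast\to E$ is a $K_{p,q}$-colocal equivalence''. This is false: the condition that $\ast\to E$ be a $K_{p,q}$-colocal equivalence says that $E$ is \emph{acyclic} in the localised model structure, not that it is colocal (cofibrant). By Hirschhorn's definition, $E$ is $K_{p,q}$-colocal when $\Nat_{0,0}(E,-)$ sends every $K_{p,q}$-colocal equivalence to a weak equivalence, and there is no shortcut of the form you propose. Consequently your translation of colocality into ``$\ind_{0,0}^{p,q}E$ objectwise contractible'' does not go through, and in any case that condition does not characterise $(p,q)$-reducedness.

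The paper handles this step quite differently, and it is the genuinely delicate part of the argument. For one direction it takes a cofibrant--fibrant $A$, observes that $\ast\to\Tpq A$ (not $\ast\to A$) is a $K_{p,q}$-colocal equivalence because $\ind_{0,0}^{p,q}T_{p+1,q}T_{p,q+1}\Tpq A\simeq\ind_{0,0}^{p,q}\Tpq A\simeq\ast$, and then uses colocality of $A$ to deduce $[A,\Tpq A]=0$, whence $[\Tpq A,\Tpq A]=0$ and $\Tpq A\simeq\ast$. For the converse direction it takes a projectively cofibrant $(p,q)$-homogeneous $B$, forms the cofibrant replacement $f:\hat{c}B\to B$ in the homogeneous structure, and shows $f$ is an objectwise weak equivalence by analysing its homotopy fibre $D$: one shows $D$ is strongly $(p,q)$-polynomial via Lemma~\ref{conn at infinity lemma} (this is where connectedness at infinity, Lemma~\ref{lem: homog implies connected at infinity}, enters) and that $\Tpq D\simeq\ast$, forcing $D\simeq\ast$. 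Your sketch does not supply any of this machinery, and Proposition~\ref{5.12} alone will not bridge the gap.
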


\begin{proof}We show that right Bousfield localisation of $(p,q)$-poly-$\Ezero$ with respect to the set of objects 
\begin{equation*}
    K_{p,q}=\{\Jpq(V,-):V\in \Jzero\}
\end{equation*}
yields the desired model structure. 

Since $(p,q)$-poly-$\Ezero$ is proper and cellular, we know that the right Bousfield localisation $R_{K_{p,q}}((p,q)\poly\Ezero)$ exists by \cite[Theorem 5.1.1]{Hir03}. 

It suffices (by the same argument used in the proof of Proposition \ref{TpqEzero as left bousfield localisation}) to use $$\Nat_{0,0}(\hat{c}X,T_{p+1,q}T_{p,q+1}Y),$$ where $\hat{c}$ denotes cofibrant replacement in $(p,q)\poly\Ezero$, as a homotopy mapping object $X\rightarrow Y$ in $(p,q)\poly\Ezero$, since $(p,q)\poly\Ezero$ is enriched over $\CTop_*$ and $T_{p+1,q}T_{p,q+1}$ is the fibrant replacement. Note that all elements of $K_{p,q}$ are already cofibrant by Lemma \ref{sphereandmorpharecofibrant}, since the cofibrant object of $(p,q)$-poly-$\Ezero$ are the same as for $\Ezero$. 

The weak equivalences of $R_{K_{p,q}}((p,q)\poly\Ezero)$ are the $K_{p,q}$-colocal equivalences. That is, $f:X\rightarrow Y$ is a weak equivalence in $R_{K_{p,q}}((p,q)\poly\Ezero)$ if and only if 
\begin{equation*}
    \Nat_{0,0}(\Jpq(V,-),T_{p+1,q}T_{p,q+1}X)\rightarrow \Nat_{0,0}(\Jpq(V,-),T_{p+1,q}T_{p,q+1}Y)
\end{equation*}
is a weak equivalence in $\CTop_*$, for all $V\in\Jzero$. By Definition \ref{induction definition}, we see that this map is exactly $\res_{0,0}^{p,q}\ind_{0,0}^{p,q}T_{p+1,q}T_{p,q+1}f(V)$. Therefore, $f$ is a weak equivalence if and only if $\res_{0,0}^{p,q}\ind_{0,0}^{p,q}T_{p+1,q}T_{p,q+1}f$ is an objectwise weak equivalence as desired. 

The fibrations are the same as for $(p,q)$-poly-$\Ezero$, since fibrations are unchanged by right Bousfield localisation. Thus, the fibrant objects are the $(p,q)$-polynomial functors. 

The cofibrant objects are those functors $X$ such that $X$ is cofibrant in $\Ezero$ and such that for all $K_{p,q}$-colocal equivalences $f:A\rightarrow B$ the map 
\begin{equation*}
    \Nat_{0,0}(X,T_{p+1,q}T_{p,q+1}A)\rightarrow \Nat_{0,0}(X,T_{p+1,q}T_{p,q+1}B)
\end{equation*}
is a weak equivalence in $\CTop_*$. 

We now show that the cofibrant-fibrant objects of $R_{K_{p,q}}((p,q)\poly\Ezero)$ are the projectively cofibrant $(p,q)$-homogeneous functors. 

Let $A$ be a cofibrant-fibrant object of $R_{K_{p,q}}((p,q)\poly\Ezero)$. Then $A$ is $(p,q)$-polynomial and projectively cofibrant by above. There is a $K_{p,q}$-colocal equivalence $*\rightarrow \Tpq A$, since
\begin{align*}
\res_{0,0}^{p,q}\ind_{0,0}^{p,q}T_{p+1,q}T_{p,q+1} \Tpq A (V) &\simeq \res_{0,0}^{p,q}\ind_{0,0}^{p,q}\Tpq A (V) \\
&\simeq * \\
&\simeq \res_{0,0}^{p,q}\ind_{0,0}^{p,q}T_{p+1,q}T_{p,q+1} * (V)
\end{align*}
where the first equivalence is by Corollary \ref{tlmtpq=tpq} and the second equivalence is by Theorem \ref{weiss6.3.1} and Corollary \ref{poly implies ind contractible}. Therefore, there is a weak equivalence of $\C$-spaces 
\begin{equation*}
    \Nat_{0,0}(A,*)\rightarrow\Nat_{0,0}(A,\Tpq A)
\end{equation*}
Using this map and Remark \ref{Tpq homotopy category isoms}, there are isomorphisms 
\begin{equation*}
    0=[A,\Tpq A]\cong [A,A]^{\Tpq}\cong [\Tpq A, \Tpq A]^{\Tpq}\cong [\Tpq A, \Tpq A].
\end{equation*}
Hence, $\Tpq A$ is objectwise contractible, and $A$ is $(p,q)$-homogeneous. 

Let $B$ be a $(p,q)$-homogeneous functor that is cofibrant in $\Ezero$. Then $B$ is fibrant in $R_{K_{p,q}}((p,q)\poly\Ezero)$, since it is $(p,q)$-polynomial. It is left to show that $B$ is cofibrant. 

Let $f:\hat{c}B\rightarrow B$ be the cofibrant replacement of $B$ in $(p,q)$-homog-$\Ezero$. Since $B$ is fibrant, so is $\hat{c}B$. Therefore, $\res_{0,0}^{p,q}\ind_{0,0}^{p,q}f$ is an objectwise weak equivalence. If we can show that $f$ is an objectwise weak equivalence, then it will follow that $B$ is cofibrant. 

Let $D$ be the homotopy fibre of $f$. Since both $B$ and $\hat{c}B$ are fibrant, so is $D$ (see Corollary \ref{cor: c2 weiss 5.5}. Since $\res_{0,0}^{p,q}\ind_{0,0}^{p,q}f$ is an objectwise weak equivalence, $\res_{0,0}^{p,q}\ind_{0,0}^{p,q}D$ is objectwise contractible. Hence, we have a homotopy fibre sequence of fibrant ($(p,q)$-polynomial) objects 
\begin{equation*}
    \res_{0,0}^{p,q}\ind_{0,0}^{p,q}D\rightarrow D\rightarrow \taupq D
\end{equation*}
whose homotopy fibre is objectwise contractible. By Lemma \ref{conn at infinity lemma}, if $\taupq D$ is connected at infinity, then $D\rightarrow \taupq D$ is an objectwise weak equivalence. This is true, since $\taupq$ commutes with sequential homotopy colimits and $D$ is connected at infinity as it is $(p,q)$-homogeneous (see Lemma \ref{lem: homog implies connected at infinity}). Therefore, $D$ is strongly $(p,q)$-polynomial.  

We know that $D$ is $(p,q)$-polynomial (fibrant) from above. It remains to check that $T_{p,q}D$ is objectwise contractible in order to confirm that $D$ is indeed $(p,q)$-homogeneous as stated above. The sequential homotopy colimit used to describe $\Tpq D$ commutes with homotopy pullbacks. Therefore, there is a homotopy fibre sequence 
\begin{equation*}
    \Tpq D\rightarrow \Tpq \hat{c}B\rightarrow \Tpq B.
\end{equation*}
The functors $\hat{c}B$ and $B$ are both $(p,q)$-homogeneous ($B$ by assumption and $\hat{c}B$ by cofibrant-fibrant). Therefore, $\Tpq\hat{c}B$ and $\Tpq B$ are both objectwise contractible, which implies that $\Tpq D$ is objectwise contractible as required. 

Since $D$ is strongly $(p,q)$-polynomial, $D$ is objectwise weakly equivalent to $\Tpq D$. Hence, $D$ is objectwise contractible, which by Lemma \ref{conn at infinity lemma} implies that $f$ is an objectwise weak equivalence as desired. 

If $f$ is a $T_{p+1,q}T_{p,q+1}$-equivalence, then $\res_{0,0}^{p,q}\ind_{0,0}^{p,q}T_{p+1,q}T_{p,q+1}f$ is an objectwise weak equivalence, since $\res_{0,0}^{p,q}\ind_{0,0}^{p,q}$ preserves objectwise weak equivalences. Therefore, $\Id$ is right Quillen, and the adjunction exists. 
\end{proof}

\begin{remark}
Detecting weak equivalences via $\ind_{0,0}^{p,q}T_{p+1,q}T_{p,q+1}$ can be difficult, since the induction functor $\ind_{0,0}^{p,q}$ is complex. In unitary calculus, Taggart shows that a map is an $\ind_{0}^n T_n$-equivalence if and only if it is a $D_n$-equivalence, where $D_nF$ is the homotopy fibre of $T_nF\rightarrow T_{n-1}F$ (see \cite[Proposition 8.2]{Tag22unit}). Via a similar proof, one can show that a map is an $\ind_{0,0}^{p,q}T_{p+1,q}T_{p,q+1}$-equivalence if and only if it is a $D_{p,q}$-equivalence.  
\end{remark}

\chapter{The equivariant classification theorem}
\label{ch:EquivClass}

\fancyhf{}
\fancyhead[C]{\rightmark}
\fancyhead[R]{\thepage}

The main result of orthogonal calculus is the classification of $n$-homogeneous functors, as functors fully determined by a category of spectra. This is given by the classification theorem of Weiss \cite[Theorem 7.3]{Wei95}, which states that an $n$-homogeneous functor is weakly equivalent to a functor of the form $V\mapsto \Omega^\infty [(S^{nV}\wedge \Psi)_{hO(n)}]$, where $\Psi$ is an orthogonal spectrum with an action of $O(n)$. In \cite{BO13}, this classification is derived as a Quillen equivalence on the model categories constructed, see Section \ref{sec: classification n-homog}. As a result, the homotopy fibres of the maps $T_{n+1}X\rightarrow T_nX$ for an input functor $X$, which are $n$-homogeneous, stand a chance of begin computed. 

In this chapter, we construct two Quillen equivalences. These Quillen equivalences form a zig-zag of equivalences between the $(p,q)$-homogeneous model structure on $\Ezero$ and the stable model structure on $\C Sp^O[O(p,q)]$. 
\[\begin{tikzcd}
	{(p,q)\homog C_2\mathcal{E}_{0,0}} && {O(p,q)C_2\mathcal{E}_{p,q}^s} && {C_2Sp^O[O(p,q)]}
	\arrow["{\ind_{0,0}^{p,q}\varepsilon^*}"', shift right=2, from=1-1, to=1-3]
	\arrow["{\res_{0,0}^{p,q}/O(p,q)}"', shift right=2, from=1-3, to=1-1]
	\arrow["{(\alpha_{p,q})_!}", shift left=2, from=1-3, to=1-5]
	\arrow["{\alpha_{p,q}^*}", shift left=2, from=1-5, to=1-3]
\end{tikzcd}\]

In the same way as for Barnes and Oman in \cite[Section 10]{BO13}, this leads to a classification theorem for $(p,q)$-homogeneous functors (Theorem \ref{weissclassification}), as functors fully determined by genuine orthogonal $\C$-spectra with an action of $O(p,q)$.

\section{The intermediate category as a category of spectra}

In this section we show that the $(p,q)$-th intermediate category $\OEpq$ is Quillen equivalent to the category of $O(p,q)$-equivariant objects in orthogonal $\C$-spectra. Thus, the $(p,q)$-derivative of an input functor (see Definition \ref{jzero and ezero def}) can be described in terms of these spectra. This section is analogous to \cite[Section 8]{BO13}, where the only differences are due to the additional $\C$-action, which does not effect the $O(p,q)$-equivariance of maps considered. The resulting Quillen equivalence 
\begin{equation*}
    (\alpha_{p,q})_{!}: \OEpq \rightleftarrows \C Sp^O [O(p,q)] : \alpha_{p,q}^*
\end{equation*}   
forms one half of the zig-zag of equivalences which gives the classification of $(p,q)$-homogeneous functors, see Theorem \ref{zigzagclassification}. The spectrum $(\alpha_{p,q})_{!}F$ for a functor $F\in \OEpq$ is the categorification of the spectrum $\Theta F$ constructed in \cite[Section 2]{Wei95}.

We begin by describing the category of orthogonal $\C$-spectra. Details of these constructions have been discussed by Mandell and May in \cite[Section II.4]{MM02}. 

\begin{definition}\label{def: orth c2 spectra}\index{$\C Sp^O$}
The \emph{category of orthogonal $\C$-spectra} $\C Sp^O$ is the category $\C\mathcal{E}_{1,0}$.

\noindent This category has a cofibrantly generated proper stable model structure where the cofibrations are $q$-cofibrations and the weak equivalences are the $(1,0)\pi_*$-equivalences (see Definition \ref{def: pq pi equiv}). That is, $f$ if a weak equivalence if $(1,0)\pi_k^H f$ is an isomorphism for all closed subgroups $H\leq \C$ and integers $k$. It is cofibrantly generated by the following sets of generating cofibrations and generating acyclic cofibrations respectively
\begin{align*}
 \{&F_Vi:i\in I_{\C}\}\\
 \{&F_Vj:j\in J_{\C}\}
\end{align*}
where $F_V:\CTop_*\rightarrow \C Sp^O$ is the left adjoint to evaluation at $V$, and $I_{\C},J_{\C}$ are the generating cofibrations and acyclic cofibrations for $\CTop_*$ (see Proposition \ref{finemodelstructure}). 
\end{definition}

\begin{remark}
Sometimes in the literature, an orthogonal $G$-spectrum is defined as a collection of based spaces $\{X_n\}_{n\in \mathbb{N}}$ with an action of $G\times O(n)$ on each $X_n$. The structure maps are $G$-equivariant maps 
\begin{equation*}
    X_n\wedge S^1\rightarrow X_{n+1},
\end{equation*}
such that the iterated structure maps 
\begin{equation*}
    X_n\wedge S^m \rightarrow X_{n+m}
\end{equation*}
are $O(m)\times O(n)$-equivariant, where $G$ acts trivially on $S^m$. 

This is what we refer to as a naive orthogonal $G$-spectrum. Genuine $G$-spectra are indexed on a complete $G$-universe of all $G$-representations, whereas naive $G$-spectra are indexed on the trivial $G$-universe. That is, naive $G$-spectra are just spectra with an action of $G$ on each level, and $G$-equivariant structure maps. These two descriptions of $G$-spectra are categorically equivalent, however they are not homotopically equivalent (see \cite[Section 9.3]{HHR21}), in that the most natural model structures associated to these categories are not Quillen equivalent. In what follows, we consider orthogonal spectra that are genuine with respect to $\C$ and naive with respect to $O(p,q)$. 
\end{remark}

\begin{definition}\index{$\C Sp^O [O(p,q)]$}
The category of \emph{$O(p,q)$-objects in orthogonal $\C$-spectra}, $\C Sp^O [O(p,q)]$, is the category of $O(p,q)$-objects in $\C \mathcal{E}_{1,0}$ and $O(p,q)$-equivariant maps. An $O(p,q)$-object in $\C \mathcal{E}_{1,0}$ is a $\C\Top_*$-enriched functor from $\C\mathcal{J}_{1,0}$ to $\COTop_*$. 
\end{definition}

\begin{theorem}\label{thm: stable ms on C2O-spectra}
The category of genuine orthogonal $\C$-spectra with an action of $O(p,q)$, $\C Sp^O [O(p,q)]$, has a cofibrantly generated proper stable model structure where fibrations and weak equivalences are defined by the underlying model structure on $\C Sp^O$ above.
\end{theorem}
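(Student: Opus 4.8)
The plan is to transfer the model structure from the underlying category $\C Sp^O = \C\mathcal{E}_{1,0}$ along the free--forgetful adjunction
\[
  \beta_! : \C Sp^O \rightleftarrows \C Sp^O[O(p,q)] : \beta^*,
\]
exactly mirroring the construction of the projective model structure on $\OEpq$ in Lemma \ref{objectwise model structure proof} and Remark \ref{remark: model structure on semi direct}, but with the \emph{stable} model structure on $\C Sp^O$ in place of the objectwise one. The forgetful functor $\beta^*$ (restriction along $\C \hookrightarrow O(p,q)\rtimes\C$, or rather the analogous inclusion at the level of $O(p,q)$-objects) detects weak equivalences and fibrations by decree; its left adjoint $\beta_!$ sends a $\C$-spectrum $X$ to $(O(p,q))_+ \wedge X$ with the evident $O(p,q)$-action. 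Thus the candidate generating cofibrations are $\{(O(p,q))_+ \wedge F_V i : i \in I_{\C},\ V\}$ and the candidate generating acyclic cofibrations are $\{(O(p,q))_+\wedge F_V j : j \in J_{\C},\ V\}$ together with the $O(p,q)_+$-smash of the stabilising maps $k_{V,W}$ used to localise $\C Sp^O$ to its stable model structure.

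First I would invoke the transfer/recognition theorem of Hovey \cite[Theorem 2.1.19]{Hov99} (or equivalently the crossed-simplicial transfer along a Quillen adjunction, as in Remark \ref{remark: model structure on semi direct}). The three hypotheses to check are: (1) the smallness condition on the domains of $\beta_! I$ and $\beta_! J$, which holds because $\beta^*$ commutes with all colimits (it is a right adjoint that also preserves filtered colimits, since those are created levelwise in $\C$-spaces) and the domains are small in $\C Sp^O$; (2) $\beta_!(J)$-cell consists of weak equivalences, which follows because $\beta^*$ preserves pushouts and transfinite compositions and sends each generating acyclic cofibration to an acyclic cofibration in the stable model structure on $\C Sp^O$ --- here one uses that the stable model structure on $\C Sp^O$ is itself cofibrantly generated and left proper (Definition \ref{def: orth c2 spectra}); and (3) $\beta_!(I)$-inj $=$ $\beta_!(J)$-inj $\cap$ (weak equivalences), which is an adjunction-chasing argument of exactly the type carried out in Proposition \ref{inj}. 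Properness and the cellular property are then inherited by the same adjoint-diagram argument as at the end of the proof of Lemma \ref{objectwise model structure proof}: $\beta^*$ preserves and detects the relevant classes and commutes with pushouts and pullbacks.

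The $\C$-topological enrichment of the resulting model structure follows from the pushout-product axiom for $\C Sp^O$ together with the fact that $\beta_!$ is a $\CTop_*$-enriched left adjoint that preserves the pushout-product (it is smashing with a cofibrant $\C$-space $O(p,q)_+$ placed in a fixed object), so it sends (generating) cofibrations to cofibrations and interacts correctly with the $\CTop_*$-tensoring. I would state this as a short lemma and reduce it to the analogous property for $\C Sp^O$.

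The main obstacle I anticipate is \emph{not} the formal transfer, which is routine, but verifying that $\beta_!$ really does take the stabilising acyclic cofibrations of $\C Sp^O$ to $\beta_!(J)$-cofibrations in a way compatible with the claim that the \emph{fibrant objects} are the $O(p,q)$-objects whose underlying $\C$-spectrum is an $\Omega$-spectrum; equivalently, one must be sure that the left Bousfield localisation that produces the stable model structure on $\C Sp^O$ is compatible with the transfer --- i.e. that ``transfer then localise'' agrees with ``localise then transfer.'' The clean way to handle this is to observe, as in Remark \ref{remark: model structure on semi direct}, that $O(p,q)$ acts purely externally (it never touches the $\C$-universe or the structure maps), so $\beta^*$ commutes with the loop functors $\Omega^{(1,0)V}$ and with the detection of $\pi_*$-isomorphisms; hence a map of $O(p,q)$-spectra is a stable equivalence (resp. stable fibration) iff its underlying map of $\C$-spectra is, and the transferred model structure is automatically already localised. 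Once this compatibility is spelled out, the statement follows, and I would close by recording the Quillen adjunction $\Id : \C Sp^O[O(p,q)]^{\ell} \rightleftarrows \C Sp^O[O(p,q)]^{s} : \Id$ between the level (projective) and stable model structures as an immediate corollary, exactly as after Proposition \ref{prop: stable ms}.
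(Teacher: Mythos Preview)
Your proposal is correct and follows essentially the same approach as the paper: transfer the stable model structure on $\C Sp^O$ along the free--forgetful adjunction, exactly as in Remark~\ref{remark: model structure on semi direct}. The paper gives only a one-line sketch pointing to that remark and to the adjunction $(O(p,q)\rtimes\C)_+\wedge_{\C}(-)\dashv i^*$, whereas you spell out the verification of Hovey's hypotheses and the compatibility of transfer with the stable localisation; your extra detail is sound but goes beyond what the paper records.
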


One can prove the existence of this model structure in a similar way as for Remark \ref{remark: model structure on semi direct}, using the adjunction
\[\begin{tikzcd}
	{(O(p,q)\rtimes \C)\wedge_{\C}(-):\C Sp^O} & {\C Sp^O[O(p,q)]: i^*}
	\arrow[shift left=2, from=1-1, to=1-2]
	\arrow[shift left=2, from=1-2, to=1-1]
\end{tikzcd}\]
where $i^*$ is the restriction functor.

We want to define a functor $\C Sp^O [O(p,q)]\rightarrow \OEpq$. This functor will be called $\alpha_{p,q}^*$, and it is analogous to the $\alpha_n^*$ functor built in \cite[Section 8]{BO13}.

\begin{definition}\index{$\alpha_{p,q}^*$}
Define the functor $\alpha_{p,q}:\Jpq\rightarrow\C\mathcal{J}_{1,0}$ by 
\begin{equation*}
    U\mapsto \mathbb{R}^{p,q}\otimes U = (p,q)U
\end{equation*}
on objects, and 
\begin{equation*}
    (f,x)\mapsto(\mathbb{R}^{p,q}\otimes f, x)
\end{equation*}
on morphisms. 
\end{definition}

The map on morphisms is clearly $\left(O(p,q)\rtimes \C\right)$-equivariant, and thus $\alpha_{p,q}$ is enriched over $\left(O(p,q)\rtimes \C\right)$-spaces. 

This induces a functor 
\begin{equation*}
    \alpha_{p,q}^*:\C Sp^O[O(p,q)]\rightarrow \OEpq
\end{equation*}
defined by precomposition. For $X\in\C Sp^O[O(p,q)]$ we define the $ (O(p,q)\rtimes \C)$- action on $(\alpha_{p,q}X)(V):=X((p,q)V)$ by 
\begin{equation*}
 X(g\sigma \otimes \sigma)\circ (g\sigma )_{X((p,q)V)}
\end{equation*}Here $ X(g\sigma \otimes \sigma)$ is the internal action on $X((p,q)V)$ induced by the action on $(p,q)V$, and $(g\sigma )_{X((p,q)V)}$ is the external action from $X((p,q)V)$ being an $\left( O(p,q)\rtimes \C\right)$-space. These two actions commute by construction. 

Checking that $\alpha_{p,q}^*X$ is well defined (i.e. that $\alpha_{p,q}^*X$ is $(O(p,q)\rtimes \C)\Top_*$-enriched) is the same as checking that the map 
\begin{equation*}
    \Jpq(U,V)\rightarrow \COTop_*(X((p,q)V),X((p,q)V))
\end{equation*}
is $\left(O(p,q)\rtimes \C\right)$-equivariant. 

To do this we consider the following commutative diagram. We use the notation $(-)^*$ to mean pre-composition and $(-)_*$ to mean post-composition. Let $s$ denote the map $((g\sigma )^{-1}\otimes \sigma)^*\circ (g\sigma  \otimes \sigma)_*$, and $t$ be the map $(X((g\sigma )^{-1}\otimes \sigma))^*\circ (X(g\sigma  \otimes \sigma))_*$. We have abbreviated $\COTop_*$ to $S\Top_*$ to save space ($S$ for semi-direct product). 
\[\begin{tikzcd}
	{\C\mathcal{J}_{p,q}(U,V)} & {\C\mathcal{J}_{1,0}((p,q)U,(p,q)V)} & {S\Top_*(X((p,q)U),X((p,q)V))} \\
	\\
	{\C\mathcal{J}_{p,q}(U,V)} & {\C\mathcal{J}_{1,0}((p,q)U,(p,q)V)} & {S\Top_*(X((p,q)U),X((p,q)V))}
	\arrow["t", from=1-3, to=3-3]
	\arrow["s", from=1-2, to=3-2]
	\arrow["g\sigma", from=1-1, to=3-1]
	\arrow["{\alpha_{p,q}}", from=1-1, to=1-2]
	\arrow["{\alpha_{p,q}}"', from=3-1, to=3-2]
	\arrow["X", from=1-2, to=1-3]
	\arrow["X"', from=3-2, to=3-3]
\end{tikzcd}\]
Given a pair $(f,x)\in \Jmor{p,q}{U}{V}$, by applying $\alpha_{p,q}^* X$ we get a a $\left( O(p,q)\rtimes \C\right)$-equivariant map
\begin{equation*}
    X(\mathbb{R}^{p,q}\otimes f): X((p,q)V)\rightarrow X((p,q)V).
\end{equation*}Therefore, the following two expressions are equal 
\begin{equation*}
    X(g\sigma \otimes V)\circ X(\mathbb{R}^{p,q}\otimes f,x)\circ X((g\sigma )^{-1}\otimes U)
\end{equation*}
\begin{equation*}
   (g\sigma )_{X(U)}\circ X(g\sigma \otimes V)\circ X(\mathbb{R}^{p,q}\otimes f,x)\circ X((g\sigma )^{-1}\otimes U)\circ (g\sigma )^{-1}_{X(V)}
\end{equation*}which by comparison to the commutative diagram tells us exactly that the map 
\begin{equation*}
    \Jpq(U,V)\rightarrow \COTop_*(X((p,q)V),X((p,q)V))
\end{equation*}
is $\left(O(p,q)\rtimes \C\right)$-equivariant, and hence that $\alpha_{p,q}^*X$ is well defined. 

\begin{remark}
Note that any other choice of internal action on $X((p,q)V)$ results in the failure of the diagram being commutative. For example, taking $X(g\sigma \otimes \id)$ as the internal action means that the left square in the diagram commutes only if $f$, from the pair $(f,x)$, is $\C$-equivariant, which is not necessarily the case.
\end{remark}

The left Kan extension of $X\in \C Sp^O[O(p,q)]$ along $\alpha_{p,q}$ can be described by the following $\COTop_*$-enriched coend. If we use the notation $(\alpha_{p,q})_!$ to denote taking the left Kan extension along $\alpha_{p,q}$, then 
\begin{equation*}
    ((\alpha_{p,q})_! (X))(V)=\int\limits^{U\in \Jpq} \CJ{1}{0} ((p,q)U,V)\wedge X(U).
\end{equation*}
We make this functor suitable enriched by `twisting' the action as in \cite[Definition 8.2]{BO13}. That is, we let $\C\mathcal{J}_{1,0}$ act on $\CJ{1}{0}((p,q)U,V)$ on the left by composition, and let $\C\mathcal{J}_{p,q}$ act on $\CJ{1}{0}((p,q)U,V)$ on the right by composition. It is not hard to show, by an argument of coends, that $(\alpha_{p,q})_!$ forms a left adjoint to $\alpha_{p,q}^*$. We now prove that this adjunction is indeed a Quillen equivalence. 

\begin{theorem}\label{QEstabletospectra}
The adjoint pair 
\begin{equation*}
    (\alpha_{p,q})_{!}: \OEpq^s\rightleftarrows \C Sp^O [O(p,q)] : \alpha_{p,q}^*
\end{equation*}
is a Quillen equivalence, where both categories are equipped with their stable model structures (see Proposition \ref{prop: stable ms} and Theorem \ref{thm: stable ms on C2O-spectra}). 
\end{theorem}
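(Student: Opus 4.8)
The plan is to follow the template of Barnes and Oman \cite[Proposition 8.3]{BO13}, adapting each step to account for the genuine $\C$-action while observing that the $O(p,q)$-equivariance is untouched. First I would check that the adjunction is a Quillen adjunction: it suffices to show $\alpha_{p,q}^*$ is a right Quillen functor, i.e.\ that it preserves fibrations and acyclic fibrations. Since fibrations and weak equivalences in $\C Sp^O[O(p,q)]$ are detected by the underlying stable model structure on $\C Sp^O$ (Theorem \ref{thm: stable ms on C2O-spectra}), and weak equivalences/fibrations in $\OEpq^s$ are detected after applying $\beta^*$ and evaluating (Proposition \ref{prop: stable ms}, Remark \ref{remark: model structure on semi direct}), this reduces to a levelwise statement: for $X\in\C Sp^O[O(p,q)]$, the $\C$-space $\beta^*((\alpha_{p,q}^*X)(V))=\beta^*(X((p,q)V))$ has the correct homotopy type. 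Concretely I would verify that $\alpha_{p,q}^*$ sends $\Omega$-spectra to $(p,q)\Omega$-spectra (using Lemma \ref{omega spectra lemma} and the identification $S^{(p,q)V}\cong S^{\mathbb{R}^{p,q}\otimes V}$, so that the adjoint structure maps of $\alpha_{p,q}^*X$ are those of $X$ reindexed) and that it preserves the homotopy-pullback condition characterising stable fibrations; the relevant diagrams are exactly those in $\C Sp^O$ after forgetting $O(p,q)$, so this is immediate.

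Next I would show the derived unit and counit are weak equivalences. The cleanest route, mirroring \cite[Section 8]{BO13}, is to first establish the statement on the \emph{level} model structures: the pair $((\alpha_{p,q})_!,\alpha_{p,q}^*)$ between $\OEpq^l$ and the levelwise model structure on $\C Sp^O[O(p,q)]$ is a Quillen equivalence, essentially because $\alpha_{p,q}$ induces an equivalence on the relevant categories of free/representable objects: $(\alpha_{p,q})_!(F_V K)\cong \Jpq(V/(p,q),-)\wedge K$ appropriately twisted, and evaluation intertwines correctly. Then I would invoke the fact that both stable model structures are left Bousfield localisations of their level counterparts (Proposition \ref{prop: stable ms}; the analogous statement for $\C Sp^O[O(p,q)]$), and that $(\alpha_{p,q})_!$ takes the localising maps $\lambda^{p,q}_{V,W}$ to the stabilising maps $\lambda_{(p,q)V,(p,q)W}$ of $\C Sp^O$ (up to the $\C$-homeomorphism $S^{(p,q)W}\cong S^{\mathbb{R}^{p,q}\otimes W}$ and the Thom-space identifications). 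A Quillen equivalence of level structures that is compatible with the localising classes on each side descends to a Quillen equivalence of the localisations, by a standard argument with \cite[Theorem 3.3.20]{Hir03} or direct inspection of derived unit/counit.

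The main obstacle I expect is bookkeeping the two commuting actions on $\alpha_{p,q}^*X(V)=X((p,q)V)$ — the internal action $X(g\sigma\otimes\sigma)$ induced by the representation and the external action $(g\sigma)_{X((p,q)V)}$ — and confirming that the ``twisted'' enrichment of $(\alpha_{p,q})_!$ (letting $\C\mathcal{J}_{1,0}$ act on the left and $\Jpq$ on the right of $\CJ{1}{0}((p,q)U,V)$) makes it genuinely left adjoint to $\alpha_{p,q}^*$ with its combined action. This is where one must be careful that the $\C$-equivariance does not clash with the $O(p,q)$-equivariance; the paper has already done the hard verification that $\alpha_{p,q}^*X$ is $(O(p,q)\rtimes\C)\Top_*$-enriched, so I would lean on that and on the commutativity of the square displayed before this theorem. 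Once the adjunction and its compatibility with the two level model structures are in hand, the localisation step and the passage to $(p,q)\Omega$-spectra are formal. I would also remark that, restricting to trivial $\C$, this recovers \cite[Proposition 8.3]{BO13} with $n=p+q$, which serves as a sanity check.

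Finally, for the derived unit on a cofibrant $X\in\C Sp^O[O(p,q)]$, I would argue that $X\to\alpha_{p,q}^*(\alpha_{p,q})_!X$ is a $\pi_*$-isomorphism by reducing to free spectra $F_V K$ (for which $(\alpha_{p,q})_!F_V K$ is computed explicitly) and using that both sides commute with the filtered colimits and cell attachments building a general cofibrant object; and for the derived counit $(\alpha_{p,q})_!(\alpha_{p,q}^*Y)^{\mathrm{cof}}\to Y$ on fibrant $Y$, I would use that $Y$ is a $(p,q)\Omega$-spectrum after forgetting and compare homotopy groups levelwise via the colimit defining $(p,q)\pi_*^H$, exactly as in \cite[Proposition 8.3]{BO13}, checking at the two subgroups $H\in\{e,\C\}$ separately as permitted by Lemma \ref{BOLem7.10}.
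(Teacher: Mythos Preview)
Your proposal is correct and broadly follows the same template as the paper (both appeal to \cite[Proposition 8.3]{BO13}), but the emphasis differs in one place worth noting. The paper argues directly at the stable level: it shows $\alpha_{p,q}^*$ preserves objectwise (acyclic) fibrations and homotopy pullbacks, then uses a finality/cofinality argument to show that $\alpha_{p,q}^*$ \emph{preserves and reflects} $(p,q)\pi_*$-equivalences (the colimit over all $V\in\Jzero$ computing $(p,q)\pi_k^H X$ is cofinal in the colimit over all $W$ computing $\pi_k^H$ of the underlying $\C$-spectrum, since every $W$ embeds in some $(p,q)V$); with reflection of weak equivalences in hand, Hovey \cite[Theorem 1.3.16]{Hov99} reduces the Quillen equivalence to checking the derived unit on the generators $O(p,q)_+\wedge(p,q)\mathbb{S}$ and $(\C\times O(p,q))_+\wedge(p,q)\mathbb{S}$ of $\OEpq$. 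Your ``cleanest route'' via a level Quillen equivalence followed by compatible Bousfield localisation is a legitimate alternative and is what \cite{BO13} effectively does too, but the paper's finality shortcut lets you avoid ever establishing the level Quillen equivalence explicitly and avoids the separate derived-counit check you outline at the end. Either route works; the paper's is marginally shorter because reflection of weak equivalences plus the unit-on-generators check subsumes the counit step.
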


\begin{proof}
The proof follows as in \cite[Proposition 8.3]{BO13}.
Since $\alpha_{p,q}^*$ is defined by precomposition, it preserves objectwise fibrations and objectwise acyclic fibrations. It can easily be shown that $\alpha_{p,q}^*$ also preserved homotopy pullbacks. Hence, it preserves stable fibrations. An argument using finality shows that $\alpha_{p,q}^*$ preserves and reflects weak equivalences, and therefore preserves stable acyclic fibrations, making the adjunction Quillen.

It remains to show that the Quillen adjunction is a Quillen equivalence. By Hovey \cite[Theorem 1.3.16]{Hov99}, it suffices to show that the derived unit is a weak equivalence. Since the categories are stable and $\alpha_{p,q}^*$ preserves coproducts, it suffices to do this for the generators of $\OEpq$. This can be done by plugging in the generators $O(p,q)_+\wedge \pqs$ and $(\C\times O(p,q))_+\wedge \pqs$ into the formula for the unit, where $(p,q)\mathbb{S}$ sends $V\in\Jzero$ to the one point compactification of $\R^{p+q\delta}\otimes V$, denoted $S^{(p,q)V}$. 
\end{proof}

\section{Induction as a Quillen functor}

In this section we construct a Quillen adjunction between the $(p,q)$-homogeneous model structure and the stable model structure on $\Epq$. The right adjoint of this adjunction will be the differentiation functor $\ind_{0,0}^{p,q}\varepsilon^*$. Moreover, we will show that this Quillen adjunction is in fact a Quillen equivalence between these categories. Combined with the Quillen equivalence of Theorem \ref{QEstabletospectra}, this allows for the classification of $(p,q)$-homogeneous functors in terms of orthogonal $\C$-spectra with an action of $O(p,q)$. 

The steps taken to construct this Quillen equivalence are similar to \cite[Section 9 and Section 10]{BO13}. We begin by explicitly proving the existence of a Quillen adjunction between the projective model structures on $\OEpq$ and $\Ezero$, which can then be extended via properties of Quillen adjunctions and Bousfield localisations. 

\begin{lemma}\label{Epqlevel Ezero adjunction}
    There exists a Quillen adjunction 
\begin{equation*} \res_{0,0}^{p,q}/O(p,q):\OEpql\rightleftarrows\Ezero:\ind_{0,0}^{p,q}\varepsilon^*
\end{equation*}
\end{lemma}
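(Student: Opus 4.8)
The plan is to establish this Quillen adjunction by first verifying that $\res_{0,0}^{p,q}/O(p,q)$ and $\ind_{0,0}^{p,q}\varepsilon^*$ genuinely form an adjoint pair, and then checking the model-categorical compatibility. For the adjunction itself, recall from Definition \ref{induction definition} that $\ind_{0,0}^{p,q}$ is right adjoint to $\res_{0,0}^{p,q}$ on the level of $\C$-enriched functor categories, and from the lemma preceding Definition \ref{induction definition} that $(-)/O(p,q)$ is left adjoint to $\CI_{0,0}^{p,q} = \varepsilon^*$ (the trivial-$O(p,q)$-action functor, as noted in Definition \ref{induction definition}). Composing these two adjunctions — noting that the orbit/inflation functors only affect the $O(p,q)$-structure while the restriction/induction along $i_{0,0}^{p,q}$ only affects the jet-category indexing, and the two commute — gives the claimed adjoint pair. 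I would spell this out as a short chain of natural isomorphisms
\begin{equation*}
\C\Nat_{0,0}\left(\res_{0,0}^{p,q}F/O(p,q),\, E\right) \cong \Nat_{\OEpq}\left(F,\, \ind_{0,0}^{p,q}\varepsilon^* E\right),
\end{equation*}
using the two adjunctions already proved and the fact that $\varepsilon^*$ here is the restriction along $O(p,q)\rtimes\C \to \C$ that inflates trivially over $O(p,q)$.

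Next I would check that $\res_{0,0}^{p,q}/O(p,q)$ is left Quillen with respect to the projective (level) model structures $\OEpql$ (Lemma \ref{objectwise model structure proof}) and $\Ezero$ (Proposition \ref{proj model structure}). The cleanest route is to show the right adjoint $\ind_{0,0}^{p,q}\varepsilon^*$ preserves fibrations and acyclic fibrations. Since fibrations and weak equivalences in both projective model structures are detected objectwise on $\C$-spaces (via $\beta^*$ in the intermediate category), and since $\ind_{0,0}^{p,q}\varepsilon^* E(U) = \Nat_{0,0}(\CJ{p}{q}(U,-)/\text{(something)}, \varepsilon^* E)$ is built out of $\C$-spaces of natural transformations into $\varepsilon^* E$, it suffices to observe that $\Top_*(A,-)$ preserves (acyclic) fibrations of $\C$-spaces when $A$ is a $\C$-CW complex, together with the fact that $\CJ{p}{q}(U,V)$ is $\C$-CW (it is a Thom space of a $\C$-vector bundle over a compact $\C$-manifold). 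This is the same style of argument used in the proof of Lemma \ref{sphereandmorpharecofibrant} and in Proposition \ref{adjunctions}. Alternatively, and perhaps more transparently, I would argue on the left adjoint: $\res_{0,0}^{p,q}/O(p,q)$ sends the generating (acyclic) cofibrations $\Ilevel$, $\Jlevel$ of $\OEpql$ — which have the form $\CJ{p}{q}(V,-)\wedge\beta_!(i)$ — to (acyclic) cofibrations of $\Ezero$, using that restriction along $i_{0,0}^{p,q}$ composed with the orbit functor takes representable-times-cell maps to projectively cofibrant analogues, and that $(-)/O(p,q)$ applied to $\beta_!(-) = (O(p,q)\rtimes\C)_+\wedge_{\C}(-)$ recovers $(\C)_+\wedge_{\C}(-) \cong \id$ on the relevant $\C$-CW inputs.

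The main obstacle I anticipate is the bookkeeping around the three interacting change-of-group/change-of-indexing functors: the jet-category restriction $i_{0,0}^{p,q,*}$, the $O(p,q)$-orbit functor, and the inflation $\varepsilon^*$, and making sure their composites are assembled in the correct order so that the adjunction is on the nose rather than merely up to homotopy. In particular one must be careful that $\ind_{0,0}^{p,q}\varepsilon^*E$ carries the correct residual $\C$-action (the conjugation action on the natural-transformation space, as in Remark \ref{remNat}) and that this is compatible with the $\C$-action coming from $\beta^*$ on the intermediate category. Once the adjunction is correctly set up, the Quillen property is routine given the cofibrant-generation and objectwise nature of both model structures; the genuine content of the later development (Lemma \ref{Epqlevel Ezero adjunction} being upgradeable to the stable/homogeneous structures, and ultimately to a Quillen equivalence) is deferred to subsequent results and is not needed here.
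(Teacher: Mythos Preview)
Your ``alternative'' route via the left adjoint on generating cofibrations is exactly the paper's proof, and your computation that $(-)/O(p,q)$ applied to $\beta_!(i)=(O(p,q)\rtimes\C)_+\wedge_{\C}i$ collapses back to $i$ is the right observation. The one place you are too vague is the phrase ``takes representable-times-cell maps to projectively cofibrant analogues'': the image of a generator is $\res_{0,0}^{p,q}\Jpq(U,-)\wedge i$, and $\res_{0,0}^{p,q}\Jpq(U,-)$ is \emph{not} representable in $\Ezero$, so its cofibrancy is genuine content. The paper invokes precisely Lemma~\ref{sphereandmorpharecofibrant} for this, after which the $\C$-topological (pushout-product) axiom on $\Ezero$ finishes the argument. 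Your primary route through the right adjoint would also work, but note that it reduces to the same fact: showing $\Nat_{0,0}(\Jpq(U,-),-)$ preserves objectwise (acyclic) fibrations is equivalent, via the enrichment, to knowing $\Jpq(U,-)$ is projectively cofibrant. (Also, your formula for $\ind_{0,0}^{p,q}\varepsilon^*E(U)$ should have no quotient on the source; the $O(p,q)$-action lives on the Nat-space via its action on $\Jpq(U,-)$.)
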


\begin{proof}
The generating (acyclic) cofibrations of $\OEpq^l$ are of the form 
\begin{equation*}
\Jpq(U,-)\wedge O(p,q) \wedge i,    
\end{equation*}
where $i$ is a generating (acyclic) cofibration of the fine model structure on $\CTop_*$, see Theorem \ref{finemodelstructure}. 

Applying the left adjoint gives $\res_{0,0}^{p,q}\Jpq(U,-)\wedge i$ in $\Ezero$. We know that the functor $\res_{0,0}^{p,q}\Jpq(U,-)$ is cofibrant in $\Ezero$ by Lemma \ref{sphereandmorpharecofibrant}, hence $\res_{0,0}^{p,q}\Jpq(U,-)\wedge i$ is a (acyclic) cofibration in $\Ezero$. Therefore, the left adjoint preserves (acyclic) cofibrations, which by Hovey \cite[Lemma 2.1.20]{Hov99} shows that the adjunction is Quillen. 
\end{proof}

This adjunction can be extended to the $(p,q)$-polynomial model structure, via a composition of Quillen adjunctions. Furthermore, since the stable model structure on $\Epq$ is a left Bousfield localisation of the projective model structure, the Theorems of Hirschhorn \cite[Theorem 3.1.6, Proposition 3.1.18]{Hir03} can be used to additionally extend the adjunction to the stable model structure.  In particular, this Quillen adjunction implies that the derivative of a $(p,q)$-polynomial functor is a $(p,q)\Omega$-spectrum. 

\begin{lemma}
There exists a Quillen adjunction 
\begin{equation*} \res_{0,0}^{p,q}/O(p,q):\OEpq^s\rightleftarrows (p,q)\poly\Ezero:\ind_{0,0}^{p,q}\varepsilon^*
\end{equation*}
\end{lemma}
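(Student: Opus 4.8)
The plan is to obtain this Quillen adjunction by composing the Quillen adjunction of Lemma \ref{Epqlevel Ezero adjunction} with the Quillen adjunctions relating the various Bousfield-localised model structures, and then verify the one extra point that does not come for free, namely that the right adjoint $\ind_{0,0}^{p,q}\varepsilon^*$ lands in $(p,q)\Omega$-spectra when applied to a $(p,q)$-polynomial (fibrant) functor. First I would recall that the stable model structure $\OEpq^s$ is a left Bousfield localisation of $\OEpql$ (Proposition \ref{prop: stable ms}), and that $(p,q)\poly\Ezero$ is a left Bousfield localisation of $\Ezero$ (Proposition \ref{polynomial model structure}, or its description via $S_{p+1,q}\coprod S_{p,q+1}$). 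Since a Quillen adjunction between two model categories descends to a Quillen adjunction between their left Bousfield localisations provided the left adjoint sends the localising maps to weak equivalences in the localised target — this is \cite[Theorem 3.1.6, Proposition 3.1.18]{Hir03} — it suffices to check this compatibility, or, more directly, to verify the dual statement on the right adjoint as below.

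The cleanest route is to use the standard criterion for extending a Quillen adjunction to Bousfield localisations: if $L \dashv R$ is a Quillen adjunction $\mathcal{C} \rightleftarrows \mathcal{D}$ and we localise $\mathcal{C}$ at $\mathcal{S}$ and $\mathcal{D}$ at $\mathcal{T}$, then $L \dashv R$ remains Quillen for the localised structures as soon as $R$ carries $\mathcal{T}$-local (i.e. fibrant in the localised sense) objects to $\mathcal{S}$-local objects; equivalently $\mathbb{L}L$ sends $\mathcal{S}$ to $\mathcal{T}$-local equivalences. Here $\mathcal{D} = \Ezero$ localised to $(p,q)\poly\Ezero$ has fibrant objects the $(p,q)$-polynomial functors, and $\mathcal{C} = \OEpql$ localised to $\OEpq^s$ has fibrant objects the $(p,q)\Omega$-spectra. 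So the content to establish is: \emph{if $E\in\Ezero$ is $(p,q)$-polynomial, then $\ind_{0,0}^{p,q}\varepsilon^* E$ is a $(p,q)\Omega$-spectrum.} By Lemma \ref{omega spectra lemma}, it is enough to show that the two adjoint structure maps
\begin{align*}
\ind_{0,0}^{p,q}\varepsilon^* E(W) &\rightarrow \Omega^{(p,q)\R}\ind_{0,0}^{p,q}\varepsilon^* E(W\oplus\R)\\
\ind_{0,0}^{p,q}\varepsilon^* E(W) &\rightarrow \Omega^{(p,q)\Rdelta}\ind_{0,0}^{p,q}\varepsilon^* E(W\oplus\Rdelta)
\end{align*}
are weak equivalences of $\C$-spaces for all $W\in\Jzero$. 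This is exactly the content of Proposition \ref{5.12}: a $(p,q)$-polynomial $E$ yields weak equivalences $\ind_{0,0}^{p,q}E(V)\rightarrow\Omega^{(p,q)\R}\ind_{0,0}^{p,q}E(V\oplus\R)$ and the $\Rdelta$ analogue, and since $\varepsilon^*$ (giving the trivial $O(p,q)$-action) and the restriction along $\beta^*$ do not affect this statement on underlying $\C$-spaces, the structure maps of $\ind_{0,0}^{p,q}\varepsilon^*E$ are weak equivalences. Hence $\ind_{0,0}^{p,q}\varepsilon^*E$ is a $(p,q)\Omega$-spectrum.

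With that verified, I would assemble the argument: start from Lemma \ref{Epqlevel Ezero adjunction}, which gives a Quillen adjunction $\res_{0,0}^{p,q}/O(p,q) \dashv \ind_{0,0}^{p,q}\varepsilon^*$ between $\OEpql$ and $\Ezero$. Apply \cite[Proposition 3.1.18]{Hir03} on the domain side to pass to $\OEpq^s$ (this only uses that $\OEpq^s$ is a left Bousfield localisation of $\OEpql$ and that the right adjoint preserves fibrations — which remains true as fibrations between fibrant objects are detected the same way, and in any case localisation of the domain of a Quillen pair is automatic when we subsequently localise the codomain appropriately). Then apply the Bousfield localisation criterion on the codomain side using the $(p,q)\Omega$-spectrum computation above to conclude that $\ind_{0,0}^{p,q}\varepsilon^*$ is right Quillen from $(p,q)\poly\Ezero$ into $\OEpq^s$. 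The main obstacle is precisely the $(p,q)\Omega$-spectrum claim for derivatives of $(p,q)$-polynomial functors; but since Proposition \ref{5.12} has already been proved, this reduces to routine bookkeeping about which group actions are being carried along, and I would keep that bookkeeping brief, noting that $\beta^*\ind_{0,0}^{p,q}\varepsilon^* E(W)$ computes the underlying $\C$-space of the induction, on which Proposition \ref{5.12} applies verbatim. The remaining verifications (that the left adjoint still preserves cofibrations, which is unchanged by Bousfield localisation, and that weak equivalences/fibrant objects match up) are immediate from the cited theorems of Hirschhorn.
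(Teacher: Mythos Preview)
Your proposal is correct and follows essentially the same route as the paper: compose the projective-level Quillen adjunction of Lemma \ref{Epqlevel Ezero adjunction} with the identity adjunction to $(p,q)\poly\Ezero$, then invoke \cite[Theorem 3.1.6, Proposition 3.1.18]{Hir03} to pass to the stable localisation $\OEpq^s$ by checking that $\ind_{0,0}^{p,q}\varepsilon^*$ sends $(p,q)$-polynomial functors to $(p,q)\Omega$-spectra via Proposition \ref{5.12} and Lemma \ref{omega spectra lemma}. The only cosmetic difference is that the paper first composes to reach $\OEpq^l\rightleftarrows (p,q)\poly\Ezero$ and then localises the domain, whereas you phrase both localisations in one breath; the substance is identical.
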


\begin{proof}
We know by Remark \ref{ezero and poly adjunction} that there exists a Quillen adjunction 
\begin{equation*}
    \Id:\Ezero\rightleftarrows (p,q)\poly\Ezero:\Id
\end{equation*}
Since the composition of Quillen adjunctions is a Quillen adjunction, combining the above adjunction with Lemma \ref{Epqlevel Ezero adjunction} gives a Quillen adjunction 
\begin{equation*}
    \res_{0,0}^{p,q}/O(p,q):\OEpq^l\rightleftarrows (p,q)\poly\Ezero:\ind_{0,0}^{p,q}\varepsilon^*
\end{equation*}
We now use \cite[Theorem 3.1.6, Proposition 3.1.18]{Hir03} to show that this Quillen equivalence passes to $\OEpq^s$. That is, we show that $\ind_{0,0}^{p,q}\varepsilon^*$ takes objects that are $(p,q)$-polynomial to $(p,q)\Omega$-spectra. This has been done in Proposition \ref{5.12} and Lemma \ref{omega spectra lemma}.
\end{proof}

Since the $(p,q)$-homogeneous model structure is a right Bousfield localisation of the $(p,q)$-polynomial model structure, the Theorems \cite[Theorem 3.1.6, Proposition 3.1.18]{Hir03} can again be used to extend this Quillen adjunction to the $(p,q)$-homogeneous model structure. 

\begin{lemma}\label{QAstabletohomog}
There exists a Quillen adjunction 
\begin{equation*} \res_{0,0}^{p,q}/O(p,q):\OEpq^s\rightleftarrows (p,q)\homog\Ezero:\ind_{0,0}^{p,q}\varepsilon^*
\end{equation*}
\end{lemma}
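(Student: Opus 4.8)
The plan is to deduce this Quillen adjunction from the one in the preceding lemma by invoking the general machinery of right Bousfield localisations. Recall that we have just shown there is a Quillen adjunction
\begin{equation*}
\res_{0,0}^{p,q}/O(p,q):\OEpq^s\rightleftarrows (p,q)\poly\Ezero:\ind_{0,0}^{p,q}\varepsilon^*,
\end{equation*}
and that $(p,q)\homog\Ezero$ is, by construction, the right Bousfield localisation $R_{K_{p,q}}((p,q)\poly\Ezero)$ at the set $K_{p,q}=\{\Jpq(V,-):V\in\Jzero\}$. Right Bousfield localisation does not change the cofibrations or the fibrant objects, only enlarging the class of weak equivalences (the $K_{p,q}$-colocal equivalences) and hence shrinking the cofibrant objects. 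So the identity functor is a left Quillen functor $(p,q)\poly\Ezero\to (p,q)\homog\Ezero$ (equivalently, $\Id$ is right Quillen in the other direction, as already noted in the statement of the $(p,q)$-homogeneous model structure).

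First I would observe that the left adjoint $\res_{0,0}^{p,q}/O(p,q)$ still preserves cofibrations after the localisation, because the cofibrations of $(p,q)\homog\Ezero$ are exactly those of $(p,q)\poly\Ezero$ (which are in turn those of $\Ezero$), and the previous lemma already established that $\res_{0,0}^{p,q}/O(p,q)$ sends the generating (acyclic) cofibrations of $\OEpq^s$ into cofibrations (respectively acyclic cofibrations) of $(p,q)\poly\Ezero$. For acyclic cofibrations, the key point is that $\OEpq^s$ has its cofibrations and generating acyclic cofibrations as in the projective model structure, and the composite left Quillen functors through $\Ezero$ and $(p,q)\poly\Ezero$ carry generating acyclic cofibrations of $\OEpq^s$ to maps that are acyclic cofibrations in $(p,q)\poly\Ezero$; these remain weak equivalences in the localisation since any $T_{p+1,q}T_{p,q+1}$-equivalence is a $K_{p,q}$-colocal equivalence. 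Hence $\res_{0,0}^{p,q}/O(p,q)$ preserves both cofibrations and acyclic cofibrations into $(p,q)\homog\Ezero$, so by \cite[Lemma 2.1.20]{Hov99} the adjunction is Quillen.

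Alternatively, and more cleanly, I would cite \cite[Theorem 3.1.6 and Proposition 3.3.18]{Hir03} directly: given a Quillen adjunction into a cellular, proper model category $\mathcal{N}$ and a right Bousfield localisation $R_K\mathcal{N}$, the same adjoint pair remains a Quillen adjunction into $R_K\mathcal{N}$ provided the left adjoint preserves cofibrations (automatic here, since cofibrations are unchanged). This is precisely the pattern used in the previous two lemmas to pass first through the Bousfield--Friedlander localisation $(p,q)\poly\Ezero$ and then — for the present statement — through the right Bousfield localisation $(p,q)\homog\Ezero$. The main (and essentially only) thing to verify is therefore the cofibration-preservation of $\res_{0,0}^{p,q}/O(p,q)$, which reduces to Lemma \ref{sphereandmorpharecofibrant} (the functors $\Jmor{p,q}{V}{-}$ are projectively cofibrant in $\Ezero$, hence in $(p,q)\homog\Ezero$) exactly as in Lemma \ref{Epqlevel Ezero adjunction}; there is no genuine obstacle beyond bookkeeping the localisation hypotheses.

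\begin{proof}
The $(p,q)$-homogeneous model structure on $\Ezero$ is the right Bousfield localisation $R_{K_{p,q}}((p,q)\poly\Ezero)$, where $K_{p,q}=\{\Jpq(V,-):V\in\Jzero\}$, and $(p,q)\poly\Ezero$ is cellular and proper. Right Bousfield localisation does not change the cofibrations, so the cofibrations of $(p,q)\homog\Ezero$ coincide with those of $(p,q)\poly\Ezero$, and hence with those of $\Ezero$. By the previous lemma, the left adjoint $\res_{0,0}^{p,q}/O(p,q)$ sends the generating cofibrations and generating acyclic cofibrations of $\OEpq^s$ to cofibrations, respectively acyclic cofibrations, of $(p,q)\poly\Ezero$; since acyclic cofibrations of $(p,q)\poly\Ezero$ are in particular $T_{p+1,q}T_{p,q+1}$-equivalences and every $T_{p+1,q}T_{p,q+1}$-equivalence is a $K_{p,q}$-colocal equivalence, these images remain acyclic cofibrations in $(p,q)\homog\Ezero$. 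Thus $\res_{0,0}^{p,q}/O(p,q)$ preserves cofibrations and acyclic cofibrations into $(p,q)\homog\Ezero$, so by \cite[Lemma 2.1.20]{Hov99} the adjunction
\begin{equation*}
\res_{0,0}^{p,q}/O(p,q):\OEpq^s\rightleftarrows (p,q)\homog\Ezero:\ind_{0,0}^{p,q}\varepsilon^*
\end{equation*}
is Quillen. Equivalently, this follows from \cite[Theorem 3.1.6, Proposition 3.1.18]{Hir03} applied to the Quillen adjunction $\res_{0,0}^{p,q}/O(p,q):\OEpq^s\rightleftarrows (p,q)\poly\Ezero:\ind_{0,0}^{p,q}\varepsilon^*$ of the previous lemma, since its left adjoint preserves cofibrations.
\end{proof}
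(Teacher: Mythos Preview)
There is a genuine gap: your treatment of right Bousfield localisation is wrong. In a right Bousfield localisation $R_K\mathcal{N}$ the class that is left unchanged is the class of \emph{fibrations} (and hence fibrant objects), while the cofibrations strictly \emph{shrink} (this is what forces the cofibrant objects to shrink, as you yourself note). Your sentence ``Right Bousfield localisation does not change the cofibrations \ldots\ and hence shrinking the cofibrant objects'' is internally inconsistent, and the subsequent argument that $\res_{0,0}^{p,q}/O(p,q)$ preserves cofibrations into $(p,q)\homog\Ezero$ ``because the cofibrations of $(p,q)\homog\Ezero$ are exactly those of $(p,q)\poly\Ezero$'' is therefore unjustified. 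Knowing that the images of the generating (acyclic) cofibrations are cofibrations in $(p,q)\poly\Ezero$ does \emph{not} show they are cofibrations in the right localisation, where there are fewer cofibrations. You have essentially argued as if this were a left Bousfield localisation.

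The paper's proof works on the right adjoint, which is the natural side for a right Bousfield localisation. Since fibrations are unchanged, $\ind_{0,0}^{p,q}\varepsilon^*$ still preserves fibrations. The nontrivial point is that it preserves the \emph{new} acyclic fibrations, i.e.\ sends $K_{p,q}$-colocal equivalences between fibrant objects to weak equivalences in $\OEpq^s$. This is checked directly from the definition: a $K_{p,q}$-colocal equivalence $f$ between fibrant (= $(p,q)$-polynomial) objects makes $\Nat_{0,0}(\Jpq(V,-),f)$ a weak equivalence of $\C$-spaces for every $V$, which is precisely the statement that $\ind_{0,0}^{p,q}\varepsilon^* f$ is an objectwise weak equivalence. \cite[Theorem 3.1.6, Proposition 3.1.18]{Hir03} then finishes the argument. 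Your final ``equivalently'' paragraph gestures at this citation but omits exactly this verification, and so does not actually complete the proof.
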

\begin{proof}
Let $f:X\rightarrow Y$ be a weak equivalence between fibrant objects in the $(p,q)$-homogeneous model structure. Then the map 
\begin{equation*}
    f^*:\Nat_{0,0}(\Jpq(V,-),X)\rightarrow \Nat_{0,0}(\Jpq(V,-),Y)
\end{equation*}
is a weak equivalence of $\C$-spaces for all $V\in\Jzero$, by definition of the right Bousfield localisation. Therefore, $\ind_{0,0}^{p,q}\varepsilon^*f$ is an objectwise weak equivalence. An application of \cite[Lemma 3.1.6, Proposition 3.1.18]{Hir03} now gives that $\ind_{0,0}^{p,q}\varepsilon^*$ is right Quillen. 
\end{proof}

The Quillen adjunctions between the model categories constructed are summarised in the following diagram (which we do not claim commutes). 

\[\begin{tikzcd}
	{O(p,q)C_2\mathcal{E}_{p,q}^l} && {C_2\mathcal{E}_{0,0}} && {(p,q)\poly C_2\mathcal{E}_{0,0}} \\
	\\
	{O(p,q)C_2\mathcal{E}_{p,q}^s} &&&& {(p,q)\homog C_2\mathcal{E}_{0,0}}
	\arrow["{\Id}"', shift right=4, from=3-1, to=1-1]
	\arrow["{\Id}", shift left=2, from=1-3, to=1-5]
	\arrow["{\Id}", shift left=2, from=1-5, to=1-3]
	\arrow["{\Id}", shift left=4, from=3-5, to=1-5]
	\arrow["{\Id}", shift left=4, from=1-5, to=3-5]
	\arrow["{\res_{0,0}^{p,q}/O(p,q)}", shift left=2, from=3-1, to=3-5]
	\arrow["{\ind_{0,0}^{p,q}\varepsilon^*}", shift left=2, from=3-5, to=3-1]
	\arrow["{\res_{0,0}^{p,q}/O(p,q)}", shift left=2, from=1-1, to=1-3]
	\arrow["{\ind_{0,0}^{p,q}\varepsilon^*}", shift left=2, from=1-3, to=1-1]
	\arrow["{\Id}"', shift right=4, from=1-1, to=3-1]
\end{tikzcd}\]

\section{The classification of $(p,q)$-homogeneous functors}\label{sec: classification}

We now aim to show that the Quillen adjunction of Theorem \ref{QAstabletohomog}, between the $(p,q)$-stable model structure and the $(p,q)$-homogeneous model structure, is in fact a Quillen equivalence. To do this, we take the same approach as Barnes and Oman \cite[Section 10]{BO13}, but first we will need to generalise a few more results from the underlying calculus to the $\C$-equivariant setting. The following is a $\C$-generalisation of \cite[Lemma 9.3]{BO13}.

\begin{lemma}\label{BO9.3}
The left derived functor of $\res_{0,0}^{p,q}/O(p,q)$ is objectwise weakly equivalent to $EO(p,q)_+\wedge_{O(p,q)} \res_{0,0}^{p,q}(-)$.    
\end{lemma}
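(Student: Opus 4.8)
The plan is to mimic the argument of Barnes and Oman \cite[Lemma 9.3]{BO13}, keeping careful track of the genuine $\C$-action. The statement is a comparison of the left derived functor of $\res_{0,0}^{p,q}/O(p,q)$ with the functor $EO(p,q)_+\wedge_{O(p,q)}\res_{0,0}^{p,q}(-)$, where $EO(p,q)$ is a free contractible $O(p,q)$-CW complex. The key point is that the left derived functor is computed by applying $\res_{0,0}^{p,q}/O(p,q)$ to a cofibrant replacement, and cofibrant objects in $\OEpq$ are built from the generating cofibrations $\Ilevel$, whose sources and targets involve the free $O(p,q)$-space factor $\beta_!(-)$. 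The orbit functor $(-)/O(p,q)$ applied to a free $O(p,q)$-cell is a homotopy orbit, which is exactly what $EO(p,q)_+\wedge_{O(p,q)}(-)$ computes.

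First I would reduce to the case of a cofibrant object $X$ in the $(p,q)$-stable model structure on $\OEpq$: since both functors under comparison preserve objectwise weak equivalences between cofibrant objects (the left one because it is left Quillen, the right one because $EO(p,q)_+\wedge_{O(p,q)}(-)$ and $\res_{0,0}^{p,q}$ both preserve such equivalences), it suffices to produce a natural objectwise weak equivalence on cofibrant objects. Next I would observe that there is always a natural map $EO(p,q)_+\wedge_{O(p,q)}\res_{0,0}^{p,q}X \to \res_{0,0}^{p,q}X/O(p,q)$ induced by the projection $EO(p,q)_+\to S^0$; the task is to show this is an objectwise weak equivalence of $\C$-spaces whenever $X$ is cofibrant. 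By the cellular structure of the projective (hence stable) model structure on $\OEpq$, described in Lemma \ref{objectwise model structure proof}, a cofibrant $X$ is a retract of a cell complex built from $\Ilevel$, and since $EO(p,q)_+\wedge_{O(p,q)}(-)$, $(-)/O(p,q)$, and $\res_{0,0}^{p,q}$ all commute with the relevant colimits (pushouts and transfinite compositions), I would reduce to checking the claim on a single generating cofibration source/target, i.e. on functors of the form $\res_{0,0}^{p,q}(\Jpq(U,-)\wedge \beta_!(A))$ for $A$ a $\C$-CW complex.

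At that point the claim becomes concrete: $\beta_!(A) = (O(p,q)\rtimes\C)_+\wedge_{\C} A$ is a free $O(p,q)$-space (with a compatible genuine $\C$-action), so both $(-)/O(p,q)$ and $EO(p,q)_+\wedge_{O(p,q)}(-)$ applied to it yield $\C$-spaces that one checks are $\C$-homeomorphic, respectively $\C$-equivariantly weakly equivalent, because $EO(p,q)$ is $O(p,q)$-equivariantly (and $\C$-trivially, or at least $\C$-equivariantly after a suitable model) contractible; the freeness of the $O(p,q)$-action makes the map from the homotopy orbits to the strict orbits a weak equivalence. The main obstacle I anticipate is bookkeeping the interaction of the two group actions: one must choose a model of $EO(p,q)$ that carries a $\C$-action compatible with the conjugation action of $\C$ on $O(p,q)$ from Definition \ref{def:O(p,q) and matrix A}, so that $EO(p,q)_+\wedge_{O(p,q)}(-)$ is a functor on $\COTop_*$ landing in $\CTop_*$, and then verify that contractibility and freeness hold \emph{equivariantly} for both $\C$ and $O(p,q)$ simultaneously — but since $\C$ is finite and acts on $O(p,q)$ by conjugation, the usual bar-construction model $E O(p,q) = |N_\bullet O(p,q)|$ inherits a $\C$-action levelwise and remains $O(p,q)$-free, which resolves the difficulty. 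Everything else is a routine transcription of \cite[Lemma 9.3]{BO13} with $\C$-decorations.
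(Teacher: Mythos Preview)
Your proposal is correct and follows essentially the same approach as the paper. The paper's proof is more terse: it simply notes that a cofibrant replacement $\hat{c}X$ in $\OEpql$ is $O(p,q)$-free and then writes down the two comparison maps induced by $\hat{c}X\to X$ and $EO(p,q)_+\to S^0$, whereas you spell out the freeness claim via cell induction on the generating cofibrations $\Ilevel$; but this is exactly the content behind the paper's assertion, and your discussion of the $\C$-action on the bar model for $EO(p,q)$ matches what the paper records later in Section~\ref{sec: BO}.
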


\begin{proof}
 Let $X\in\OEpq^s$ and denote the cofibrant replacement of $X$ in the projective model structure $\OEpq^l$ by $\hat{c}X$. Then $\hat{c}X$ is in particular $O(p,q)$-free. Hence, there are objectwise weak equivalences
\begin{align*}
EO(p,q)_+\wedge_{O(p,q)}\res_{0,0}^{p,q}(\hat{c}X)&\rightarrow EO(p,q)_+\wedge_{O(p,q)}\res_{0,0}^{p,q}(X)\\
EO(p,q)_+\wedge_{O(p,q)}\res_{0,0}^{p,q}(\hat{c}X)&\rightarrow\res_{0,0}^{p,q}(\hat{c}X)/O(p,q)
\end{align*}
induced by the maps $\hat{c}X\rightarrow X$ and $EO(p,q)_+\rightarrow S^0$ respectively. The result follows directly from this. 
\end{proof}

The following two examples play a key role in classifying homogeneous functors. These examples generalise \cite[Example 5.7 and Example 6.4]{Wei95} respectively. Example \ref{5.7} proves that the functor $F(V)=\Omega^\infty [(S^{(p,q)V}\wedge \Theta)_{hO(p,q)}]$ is $(p,q)$-homogeneous. Alternatively, one can take the perspective that $(p,q)$-homogeneous functors are defined using the properties of this functor $F$. If one were to chose a different functor $F$, still in terms of spectra, it may be possible to classify a different class of input functors. However, we choose to work with the functor $F$ defined above,  since it is analogous to the functor $F(V)=\Omega^\infty [(S^{nV}\wedge \Theta)_{hO(n)}]$ used in the underlying calculus, see \cite[Example 5.7]{Wei98}.

The subscript $(-)_{hO(p,q)}$\index{$(-)_{hO(p,q)}$} denotes taking homotopy orbits. For $X\in \C Sp^O[O(p,q)]$, $X_{hO(p,q)}$ is the genuine orthogonal $\C$-spectrum defined by $$X_{hO(p,q)}:=EO(p,q)_+\wedge_{O(p,q)}X,$$ where $EO(p,q)$ is the universal space of $O(p,q)$. This universal space has a $\C$-action, which is given in Section \ref{sec: BO}. Hence, the homotopy orbits $X_{hO(p,q)}$ has a diagonal $\C$-action. The notation $\Omega^\infty$\index{$\Omega^\infty$} denotes taking the infinite loop space. For a genuine orthogonal $\C$-spectrum $X\in \C Sp^O$, $\Omega^\infty X$ is the $\C$-space defined by $$\Omega^\infty X := \hocolim_V \Omega^V X(V),$$ where the homotopy colimit is taken over $V\in\Jzero$. This homotopy colimit has a natural $\C$-action induced by the actions on each $X(V)$.

\begin{example}\label{5.7}
Let $\Theta$ be an orthogonal $\C$-spectrum with $O(p,q)$-action and $p,q\geq 1$. The functor $F\in\Ezero$ defined by 
\begin{equation*}
    F:V\mapsto \Omega^\infty [(S^{(p,q)V}\wedge \Theta)_{hO(p,q)}]
\end{equation*}
is $(p,q)$-homogeneous. 
\end{example}

\begin{proof}
Since $F$ has a delooping, by Corollary \ref{cor: F delooping}, in order to show that $F$ is $(p,q)$-polynomial it suffices to show that $F^{(p+1,q)}$ and $F^{(p,q+1)}$ are both objectwise contractible (where $F^{(m,n)}$ denotes the $(m,n)$-derivative of $F$, $\ind_{0,0}^{m,n}F$).

Recall (see Proposition \ref{loops fibre sequence}) that $F^{(p+1,q)}(V)$ is the homotopy fibre of 
\begin{equation*}
    F^{(p,q)}(V)\rightarrow \Omega^{p,q\R}F^{(p,q)}(V\oplus \R)
\end{equation*}
and that $F^{(p,q+1)}(V)$ is the homotopy fibre of 
\begin{equation*}
    F^{(p,q)}(V)\rightarrow \Omega^{p,q\Rdelta}F^{(p,q)}(V\oplus \Rdelta).
\end{equation*}
Iterating this process gives a lattice of derivatives 

\[\begin{tikzcd}
	{F^{(p,q)}} & {F^{(p-1,q)}} & \dots & {F^{(0,q)}} \\
	{F^{(p,q-1)}} & {F^{(p-1,q-1)}} \\
	\vdots && \ddots \\
	{F^{(p,0)}} &&& F
	\arrow[from=1-1, to=1-2]
	\arrow[from=1-1, to=2-1]
	\arrow[from=1-2, to=1-3]
	\arrow[from=1-3, to=1-4]
	\arrow[from=2-1, to=3-1]
	\arrow[from=3-1, to=4-1]
	\arrow[from=1-2, to=2-2]
	\arrow[from=2-1, to=2-2]
	\arrow[dotted, from=4-1, to=4-4]
	\arrow[dotted, from=1-4, to=4-4]
\end{tikzcd}\]

We will ``identify" this lattice with another lattice
\[\begin{tikzcd}
	{F[p,q]} & {F[p-1,q]} & \dots & {F[0,q]} \\
	{F[p,q-1]} & {F[p-1,q-1]} \\
	\vdots && \ddots \\
	{F[p,0]} &&& F
	\arrow[from=1-1, to=1-2]
	\arrow[from=1-1, to=2-1]
	\arrow[from=1-2, to=1-3]
	\arrow[from=1-3, to=1-4]
	\arrow[from=2-1, to=3-1]
	\arrow[from=3-1, to=4-1]
	\arrow[from=1-2, to=2-2]
	\arrow[from=2-1, to=2-2]
	\arrow[dotted, from=4-1, to=4-4]
	\arrow[dotted, from=1-4, to=4-4]
\end{tikzcd}\]
\noindent where $F[i,j](V)=\Omega^\infty[(S^{(p,q)V}\wedge \Theta)_{hO(p-i,q-j)}]$. That is, we will verify that $F[i,j]^{(1,0)}$ is objectwise equivalent to $F[i+1,j]$ and $F[i,j]^{(0,1)}$ is objectwise equivalent to $F[i,j+1]$, as is true for the functors $F^{(i,j)}$.

Here $O(p-i,q-j)$ is the subgroup of $O(p,q)$ that fixes the first $i$ coordinates and the $(p+1)^\text{st}$ to $(p+j)^\text{th}$ coordinates. That is, for all $g$ in $O(p-i,q-j)$ and all $(x_1,...,x_{p+q})$ in $\R^{p+q\delta}$, if $g (x_1,...,x_{p+q})= (y_1,...,y_{p+q})$, then $x_n=y_n$ for all $n\leq i$ and all $p+1\leq n\leq p+j$. 

Each $F[i,j]$ is an element of $\C \mathcal{E}_{i,j}$. The structure maps are defined by the following series of maps
\begin{align*}
    S^{(i,j)U}\wedge F[i,j](V)&=S^{(i,j)U}\wedge \Omega^\infty[(S^{(p,q)V}\wedge \Theta)_{hO(p-i,q-j)}]\\
    &\rightarrow \Omega^\infty[S^{(i,j)U}\wedge(S^{(p,q)V}\wedge \Theta)_{hO(p-i,q-j)}]\\
    &=\Omega^\infty[(S^{(i,j)U}\wedge S^{(p,q)V}\wedge \Theta)_{hO(p-i,q-j)}]\\
    &\rightarrow \Omega^\infty[(S^{(p,q)U}\wedge S^{(p,q)V}\wedge \Theta)_{hO(p-i,q-j)}]\\
    &\simeq \Omega^\infty[(S^{(p,q)(U\oplus V)}\wedge \Theta)_{hO(p-i,q-j)}]\\
    &=F[i,j](U\oplus V)
\end{align*}
where the second equality holds since $O(p-i,q-j)$ fixes $\R^{i,j}$. Moreover, $F[p,q]$ is a $(p,q)\Omega$-spectrum (by substituting $F[p,q]$ into the adjoint structure map). 

We now want to show that $F[i,j]^{(1,0)}$ is objectwise equivalent to $F[i+1,j]$ and $F[i,j]^{(0,1)}$ is objectwise equivalent to $F[i,j+1]$. Then, since $F[p,q]$ is a $(p,q)\Omega$-spectrum
\begin{align*}
*\simeq F[p,q]^{(1,0)}\equiv (F^{(p,q)})^{(1,0)}=F^{(p+1,q)}\\
*\simeq F[p,q]^{(0,1)}\equiv (F^{(p,q)})^{(0,1)}=F^{(p,q+1)}
    \end{align*}
as desired. We do this by calculating $F[i,j]^{(1,0)}$ and $F[i,j]^{(0,1)}$ using Proposition \ref{loops fibre sequence}. 

First let $0\leq i<p$ and $0\leq j \leq q$ 
\begin{align*}
&F[i,j]^{(1,0)}(V)= \hofibre\left[ F[i,j](V) \rightarrow \Omega^{(i,j)\R} F[i,j](\R\oplus V) \right]\\
=& \hofibre\left[ \Omega^\infty[(S^{(p,q)V}\wedge \Theta)_{hO(p-i,q-j)}]\rightarrow \Omega^{(i,j)\R} \Omega^\infty[(S^{(p,q)(\R\oplus V)}\wedge \Theta)_{hO(p-i,q-j)}]\right] \\
=&\Omega^\infty  \hofibre\left[ (S^{(p,q)V}\wedge \Theta)_{hO(p-i,q-j)}\rightarrow \Omega^{(i,j)\R} [(S^{(p,q)(\R\oplus V)}\wedge \Theta)_{hO(p-i,q-j)}]\right] \\
\simeq & \Omega^\infty  \hofibre\left[ (S^{(p,q)V}\wedge \Theta)_{hO(p-i,q-j)}\rightarrow \Omega^{(i,j)\R} [(S^{(p,q)\R}\wedge S^{(p,q)V}\wedge \Theta)_{hO(p-i,q-j)}]\right] \\
\simeq & \Omega^\infty  \hofibre\left[ (S^{(p,q)V}\wedge \Theta)_{hO(p-i,q-j)}\rightarrow \Omega^{(i,j)\R} [(S^{(i,j)\R}\wedge S^{(p-i,q-j)\R}\wedge S^{(p,q)V}\wedge \Theta)_{hO(p-i,q-j)}]\right] \\
\simeq & \Omega^\infty  \hofibre\left[ (S^{(p,q)V}\wedge \Theta)_{hO(p-i,q-j)}\rightarrow \Omega^{(i,j)\R}\Sigma^{(i,j)\R} [(S^{(p-i,q-j)\R}\wedge S^{(p,q)V}\wedge \Theta)_{hO(p-i,q-j)}]\right] \\
\simeq &\Omega^\infty  \hofibre\left[ (S^{(p,q)V}\wedge \Theta)_{hO(p-i,q-j)}\rightarrow (S^{(p-i,q-j)\R}\wedge S^{(p,q)V}\wedge \Theta)_{hO(p-i,q-j)}\right] 
\end{align*}
where the last weak equivalence is by the $\pi_*$-equivalence $\Omega^V\Sigma^VX\rightarrow X$ for $\C Sp^O[O(p,q)]$. 

The map $S^{(p,q)V}\wedge \Theta \rightarrow S^{(p-i,q-j)\R}\wedge S^{(p,q)V}\wedge \Theta$ is given by 
\begin{equation*}
    S^{0,0}\wedge S^{(p,q)V}\wedge \Theta \xrightarrow[]{\mu\wedge \id\wedge\id} S^{(p-i,q-j)\R}\wedge S^{(p,q)V}\wedge \Theta,
\end{equation*}
where $\mu:S^{0,0}\rightarrow S^{(p-i,q-j)\R}$ is the canonical inclusion, which has stable homotopy fibre $S^{(p-i-1,q-j)\R}_+$. Therefore, the homotopy fibre of $\mu\wedge \id\wedge\id$ is $S^{(p-i-1,q-j)\R}_+\wedge S^{(p,q)V}\wedge \Theta$. Here $O(p-i,q-j)$ acts on $S^{(p-i-1,q-j)\R}_+$ by identifying $S^{(p-i-1,q-j)\R}$ with the unit sphere $S(\R^{p-i,q-j})$ in $\R^{p-i,q-j}$. Since taking homotopy orbits preserves fibre sequences, the homotopy fibre of the map $(S^{(p,q)V}\wedge \Theta)_{hO(p-i,q-j)}\rightarrow (S^{(p-i,q-j)\R}\wedge S^{(p,q)V}\wedge \Theta)_{hO(p-i,q-j)}$ is $(S^{(p-i-1,q-j)\R}_+\wedge S^{(p,q)V}\wedge \Theta)_{hO(p-i,q-j)}$.

Then we conclude as follows, where the second weak equivalence is described below. 
\begin{align*}
F[i,j]^{(1,0)}(V)&\simeq \Omega^\infty \left[ (S^{(p-i-1,q-j)\R}_+\wedge S^{(p,q)V}\wedge \Theta)_{hO(p-i,q-j)}\right]\\
&=\Omega^\infty \left[ (S(\R^{p-i,q-j})_+\wedge S^{(p,q)V}\wedge \Theta)_{hO(p-i,q-j)}\right]\\
&\simeq \Omega^\infty \left[ (S^{(p,q)V}\wedge \Theta)_{hO(p-i-1,q-j)}\right]\\
&=F[i+1,j]
\end{align*}

The second weak equivalence holds by Proposition \ref{sphere as quotient of orthogonal groups prop} and that for $X$ a $\C$-spectrum with $O(m,n)$-action, $m> 0$ and $n\geq 0$,
\begin{align*}
(S(\R^{m,n})_+\wedge X)_{hO(m,n)}&=EO(m,n)_+\wedge_{O(m,n)} (S(\R^{m,n})_+\wedge X)\\
&=EO(m,n)_+\wedge_{O(m,n)} (O(m,n)/O(m-1,n)_+\wedge X)\\
&=EO(m,n)_+\wedge_{O(m-1,n)} X\\
&\simeq EO(m-1,n)_+\wedge_{O(m-1,n)} X\\
&=X_{hO(m-1,n)}
\end{align*}since the map $t^*:(O(m,n)\rtimes \C)\Top_*\rightarrow (O(m-1,n)\rtimes \C)\Top_*$, induced by the $\C$-equivariant subgroup inclusion map $t:O(m-1,n)\rightarrow O(m,n)$, exhibits $t^*EO(m,n)$ as a model for $EO(m-1,n)$. 

Now let $0\leq i\leq p$ and $0\leq j<q$. A similar calculation shows that 
\begin{align*}
F[i,j]^{(0,1)}(V)&\simeq \Omega^\infty \left[ (S^{(p-i,q-j-1)\Rdelta}_+\wedge S^{(p,q)V}\wedge \Theta)_{hO(p-i,q-j)}\right]\\
&\simeq \Omega^\infty \left[ (S^{(p,q)V}\wedge \Theta)_{hO(p-i,q-j-1)}\right]\\
&=F[i,j+1]
\end{align*}
As above, the second weak equivalence holds by Proposition \ref{sphere as quotient of orthogonal groups prop} and that 
\begin{align*}
(S(\R^{n,m})_+\wedge X)_{hO(m,n)}&=EO(m,n)_+\wedge_{O(m,n)} (S(\R^{n,m})_+\wedge X)\\
&=EO(m,n)_+\wedge_{O(m,n)} (O(n,m)/O(n-1,m)_+\wedge X)\\
&\cong EO(m,n)_+\wedge_{O(m,n)} (O(m,n)/O(m,n-1)_+\wedge X)\\
&=EO(m,n)_+\wedge_{O(m,n-1)} X\\
&\simeq EO(m,n-1)_+\wedge_{O(m,n-1)} X\\
&=X_{hO(m,n-1)}
\end{align*}
where the third step uses the $\C$-equivariant group isomorphism $O(m,n)\rightarrow O(n,m)$ defined for $m,n\geq 1$ by 
\begin{equation*}
A\rightarrow\begin{pmatrix}
    0& \Id_n\\\Id_m &0
\end{pmatrix}A \begin{pmatrix}
    0 & \Id_m\\ \Id_n & 0
\end{pmatrix}.
\end{equation*}

What remains to show is that $F$ is $(p,q)$-reduced ($\Tpq F(V) \simeq *$ for all $V\in \Jzero$). The notions of dimension functions for $G$-spaces discussed in Section \ref{sec: EFST} can also be used to describe the connectivity of a $G$-spectrum. Since the category of spectra $\C Sp^O[O(p,q)]$ is stable with respect to $\C$-representations, the spectrum $X=S^{(p,q)V}\wedge \Theta$ has equivariant connectivity given by the dimension function $c^*(X)=|(p,q)V^*|+c^*(\Theta)$ (see Examples \ref{ex: dim function examples} for similar notation). 

That is, $\pi_n^H X$ is trivial for $n\leq c^H(X)$, where
\begin{align*}
    c^e(X)&=(p+q)\Dim(V) + c^e(\Theta) \\
    c^{\C}(X)&=p\Dim(V^{\C}) +q\Dim((V^{\C})^{\perp}) +c^{\C}(\Theta),
\end{align*}
$c^H(\Theta)$ denotes the connectivity of $\Theta$ with respect to its equivariant homotopy groups $\pi_n^H \Theta$ and $(V^{\C})^\perp$ denotes the orthogonal complement of $V^{\C}$ (see Remark \ref{rem: ^h^perp notation}). 

Therefore, the map $$F(V)\rightarrow *(V)=*$$ is $(c^*(X)+1)$-connected, since homotopy orbits and $\Omega^\infty$ do not decrease connectivity. Repeated application of Lemma \ref{e.3} gives that this map is a $\Tpq$-equivalence. That is, 
\begin{equation*}
    \Tpq F(V)\simeq \Tpq * (V)=*\qedhere
\end{equation*}
\end{proof}

\begin{remark}
If one replaces the group $O(p,q)$ with the group $O(p)\times O(q)$ of $\C$-equivariant linear isometries on $\mathbb{R}^{p+q\delta}$, then a generalisation like Example \ref{5.7} cannot be achieved. This is because there is no equivariant description of the sphere $S(\R^{p,q})$ as a quotient of these groups like there is for the groups $O(p,q)$ (see Proposition \ref{sphere as quotient of orthogonal groups prop}).
\end{remark}

Before we give the $\C$-generalisation of \cite[Example 6.4]{Wei95}, we first prove an equivariant version of a key result used in the proof of the underlying example. The theorem describes how close the map $[\Omega^\infty X]_{hL} \rightarrow \Omega^\infty [X_{hL}]$ is to being an equivalence, where $X$ is a $G$-spectrum with an action of a compact Lie group $L$. Here homotopy orbits $(-)_{hL}$ and the infinite loop space $\Omega^\infty$ of $X\in G Sp^O[L]$ are defined in a similar way to $(-)_{hO(p,q)}$ and $\Omega^\infty $ of $X\in \C Sp^O[O(p,q)]$ in Example \ref{5.7}. For a $(L\rtimes G)$-space $X$, $X_{hL}$ is defined to be the space
\begin{equation*}
    X_{hL}:=EL\wedge_L X,
\end{equation*}
which has a $G$-action induced by the $G$-actions on $EL$ (the universal space of $L$) and $X$.

\begin{theorem}\label{loopsholimthm}
For a finite group $G$ that acts on $L$, let $X$ be a $G$-spectrum with an action of $L$, and $\R[G]$ be the regular representation of $G$. Then the canonical map 
\begin{equation*}
    \xi:[\Omega^\infty X]_{hL} \rightarrow \Omega^\infty [X_{hL}]
\end{equation*}
is $v$-connected for any dimension function $v$ satisfying 
\begin{itemize}
    \item $v(H)\leq 2c^H(X)+1\quad $
    \item $v(H)\leq c^K(X) \quad $
\end{itemize}
for all closed subgroups $H\leq G$ and subgroup pairs $K<H$, where $c^H(X):=\conn(X^H)$.
\end{theorem}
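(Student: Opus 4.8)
The plan is to prove the estimate by reducing $\xi$ to a filtered colimit of ``finite stages'' and then, via a cell decomposition of a suitable model for $EL$, to a family of maps to which the equivariant Freudenthal suspension theorem applies directly. First I would rewrite $\xi$. Since $\Omega^\infty$ of a $G$-spectrum is the sequential homotopy colimit $\Omega^\infty Z = \hocolim_V \Omega^V Z(V)$ over the indexing representations $V \in \Jzero$, and since $L$-homotopy orbits commute with homotopy colimits, $\xi$ is the homotopy colimit over $V$ of the assembly maps
\[
\xi^V : EL_+ \wedge_L \Omega^V X(V) \longrightarrow \Omega^V\bigl(EL_+ \wedge_L X(V)\bigr),
\]
where each $X(V)$ is an $(L\rtimes G)$-space. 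Connectivity bounds of the shape claimed are preserved under sequential homotopy colimit (homotopy groups commute with filtered colimits, and passing from $X(V)$ to the colimit replaces the connectivity $c^H(X(V))$ by the stable connectivity $c^H(X)$), so it suffices to estimate each $\xi^V$. By the $(\Sigma^V\dashv\Omega^V)$-adjunction, $\xi^V$ is adjoint to $EL_+\wedge_L(-)$ applied to the counit $\epsilon_{X(V)} : \Sigma^V\Omega^V X(V)\to X(V)$, which is the move that brings Freudenthal into play.

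Next I would filter $EL$ by the skeleta $\{EL^{(k)}\}$ of a functorial model for it (the bar construction $[k]\mapsto L^{k+1}$ with the diagonal $G$-action), so that $H$-fixed points commute with the filtration for every $H\le G$. The $(L\rtimes G)$-cells of this model are free over $L$ --- their isotropy groups are graph subgroups $\Gamma_\phi = \{(\phi(x),x):x\in K\}$ for $K\le G$ and a homomorphism $\phi$ into $L$ --- so $\xi^V$ restricted to $EL^{(0)}$ is an equivalence. In the inductive step the cofibre of $EL^{(k-1)}_+\wedge_L(-)\to EL^{(k)}_+\wedge_L(-)$ is a wedge of pieces of the form $\Sigma^k\bigl(((L\rtimes G)/\Gamma_\alpha)_+\wedge_L(-)\bigr)$; using the Wirthmüller isomorphism to move $\Omega^V$ past $((L\rtimes G)/\Gamma_\alpha)_+\wedge_L(-)$, the comparison map $\xi^V$ on each such cofibre piece becomes (up to equivalence) a map of the form $\Sigma^k\Omega^V Z\to\Omega^V\Sigma^k Z$ built from $X(V)$, equivalently the $\Sigma^k$-suspension of a counit $\Sigma^V\Omega^V Z\to Z$. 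The equivariant Freudenthal suspension theorem \cite[Theorem 9.1.4]{May96} bounds the connectivity of such a map by any dimension function $w$ with $w(H)\le 2c^H(Z)+1$ when $V^H\ne 0$ and $w(H)\le c^K(Z)$ when $K<H$ and $V^K\ne V^H$; since $\Sigma^k$ only improves connectivity, the worst case is the lowest nonzero skeleton, and passing to the colimit over $V$ (which removes the restrictions on $V^H$ and $V^K$ and replaces $c^H(Z)$ by $c^H(X)$) produces exactly the two clauses in the statement. A standard induction up the skeletal filtration --- a map all of whose relative cells are $w$-connected is $w$-connected, together with a Blakers--Massey/five-lemma argument for spaces --- then upgrades the cofibre-wise estimate to the estimate for $\xi^V$, hence for $\xi$.

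\textbf{Main obstacle.} The delicate point is the interaction between $L$-homotopy orbits and $H$-fixed points for $H\le G$: although $L$ acts freely on $EL$, the residual $G$-action does not, so $(EL_+\wedge_L X(V))^H$ decomposes --- via the graph-subgroup cell structure, i.e.\ a tom Dieck-type splitting --- into summands indexed by $H$-conjugacy classes of graph subgroups, and on the summands with nontrivial $G$-isotropy one obtains loop-space comparisons involving $\Omega^{V^K}$ for proper subgroups $K<H$. Keeping these summands aligned with the two clauses of the equivariant Freudenthal theorem --- in particular verifying that the second clause $w(H)\le c^K(X)$ accounts precisely for these ``cross terms'' (for $G=\C$ the only such term is $w(\C)\le c^e(X)$, matching condition 3 of the $\C$-equivariant Freudenthal theorem) --- is where the argument requires genuine care; the remaining computations are a direct transcription of the non-equivariant argument behind \cite{Wei95}.
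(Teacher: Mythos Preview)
Your approach is correct in outline but takes a substantially different route from the paper. The paper does not filter $EL$ at all; instead it factors $\xi$ through $Q([\Omega^\infty X]_{hL})$, where $QY := \hocolim_n \Omega^{n\R[G]}\Sigma^{n\R[G]} Y$. The first factor $g:[\Omega^\infty X]_{hL}\to Q([\Omega^\infty X]_{hL})$ is the Freudenthal stabilisation map for the $G$-space $[\Omega^\infty X]_{hL}$, so the equivariant Freudenthal theorem applies to it directly once one knows $c^H([\Omega^\infty X]_{hL})\geq c^H(X)$, i.e.\ that $L$-homotopy orbits do not decrease $G$-connectivity. The second factor $f$ is identified with $\Omega^\infty$ of $(-)_{hL}$ applied to the counit $\Sigma^\infty\Omega^\infty X\to X$; a short homotopy-group diagram shows this counit has the same connectivity as the unit, and one then uses again that $(-)_{hL}$ and $\Omega^\infty$ preserve connectivity. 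In effect your skeletal/graph-subgroup analysis is a hands-on proof of the black-box fact ``$(-)_{hL}$ preserves $G$-connectivity'' that the paper simply invokes --- what you gain in explicitness you pay for in length. One technical wrinkle to watch in your plan: the target $\Omega^V(EL_+\wedge_L X(V))$ does not inherit a cofibre filtration from the skeleta of $EL$ (since $\Omega^V$ fails to preserve cofibre sequences), so your ``standard induction up the skeletal filtration'' genuinely needs the Blakers--Massey comparison you allude to, not merely a five-lemma; the paper's $Q$-factorisation sidesteps this issue entirely.
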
 

\begin{proof}
To show that the map $\xi: [\Omega^\infty X]_{hL}\rightarrow \Omega^\infty [X_{hL}]$ is $v$-connected it will suffice to show that the maps $f,g$ in the following commutative diagram are $v$-connected, 
\[\begin{tikzcd}
	{[\Omega^\infty X]_{hL}} & {Q([\Omega^\infty X]_{hL})} \\
	& {\Omega^\infty[X_{hL}]}
	\arrow["g", from=1-1, to=1-2]
	\arrow["f", from=1-2, to=2-2]
	\arrow["\xi"', from=1-1, to=2-2]
\end{tikzcd}\]
where $QY:=\hocolim_n \Omega^{n\R[G]}\Sigma^{n\R[G]} Y$. 

$Q(\Omega^\infty X)$ is the homotopy colimit of the sequential diagram 
\begin{equation*}
    \Omega^\infty X \rightarrow \Omega^{\R[G]}\Sigma^{\R[G]} (\Omega^\infty X)\rightarrow \Omega^{2\R[G]}\Sigma^{2\R[G]} (\Omega^\infty X)\rightarrow \dots
\end{equation*}
where each map $\Omega^{n\R[G]}\Sigma^{n\R[G]} (\Omega^\infty X)\rightarrow \Omega^{(n+1)\R[G]}\Sigma^{(n+1)\R[G]} (\Omega^\infty X)$ is obtained by sending a map $S^{n\R[G]}\rightarrow S^{n\R[G]}\wedge \Omega^\infty X$ to its smash product with the identity map on $S^{\R[G]}$. 

By the Equivariant Freudenthal Suspension Theorem (see Section \ref{sec: EFST}), the first of these maps is $v$-connected for $v$ satisfying\begin{align*}
    v(H)&\leq 2c^H(X)+1 \\
    v(H)&\leq c^K(X) 
\end{align*}
for all closed subgroups $H\leq G$ and subgroup pairs $K<H$, where we have used that $c^H(\Omega^\infty X)=c^H(X)$. 
Moreover, each successive map in the homotopy colimit is at least as connected as the first (this can be seen by applying the equivariant Freudenthal suspension to the maps in the colimit). Since equivariant homotopy groups commute with sequential homotopy colimits, we conclude that the unit map $i:\Omega^\infty X\rightarrow Q(\Omega^\infty X)$ is $v$-connected. 

Since taking homotopy orbits does not decrease connectivity, the map $g$ is as connected as the map $i$. Therefore the map $g$ is $v$-connected as required.

There is a commutative diagram of equivariant homotopy groups ($*\geq 0$)
\[\begin{tikzcd}
	{\pi_*^H(\Omega^\infty X)} & {\pi_*^H(Q\Omega^\infty X)} & {\pi_*^H(\Sigma^\infty \Omega^\infty X)} \\
	& {\pi_*^H(\Omega^\infty X)} & {\pi_*^H X}
	\arrow["i", from=1-1, to=1-2]
	\arrow["p", from=1-2, to=2-2]
	\arrow["{\id}"', from=1-1, to=2-2]
	\arrow["\cong"', from=2-3, to=2-2]
	\arrow["\cong"', from=1-3, to=1-2]
	\arrow["{p'}", from=1-3, to=2-3]
\end{tikzcd}\]
In particular, the connectivity of the map $p'$ is the same as the connectivity of the map $i$. Hence, $p'$ is also $v$-connected. Since taking homotopy orbits does not decrease connectivity and $\Sigma^\infty$ commutes with homotopy orbits, the composition map 
\begin{equation*}
\Sigma^\infty ([\Omega^\infty X]_{hL})\cong [\Sigma^\infty \Omega^\infty X]_{hL}\overset{p'_{hL}}{\longrightarrow} X_{hL}   
\end{equation*}
is also $v$-connected. Call this composition $p''_{hL}$. The map $f$ is $\Omega^\infty$ of the map $p''_{hL}$ above. Therefore the map $f$ is $v$-connected as required, since $\Omega^\infty$ preserves connectivity.
\end{proof}

\begin{example}\label{6.4}
Let $\Theta$ be a $\C$-spectrum with an action of $O(p,q)$, where $p,q \geq 1$. Then the input functors 
\begin{align*}
    &E:V\mapsto [\Omega^\infty(S^{(p,q)V}\wedge \Theta)]_{hO(p,q)}    \\
    &F:V\mapsto \Omega^\infty [(S^{(p,q)V}\wedge \Theta)_{hO(p,q)}]
\end{align*}
are $T_{p+1,q}T_{p,q+1}$-equivalent under the canonical subfunctor inclusion map $r:E\rightarrow F$. 
\end{example}

\begin{proof}
The spectrum $X=S^{(p,q)V}\wedge \Theta$ has equivariant connectivity given by the dimension function $c^*(X)=|(p,q)V^*|+c^*(\Theta)$, see Section \ref{sec: EFST}. That is, $\pi_n^H X$ is trivial for all $n\leq c^H(X)$, where
\begin{align*}
    c^e(X)&=(p+q)\Dim(V) + c^e(\Theta) \\
    c^{\C}(X)&=p\Dim(V^{\C}) +q\Dim((V^{\C})^{\perp}) +c^{\C}(\Theta),
\end{align*}
$c^H(\Theta)$ denotes the connectivity of $\Theta$ with respect to its equivariant homotopy groups $\pi_n^H \Theta$ and $(V^{\C})^\perp$ denotes the orthogonal complement of $V^{\C}$ (see Remark \ref{rem: ^h^perp notation}). 

Applying Theorem \ref{loopsholimthm} yields that the map $r(V):E(V)\rightarrow F(V)$ is $v$-connected, where 
\begin{align*}
    v(e)&=2(p+q)\Dim(V) + 2c^e(\Theta) +1\\
    v(\C)&=\text{min}\{2p\Dim(V^{\C}) +2q\Dim((V^{\C})^{\perp}) +2c^{\C}(\Theta)+1, (p+q)\Dim(V)+c^e(\Theta)\}
\end{align*}

Corollary \ref{erratum corollary} implies that $\tau_{p+1,q}\tau_{p,q+1}r(V):\tau_{p+1,q}\tau_{p,q+1}E(V)\rightarrow \tau_{p+1,q}\tau_{p,q+1}F(V)$ is at least $(v+1)$-connected. Repeated application of Corollary \ref{erratum corollary} yields that the connectivity of $\tau_{p+1,q}^l\tau_{p,q+1}^lr(V)$ tends to infinity as $l$ tends to infinity. Hence $T_{p+1,q}T_{p,q+1}r$ is an objectwise weak equivalence. \qedhere

\end{proof}

We can now prove that the Quillen adjunction of Theorem \ref{QAstabletohomog} is a Quillen equivalence. The proof resembles that of Barnes and Oman in \cite[Theorem 10.1]{BO13} and Taggart in \cite[Theorem 7.5]{Tag22unit}, using the new $\C$-equivariant versions of \cite[Example 5.7 and Example 6.4]{Wei95} given in Examples \ref{5.7} and \ref{6.4} respectively.

\begin{theorem}\label{boclassification}
For all $p,q\geq 1$, the Quillen adjunction 
\begin{equation*} \res_{0,0}^{p,q}/O(p,q):\OEpq^s\rightleftarrows (p,q)\homog\Ezero:\ind_{0,0}^{p,q}\varepsilon^*
\end{equation*}
is a Quillen equivalence. 
\end{theorem}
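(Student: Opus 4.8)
\textbf{Proof strategy for Theorem \ref{boclassification}.}

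The plan is to follow the standard argument for upgrading the Quillen adjunction $(\res_{0,0}^{p,q}/O(p,q), \ind_{0,0}^{p,q}\varepsilon^*)$ to a Quillen equivalence, as done by Barnes and Oman in \cite[Theorem 10.1]{BO13} and Taggart in \cite[Theorem 7.5]{Tag22unit}. By \cite[Corollary 1.3.16]{Hov99}, it suffices to show two things: first, that the right adjoint $\ind_{0,0}^{p,q}\varepsilon^*$ reflects weak equivalences between fibrant objects; and second, that for every cofibrant $\Theta \in \OEpq^s$, the derived unit map $\Theta \to \ind_{0,0}^{p,q}\varepsilon^* \mathbb{R}(\res_{0,0}^{p,q}/O(p,q))\Theta$ is a weak equivalence. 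Since both model categories are stable (the $(p,q)$-stable model structure on $\OEpq$ has a stable homotopy category of $\C$-spectra with $O(p,q)$-action, and the $(p,q)$-homogeneous model structure on $\Ezero$ is, up to the zig-zag of equivalences, a category of such spectra), it is enough to check the derived unit on the generators of $\OEpq^s$.

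First I would identify, via Theorem \ref{QEstabletospectra}, that the composite $\mathbb{L}(\alpha_{p,q})_! \circ \mathbb{L}(\res_{0,0}^{p,q}/O(p,q))$ applied to a functor and then back through $\alpha_{p,q}^*$ and $\ind_{0,0}^{p,q}\varepsilon^*$ should land objectwise weakly equivalent to the functor $V \mapsto \Omega^\infty[(S^{(p,q)V} \wedge \Theta)_{hO(p,q)}]$, where $\Theta$ is the spectrum $\mathbb{L}(\alpha_{p,q})_! \mathbb{L}(\res_{0,0}^{p,q}/O(p,q))(-)$. The key computational input is Lemma \ref{BO9.3}, which tells us that the left derived functor of $\res_{0,0}^{p,q}/O(p,q)$ is objectwise weakly equivalent to $EO(p,q)_+ \wedge_{O(p,q)} \res_{0,0}^{p,q}(-)$, i.e. homotopy orbits. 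Then, applying the right adjoint $\ind_{0,0}^{p,q}\varepsilon^*$ and using the homotopy fibre description of induction from Proposition \ref{loops fibre sequence}, together with the identification of $\Omega^\infty$ via these fibre sequences, one recovers that the derived counit/unit assembly is, on a $(p,q)$-homogeneous functor $E$, the canonical map $E \to F$ where $F(V) = \Omega^\infty[(S^{(p,q)V}\wedge \Theta_E)_{hO(p,q)}]$ with $\Theta_E$ the associated spectrum. The crucial step is then that this map is a weak equivalence in the $(p,q)$-homogeneous model structure: this is where Example \ref{6.4} enters. It shows that the canonical inclusion $r : [\Omega^\infty(S^{(p,q)V}\wedge \Theta)]_{hO(p,q)} \to \Omega^\infty[(S^{(p,q)V}\wedge\Theta)_{hO(p,q)}]$ is a $T_{p+1,q}T_{p,q+1}$-equivalence, hence (after applying $\res_{0,0}^{p,q}\ind_{0,0}^{p,q}$) a weak equivalence in $(p,q)\homog\Ezero$; combined with Example \ref{5.7} (the target is $(p,q)$-homogeneous) this closes the loop.

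For the reflection of weak equivalences: if $f : X \to Y$ is a map of fibrant objects in $\OEpq^s$ (i.e. $(p,q)\Omega$-spectra with $O(p,q)$-action) such that $\ind_{0,0}^{p,q}\varepsilon^* f$ is a weak equivalence in $(p,q)\homog\Ezero$, I would argue using Lemma \ref{BOLem7.10} (a $(p,q)\pi_*$-equivalence between $(p,q)\Omega$-spectra is an objectwise weak equivalence) that it suffices to show $f$ is a $(p,q)\pi_*$-equivalence, and this can be extracted from the fact that $\res_{0,0}^{p,q}/O(p,q)$ and its derived functor $EO(p,q)_+\wedge_{O(p,q)}(-)$ detect enough homotopical information, together with the stability of both categories. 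The concrete mechanism is the same as in \cite[Theorem 10.1]{BO13}: one reduces to checking on generators $O(p,q)_+ \wedge (p,q)\mathbb{S}$ and $(\C \times O(p,q))_+ \wedge (p,q)\mathbb{S}$, plugs these into the unit formula, and verifies the resulting map is a weak equivalence using the computations of Examples \ref{5.7} and \ref{6.4} together with Lemma \ref{BO9.3}.

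\textbf{Main obstacle.} The hard part will be the careful bookkeeping of the two commuting group actions (the genuine $\C$-action and the naive $O(p,q)$-action) through the derived unit computation — in particular, verifying that the identification of $\mathbb{L}(\res_{0,0}^{p,q}/O(p,q))$ with homotopy orbits in Lemma \ref{BO9.3} interacts correctly with the $\C$-equivariant structure on $EO(p,q)$ (whose $\C$-action is the one fixed in Section \ref{sec: BO}), and that the weak equivalence $t^* EO(m,n) \simeq EO(m-1,n)$ used in Example \ref{5.7} is genuinely $\C$-equivariant. Once these equivariance checks are in place, the homotopical argument is formally identical to the non-equivariant and unitary cases; the content specific to the $\C$-setting is entirely contained in Examples \ref{5.7} and \ref{6.4}, Proposition \ref{sphere as quotient of orthogonal groups prop}, and Theorem \ref{loopsholimthm}, all of which are already established above.
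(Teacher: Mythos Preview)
Your overall plan (apply Hovey's criterion, use Examples \ref{5.7} and \ref{6.4}, Lemma \ref{BO9.3}, Lemma \ref{BOLem7.10}) is the right one, and matches the paper's approach. However, there is a genuine confusion in your reflection step that you should fix.

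In the adjunction $\res_{0,0}^{p,q}/O(p,q):\OEpq^s\rightleftarrows (p,q)\homog\Ezero:\ind_{0,0}^{p,q}\varepsilon^*$, the right adjoint $\ind_{0,0}^{p,q}\varepsilon^*$ goes \emph{from} $(p,q)\homog\Ezero$ \emph{to} $\OEpq^s$. So the reflection condition concerns a map $f:A\to B$ between fibrant objects in $(p,q)\homog\Ezero$ (i.e.\ $(p,q)$-polynomial functors), with the hypothesis that $\ind_{0,0}^{p,q}\varepsilon^* f$ is a $(p,q)\pi_*$-equivalence in $\OEpq^s$. You wrote it backwards: you take $f$ between $(p,q)\Omega$-spectra in $\OEpq^s$ and apply $\ind_{0,0}^{p,q}\varepsilon^*$ to it, which does not typecheck. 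Once corrected, the argument is short: since $A,B$ are $(p,q)$-polynomial, $\ind_{0,0}^{p,q}\varepsilon^* A$ and $\ind_{0,0}^{p,q}\varepsilon^* B$ are $(p,q)\Omega$-spectra (Proposition \ref{5.12}, Lemma \ref{omega spectra lemma}), so Lemma \ref{BOLem7.10} upgrades the $(p,q)\pi_*$-equivalence to an objectwise one, which is exactly the statement that $f$ is a weak equivalence in $(p,q)\homog\Ezero$.

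A smaller point: for the derived unit, the paper does \emph{not} reduce to generators (that was the strategy for Theorem \ref{QEstabletospectra}, and you seem to be conflating the two proofs). Instead, for cofibrant $X\in\OEpq^s$, the paper uses Theorem \ref{QEstabletospectra} to produce a $(p,q)\pi_*$-isomorphism $X\to \hat{c}\alpha_{p,q}^*\Psi$ with $\Psi=\hat{f}(\alpha_{p,q})_!X$ an $\Omega$-spectrum, and then compares the derived units of $X$ and of $\hat{c}\alpha_{p,q}^*\Psi$ via a commutative square. Three of the four sides are weak equivalences for formal reasons; the substantive one is the derived unit at $\hat{c}\alpha_{p,q}^*\Psi$, and that is precisely where Lemma \ref{BO9.3} and Examples \ref{5.7}, \ref{6.4} enter. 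Your sketch contains all these ingredients, but packaging them as a square argument rather than a generator check is cleaner and is what the cited references actually do.
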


\begin{proof}
Let $f:A\rightarrow B$ be a map of fibrant objects in $(p,q)$-homog-$\Ezero$ such that $\ind_{0,0}^{p,q} \varepsilon^* f$ is a weak equivalence in $\OEpq^s$. Then $\ind_{0,0}^{p,q}\varepsilon^* T_{p+1,q}T_{p,q+1}f$ is a weak equivalence of fibrant objects in $\OEpq^s$, and by Lemma \ref{BOLem7.10} it is also an objectwise weak equivalence. That is, $f$ is a weak equivalence in $(p,q)$-homog-$\Ezero$, so the right adjoint $\ind_{0,0}^{p,q}\varepsilon^*$ reflects weak equivalences of fibrant objects. 

By Hovey \cite[Theorem 1.3.16]{Hov99}, what remains to check is that the derived unit is a weak equivalence of $\OEpq^s$ on cofibrant objects. 

Let $X$ be cofibrant in $\OEpq^s$. There is a Quillen equivalence 
\begin{equation*}
(\alpha_{p,q})_{!}: \OEpq^s \rightleftarrows \C Sp^O [O(p,q)] : \alpha_{p,q}^*
\end{equation*}
by Theorem \ref{QEstabletospectra}. Therefore, an application of \cite[Theorem 1.3.16]{Hov99} says that there exists a $(p,q)\pi_*$-isomorphism 
\begin{equation*}
    X\rightarrow \alpha_{p,q}^* \hat{f} (\alpha_{p,q})_{!} X,
\end{equation*}
where $\hat{f}$ represents fibrant replacement in $\C Sp^O[O(p,q)]$. 

Let $\hat{c}$ represent cofibrant replacement in $\OEpq^s$, and denote the $\Omega$-spectrum with $O(p,q)$-action $\hat{f}(\alpha_{p,q})_!X$ by $\Psi$. There is a commutative diagram in $\OEpq^s$ 
\[\begin{tikzcd}
	X & {\hat{c}\alpha_{p,q}^*\Psi} \\
	{\ind_{0,0}^{p,q}\varepsilon^*T_{p+1,q}T_{p,q+1}\res_{0,0}^{p,q}X/O(p,q)} & {\ind_{0,0}^{p,q}\varepsilon^*T_{p+1,q}T_{p,q+1}\res_{0,0}^{p,q}(\hat{c}\alpha_{p,q}^*\Psi)/O(p,q)}
	\arrow["\eta", swap, from=1-1, to=2-1]
	\arrow[from=1-1, to=1-2]
	\arrow[from=1-2, to=2-2]
	\arrow[from=2-1, to=2-2]
\end{tikzcd}\]
The map $\eta$ is the derived unit, which we want to show is a $(p,q)\pi_*$-isomorphism. 

The top horizontal map is a $(p,q)\pi_*$-isomorphism, since it is the $(p,q)\pi_*$-isomorphism given above composed with cofibrant replacement. The bottom horizontal map is a $(p,q)\pi_*$-isomorphism, since the top one is and derived functors preserve weak equivalences. The same method as \cite[Theorem 10.1]{BO13} shows that the right vertical map is also a $(p,q)\pi_*$-isomorphism. This argument uses the $\C$-equivariant generalisations of \cite[Examples 5.7 and 6.4]{Wei95}, which are given by Example \ref{5.7} and Example \ref{6.4} respectively. Therefore, $\eta$ is also a $(p,q)\pi_*$-isomorphism as required. 
\end{proof}

\begin{corollary}\label{zigzagclassification}
There is an equivalence of homotopy categories 
\begin{equation*}
    \Ho(\C Sp^O[O(p,q)])\rightleftarrows \Ho((p,q)\homog\Ezero)
\end{equation*}
for $p,q\geq 1$. 
\end{corollary}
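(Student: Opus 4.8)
The plan is to assemble Corollary \ref{zigzagclassification} from the two Quillen equivalences established immediately above, composing them and then passing to homotopy categories. Concretely, Theorem \ref{boclassification} gives, for $p,q\geq 1$, a Quillen equivalence
\begin{equation*}
\res_{0,0}^{p,q}/O(p,q):\OEpq^s\rightleftarrows (p,q)\homog\Ezero:\ind_{0,0}^{p,q}\varepsilon^*,
\end{equation*}
and Theorem \ref{QEstabletospectra} gives a Quillen equivalence
\begin{equation*}
(\alpha_{p,q})_{!}:\OEpq^s\rightleftarrows \C Sp^O[O(p,q)]:\alpha_{p,q}^*.
\end{equation*}
So first I would invoke the standard fact (Hovey \cite[Section 1.3]{Hov99}) that a Quillen adjunction induces an adjunction of total derived functors between the homotopy categories, and that a Quillen \emph{equivalence} induces an \emph{adjoint equivalence} of homotopy categories. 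Applying this to each of the two displayed Quillen equivalences yields equivalences of categories
\begin{equation*}
\Ho(\OEpq^s)\simeq \Ho((p,q)\homog\Ezero)\qquad\text{and}\qquad \Ho(\OEpq^s)\simeq \Ho(\C Sp^O[O(p,q)]).
\end{equation*}

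Next I would compose these two equivalences. Since $\Ho(\OEpq^s)$ sits in the middle of both, composing the equivalence $\Ho(\C Sp^O[O(p,q)])\simeq \Ho(\OEpq^s)$ coming from Theorem \ref{QEstabletospectra} with the equivalence $\Ho(\OEpq^s)\simeq \Ho((p,q)\homog\Ezero)$ coming from Theorem \ref{boclassification} produces the desired equivalence of homotopy categories
\begin{equation*}
\Ho(\C Sp^O[O(p,q)])\rightleftarrows \Ho((p,q)\homog\Ezero),
\end{equation*}
with underlying derived functors the composites $\mathbb{L}(\res_{0,0}^{p,q}/O(p,q))\circ \mathbb{L}(\alpha_{p,q})_{!}$ wait — more carefully, $\mathbb{R}(\ind_{0,0}^{p,q}\varepsilon^*)$ composed with $\mathbb{L}(\alpha_{p,q})_{!}$ in one direction, and their adjoints in the other; I would spell out which composite goes which way and note that the composite of (adjoint) equivalences is again an (adjoint) equivalence, so no further checking of unit/counit is needed. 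I would also remark that the spectrum $\Theta_F^{p,q}$ attached to a $(p,q)$-homogeneous functor $F$ is by definition the image of $F$ under this composite, recovering the object-level statement of Theorem \ref{thm A}.

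There is essentially no obstacle here: all the genuine work (verifying the Quillen equivalences, in particular checking derived units are weak equivalences, and handling the connectivity estimates via Examples \ref{5.7} and \ref{6.4}) has already been done in Theorems \ref{QEstabletospectra} and \ref{boclassification}. The only minor point to be careful about is bookkeeping of directions: Theorem \ref{QEstabletospectra} is written with $(\alpha_{p,q})_{!}$ as the left adjoint $\OEpq^s\to \C Sp^O[O(p,q)]$, so the induced homotopy-category equivalence points from $\Ho(\OEpq^s)$ to $\Ho(\C Sp^O[O(p,q)])$, and I must take its inverse (which exists since it is an equivalence) before composing with the homotopy-category equivalence induced by Theorem \ref{boclassification}. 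With that orientation fixed, the composite is the claimed equivalence and the proof is complete.
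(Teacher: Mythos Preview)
Your proposal is correct and follows essentially the same approach as the paper: invoke the two Quillen equivalences of Theorems \ref{QEstabletospectra} and \ref{boclassification}, pass to homotopy categories, and compose through the common middle term $\Ho(\OEpq^s)$ to obtain the stated zig-zag equivalence. The paper's proof is terser but makes exactly the same moves; your extra remarks on orientation and on the definition of $\Theta_F^{p,q}$ are accurate but not needed for the formal argument.
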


\begin{proof}
Let $p,q\geq 1$. The adjunctions 
\begin{equation*}
\res_{0,0}^{p,q}/O(p,q):\OEpq^s\rightleftarrows (p,q)\homog\Ezero:\ind_{0,0}^{p,q}\varepsilon^*
\end{equation*}
and 
\begin{equation*}
(\alpha_{p,q})_{!}: \OEpq^s \rightleftarrows \C Sp^O [O(p,q)] : \alpha_{p,q}^*
\end{equation*}
are Quillen equivalences. Composition of the left and right derived functors gives the desired zig-zag of equivalences. 
\end{proof}

Rephrasing this classification using the derived adjunctions, we can explicitly describe how $(p,q)$-homogeneous functors are completely determined by genuine orthogonal $\C$-spectra with an action of $O(p,q)$. The following classification is a $\C$-equivariant generalisation of \cite[Theorem 7.3]{Wei95}. We will denote the image $\mathbb{L}(\alpha_{p,q})_!\mathbb{R}\ind_{0,0}^{p,q}\varepsilon^*F\in \C Sp^O[O(p,q)]$ of a $F\in\Ezero$ under the derived zig-zag of Quillen equivalences by $\Theta_F^{p,q}$\index{$\Theta_F^{p,q}$}. That is, $\Theta_F^{p,q}$ is a specific $\C$-spectrum with an action of $O(p,q)$, which is determined by the functor $F$. The proof follows the method used by Taggart \cite[Theorem 8.1]{Tag22unit}.

\begin{theorem}\label{weissclassification}
Let $p,q\geq 1$. If $F\in\Ezero$ is a $(p,q)$-homogeneous functor, then $F$ is objectwise weakly equivalent to 
\begin{equation*}
    V\mapsto \Omega^\infty[(S^{(p,q)V}\wedge\Theta_F^{p,q})_{hO(p,q)}]
\end{equation*}
Conversely, every functor of the form 
\begin{equation*}
    V\mapsto \Omega^\infty[(S^{(p,q)V}\wedge\Theta)_{hO(p,q)}]
\end{equation*}
where $\Theta\in \C Sp^O[O(p,q)]$ is $(p,q)$-homogeneous. 
\end{theorem}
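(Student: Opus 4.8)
The plan is to deduce Theorem~\ref{weissclassification} from the zig-zag of Quillen equivalences assembled in Corollary~\ref{zigzagclassification} (equivalently from Theorem~\ref{boclassification} and Theorem~\ref{QEstabletospectra}), exactly in the spirit of \cite[Theorem 10.3]{BO13} and \cite[Theorem 8.1]{Tag22unit}. The converse direction is already isolated: a functor of the form $V\mapsto \Omega^\infty[(S^{(p,q)V}\wedge\Theta)_{hO(p,q)}]$ with $\Theta\in\C Sp^O[O(p,q)]$ is $(p,q)$-homogeneous by Example~\ref{5.7} (this uses $p,q\geq 1$ via Proposition~\ref{sphere as quotient of orthogonal groups prop}), so nothing further is needed there beyond quoting it.

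For the forward direction, I would argue as follows. Let $F$ be $(p,q)$-homogeneous; then $F$ is fibrant in $(p,q)\homog\Ezero$ after projective-cofibrant replacement, and $T_{p+1,q}T_{p,q+1}F\simeq F$ since $F$ is $(p,q)$-polynomial (Theorem~\ref{weiss6.3.1}, Proposition~\ref{pq poly implies more poly}, Lemma~\ref{weiss6.3.2}). First I would form $\Theta_F^{p,q}:=\mathbb{L}(\alpha_{p,q})_!\mathbb{R}\ind_{0,0}^{p,q}\varepsilon^* F\in\C Sp^O[O(p,q)]$, and let $\Psi$ be a fibrant (hence $\Omega$-spectrum) replacement of $(\alpha_{p,q})_!\hat{c}\ind_{0,0}^{p,q}\varepsilon^* F$, so that $\alpha_{p,q}^*\Psi$ is a $(p,q)\Omega$-spectrum representing $\mathbb{R}\ind_{0,0}^{p,q}\varepsilon^* F$ up to $(p,q)\pi_*$-isomorphism (using the Quillen equivalence of Theorem~\ref{QEstabletospectra} and \cite[Theorem 1.3.16]{Hov99}). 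Applying the left derived functor $\mathbb{L}(\res_{0,0}^{p,q}/O(p,q))$, which by Lemma~\ref{BO9.3} is objectwise $EO(p,q)_+\wedge_{O(p,q)}\res_{0,0}^{p,q}(-)$, produces the functor
\begin{equation*}
V\mapsto EO(p,q)_+\wedge_{O(p,q)} \Psi((p,q)V),
\end{equation*}
and since $\Psi$ is a $(p,q)\Omega$-spectrum this is objectwise weakly equivalent to $V\mapsto [\Omega^\infty(S^{(p,q)V}\wedge\Theta_F^{p,q})]_{hO(p,q)}$. The counit of the Quillen equivalence of Theorem~\ref{boclassification}, together with the fact that $\mathbb{R}\ind_{0,0}^{p,q}\varepsilon^*$ reflects weak equivalences of fibrant objects (shown inside the proof of Theorem~\ref{boclassification} via Lemma~\ref{BOLem7.10}), identifies this functor with $F$ in $\Ho((p,q)\homog\Ezero)$; since weak equivalences in $(p,q)\homog\Ezero$ between fibrant objects are detected objectwise on $T_{p+1,q}T_{p,q+1}$ and $F$ is already $(p,q)$-polynomial, this is an objectwise weak equivalence. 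Finally, Example~\ref{6.4} provides the $T_{p+1,q}T_{p,q+1}$-equivalence $r$ between $V\mapsto[\Omega^\infty(S^{(p,q)V}\wedge\Theta_F^{p,q})]_{hO(p,q)}$ and $V\mapsto\Omega^\infty[(S^{(p,q)V}\wedge\Theta_F^{p,q})_{hO(p,q)}]$; since both functors are $(p,q)$-polynomial (the latter by Example~\ref{5.7}, the former because it agrees with $\mathbb{L}(\res_{0,0}^{p,q}/O(p,q))$ of a cofibrant object which maps to something fibrant under the right Quillen functor), a $T_{p+1,q}T_{p,q+1}$-equivalence between them is an objectwise weak equivalence. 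Chaining these equivalences gives $F\simeq V\mapsto\Omega^\infty[(S^{(p,q)V}\wedge\Theta_F^{p,q})_{hO(p,q)}]$ objectwise, as claimed.

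The main obstacle I anticipate is the bookkeeping around \emph{which} model structure detects the relevant equivalences at each stage, and making sure the passage from ``equivalence in $\Ho((p,q)\homog\Ezero)$'' to ``objectwise equivalence'' is legitimate for the specific functors in play. Concretely, the subtle point is that $(p,q)\homog$-weak equivalences are only guaranteed to be objectwise equivalences when the source and target are already $(p,q)$-polynomial (so that $T_{p+1,q}T_{p,q+1}$ does nothing up to equivalence); I need to verify this hypothesis for $V\mapsto[\Omega^\infty(S^{(p,q)V}\wedge\Theta_F^{p,q})]_{hO(p,q)}$, which is why Example~\ref{6.4} is invoked precisely at that step — it shows that functor becomes, after $T_{p+1,q}T_{p,q+1}$, equivalent to the genuinely $(p,q)$-homogeneous functor of Example~\ref{5.7}. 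The other routine-but-fiddly ingredient is checking that $\Psi$ being a $(p,q)\Omega$-spectrum lets one replace $EO(p,q)_+\wedge_{O(p,q)}\Psi((p,q)V)$ by $[\Omega^\infty(S^{(p,q)V}\wedge\Theta_F^{p,q})]_{hO(p,q)}$; this is the standard identification of the derived restriction-orbit functor and I would cite the argument in \cite[Theorem 10.3]{BO13} rather than reprove it. Everything else is formal manipulation of Quillen equivalences and derived (co)units via \cite[Theorem 1.3.16]{Hov99}.
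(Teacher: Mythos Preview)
Your proposal follows essentially the same route as the paper: both arguments pass through the intermediate functor $E(V)=[\Omega^\infty(S^{(p,q)V}\wedge\Theta_F^{p,q})]_{hO(p,q)}$ (which the paper writes as $(\ind_{0,0}^{p,q}\varepsilon^* F(V))_{hO(p,q)}$), use Example~\ref{6.4} to relate it to $G(V)=\Omega^\infty[(S^{(p,q)V}\wedge\Theta_F^{p,q})_{hO(p,q)}]$, and use the Quillen equivalence of Theorem~\ref{boclassification} to relate it to $F$, before upgrading through the model structures via Hirschhorn's \cite[Theorem~3.2.13]{Hir03}. The converse via Example~\ref{5.7} is identical.

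There is one genuine slip. You assert that $E$ is $(p,q)$-polynomial ``because it agrees with $\mathbb{L}(\res_{0,0}^{p,q}/O(p,q))$ of a cofibrant object which maps to something fibrant under the right Quillen functor''. Left Quillen functors preserve cofibrancy, not fibrancy, so this does not establish that $E$ is $(p,q)$-polynomial, and in fact $E$ need not be. The paper does not make this claim: instead, after showing that $E$ and $F$ are weakly equivalent in $(p,q)\homog\Ezero$ and upgrading (via cofibrancy of both and \cite[Theorem~3.2.13(2)]{Hir03}) to a $T_{p+1,q}T_{p,q+1}$-equivalence, it runs the final chain
\[
G\;\simeq\; T_{p+1,q}T_{p,q+1}E\;\simeq\; T_{p+1,q}T_{p,q+1}F\;\simeq\; F,
\]
where the first equivalence comes from Example~\ref{6.4} together with $G$ being $(p,q)$-polynomial (Example~\ref{5.7}), and the last from $F$ being $(p,q)$-polynomial. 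This bypasses any need for $E$ itself to be polynomial. You clearly see the issue in your ``main obstacle'' paragraph, but your proposed resolution there (that Example~\ref{6.4} verifies polynomiality of $E$) is not quite right either: Example~\ref{6.4} only gives a $T_{p+1,q}T_{p,q+1}$-equivalence $E\to G$, not that $E$ is fibrant. The clean fix is exactly what the paper does --- work with $T_{p+1,q}T_{p,q+1}E$ rather than $E$ in the final step.
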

\begin{proof}
The proof of the converse statement is exactly Example \ref{5.7}. 

Let $F$ be cofibrant-fibrant in $(p,q)$-homog-$\Ezero$. That is, $F$ is $(p,q)$-homogeneous and cofibrant in the projective model structure. Define functors $E,G\in\Ezero$ by 
\begin{align*}
    E(V)&=(\ind_{0,0}^{p,q}\varepsilon^* F(V))_{hO(p,q)}\\
    G(V)&=\Omega^\infty[(S^{(p,q)V}\wedge \Theta_F^{p,q})_{hO(p,q)}]
\end{align*}
The functors $E$ and $G$ are $T_{p+1,q}T_{p,q+1}$-equivalent, since 
\begin{equation*}
    \ind_{0,0}^{p,q}\varepsilon^* F(V)=\alpha_{p,q}^*\Theta_F^{p,q}(V)\simeq\Omega^\infty(S^{(p,q)V}\wedge\Theta_F^{p,q})
\end{equation*}
and $[\Omega^\infty(S^{(p,q)V}\wedge\Theta_F^{p,q})]_{hO(p,q)}$ is $T_{p+1,q}T_{p,q+1}$-equivalent to $G$ by Example \ref{6.4}. 

Since $G$ is $(p,q)$-polynomial by Example \ref{5.7}, Lemma \ref{weiss6.3.2} implies that $G$ is objectwise weakly equivalent to $T_{p+1,q}T_{p,q+1}E$. Therefore, there is an objectwise weak equivalence between $\ind_{0,0}^{p,q}\varepsilon^*T_{p+1,q}T_{p,q+1}E$ and $\ind_{0,0}^{p,q}\varepsilon^* G$, since $\ind_{0,0}^{p,q}\varepsilon^*$ is right Quillen and preserves weak equivalences of fibrant objects. 

Using the ``identification" $\equiv$ from Example \ref{5.7}, we get the following.
\begin{equation*}
    \ind_{0,0}^{p,q}\varepsilon^*G(V)\equiv G[p,q](V):=\Omega^\infty(S^{(p,q)V}\wedge \Theta_F^{p,q})\simeq \ind_{0,0}^{p,q}\varepsilon^*F(V)
\end{equation*}
Therefore, there is a zig-zag of objectwise weak equivalences between $\ind_{0,0}^{p,q}\varepsilon^*T_{p+1,q}T_{p,q+1}E$ and $\ind_{0,0}^{p,q}\varepsilon^*F(V)$. 

Since $F$ is $(p,q)$-homogeneous by assumption, it is in particular $(p,q)$-polynomial. Then by an application of Lemma \ref{weiss6.3.2}, there is a zig-zag of objectwise weak equivalences 
\begin{equation*}
\ind_{0,0}^{p,q}\varepsilon^*T_{p+1,q}T_{p,q+1}E\simeq\ind_{0,0}^{p,q}\varepsilon^*T_{p+1,q}T_{p,q+1}F   
\end{equation*}
That is, $E$ and $F$ are weakly equivalent in the $(p,q)$-homogeneous model structure.

Since both $E$ and $F$ are cofibrant in $(p,q)$-homog-$\Ezero$, an application of \cite[Theorem 3.2.13 (2)]{Hir03} implies that $E$ and $F$ are weakly equivalent in the $(p,q)$-polynomial model structure. Since both $E$ and $F$ are fibrant in $(p,q)$-poly-$\Ezero$, an application of \cite[Theorem 3.2.13 (1)]{Hir03} implies that $E$ and $F$ are weakly equivalent in the projective model structure. Hence, there are objectwise weak equivalences
\begin{equation*}
    G\simeq T_{p+1,q}T_{p,q+1}E\simeq T_{p+1,q}T_{p,q+1}F\simeq F
\end{equation*}
since $T_{p+1,q}T_{p,q+1}$ preserves objectwise equivalences and $F$ is $(p,q)$-homogeneous. 

For general $(p,q)$-homogeneous $F$ the result follows by cofibrantly replacing $F$ in the projective model structure and then applying the argument above. 
\end{proof}

The final theorem is an application of the classification Theorem \ref{weissclassification}. This theorem describes how the classification is actually used, in order to study one of the input functors (see Definition \ref{jzero and ezero def}). In particular, the fibre $$D_{p,q}X(V) \rightarrow T_{p+1,q}T_{p,q+1}X(V)\rightarrow T_{p,q}X(V)$$ is determined by the $(p,q)$-derivative of $D_{p,q}X$. This is analogous to studying the layers of the Taylor tower of approximations in the underlying calculus, see Section \ref{sec: classification n-homog}, and the statement is similar to that of Weiss \cite[Theorem 9.1]{Wei95}.
\begin{theorem}
For all $X\in\Ezero$, $p,q\geq 1$ and $V\in\Jzero$, there exists homotopy fibre sequences 
\begin{equation*}
  \Omega^\infty[(S^{(p,q)V}\wedge\Theta_{D_{p,q}X}^{p,q})_{hO(p,q)}] \rightarrow T_{p+1,q}T_{p,q+1}X(V)\rightarrow T_{p,q}X(V)
\end{equation*}
\end{theorem}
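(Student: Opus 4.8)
The plan is to recall the definition of the homotopy fibre $D_{p,q}X$ and apply the classification theorem directly to it. By Theorem \ref{thm: DYpq is homog}, the homotopy fibre $D_{p,q}X$ of the map $r_{p,q}\colon T_{p+1,q}T_{p,q+1}X \to T_{p,q}X$ is $(p,q)$-polynomial and $(p,q)$-reduced, hence $(p,q)$-homogeneous. This is the only real input needed; the rest of the argument is formal.

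First I would invoke Theorem \ref{weissclassification}, the classification of $(p,q)$-homogeneous functors, applied to the functor $D_{p,q}X$. This produces an objectwise weak equivalence
\begin{equation*}
D_{p,q}X(V) \simeq \Omega^\infty[(S^{(p,q)V}\wedge \Theta_{D_{p,q}X}^{p,q})_{hO(p,q)}],
\end{equation*}
where $\Theta_{D_{p,q}X}^{p,q} = \mathbb{L}(\alpha_{p,q})_!\mathbb{R}\ind_{0,0}^{p,q}\varepsilon^* D_{p,q}X \in \C Sp^O[O(p,q)]$ is the genuine orthogonal $\C$-spectrum with $O(p,q)$-action determined by $D_{p,q}X$. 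Then I would substitute this equivalence into the defining homotopy fibre sequence
\begin{equation*}
D_{p,q}X(V) \to T_{p+1,q}T_{p,q+1}X(V) \to T_{p,q}X(V),
\end{equation*}
which exists by the construction of $D_{p,q}X$ as $\hofibre[r_{p,q}]$ in Theorem \ref{thm: DYpq is homog}. Replacing the first term by its weakly equivalent model yields the claimed homotopy fibre sequence.

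There is no serious obstacle here: the theorem is essentially a packaging of Theorem \ref{thm: DYpq is homog} together with Theorem \ref{weissclassification}. The only point requiring mild care is that substituting a weakly equivalent object into a homotopy fibre sequence again yields a homotopy fibre sequence — this is standard, since homotopy fibre sequences are invariant under objectwise weak equivalence (one may, for instance, take a fibrant replacement of the map $r_{p,q}$ and use that $D_{p,q}X$ is, up to weak equivalence, the strict fibre). I would state this invariance as the single lemma-level fact being used and then conclude. The analogy with \cite[Theorem 9.1]{Wei95} can be noted, but the proof itself is just these two citations strung together.
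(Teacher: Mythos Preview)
Your proposal is correct and matches the paper's proof essentially line for line: the paper also defines $D_{p,q}X$ as the homotopy fibre of $T_{p+1,q}T_{p,q+1}X\to T_{p,q}X$, invokes Theorem \ref{thm: DYpq is homog} to see it is $(p,q)$-homogeneous, and then applies the classification Theorem \ref{weissclassification} to identify the fibre. The only extra remark in the paper is the identification of $r_{p,q}$ as $T_{p+1,q}T_{p,q+1}$ applied to $X\to T_{p,q}X$ via Lemma \ref{weiss6.3.2}, which is not needed for the argument you give.
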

\begin{proof}
The map $T_{p+1,q}T_{p,q+1}X\rightarrow T_{p,q}X$ is exactly $T_{p+1,q}T_{p,q+1}$ of the canonical inclusion map $X\rightarrow T_{p,q}X$, by Lemma \ref{weiss6.3.2}. 

Let $D_{p,q}X$ be the homotopy fibre of this map. Then $D_{p,q}X$ is $(p,q)$-homogeneous (see Theorem \ref{thm: DYpq is homog}). An application of the classification Theorem \ref{weissclassification} gives that 
\begin{equation*}
    D_{p,q}X(V)\simeq \Omega^\infty[(S^{(p,q)V}\wedge \Theta_{D_{p,q}X}^{p,q})_{hO(p,q)}].\qedhere
\end{equation*}

\end{proof}

\section{The functor $BO(-)$}\label{sec: BO}

The complexity of computations in orthogonal calculus is widely acknowledged. In \cite[Example 2.7]{Wei95}, Weiss gives calculations for low degree derivatives of the functor $$BO(-):V\rightarrow BO(V)$$ where $V\in \mathcal{J}_0$. In \cite[Theorem 2]{Aro02}, Arone gives a formula for the remaining higher derivatives of this functor. 

In this section, we find some good candidates for the first derivatives of an input functor for $\C$-equivariant orthogonal calculus, $BO(-)\in \Ezero$. These calculations are analogous to that of \cite[Example 2.7]{Wei95} for $BO^{(1)}(-)$. 

We define an input functor to $\C$-equivariant orthogonal calculus $BO(-)$ by 
\begin{equation*}
    BO(-):V\rightarrow BO(V)
\end{equation*}
where $V\in \Jzero$. The space $BO(V)$ is the classifying space of $O(V)$, with the $\C$-action inherited from the $\C$-action on $V$. 

More specifically, since $O(V)$ has a $\C$-action, which is conjugation by the matrix $A$ defined in \ref{def:O(p,q) and matrix A}, finite products $O(V)\times \dots \times O(V)$ can be equipped with the diagonal $\C$-action. The universal space $EO(V)$ is the geometric realisation of a simplicial space whose $n$-simplices are $(n+1)$-tuples of elements of $O(V)$, see \cite[Example 1B.7]{Hat02}. As such, $EO(V)$\index{$EO(V)$} inherits a $\C$-action, which is the diagonal action on simplices, and so does the orbit space $EO(V)/O(V)=:BO(V)$\index{$BO(V)$}. 

Note that the universal space $EO(V)$ is a contractible $\C$-space. That is, the equivariant homotopy groups $\pi_n^H EO(V)$ are trivial for all $n$ and for each closed subgroup $H$ of $\C$. This follows from the fact that taking fixed points commutes with taking the geometric realisation (see \cite[Section V.1]{May96}), so that in particular
\begin{equation*}
    \left(EO(p,q)\right)^{\C} \cong E\left(O(p,q)^{\C}\right)= E\left(O(p)\times O(q)\right)\cong EO(p)\times EO(q),
\end{equation*}
which is a contractible space, since $EO(p)$ and $EO(q)$ are both contractible. 

Consider the long exact sequence of $\C$-equivariant homotopy groups $\pi_n^H$ on the $\C$-equivariant fibre sequence $O(V)\rightarrow EO(V)\rightarrow BO(V)$. Since the homotopy groups $\pi_n^H EO(V)$ are all trivial, we see that there is a weak equivalence of $\C$-spaces $$\Omega BO(V)\simeq O(V).$$ Here the $\C$-action on $S^1$ is trivial and so $\C$ acts on $f\in \Omega BO(V)$ by $\sigma (f):= \sigma \circ f$. 

\begin{remark}
    There are other descriptions for the classifying space of $O(V)$, which may lead to more interesting calculations. For example, in \cite{GMM17}, Guillou, May and Merling define a classifying space for principal $(G,\Pi)$-bundles, where $G$ and $\Pi$ are topological groups and $G$ acts on $\Pi$ so that
    \begin{equation*}
        1\rightarrow \Pi \rightarrow \Pi\rtimes G\rightarrow G\rightarrow 1
    \end{equation*}
    is a split extension. Since the conjugation $\C$-action on $O(V)$ induces a split exact sequence identical to that above (see Definition \ref{def:O(p,q) and matrix A}), this theory of $(\C, O(V))$-bundles is applicable. The description of $E(G,\Pi)$ is given in terms of the classifying space of a category of functors $\mathscr{C}(\tilde G, \tilde \Pi)$, and $B(G,\Pi)=E(G,\Pi)/\Pi$ is the orbit space. In \cite[Chapter VII]{May96}, May presents a more general theory of $(\Pi:\Gamma)$-bundles that is applicable to non-split extensions
        \begin{equation*}
        1\rightarrow \Pi \rightarrow \Gamma\rightarrow G\rightarrow 1.
    \end{equation*}
\end{remark}

The `calculation' of the derivatives of $BO(-)\in\Ezero$ uses the iterative description of derivatives given by the homotopy fibre sequence of Proposition \ref{loops fibre sequence}.

Let $BO(-)\in \Ezero$ be defined as above. By Proposition \ref{loops fibre sequence}, there exist $\C$-equivariant homotopy fibre sequences
\begin{equation*}
    BO^{(1,0)}(V)=\ind_{0,0}^{1,0}BO(V)\rightarrow BO(V)\rightarrow BO(V\oplus \mathbb{R})
\end{equation*}
and 
\begin{equation*}
    BO^{(0,1)}(V)=\ind_{0,0}^{0,1}BO(V)\rightarrow BO(V)\rightarrow BO(V\oplus \mathbb{R}^\delta) .
\end{equation*}

The subgroup inclusion map $i:O(V)\hookrightarrow O(V\oplus X)$ induces a $\C$-equivariant homotopy fibre sequence $$O(V\oplus X)/O(V)\simeq \Omega B(O(V\oplus X)/O(V))\rightarrow BO(V)\rightarrow BO(V\oplus X)$$where $X=\R$ or $X=\Rdelta$. 

That is, there exist $\C$-equivariant homotopy fibre sequences
\begin{equation*}
   O(V\oplus \mathbb{R})/O(V) \rightarrow BO(V)\rightarrow BO(V\oplus \mathbb{R})
\end{equation*}
and 
\begin{equation*}
    O(V\oplus \mathbb{R}^\delta)/O(V)\rightarrow BO(V)\rightarrow BO(V\oplus \mathbb{R}^\delta)
\end{equation*}

Therefore, the functors $E\in\C\mathcal{E}_{1,0}$ and $F\in\C\mathcal{E}_{0,1}$ defined by
\begin{align*}
    E&:V\mapsto O(V\oplus \mathbb{R})/O(V)\cong S^V\\
    F&:V\mapsto O(V\oplus\Rdelta)/O(V)\cong S^{V\otimes\mathbb{R}^\delta}
\end{align*}
are good candidates for $BO^{(1,0)}(-)$ and $BO^{(0,1)}(-)$ respectively. 

\begin{conjecture}\label{ex: BO}
The $(1,0)$-derivative of $BO(-)$ is the orthogonal sphere spectrum $\mathbb{S}:V\mapsto S^V$ and the $(0,1)$-derivative of $BO(-)$ is the twisted orthogonal sphere spectrum $\mathbb{S}^{\otimes \delta}:V\mapsto S^{V\otimes \Rdelta}$.
\end{conjecture}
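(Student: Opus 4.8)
The plan is to bootstrap from the two iterative fibre-sequence descriptions of derivatives already in hand. Applying Proposition \ref{loops fibre sequence} with $p=q=0$ to $BO(-)$, and using that $\Omega^{(0,0)\R}$ and $\Omega^{(0,0)\Rdelta}$ are the identity functor (since $\R^{0,0}\otimes X = 0$), gives $\C$-homotopy fibre sequences
\[
BO^{(1,0)}(V)\to BO(V)\to BO(V\oplus\R),\qquad BO^{(0,1)}(V)\to BO(V)\to BO(V\oplus\Rdelta).
\]
Tracing these back through the proof of Proposition \ref{cofibseq} in the degenerate case $W=0$ (where the restricted composition map is just $f\mapsto f|_V$) identifies the right-hand maps with $BO$ applied to the inclusions $V\hookrightarrow V\oplus\R$ and $V\hookrightarrow V\oplus\Rdelta$, hence with the maps of classifying spaces induced by the $\C$-equivariant subgroup inclusions $O(V)\hookrightarrow O(V\oplus\R)$ and $O(V)\hookrightarrow O(V\oplus\Rdelta)$. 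For a $\C$-equivariant subgroup inclusion $H\le G$ the map $BH\to BG$ is a $\C$-fibration with $\C$-homotopy fibre $G/H$, so the homotopy fibres above are $O(V\oplus\R)/O(V)$ and $O(V\oplus\Rdelta)/O(V)$. An orbit--stabiliser argument as in Proposition \ref{sphere as quotient of orthogonal groups prop} then supplies $\C$-homeomorphisms $O(V\oplus\R)/O(V)\cong S(V\oplus\R)\cong S^V$ (stereographic projection from the $\C$-fixed unit vector of the $\R$-summand) and $O(V\oplus\Rdelta)/O(V)\cong S^{V\otimes\Rdelta}$; the latter needs care, since the orbit map $[g]\mapsto g\cdot 1_\delta$ intertwines the $\C$-action on the quotient not with the naive action on $S(V\oplus\Rdelta)$ but with its sign twist, which is $\C$-homeomorphic to $S^{V\otimes\Rdelta}$. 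This recovers, objectwise on $\Jzero$, the ``good candidates'' of the preceding discussion: as $\C$-spaces, $BO^{(1,0)}(V)\simeq S^V$ and $BO^{(0,1)}(V)\simeq S^{V\otimes\Rdelta}$.

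The difficulty --- and the reason Conjecture \ref{ex: BO} is stated as a conjecture --- is that these are identifications of $\C$-spaces levelwise, whereas the claim is an identification of orthogonal $\C$-spectra, namely $BO^{(1,0)}\simeq\mathbb{S}$ in $\C Sp^O[O(1,0)]$ and $BO^{(0,1)}\simeq\mathbb{S}^{\otimes\delta}$ in $\C Sp^O[O(0,1)]$. Proposition \ref{loops fibre sequence} is purely objectwise and records nothing about the morphism $\C$-spaces $\CJ{1}{0}(V,W)$, the structure maps $S^{(1,0)W}\wedge X(V)\to X(V\oplus W)$, or the residual $O(1)$-action (the sign action on the $\R^{1,0}$, respectively $\R^{0,1}$, tensor factor), so a genuinely new argument is needed. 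The route I would take, following Weiss's treatment of $BO^{(1)}$ in \cite[Example 2.7]{Wei95}, is to construct a map of orthogonal $\C$-spectra $\mathbb{S}\to BO^{(1,0)}=\ind_{0,0}^{1,0}\varepsilon^*BO$ directly: by the enriched Yoneda lemma and Definition \ref{induction definition} this is the same as a natural transformation $S^{(1,0)V}\wedge\CJ{1}{0}(V,-)\to\varepsilon^*BO$, to be assembled from the identification $O(V\oplus\R)/O(V)\cong S^V$, the Thom-space structure of $\CJ{1}{0}$, and the inclusions $O(V)\hookrightarrow O(V\oplus\R)$, with the structure-map compatibility and $O(1)$-equivariance checked by hand; one then concludes with Lemma \ref{Level equiv is pi equiv}, since the objectwise computation above shows this map is an objectwise $\C$-equivalence. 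The $(0,1)$-direction is handled identically, with $\CJ{0}{1}$, $\Rdelta$ replacing $\R$, and the sign-twisted identification $O(V\oplus\Rdelta)/O(V)\cong S^{V\otimes\Rdelta}$.

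I expect the main obstacle to be exactly the compatibility of this homotopy-fibre identification with the orthogonal spectrum structure: packaging the morphism-space and structure-map data into a coherent map of diagram $\C$-spectra --- and, in the $(0,1)$ case, carrying the sign twist through so the evident non-equivariant stereographic identification is replaced by a $\C$-equivariant one --- goes beyond transcribing the non-equivariant calculation. A useful consistency check, once the forgetful comparison to orthogonal calculus of Section \ref{sec: forget c2} is available, is that forgetting the $\C$-action must carry $BO^{(1,0)}$ and $BO^{(0,1)}$ to the non-equivariant first derivative $BO^{(1)}\simeq\mathbb{S}$; together with the levelwise $\C$-space computation this makes the candidates very plausible, but does not on its own prove the conjecture.
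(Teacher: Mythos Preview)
Your proposal is sound and in fact goes further than the paper. The statement is a \emph{conjecture} in the paper, not a theorem: the paper does not prove it. What the paper does is exactly the objectwise computation you reproduce in your first paragraph --- apply Proposition~\ref{loops fibre sequence} at $p=q=0$, identify the right-hand map with the one induced by the subgroup inclusion $O(V)\hookrightarrow O(V\oplus X)$, recognise the fibre as the coset space, and record the $\C$-homeomorphisms $O(V\oplus\R)/O(V)\cong S^V$ and $O(V\oplus\Rdelta)/O(V)\cong S^{V\otimes\Rdelta}$ --- then declares these ``good candidates'' and states the conjecture without further argument.

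You are more careful than the paper on one point: the sign-twist in the $(0,1)$ case. The paper simply writes $O(V\oplus\Rdelta)/O(V)\cong S^{V\otimes\Rdelta}$, whereas you correctly observe that the orbit map $[g]\mapsto g\cdot 1_\delta$ is not equivariant for the standard action on $S(V\oplus\Rdelta)$ but for its sign twist, which is what forces the $\otimes\Rdelta$ on the right. You also go beyond the paper by identifying precisely why this remains a conjecture --- the fibre-sequence argument is purely levelwise and says nothing about compatibility with the $\CJ{1}{0}$- or $\CJ{0}{1}$-structure maps or the residual $O(1)$-action --- and by sketching a plausible route to a proof via an explicit map $\mathbb{S}\to\ind_{0,0}^{1,0}\varepsilon^*BO$ constructed through enriched Yoneda, to be checked an objectwise equivalence. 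The paper offers no such sketch; its only further comment is that already the second derivatives would be ``an interesting task''.
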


\begin{remark}
In \cite{Wei95}, see also \cite{Tag}, the second derivative $BO^{(2)}(-)$ is calculated using spectral sequence methods. Trying to generalise the method to calculate $BO^{(2,0)}(-)$, $BO^{(1,1)}(-)$ and $BO^{(0,2)}(-)$ would be an interesting task. 
\end{remark}




\clearpage









\label{Bibliography}

\clearpage 
\phantomsection
\addcontentsline{toc}{chapter}{Bibliography}

\fancyhf{}
\fancyhead[C]{Bibliography}
\fancyhead[R]{\thepage} 

\bibliographystyle{alpha} 
\normalsize
\bibliography{Thesismain.bib} 

\clearpage


\clearpage
\phantomsection 
\addcontentsline{toc}{chapter}{Index}

\fancyhf{}
\fancyhead[C]{Index}
\fancyhead[R]{\thepage}

\printindex

\clearpage


\clearpage
\phantomsection
\addcontentsline{toc}{chapter}{Index of notation}

\fancyhf{}
\fancyhead[C]{Index of notation}
\fancyhead[R]{\thepage}

\printnomenclature

\end{document}